\definecolor{myred}{rgb}{0.75,0,0}
\definecolor{mygreen}{rgb}{0,0.5,0}
\definecolor{myblue}{rgb}{0,0,0.65}
\numberwithin{equation}{section} 
\newtheorem{theorem}{Theorem}[section]
\newtheorem{lemma}[theorem]{Lemma}
\newtheorem{cor}[theorem]{Corollary}
\newtheorem{proposition}[theorem]{Proposition} 
\newtheorem{definition}[theorem]{Definition}
\newtheorem{conjecture}[theorem]{Conjecture}
\theoremstyle{definition}
\newcommand{\R}{\mathbb R}
\newcommand{\C}{\mathbb C}
\renewcommand{\phi}{\varphi}
\newcommand{\T}{\mathcal T}
\newcommand{\V}{\mathbb V}
\newcommand{\E}{\mathbb E}
\newcommand{\cM}{\mathcal{M}}
\newcommand{\cV}{\mathcal{V}}
\def\sums{\mathop{\sum \Bigl.^{*}}\limits}
\def\sumb{\mathop{\sum \Bigl.^{\flat}}\limits}
\newcommand{\ord}{{\rm ord}}
\renewcommand{\leq}{\leqslant}
\renewcommand{\geq}{\geqslant}
\renewcommand{\R}{\mathbb R}
\renewcommand{\d}{{\rm d}}
 \newcommand{\e}{{\rm e}}
\renewcommand{\S}{\mathcal S}
\newcommand{\U}{\mathcal U}
\newcommand{\bchi}{\boldsymbol{\chi}}
\newcommand{\bfsigma}{\boldsymbol{\sigma}}
\newcommand{\bfgamma}{\boldsymbol{\gamma}}
\DeclareMathOperator{\Mod}{mod}
\renewcommand{\bmod}[1]{\,(\Mod{#1})}
\renewcommand{\rho}{\varrho}
\def\sums{\mathop{\sum \Bigl.^{*}}\limits}
\renewcommand{\i}{\mathrm{i}}
\newcommand{\eps}{\varepsilon}
\theoremstyle{plain}
\newtheorem{X}{X}[section] 
\theoremstyle{remark}
\newtheorem{remark}[X]{Remark}
\begin{document}
\title[
]{Moments of moments of primes in arithmetic progressions}

\date{\today}
 
\author{R.\ de la Bret\`eche}
\address{
Institut de Math\'ematiques de Jussieu-Paris Rive Gauche\\
Universit\'e de Paris, Sorbonne Universit\'e, CNRS UMR 7586\\
Case Postale 7012\\
F-75251 Paris CEDEX 13\\ France}
\email{regis.delabreteche@imj-prg.fr}

\author{D. Fiorilli}
\address{CNRS, Universit\'e Paris-Saclay, Laboratoire de math\'ematiques d'Orsay, 91405, Orsay, France}
\email{daniel.fiorilli@universite-paris-saclay.fr}

\maketitle

\begin{abstract}
We establish unconditional $\Omega$-results for all weighted even moments of primes in arithmetic progressions.
We also study the moments of these moments and establish lower bounds under GRH. Finally, under GRH and LI we prove an asymptotic for all moments of the associated limiting distribution, which in turn indicates that our unconditional and GRH results are essentially best possible. Using our probabilistic results, we formulate a conjecture on the moments with a precise associated range of validity, which we believe is also best possible. This last conjecture implies a $q$-analogue of the Montgomery-Soundararajan conjecture on the Gaussian distribution of primes in short intervals. The ideas in our proofs include a novel application of positivity in the explicit formula and the combinatorics of arrays of characters which are fixed by certain involutions. 
\end{abstract}

\section{Introduction}

In addition to having a rich history spanning decades, the study of moments of arithmetical sequences in residue classes is experiencing accelerating and major recent progress. For instance, starting from the works of Liu~\cite{L93,L95}, Perelli~\cite{P95}, Friedlander and Goldston~\cite{FG96}, Hooley~\cite{H00,H03,H04} and culminating with the work of Harper-Soundararajan~\cite{HS17} and its generalization by Mastrostefano~\cite{M20}, we now have a general unconditional lower bound for the variance of primes and generalized divisor functions up to $x$ in arithmetic progressions modulo $q$, on average over $q\leq Q$ and in the range $x^{\frac 12+\eps}\leq Q\leq x$. These questions have their origin in the foundational work of Barban~\cite{B66}, Davenport-Halberstam~\cite{DH66}, Montgomery~\cite{M70} and Hooley~\cite{H75a}, as well as in the earlier work of Selberg~\cite{S43} and others on primes in short intervals. There is a myriad of other related works which would be relevant to mention here, including work on varations of the Montgomery-Hooley asymptotic formula~\cite{H75a,H74,H75b,H75c,H76,H77,H98,H00,H02,H03,H04,GV95,H13a,LPZ10,S09a,S10,S11}, relations with pair correlation and other statistics on zeros of $L$-functions~\cite{BKS16,CLLR14,L08}, spaced moduli~\cite{B14a,B14b,BF18,BW11,E01,MP05}, other arithmetic functions~\cite{B08,B18,BF16,C75,GMRR20,K95,LZ12,M06,M73,N15,N20,PV18,RS18,V98,W80,Y14,Y16}, as well as function field analogues \cite{HK19,KR14,KR16, RG17}.

In parallel, we have been experiencing intensive ongoing work on moments of $L$-functions in families, guided by the random matrix theory analogues~\cite{KS00a,KS00b} and the conjectures~\cite{CG98,CF99,CG01,CFKRS05}. In this direction we mention the fundamental work of Ramachandra~\cite{R78, R80} and Heath-Brown~\cite{HB81}, as well as the relatively recent Rudnick-Soundararajan breakthrough \cite{RS05,RS06}, which inspired much further work~\cite{AF12,RS13,CL13,A16,S18} on lower bounds. As for upper bounds, we mention Soundararajan's landmark result~\cite{S09} which was sharpened to the conjectured order of magnitude by Harper~\cite{H13b}, as well as some of the many other papers on the subject~\cite{K10,HB10,Y11,CIS12,RS15,M17,CL17,KY17}.

Much less is known about higher moments of arithmetical sequences in residue classes, with a notable family of exceptions. 
Indeed, in the influential work~\cite{FGKM14}, Fouvry, Ganguly, Kowalski and Michel have computed all moments of the classical divisor function in a certain range. This work has inspired further developments including generalizations to coefficients of GL($N$) automorphic forms~\cite{KR14b, LY16, X17a, ZW18, SZZ19}.  Moreover, very recently Nunes has established upper bounds on higher moments of squarefree integers~\cite{N20}.
As for the sequence $\Lambda(n)$ (the von Mangoldt function), the third moment has been computed by Hooley~\cite{H98}, as well as by Vaughan~\cite{V03b} with a major arcs approximation. Hooley has also formulated a conjecture on higher moments~\cite{H77} on average over $q\leq Q$ in the restricted range $x/(\log x)^A\leq Q = o(x/\log x) $. 
Other than this conjecture, we are not aware of any results on higher moments of $\Lambda(n)$ in arithmetic progressions.

In this paper, we put forward a new technique which will allow us, through positivity and involved combinatorics, to establish lower bounds on average for all weighted moments of $\Lambda(n)$ in arithmetic progressions. More precisely, we will establish lower bounds on all moments of moments. The conceptual idea of studying moments of moments has its roots in Cramer's result~\cite{C22} on the variance of $\psi(x)-x$, Hooley's bound~\cite{H76} on the $t$-average of the variance of $\psi(\e^t;q,a)$, Vaughan's upper bound~\cite{V01} on the higher $q$-moments of the variance of $\psi(x;q,a)$, the second author's work~\cite{F15b} on the moments of the limiting distribution of the variance, as well as the very recent work of Assiotis, Bailey and Keating~\cite{AK19,ABK19,BK19,BK20} on moments of moments in the context of families of $L$-functions and of characteristic polynomials of large random matrices. The approach in the present paper is to study the higher moments of $\psi(\e^t;q,a)$ via its $t$-moments for a given modulus $q$; in other words, we consider a double moment, over $a \bmod q$ and over $t\leq T$. 
As a side result, we are also able to study the fixed residue class $a=1$, once more on average over $t\leq T$. Note also that all our results apply to individual moduli $q$, 
rather than to the more usual average over $q$. 

As pointed out by Hooley~\cite{H02}, it is somewhat paradoxical that the GRH allows one to obtain an asymptotic for $\psi(x;q,a)$ for individual moduli $q\leq x^{\frac 12-\eps}$, while asymptotics for the variance of $\psi(x;q,a)$ require that $ x^{\frac 12+\eps} \leq q \leq x $. This last barrier has recently been broken over function fields in the influential work of Keating and Rudnick~\cite{KR14}. However, as far as we know, it still stands over $\mathbb Q$. Interestingly, with the current methods we were able to obtain lower bounds of the conjectured order of magnitude for moduli that are of size $x^{\delta}$ for some small enough $0<\delta<\tfrac 12$ (in fact, $x^{\delta}$ can be replaced a smaller function tending to infinity, no matter how slowly). 

The moments of $\Lambda(n)$ are strongly linked with the spacing properties of zeros of $L$-functions (see for instance~\cite{C22,CLLR14,GM87}), and \textit{a fortiori} 
to the Diophantine properties of imaginary parts of zeros of $L(s,\chi)$. An analogous relation was uncovered in the Monach--Montgomery determination of the exact order of magnitude of the error term in the prime number theorem~\cite[p.484]{MV07}, which depends on an effective version of the linear independence hypothesis on zeros of $\zeta(s)$. Having to assume such a hypothesis is a major drawback in the application of the explicit formula to this context. Nevertheless, in the current work we were able to overcome this hypothesis
through an application of positivity. 
As a result, we obtain optimal lower bounds that are independent of the Diophantine properties of zeros of $L$-functions. Moreover, by combining these results with an approach pioneered by Littlewood~\cite{L14}, we  obtain unconditional and conjecturally optimal $\Omega$-results on all even moments.

Before we state our results, we need to introduce some notation. For $\eta \in \mathcal L^1(\mathbb R)$, we recall the usual definition of the Fourier transform
$$\widehat \eta(\xi) := \int_{\R} \eta(t)\e^{-2\pi i \xi t}  \d t. $$
As in the work of Fouvry, Ganguly, Kowalski and Michel~\cite{FGKM14}, we will count $\Lambda(n)$ with a weight function. More precisely, for any fixed $\delta>0$, we define 
$\S_\delta\subset \mathcal L^1(\mathbb R)$ to be the set of all non-trivial differentiable even $ \eta :\R \rightarrow \R$ such that for all $t\in \R$,
$$\eta (t) ,  \eta'(t) \ll \e^{-(\frac 12 +\delta) |t|}, $$
and moreover for all $\xi \in \R$, we have that\footnote{The upper bound on $\widehat \eta(\xi)$ is a quite mild condition given the differentiability of $\eta$; going through the proof of the Riemann-Lebesgue lemma 
we see for instance that a stronger bound holds as soon as $\eta'$ is monotonous. (A stronger bound holds if $\eta$ is twice differentiable.) As for the positivity condition, we can take for example $\eta = \eta_1 \star \eta_1$ for some smooth and rapidly decaying $\eta_1$.}
 \begin{equation}
 \label{condiavecdelta}
 0 \leq \widehat \eta(\xi) \ll (|\xi|+1)^{-1} (\log(|\xi|+2))^{-2-\delta}. 
 \end{equation}
 For $a,q\in \mathbb N$, we define
$$ \psi_{\eta}(x;q,a):=\sum_{\substack{n\geq 1 \\ n\equiv a \bmod q}} \frac{\Lambda(n)}{n^{\frac 12}} \eta(\log (n/x)); \qquad \psi_{\eta}(x,\chi _{0,q}):=\sum_{\substack{n\geq 1 \\ (n,q)=1}} \frac{\Lambda(n)}{n^{\frac 12}} \eta(\log (n/x)). $$
The effect of this choice of test function is the concentration of the mass on those $n$ for which $n\asymp x $.  As an example of element of $\S_\delta$, one can take  
$\eta_K(t) = \e^{-K |t|}$ with 
$K\geq\tfrac 12+\delta$, for which $\widehat \eta(\xi) = 2K / (K^2+(2\pi \xi)^2)$.
Note also that the classical prime counting functions correspond to the choice  $\eta_0(t):= \e^{ \frac t2}1_{t\leq 0}\notin \S_{\delta}$, for which $\psi_{\eta_0}(x;q,a)=\psi(x;q,a)/\sqrt{x}$ and $\widehat {\eta_0}(\xi) = (\tfrac 12- 2\pi i\xi)^{-1}$.

The object of study in this paper is the moment\footnote{Note that the moments are usually not divided out by $\phi(q)$; this normalization should be kept in mind while comparing our results to those in the literature. Moreover, since $\psi_{\eta}(x,\chi_{0,q})$ is of order $x^{\frac 12}$, in a sense we have also normalized by $x^{\frac n2}$.}
\begin{equation}
M_n(x,q;\eta)  = \frac{1}{\phi(q)}\sum_{\substack{a \bmod q\\ (a,q)=1}} \Big(\psi_{\eta}(x;q,a)-\frac {\psi_{\eta}(x,\chi_{0,q})}{\phi(q)}\Big)^n.
\end{equation}
We define the constants $\alpha(h):=\widehat h(0)$ and
\begin{equation} \beta_q(h):=
-\widehat h(0)\Big(\log(8\pi) +\gamma_0+  \sum_{p \mid q } \frac{\log p}{p-1}\Big)+\frac{1 }{2}\int_0^\infty 
\frac{\e^{-  \frac x2} }{1-\e^{-  x}}
\big(2\widehat h(0)-\widehat h(x)-\widehat h(-x)\big)\d x,
\label{equation definition alpha beta}
\end{equation}
where $\gamma_0$ is the Euler-Mascheroni constant.
Here are our main unconditional results. 
\begin{theorem}
\label{thunconditional n=2}
 Let $\delta>0,$ $\eta\in \S_\delta$, fix $\eps>0$, and let $g:\mathbb R_{\geq 1} \rightarrow \R_{\geq 3}$ be any increasing function tending to infinity for which $g(u) \leq \e^{u}$. 
 For a positive proportion of moduli $q$, there exists an associated value $x_q$ such that 
$  g(c_1(\eps,\delta,\eta)\log x_q) \leq q \leq  g(c_2(\eps,\delta,\eta)\log x_q) $, where the $c_j(\eps,\delta,\eta)>0$ are constants, and
\begin{equation}  \label{eq:unconditionalbound}
M_{2}(x_q,q;\eta) \geq (1-\eps) \frac{\alpha(\widehat \eta^2) \log q}{\phi(q)}. 
\end{equation}
If moreover $g(u) \leq  u^{\frac 1M}$ for some $M\geq 4\eps^{-1}$, then we can choose
$x_q$ in such a way that $g((\log x_q)^{1-\frac 3{M\eps}}) \leq q \leq  g(\log x_q) $ and
\begin{equation}
     M_{2}(x_q,q;\eta) \geq   \frac{\alpha(\widehat \eta^2)\log q +\beta_{q}(\widehat \eta^2)}{\phi(q)}+ 
\frac{ 1}{q^{   \frac 32 +\eps}}. 
\label{equation omega result M_2}
\end{equation}

\end{theorem}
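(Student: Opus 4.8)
The plan is to reduce $M_2$ to a character sum, insert the explicit formula, and then exploit the two positivity hypotheses: $\widehat\eta\ge 0$, and positivity of the Fourier transform of a well-chosen averaging kernel. First I would use orthogonality of Dirichlet characters: putting $\psi_\eta(x,\chi):=\sum_{n\ge1}\Lambda(n)n^{-1/2}\chi(n)\eta(\log(n/x))$ (so that $\psi_\eta(x,\chi_{0,q})$ is this quantity at $\chi_0$), one has $\psi_\eta(x;q,a)=\phi(q)^{-1}\sum_{\chi}\bar\chi(a)\psi_\eta(x,\chi)$, the normalisation in the definition of $M_n$ removes the principal character, and $\sum_{(a,q)=1}\bar\chi_1(a)\bar\chi_2(a)=\phi(q)\mathbf 1_{\chi_2=\bar\chi_1}$ collapses the square, giving
\[
M_2(x,q;\eta)=\frac{1}{\phi(q)^2}\sum_{\chi\neq\chi_0}\bigl|\psi_\eta(x,\chi)\bigr|^2 .
\]
For $\chi\neq\chi_0$ of conductor $q^\ast\mid q$ and inducing character $\chi^\ast$, the ramified prime powers change $\psi_\eta(x,\chi)$ by only $O(x^{-1/2}\log q)$, and a contour shift in the Mellin representation of $\psi_\eta(x,\chi^\ast)$, pushed past $\Re s=-\delta$ (which the decay of $\eta,\eta'$ in $\S_\delta$ permits), yields, with $u=\log x$,
\[
\psi_\eta(e^u,\chi)=-\sum_{\rho}\widehat\eta\!\Bigl(\tfrac{\rho-1/2}{2\pi i}\Bigr)e^{(\rho-\frac12)u}+O_q\!\bigl(e^{-(\frac12+\delta-\eps)u}\bigr)\qquad(u\to+\infty),
\]
the sum over the non-trivial zeros $\rho=\beta+i\gamma$ of $L(s,\chi^\ast)$; by \eqref{condiavecdelta} the series $\sum_\rho|\widehat\eta((\rho-\tfrac12)/2\pi i)|\ll\log q$ converges. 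I would then split into two cases: if some $L(s,\chi)$, $\chi$ modulo $q$, has a zero with $\beta>\tfrac12$, the formula above contains a term growing like a positive power of $x$, so $M_2(e^u,q;\eta)\to\infty$ as $u\to\infty$ and all the claimed bounds are trivial; hence I may assume every such zero lies on $\Re s=\tfrac12$, so that $\psi_\eta(e^u,\chi)\approx-\sum_\gamma\widehat\eta(\tfrac{\gamma}{2\pi})e^{i\gamma u}$ with \emph{non-negative} coefficients $\widehat\eta(\gamma/2\pi)$.

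The main device is a positivity argument. Because $\widehat\eta\ge0$, the decay \eqref{condiavecdelta} furnishes a cutoff $T_0=T_0(\eps,\delta,\eta)$, independent of $q$, with $\sum_{|\gamma|\ge T_0}\widehat\eta(\gamma/2\pi)^2\le\eps\sum_\gamma\widehat\eta(\gamma/2\pi)^2$ for every $\chi$, so that only the $O_\eps(1)$-worth of low-lying zeros matter and, crucially, the relevant partial sums $\sum_{|\gamma|<T_0}\widehat\eta(\gamma/2\pi)$ are sums of non-negative terms, free of cancellation. I would then integrate $u\mapsto M_2(e^u,q;\eta)$ against a probability density $\Phi$ that is non-negative with non-negative Fourier transform (a rescaled, translated Fejér-type kernel), supported in a window $[U-W,U+W]$ with $U$ a large multiple of $g^{-1}(q)$ and $W\asymp U$. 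Expanding $|\psi_\eta(e^u,\chi)|^2=\sum_{\gamma_1,\gamma_2}\widehat\eta(\tfrac{\gamma_1}{2\pi})\widehat\eta(\tfrac{\gamma_2}{2\pi})e^{i(\gamma_1-\gamma_2)u}$ and integrating, the diagonal $\gamma_1=\gamma_2$ contributes $V_q:=\phi(q)^{-2}\sum_{\chi\ne\chi_0}\sum_\gamma\widehat\eta(\gamma/2\pi)^2\ge 0$, while the off-diagonal terms are controlled using $|\widehat\Phi(\xi)|\ll\min(1,(W|\xi|)^{-2})$ together with an (on average over $\chi$) bound on how much zeros of the $L(s,\chi)$ modulo $q$ can cluster; this makes the off-diagonal contribution $\le\eps V_q$ once $W$ — i.e.\ $g^{-1}(q)$ relative to $\log q$, which is absorbed into the constants $c_1,c_2$ — is large enough. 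Hence $\int M_2(e^u,q;\eta)\Phi(u)\,du\ge(1-2\eps)V_q$, and since $\Phi\ge0$ and $\int\Phi=1$ there is $u_q\in[U-W,U+W]$ with $M_2(e^{u_q},q;\eta)\ge(1-2\eps)V_q$; taking $x_q:=e^{u_q}$ gives the range $g(c_1\log x_q)\le q\le g(c_2\log x_q)$. (Alternatively one could locate $x_q$ by a simultaneous Diophantine approximation making $e^{i\gamma\log x_q}\approx 1$ for the low-lying zeros, again relying on $\widehat\eta\ge0$ to prevent cancellation.)

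It remains to evaluate $V_q$. The Riemann–von Mangoldt count for $L(s,\chi^\ast)$, with the archimedean Gamma factor and the trivial zeros producing the constants $\log(8\pi)+\gamma_0$ and the integral in \eqref{equation definition alpha beta}, gives $\sum_\gamma\widehat\eta(\gamma/2\pi)^2=\alpha(\widehat\eta^2)\log q^\ast+\beta_{q^\ast}(\widehat\eta^2)+(\text{error})$; summing over $\chi\neq\chi_0$ and keeping track of the conductors $q^\ast\mid q$ (whose combinatorics produces the term $\sum_{p\mid q}\tfrac{\log p}{p-1}$ in $\beta_q$) yields $V_q=\frac{\alpha(\widehat\eta^2)\log q+\beta_q(\widehat\eta^2)}{\phi(q)}+(\text{lower order})$. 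For \eqref{eq:unconditionalbound} I only need $V_q\ge(1-\eps)\frac{\alpha(\widehat\eta^2)\log q}{\phi(q)}$, which holds for a positive proportion of $q$ — e.g.\ those whose largest prime factor exceeds $q^{1-\eps}$, so that almost all characters modulo $q$ are nearly primitive and $\sum_\chi\log q^\ast=(1-o(1))\phi(q)\log q$. Under the stronger hypothesis $g(u)\le u^{1/M}$ with $M\ge4\eps^{-1}$, the modulus $q$ is so small compared with $x_q$ that every error term above (and the off-diagonal bound) becomes $\ll q^{-3/2-\eps}$, and one may retain a positive secondary contribution of that size, which gives \eqref{equation omega result M_2}.

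The step I expect to be the main obstacle is the off-diagonal estimate combined with the localisation of $x_q$: a kernel centred at the origin makes every off-diagonal term non-negative (so they can simply be discarded), but then $\log x_q$ cannot be forced into a window of bounded relative width around $g^{-1}(q)$; a translated kernel restores the oscillatory phases $e^{i(\gamma_1-\gamma_2)U}$, and bounding their total effect requires quantitative control — uniform in $q$, with no Diophantine hypothesis on the zeros — of the clustering of zeros across the family $\{L(s,\chi)\}$, $\chi$ modulo $q$. One must therefore work on average over $\chi$ and lean on the positivity of $\widehat\eta$ both to reduce to the finitely many low zeros and to keep all retained quantities non-negative; this tension is precisely what dictates the admissible range of $g$ in the sharp statement \eqref{equation omega result M_2}.
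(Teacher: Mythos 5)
Your reduction of $M_2$ to $\phi(q)^{-2}\sum_{\chi\neq\chi_{0,q}}|\psi_\eta(x,\chi)|^2$, the use of the explicit formula, the positivity of $\widehat\eta$ to isolate the non-negative diagonal $\gamma_1=\gamma_2$, and the evaluation of the diagonal via the Riemann--von Mangoldt formula and the average of $\log q_\chi$ all match the paper. But the two load-bearing steps are not carried out correctly. First, the localisation of $x_q$: your plan is to translate the averaging kernel to $[U-W,U+W]$ and to beat the off-diagonal down to $\eps V_q$ using $|\widehat\Phi(\xi)|\ll\min(1,(W|\xi|)^{-2})$ plus a clustering bound. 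This cannot work with any available input: the off-diagonal pairs with $|\gamma_1-\gamma_2|\leq 1/W$ see no decay from $\widehat\Phi$, and the best unconditional (or even GRH) control on such pairs -- the paper's Proposition~\ref{proposition pair correlation} -- bounds the smoothed pair count at scale $1/W$ by $O(\phi(q)\log q(1+\log q/W))$, which for $W\gg\log q$ is the \emph{same order} as the diagonal, not $\eps$ times it. Making the off-diagonal $o(V_q)$ would require an asymptotic pair-correlation statement or an effective linear-independence hypothesis, exactly what the theorem is designed to avoid (your parenthetical Diophantine-approximation alternative has the same defect and forces $q\leq(\log_2 x)^{1-\eps}$). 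The paper's device is different: it keeps the kernel centred at the origin (so $\widehat\Phi\geq0$ lets one discard the off-diagonal entirely and lower-bound the average of $M_2$ over $[0,T]$), and then excludes small $t$ not by oscillation estimates but by an \emph{upper} bound on $\frac1S\int_0^S M_2\ll\log q/\phi(q)$ (Lemma~\ref{lemma hooley upper bound}, a Hooley-type argument which is where the pair correlation estimate is actually used), so that a pigeonhole places the large value in $[cT,T]$, a window of bounded relative width. Without this ingredient your argument produces some $x$ in $[1,e^{T}]$ but cannot enforce $q\geq g(c_1\log x_q)$.

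Second, the refined bound~\eqref{equation omega result M_2}: asserting that ``every error term becomes $\ll q^{-3/2-\eps}$ and one may retain a positive secondary contribution of that size'' is not an argument -- a first-moment computation of $M_2$ can never detect a one-sided excursion of size $q^{-3/2-\eps}$ above the mean. The paper obtains this term from the \emph{odd moments of moments}: the lower bound on $\cV_{2r+1,2}(T,q;\eta,\Phi)$ in Theorem~\ref{thmomentscentres2rT} (proved by the combinatorics of involution-fixed arrays of characters) shows the $(2r+1)$-th centred moment of $M_2(\e^t,q;\eta)-m_2(q;\eta)$ is positive and of size $\phi(q)^{-1/2}V_2(q;\eta)^{(2r+1)/2}$, which forces $M_2$ to exceed $m_2(q;\eta)\geq(\alpha\log q+\beta_q)/\phi(q)$ by $\gg\phi(q)^{-3/2-\eps}$ somewhere in the window; nothing in your proposal plays this role. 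Finally, your dismissal of the case where GRH$_{\widehat\eta}$ fails is too quick: an off-line zero does \emph{not} imply $M_2(\e^u,q;\eta)\to\infty$ (the sum over zeros oscillates), and one still has to produce an $x$ in the prescribed window and identify the positive proportion of moduli; the paper needs the Kaczorowski--Pintz $\Omega$-theorem (Lemma~\ref{lemma omega result psi(x,chi)}) and takes the moduli to be multiples of the least exceptional conductor. This last gap is repairable by classical means, but the first two are genuine obstructions to your route.
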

\begin{remark}
\begin{enumerate}
    \item The expression $ (\alpha(\widehat \eta^2)\log q  +\beta_{q}(\widehat \eta^2))/\phi(q ) $ is the expected main term for $ M_{2}(x ,q ;\eta)$ (see Conjecture~\ref{conjecturemoments} below).
    
    \item One can obtain an $\Omega$-result for $ M_{2}(x_q,q;\eta) - (\alpha(\widehat \eta^2)\log q +\beta_{q}(\widehat \eta^2))/\phi(q)$ which is slightly stronger than~\eqref{equation omega result M_2}, but with and undetermined  sign. More precisely, the term $1/q^{\frac 32+\eps}$ can be replaced with a constant times $\log q/\phi(q)^{\frac 32}$.
    
    \item Goldston and Vaughan~\cite{GV95} have shown\footnote{Note that in \cite[Theorem 1.1]{GV95}, $-cxQ$ should be replaced by $+cxQ$.} that under GRH and in the range $x^{\frac 57+\eps}\leq Q \leq x^{1-\eps}$, 
      $$\frac 1{Q^2} \sum_{q\leq Q} \Big( \sum_{\substack{a\bmod q \\ (a,q)=1}} \frac 1x \Big( \sum_{\substack{n\leq x \\ n\equiv a \bmod q}}\Lambda(n) - \frac {x}{\phi(q)}\Big)^2 -\log q -\beta_{q}(|\widehat\eta_0|^2) \Big) = \Omega_{\pm}( x^{-\frac 34}Q^{-\frac 14}),  $$
   where $\beta_{q}(|\widehat\eta_0|^2)= -(\gamma_0+\log(2\pi)+\sum_{p\mid q} \frac{\log p}{p-1})$. 
\end{enumerate}

\end{remark}
In a certain range, we have similar results for all even moments.
Here and throughout, we will denote the $n$-th moment of the Gaussian by
\begin{equation}
    \mu_n:= \begin{cases}
    \frac{(2m)!}{2^m m!} & \text{ if } n=2m \text{ for some } m\in \mathbb N,\\
    0 &\text{otherwise.}
    \end{cases}
\end{equation}

\begin{theorem} 
\label{thunconditional n>2}
Let $\delta>0$, $\eta\in \S_\delta$, and fix $m\geq 2$. 
For any $B\in \mathbb R_{ > 0}$, there exists a real number $\lambda \in [\frac 1{m-1+B},\frac 1B]$ with the following property. For any fixed $\eps >0$ there exists a sequence of moduli $\{q_i\}_{i\geq 1}$ and associated values $\{x_i\}_{i\geq 1}$ such that 
$q_i= (\log x_i)^{\lambda+o(1)} $ and
\begin{equation}\label{eq:unconditionalbound m>2}
M_{2m}(x_i,q_i;\eta) \geq (1-\eps)\mu_{2m}\frac{(\alpha(\widehat \eta^2)\log q_i)^m}{\phi(q_i)^m}.  
\end{equation}
If moreover $g:\mathbb R_{\geq 1} \rightarrow \R_{\geq 3}$ is an increasing function such that $ g(x) \leq x^{\frac 1{mM}}$ with $M\geq 2\eps^{-1}$, then for a positive proportion of $q$ there exists 
$x_q$ such that $g((\log x_q)^{1-\frac{1}{\eps M}}) \leq q \leq  g(\log x_q) $ and
\begin{equation}
    M_{2m}(x_i,q_i;\eta) \geq   \mu_{2m}\Big( \frac{\alpha(\widehat \eta^2)\log q_i +\beta_{q_i}(\widehat \eta^2)}{\phi(q_i)}\Big)^m+ \frac{ 1}{q_i^{  m+\frac 12 +\eps }}. 
    \label{equation more precise omega result high moments}
\end{equation}
\end{theorem}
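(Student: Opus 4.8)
The plan is to linearise $M_{2m}$ by orthogonality of Dirichlet characters and then feed the explicit formula into each factor. Setting $Y_{q,a}(x):=\psi_\eta(x;q,a)-\psi_\eta(x,\chi_{0,q})/\phi(q)$, one has $M_{2m}(x,q;\eta)=\phi(q)^{-1}\sum_{(a,q)=1}Y_{q,a}(x)^{2m}$, and expanding $Y_{q,a}$ into characters and executing the sum over $a$ gives
\[
M_{2m}(x,q;\eta)=\frac1{\phi(q)^{2m}}\sum_{\substack{\chi_1,\dots,\chi_{2m}\neq\chi_{0,q}\\ \chi_1\cdots\chi_{2m}=\chi_{0,q}}}\ \prod_{j=1}^{2m}\psi_\eta(x,\chi_j).
\]
For each factor I would use the Riemann--von Mangoldt explicit formula $\psi_\eta(x,\chi)=-\sum_{\rho}x^{\rho-\frac12}\widehat\eta\big(\tfrac{\i(\rho-\frac12)}{2\pi}\big)+(\text{bounded archimedean term})+O_\delta(x^{-\frac12})$, the sum running over the non-trivial zeros of the primitive $L$-function inducing $\chi$; the exponential decay in the definition of $\S_\delta$ makes $\widehat\eta$ analytic in a strip, and \eqref{condiavecdelta} makes the zero-sum absolutely convergent.

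The heart of the proof is to integrate $M_{2m}(\e^t,q;\eta)$ in $t$ against a fixed probability density $w\geq 0$ whose Fourier transform is also non-negative (for instance a scaled Fej\'er kernel), supported in an interval around $\log x_q$; this is what lets us dispense with any linear-independence hypothesis. Inserting the explicit formula and expanding the $2m$-fold product, $\int w(t)M_{2m}(\e^t,q;\eta)\,\d t$ becomes a sum over tuples of non-principal characters $(\chi_1,\dots,\chi_{2m})$ of trivial product together with tuples of zeros $(\rho_1,\dots,\rho_{2m})$, each term carrying the factor $\prod_j\widehat\eta\big(\tfrac{\i(\rho_j-\frac12)}{2\pi}\big)\cdot\int w(t)\e^{\sum_j(\rho_j-\frac12)t}\,\d t$. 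Because $\widehat\eta\geq 0$ on $\R$ and --- once the zeros lie close enough to the critical line --- the $t$-integral is $\widehat w$ at an almost-real argument, hence non-negative up to a controlled error, every term is of essentially one sign, so the whole expression is bounded below by its restriction to any sub-family we like. I would keep precisely the ``diagonal'' tuples: those whose characters are matched, by a fixed-point-free involution $\iota$ of $\{1,\dots,2m\}$, into conjugate pairs $\chi_{\iota(j)}=\bar\chi_j$, with zeros correspondingly matched $\rho_{\iota(j)}=\bar\rho_j$, so that $\sum_j(\rho_j-\tfrac12)$ is real and each pair contributes $(1+o(1))\widehat\eta\big(\tfrac{\Im\rho}{2\pi}\big)^2$.

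The combinatorial input is then the count of these matchings. An inclusion--exclusion over involutions removes the overcounting from tuples invariant under several of them --- the ``degenerate'' ones, in which some $\chi_j$ coincide, contributing only lower-order terms --- leaving the generic count $(2m)!/(2^mm!)=\mu_{2m}$; and summing a single conjugate pair over its data yields $\phi(q)^{-1}\sum_{\chi\neq\chi_{0,q}}\sum_\rho\widehat\eta\big(\tfrac{\Im\rho}{2\pi}\big)^2=\alpha(\widehat\eta^2)\log q+\beta_q(\widehat\eta^2)+o(\cdot)$ by the usual density computation for zeros of $L$-functions, the constant being exactly \eqref{equation definition alpha beta} (the archimedean $\Gamma$-factor and the conductor-versus-modulus discrepancy producing the $\log(8\pi)+\gamma_0+\sum_{p\mid q}\tfrac{\log p}{p-1}$ and the integral term). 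This gives $\int w(t)M_{2m}(\e^t,q;\eta)\,\d t\geq\mu_{2m}\big((\alpha(\widehat\eta^2)\log q+\beta_q(\widehat\eta^2))/\phi(q)\big)^m(1+o(1))$. To make everything unconditional I would run a dichotomy: if some $L$-function of conductor dividing $q$ has a zero with $\Re\rho$ bounded away from $\tfrac12$, then Littlewood's device (or the same positivity average applied to $|\psi_\eta(\e^t,\chi)|^2$) makes $|\psi_\eta(x,\chi)|$ large for a suitable $x$, so $M_2(x,q;\eta)$ is large and, by the power-mean inequality $M_{2m}\geq M_2^{\,m}$ together with Theorem~\ref{thunconditional n=2}, $M_{2m}$ already exceeds the target; otherwise the symmetry $\rho\leftrightarrow 1-\bar\rho$ of the zeros of $L(s,\chi)$ yields the quasi-Riemann-hypothesis bound needed in the averaging argument.

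Finally, since the weighted $t$-average of $M_{2m}(\e^t,q;\eta)$ is at least the target, some $t=\log x_q$ in the support of $w$ realises $M_{2m}(\e^t,q;\eta)$ at least the target; tuning $w$ and the dichotomy threshold so the total error is $\leq\eps$ times the main term gives \eqref{eq:unconditionalbound m>2}. The shape $q_i=(\log x_i)^{\lambda+o(1)}$ with $\lambda\in[\tfrac1{m-1+B},\tfrac1B]$ falls out of simultaneously optimising the width of $w$ (which fixes $\log x_q$), the quasi-RH threshold and the Case~(I) estimate, together with a pigeonhole over dyadic scales. For the sharper \eqref{equation more precise omega result high moments} one keeps the secondary constant $\beta_q(\widehat\eta^2)$ throughout and adds a genuine $\Omega$-term of size $q^{-m-\frac12-\eps}$ from the lowest-lying zeros, with $x_q$ chosen \`a la Littlewood so their contribution has a fixed sign; this is where ``a positive proportion of $q$'' enters, through a large-sieve/second-moment count of the moduli whose $L$-functions have an atypically large low-lying zero or fail the quasi-RH above. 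I expect the main obstacle to be precisely this balancing act: arranging the kernel $w$ and the parameters so that (a) in the quasi-RH regime the integrals $\int w(t)\e^{\sum_j(\rho_j-\frac12)t}\,\d t$ are non-negative up to an error negligible against $\mu_{2m}(\log q/\phi(q))^m$ --- the point at which $\widehat w\geq 0$ and $\widehat\eta\geq 0$ together stand in for a linear-independence hypothesis --- and (b) the inclusion--exclusion over involutions isolates $\mu_{2m}$ with error below $\eps$ times the main term; reconciling these with the Case~(I) bound is what confines $q$ to the window $q=(\log x)^{\lambda+o(1)}$.
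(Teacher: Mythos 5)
Your treatment of the main bound \eqref{eq:unconditionalbound m>2} is essentially the paper's argument: orthogonality, the explicit formula, a Fej\'er-type weight with non-negative Fourier transform, positivity of $\widehat\eta$ standing in for linear independence, the $\mu_{2m}$ count of conjugate pairings with each pair contributing $\alpha(\widehat\eta^2)\log q+\beta_q(\widehat\eta^2)$ on average, the dichotomy on GRH$_{\widehat\eta}$ with a Kaczorowski--Pintz/Littlewood oscillation result plus $M_{2m}\geq M_2^{\,m}$ in the failing case, and a pigeonhole over scales for the exponent $\lambda$. That part is sound.

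The gap is in your derivation of the refined bound \eqref{equation more precise omega result high moments}. You propose to extract the secondary term $q^{-m-\frac12-\eps}$ ``from the lowest-lying zeros, with $x_q$ chosen \`a la Littlewood so their contribution has a fixed sign.'' Two problems. First, choosing $x_q$ so that a prescribed collection of off-diagonal zero-tuples contributes with a fixed sign is exactly the simultaneous-approximation step that requires Diophantine (effective linear-independence) information about the $\gamma_\chi$ --- the hypothesis the whole construction is designed to avoid; positivity of $\widehat\eta$ and of the weight only helps when every term already has one sign, which off-diagonal terms do not. Second, the size is wrong: an individual off-diagonal tuple enters $M_{2m}$ with weight $\phi(q)^{-2m}$ times a bounded factor, far below $q^{-m-\frac 12}$. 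The paper's mechanism is different: it proves a lower bound of order $\phi(q)^{-\frac 12}V_{2m}(q;\eta)^{\frac{2r+1}2}$ for the \emph{odd centered moments of moments} $\cV_{2r+1,2m}(T,q;\eta,\Phi)$, i.e.\ the $t$-average of $(M_{2m}(\e^t,q;\eta)-m_{2m}(q;\eta))^{2r+1}$; since the exponent is odd, this forces $M_{2m}-m_{2m}$ to exceed roughly its standard deviation $\asymp(\log q)^m/\phi(q)^{m+\frac 12}$ at some $t$, and it rests on the combinatorics of the involutions $F_{2r+1,n}$ (Lemmas~\ref{lemma sum over characters odd}--\ref{lemma combinatorics odd s}), which your sketch does not supply. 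Relatedly, the ``positive proportion of $q$'' in the second claim does not come from a large-sieve count of moduli with atypical low-lying zeros; it comes from the GRH-false branch of the dichotomy, where one is restricted to multiples of the exceptional conductor.
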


\begin{remark}
\begin{enumerate}
    \item Under the additional condition $ q\geq (\log_2 x)^{1+\eps}$, we believe that
    Theorems~\ref{thunconditional n=2} and~\ref{thunconditional n>2} 
    are essentially best possible (see Conjecture~\ref{conjecturemoments}). However, in the range $ q\leq \eps \log_2 x$ we believe that $M_{2m}(x,q;\eta)$ can be larger (see Remark~\ref{remark very small q}).

\item
Under GRH, we will show in Section~\ref{section unconditional applications} that analogues of Theorems~\ref{thunconditional n=2} and~\ref{thunconditional n>2} are valid for \emph{all} moduli $q$. For instance, it follows from Lemma~\ref{lemma raw oscillations higher moments} that for every large enough $q$, there exists an associated value $x_q$ such that $\log x_q\asymp_{\eps,\eta} \log q$ and
$$
M_{2}(x_q,q;\eta) \geq (1-\eps)\frac{\alpha(\widehat \eta^2) \log q}{\phi(q)}.  
$$
Moreover, one can obtain a version of Theorem~\ref{thunconditional n>2} (still unconditional) which holds for a positive proportion of moduli $q$ in the range $ \log q /\log\log x \asymp_m 1 $.

\item Under GRH, one can also obtain $\Omega$-results for the odd moments $M_{2m+1}(x,q;\eta)$ (using this time even moments, that is the lower bound on $\cV_{2r,2m+1}(T,q;\eta,\Phi)$ in Theorem~\ref{thmomentscentres2rT} below). 

\end{enumerate}
\end{remark}

Theorems~\ref{thunconditional n=2} and~\ref{thunconditional n>2} are applications of our estimates for higher moments of moments, which we now describe.
We define the expected value
\begin{equation}
    m_n(q;\eta):= \lim_{T\rightarrow \infty} \frac 1T \int_0^T M_n(\e^t,q;\eta) \d t, 
  \label{equation definition mean of M_n}
\end{equation}
  whenever\footnote{We will see in Lemma~\ref{lemma moments of limiting distribution as limit of V(T)} that GRH implies the existence of this limit.} the limit exists. 
We consider the set $\U\subset \mathcal L^1(\R)$ of non-trivial even integrable functions $\Phi:\R \rightarrow \R$ such that $\widehat \Phi\geq 0$.  As a consequence,
we have that $\Phi(0)>0$. For $s,n\in \mathbb N$ such that~\eqref{equation definition mean of M_n} exists, we define\footnote{It is crucial here that we subtract $m_n(q;\eta)$ rather than the empirical mean $\frac 1{T} \int_0^TM_n(\e^t,q;\eta)$; in the latter case the resulting combinatorial expression is not well suited for an application of positivity of $\widehat \eta$, $\widehat \Phi$.}
\begin{equation}
     \label{defV2rnT}
\cV_{s,n}(T,q;\eta,\Phi):=\frac {1}{T \int_0^{\infty} \Phi} \int_{0}^{\infty } \Phi\Big( \frac tT \Big) \Big(M_n(\e^t,q;\eta)-m_n(q;\eta)\Big)^{s}\d t.
\end{equation}
We also define the 
quantities\footnote{ As we will see in~\eqref{equation asymptotic high even moments LI}, $V_n(q;\eta)$ is the (conjectural) asymptotic variance of $M_n(\e^t,q,\eta)$.}
\begin{equation}
\label{defKn} \nu_n:=n!^2 \sum_{\substack{(k,m)\\ k\geq 2,m\geq 0\\    n=k+2m}}   \frac 1{k!2^{2m} m!^2};\hspace{2cm}  V_n(q;\eta):=\frac{\nu_n (\alpha(\widehat \eta^2)\log q)^{  n}}{\phi(q)^{  n+1}}.
 \end{equation}

We are ready to state our main theorem.

\goodbreak
\begin{theorem}
\label{thmomentscentres2rT} Assume GRH, and let $\delta>0$, $\eta\in \S_\delta, \Phi\in \U$, $q\geq 3$, $T\geq 1$, $ n\geq 2,$ $k,r\geq 1$. Then, the limit~\eqref{equation definition mean of M_n} exists. Moreover, if $q$ is large enough in terms of $\delta$ and $\eta$, then in the ranges $n\leq \log q/\log_2 q$ and $r\leq \log q$ the moment of moment $\cV_{s,n}(T,q;\eta,\Phi)$ satisfies the lower bounds
\begin{equation*}
    \cV_{2r,n}(T,q;\eta,\Phi)\geq \mu_{2r}V_n(q;\eta)^r \Big( \Big( 1+O_{\delta,\eta}\Big( n \frac{\log_2 q}{\log q}\Big)\Big)^r+O_{\delta,\eta}\Big( \frac r{\log q}\Big)\Big)
+ O_\Phi\Big( \frac{(C_{\delta,\eta}\log q)^{2rn} }{ T\phi(q)^{2r}}\Big);
\end{equation*}
\begin{multline*}
    (-1)^n\cV_{2r+1,n}(T,q;\eta,\Phi)\geq  \frac{(2r+1)!}{2^{r} (r-1)!}\frac{\vartheta_{n,2r+1}}{\phi(q)^{\frac 12}} V_n(q;\eta)^{\frac{2r+1}2}\times \\
  \Big( \Big( 1+O_{\delta,\eta}\Big( n \frac{\log_2 q}{\log q}\Big)\Big)^r+O_{\delta,\eta}\Big( \frac r{\log q}\Big)\Big)+  O_\Phi\Big( \frac{(C_{\delta,\eta}\log q)^{(2r+1)n} }{ T\phi(q)^{2r+1}}\Big).
\end{multline*}
Here,
$\vartheta_{n,2r+1}=0$ if $n$ is odd, and otherwise $\vartheta_{n,2r+1}:=\nu_n^{-\frac 32}(\nu_n'+\nu_n''), $ where $\nu_n'$ and $\nu_n''$ are constants of the same nature as $\nu_n$ which are defined in~\eqref{defKn'} and~\eqref{defKn''}, respectively. Moreover, the constant $C_{\delta,\eta}>0$ depends only on $\delta, \eta$.
Finally, the mean of the $n$-th moment $m_{n}(q;\eta)$ satisfies the 
bounds $m_{2k+1}(q;\eta) \leq 0$ and 
$$m_{2k}(q;\eta) \geq  \mu_{2k}\Big( \frac{\alpha(\widehat \eta^2) \log q +\beta_q(\widehat \eta^2) }{\phi(q)}\Big)^k  
+O\Big(   \mu_{2k} \frac{(C_{\delta,\eta}\log q)^{k}}{\phi(q)^{k+1}} \Big).$$
\end{theorem}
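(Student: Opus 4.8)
The plan is to reduce everything, via the explicit formula, to the positivity of $\widehat\eta$ and $\widehat\Phi$; in particular linear independence of zeros is never used. First I would record the shape of the moment. By orthogonality of Dirichlet characters,
\[
M_n(\e^t,q;\eta)=\frac{(-1)^n}{\phi(q)^n}\sum_{\substack{\chi_1,\dots,\chi_n\neq\chi_{0,q}\\ \chi_1\cdots\chi_n=\chi_{0,q}}}\ \prod_{j=1}^n\psi_\eta(\e^t,\chi_j),
\]
and, under GRH, the weighted explicit formula yields $\psi_\eta(\e^t,\chi)=-\sum_{\gamma_\chi}\widehat\eta\bigl(\tfrac{\gamma_\chi}{2\pi}\bigr)\e^{\i t\gamma_\chi}+E_\eta(t,\chi)$ with $E_\eta(t,\chi)\ll_{\delta,\eta}\e^{-\delta' t}$ for some $\delta'=\delta'(\delta)>0$ (the archimedean factor, the conductor and the trivial zeros of $L(s,\chi)$ all producing terms that decay in $t$). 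Hypothesis~\eqref{condiavecdelta} together with the density of zeros of $L(s,\chi)$ makes $\sum_{\gamma_\chi}|\widehat\eta(\gamma_\chi/(2\pi))|$ converge — this is precisely the role of the factor $(\log(|\xi|+2))^{-2-\delta}$ — so expanding the product turns $M_n(\e^t,q;\eta)$ into an absolutely and uniformly convergent sum of oscillations $\e^{\i t\lambda}$, $\lambda=\gamma_{\chi_1}+\dots+\gamma_{\chi_n}$, plus a contribution that tends to $0$. Hence the limit~\eqref{equation definition mean of M_n} exists and $m_n(q;\eta)$ is the sum of those coefficients with $\lambda=0$.

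Since every coefficient $\prod_j\widehat\eta(\gamma_{\chi_j}/(2\pi))$ is $\geq 0$, the contribution of the tuples with $\lambda=0$ to $(-1)^nm_n(q;\eta)$ is a sum of non-negative terms; this already gives $m_{2k+1}(q;\eta)\leq 0$. For $n=2k$ I would keep only the \emph{structurally diagonal} tuples, those whose multiset $\{(\chi_j,\gamma_{\chi_j})\}_j$ is invariant under $(\chi,\gamma)\mapsto(\bar\chi,-\gamma)$: these pair the $2k$ indices, their character product is then automatically $\chi_{0,q}$, and there are $\mu_{2k}$ pairings, each contributing $\phi(q)^{-2k}\bigl(\sum_{\chi\neq\chi_{0,q}}\sum_{\gamma_\chi}\widehat\eta(\gamma_\chi/(2\pi))^2\bigr)^k$. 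The inner quantity divided by $\phi(q)$ is exactly the $n=2$ variance: the density formula for zeros of $L(s,\chi)$ gives the conductor term $\alpha(\widehat\eta^2)\log q$, while the archimedean factor and the prime powers give $\beta_q(\widehat\eta^2)$ (the term $\sum_{p\mid q}\tfrac{\log p}{p-1}$ accounting for imprimitive characters and missing Euler factors), with a remainder that contributes $O_{\delta,\eta}(\mu_{2k}(C_{\delta,\eta}\log q)^k/\phi(q)^{k+1})$ to $m_{2k}(q;\eta)$. Discarding all other $\lambda=0$ tuples is legitimate by positivity, and gives the claimed bound.

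For the moments of moments, write $M_n(\e^t,q;\eta)-m_n(q;\eta)=\frac{(-1)^n}{\phi(q)^n}\sum'\prod\widehat\eta\cdot\e^{\i t\lambda}$, the prime restricting to $\lambda\neq 0$. Raising to the $s$-th power and integrating against $\Phi(\cdot/T)$ I get
\[
\cV_{s,n}(T,q;\eta,\Phi)=\frac{(-1)^{ns}}{\phi(q)^{ns}}\sum_{\text{tuples}}\Bigl(\prod\widehat\eta\Bigr)\,w_T(\lambda_{\mathrm{tot}})+O_\Phi\Bigl(\frac{(C_{\delta,\eta}\log q)^{sn}}{T\phi(q)^{s}}\Bigr),
\]
where $\lambda_{\mathrm{tot}}$ is the sum of all $sn$ imaginary parts, the remainder absorbs the cross terms carrying a factor $E_\eta$ (each integrating to $O(1/T)$), and $w_T(\mu):=(T\int_0^\infty\Phi)^{-1}\int_0^\infty\Phi(t/T)\e^{\i t\mu}\d t$. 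Grouping each tuple with its conjugate and using that $\Phi$ is even with $\widehat\Phi\geq 0$ gives $w_T(\mu)+w_T(-\mu)=\widehat\Phi(T\mu/(2\pi))/\int_0^\infty\Phi\geq 0$, while $w_T(0)=1$. Therefore $(-1)^{ns}\cV_{s,n}(T,q;\eta,\Phi)$ is, up to the remainder, a sum of non-negative terms, and lower-bounding it by the \emph{structurally block-diagonal} tuples alone — using also $(-1)^{2rn}=1$ and $(-1)^{(2r+1)n}=(-1)^n$ — yields the two displayed inequalities.

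What remains is the combinatorial heart: evaluating the structurally block-diagonal contribution. For $s=2r$, these are obtained by matching the $2r$ blocks in pairs ($\mu_{2r}$ ways) and, inside a matched pair $\{B,B'\}$, matching the $n$ slots of $B$ to those of $B'$ with conjugate characters and negated zeros, the characters of $B$ being allowed to form conjugate sub-pairs among themselves; counting these slot-matchings ($2m$ internal slots forming $m$ pairs and $k=n-2m$ cross slots, with $k\geq 2$ forced by $\chi_j\neq\chi_{0,q}$) produces exactly $\nu_n$, each slot yielding a factor $\sum_\gamma\widehat\eta(\gamma/(2\pi))^2\sim\alpha(\widehat\eta^2)\log q$ and each matched block-pair carrying $\phi(q)^{n-1}$ free characters, so a matched pair contributes $\nu_n(\alpha(\widehat\eta^2)\log q)^n\phi(q)^{n-1}(1+O_{\delta,\eta}(n\log_2 q/\log q))$ — the relative error coming from the restrictions $\chi_j\neq\chi_{0,q}$, $\lambda\neq 0$, and the subleading terms in the density of zeros — while configurations in which the blocks are not matched purely in pairs (the smallest irreducible unit being a triple of blocks, which forces $n$ even) are a factor $\asymp r/\log q$ down; this reproduces $\mu_{2r}V_n(q;\eta)^r((1+O(n\log_2 q/\log q))^r+O(r/\log q))$. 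For $s=2r+1$ an odd number of blocks cannot be matched purely in pairs, so the leading structural configuration is $r-1$ matched pairs plus one irreducible triple; the triple needs $3n$ even, whence $\vartheta_{n,2r+1}=0$ when $n$ is odd, and otherwise its value involves two further constants $\nu_n',\nu_n''$ of the same nature as $\nu_n$ recording the two matching-topologies on three blocks, the $\binom{2r+1}{3}(2r-3)!!$ choices yielding, after the triple's internal factors are absorbed into $\vartheta_{n,2r+1}$, the constant $\tfrac{(2r+1)!}{2^r(r-1)!}$ and the half-power $\phi(q)^{-1/2}$. I expect this combinatorics to be the main obstacle: pinning down the structurally block-diagonal tuples, proving the pairings (resp. pairings plus one triple) dominate all other cluster shapes, and checking that the slot-counts reproduce $\nu_n$ (resp. $\nu_n',\nu_n''$) while simultaneously honouring $\chi_1\cdots\chi_n=\chi_{0,q}$, $\chi_j\neq\chi_{0,q}$ and "sum of matched zeros nonzero", all uniformly in the ranges $n\leq\log q/\log_2 q$ and $r\leq\log q$, which is exactly what forces the stated shape of the error.
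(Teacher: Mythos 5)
Your plan is essentially the paper's proof: the explicit formula plus positivity of $\widehat\eta$ and $\widehat\Phi$ (the latter entering exactly through your symmetrized weight $w_T(\mu)+w_T(-\mu)=\widehat\Phi(T\mu/2\pi)/\int_0^\infty\Phi\geq 0$, which is the paper's $\Delta_s(\bm\sigma;\Phi,T)$ after inclusion--exclusion) reduces everything to counting block-diagonal arrays, with $\mu_{2r}$ pairings of blocks, respectively $r-1$ pairings plus one irreducible triple, producing $\mu_{2r}V_n(q;\eta)^r$ and the $\vartheta_{n,2r+1}$ term, and with $\nu_n,\nu_n',\nu_n''$ arising from the slot-matchings exactly as you describe. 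Two points you leave implicit are where the real work sits, so be aware of them. First, ``keep the structurally diagonal tuples and multiply by the number of pairings'' overcounts tuples fixed by more than one involution; one must first restrict, again by positivity, to tuples for which the matching is \emph{unique} (the condition $\chi_{\mu,j}=\overline{\chi_{\nu,i}}\Rightarrow(\mu,j)\in\{(\nu,i),\pi(\nu,i)\}$ of Lemmas~\ref{lemma removal star} and~\ref{lemma sum over characters}) and then show that reinstating the coincidences costs only $O(n^2r^2/\phi(q))$ relatively. Second, discarding the block-diagonal tuples with some block-sum $\lambda_\mu=0$ (forced by the mean subtraction) is not free: it requires the GRH pair-correlation estimate of Proposition~\ref{proposition pair correlation}, which via~\eqref{equation average sum over zero one removed} saves exactly one power of $\log q$ on each constrained block and is what produces the $O_{\delta,\eta}(r/\log q)$ term; without such an input the claimed relative error has no proof.
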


\begin{remark}
\begin{enumerate}
    \item In Proposition~\ref{proposition moments a=1}, we also obtain lower bounds for the moments of $\psi_\eta(\e^t;q,1)-\psi_\eta(\e^t,\chi_{0,q})/\phi(q)$; in other words, we may fix the individual residue class $1\bmod q$. 

\item  The conditions $\widehat\eta,\widehat \Phi \geq 0$ are crucial here (recall the definitions of $\S_\delta$ and $\mathcal U$). If they were to be dropped, then as discussed earlier, in order to obtain a lower bound 
 of the strength of Theorem~\ref{thmomentscentres2rT} using the techniques of this paper, one would need to assume an \emph{effective} version of the linear independence hypothesis on Dirichlet $L$-functions. More precisely, it would be sufficient to assume that if $\gamma_1,\dots ,\gamma_n$ are imaginary parts of zeros of height at most $S\geq 1$ of $L(s,\chi_1),\dots L(s,\chi_n) $ respectively with $\chi_j \neq \chi_{0,q}$, then either $|\gamma_1+\dots +\gamma_n| \geq \exp(-(qS)^{1+\eps})$, or $n=2m$ and for each $j$ there exists $\ell\neq j$ for which $\chi_j=\overline{\chi_\ell}$ and $\gamma_j=-\gamma_{\ell}$. This is the $q$-analogue of the Montgomery--Monach conjecture~\cite[(15.24)]{MV07} on zeros of $\zeta(s)$. However, this approach would impose the extra condition $q\leq (\log T)^{1-\eps}$ (in other words, when translating back to $M_{n}(x,q;\eta)$, we would need $q\leq (\log_2 x)^{1-\eps}$). 
\end{enumerate}

\end{remark} 

As another application of our results, we will establish $\Omega$-results for the usual prime counting functions (without smooth weights).

\begin{cor}
\label{corollary montgome}
Assume GRH, and fix $g:\mathbb R_{\geq 1} \rightarrow \mathbb R_{\geq 3}$ an increasing function tending to infinity for which $g(u)\leq \e^{ u}$.
For each large enough modulus $q$ there exists
$a \bmod q $ with $(a,q)=1$ 
and values $x_q$
for which $ g(c_1\log x_q) \leq q \leq  g(c_2\log x_q)$, where $0<c_1<c_2\leq \frac 12$ are absolute, and 
$$ \Bigg| \sum_{\substack{ n\leq x_q \\ n\equiv a\bmod {q}}} \Lambda(n)-\frac{1}{\phi(q)} \sum_{\substack{ n\leq x_q \\ (n,q)=1}} \Lambda(n)\Bigg| \gg \frac{(x_q\log q)^{\frac 12}}{\phi(q)^{\frac 12}}.   $$
Unconditionally, the same holds for a positive proportion of moduli $q$.
\end{cor}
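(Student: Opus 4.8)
The plan is to deduce the statement from Theorem~\ref{thunconditional n=2} (or its GRH analogue from Theorem~\ref{thmomentscentres2rT}) by removing the smooth weight $\eta$ from the moment $M_2(x,q;\eta)$ and by translating a lower bound on the second moment over $a\bmod q$ into the existence of a single residue class achieving the claimed size. First I would fix a convenient $\eta\in\S_\delta$, for instance $\eta=\eta_1\star\eta_1$ with $\eta_1$ smooth, compactly supported and nonnegative, so that $\widehat\eta=\widehat\eta_1^2\geq 0$ and $\alpha(\widehat\eta^2)=\widehat{\widehat\eta^2}(0)=\int_{\R}\widehat\eta(\xi)^2\,\d\xi>0$. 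Applying the GRH version of Theorem~\ref{thunconditional n=2} (via Lemma~\ref{lemma raw oscillations higher moments}, as described in Remark~2 following Theorem~\ref{thunconditional n>2}), for every large enough $q$ there is an $x_q$ with $\log x_q\asymp_{\eta}\log q$ and
\[
\frac{1}{\phi(q)}\sum_{\substack{a\bmod q\\(a,q)=1}}\Big(\psi_\eta(x_q;q,a)-\frac{\psi_\eta(x_q,\chi_{0,q})}{\phi(q)}\Big)^2\;\geq\;(1-\eps)\frac{\alpha(\widehat\eta^2)\log q}{\phi(q)}.
\]
Since the average of the squares over $a$ is $\geq(1-\eps)\alpha(\widehat\eta^2)\log q/\phi(q)$, there exists at least one $a\bmod q$ with $(a,q)=1$ for which $|\psi_\eta(x_q;q,a)-\psi_\eta(x_q,\chi_{0,q})/\phi(q)|\geq(1-\eps)^{1/2}(\alpha(\widehat\eta^2)\log q)^{1/2}/\phi(q)^{1/2}$.

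Next I would pass from the weighted quantity $\psi_\eta$ back to the sharp-cutoff sum $\sum_{n\leq x}\Lambda(n)$. Recall $\psi_\eta(x;q,a)=\sum_{n\equiv a(q)}\Lambda(n)n^{-1/2}\eta(\log(n/x))$; because $\eta$ is supported (essentially) near $0$ and $x_q$ is a power of $q$, the difference $\psi_\eta(x_q;q,a)-\psi_\eta(x_q,\chi_{0,q})/\phi(q)$ can be written, by partial summation / a dyadic decomposition, as a bounded linear combination (an average against a fixed finite-mass kernel) of the normalized sharp sums $x^{-1/2}\big(\sum_{n\leq x,\,n\equiv a(q)}\Lambda(n)-\phi(q)^{-1}\sum_{n\leq x,\,(n,q)=1}\Lambda(n)\big)$ over $x$ ranging in a short multiplicative window around $x_q$. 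Hence if the weighted quantity is $\gg(\log q)^{1/2}/\phi(q)^{1/2}$ in absolute value, then for some $x$ in that window (rename it $x_q$, which only changes the constants $c_1,c_2$) the sharp discrepancy satisfies $\big|\sum_{n\leq x_q,\,n\equiv a(q)}\Lambda(n)-\phi(q)^{-1}\sum_{n\leq x_q,\,(n,q)=1}\Lambda(n)\big|\gg x_q^{1/2}(\log q)^{1/2}/\phi(q)^{1/2}$, which is the claim; the size constraint $g(c_1\log x_q)\leq q\leq g(c_2\log x_q)$ with $0<c_1<c_2\leq\tfrac12$ comes directly from $\log x_q\asymp_\eta\log q$. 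The unconditional statement for a positive proportion of $q$ is identical but invokes the unconditional half of Theorem~\ref{thunconditional n=2} (the bound~\eqref{eq:unconditionalbound}) instead of the GRH version.

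The main obstacle I expect is the de-smoothing step: one must show that controlling the $\eta$-weighted discrepancy at a single point $x_q$ really does force the \emph{unweighted} discrepancy to be large at \emph{some} nearby $x$, with no loss in the power of $\log q$. The cleanest route is to argue contrapositively — if $|\sum_{n\leq x,\,n\equiv a(q)}\Lambda(n)-\phi(q)^{-1}\sum_{n\leq x,\,(n,q)=1}\Lambda(n)|=o(x^{1/2}(\log q)^{1/2}/\phi(q)^{1/2})$ uniformly for $x$ in the relevant window, then integrating against the (rapidly decaying, fixed-mass) weight coming from $\eta$ would force $|\psi_\eta(x_q;q,a)-\psi_\eta(x_q,\chi_{0,q})/\phi(q)|=o((\log q)^{1/2}/\phi(q)^{1/2})$, contradicting the lower bound just obtained; the tails of the window (where $n$ is far from $x_q$) are controlled trivially using the exponential decay $\eta(t)\ll\e^{-(\frac12+\delta)|t|}$ together with Chebyshev's bound $\sum_{n\leq y}\Lambda(n)\ll y$. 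Care is needed only to ensure the implied constants and the window length depend on $\eta$ alone, so that the final $c_1,c_2$ are absolute; choosing $\eta$ once and for all at the start makes this automatic.
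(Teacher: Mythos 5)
Your proposal is essentially the paper's proof: the paper likewise fixes $\eta=\eta_0\star\eta_0$ with $\eta_0$ smooth, even and compactly supported, argues by contradiction, and uses partial summation over the window $[\e^{-2}x,\e^{2}x]$ to convert an assumed uniform bound $K(y\log q)^{1/2}/\phi(q)^{1/2}$ on the sharp discrepancy (for all $a$ and all $y$ in the window) into $M_2(x,q;\eta)\ll K^2\log q/\phi(q)$, which for $K$ small contradicts the variance lower bound of Lemma~\ref{lemma raw oscillations variance}; your "contrapositive de-smoothing" is exactly this step, and compact support of $\eta$ even spares you the tail estimates you worry about. The one slip is in the localization: you assert $\log x_q\asymp_\eta\log q$ and claim $g(c_1\log x_q)\le q\le g(c_2\log x_q)$ follows, but for slowly growing $g$ (e.g.\ $g(u)=\log u$) that constraint forces $\log x_q\asymp g^{-1}(q)\gg\log q$, so you must instead apply Lemma~\ref{lemma raw oscillations variance} (the $n=2$ lemma with tight localization, not Lemma~\ref{lemma raw oscillations higher moments}) with the free function $f=g^{-1}$, which is admissible precisely because $g(u)\le\e^{u}$ gives $f(q)\ge\log q$ — this is how the paper obtains absolute constants $c_1,c_2$.
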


Our method also allows to isolate the individual arithmetic progression $1\bmod q$. We will show in Proposition~\ref{proposition moments a=1} that under GRH, for any $\eta \in \S_\delta$ and $\Phi \in \U$, and in the range $m\leq \log q/(2\log_2 q)$, 
\begin{multline*} \frac 1{T\int_0^{\infty} \Phi}\int_{\mathbb R}  \Phi\Big( \frac tT\Big) \Big( \psi_\eta(\e^t;q,1) - \frac{\psi_\eta(\e^t,\chi_{0,q})}{\phi(q)}\Big)^{2m} \d t \\ \geq \mu_{2m}\Big( \frac{ \alpha(\widehat \eta^2)\log q+\beta_q(\widehat \eta^2) }{\phi(q)}\Big)^m +O_\Phi\Big(\mu_{2m} \frac{ (C_{\delta,\eta}\log q)^{m}}{\phi(q)^{m+1 }}+ \frac{(K_{\delta,\eta}\log q)^{2m}}{T}\Big),
\end{multline*}
where $C_{\delta,\eta},K_{\delta,\eta}>0$ are constants.
We deduce the following.
\begin{cor}
\label{corollary a=1}
Assume GRH, and fix $\eps>0$ small enough and $M> 1+\eps$. 
 Let $g:\mathbb R_{\geq 1} \rightarrow \R_{\geq 3}$ be an increasing function tending to infinity such that $g(u) \leq  u^{\frac 1M}$. Then, for each large enough modulus $q$ there exists an associated value $x_q$ such that $g((\log x_q)^{1-\frac{1}M-2\eps}) \leq q \leq  g(\log x_q) $ and
\begin{equation}
\Bigg| \sum_{\substack{ n\leq x_q\\  n\equiv 1 \bmod q  } } \Lambda(n) -\frac 1{\phi(q)}\sum_{\substack{ n\leq x_q \\ (n,q)=1  } }\Lambda(n) \Bigg| \gg \frac{(x_q\log q)^{\frac 12}}{\phi(q)^{\frac 12}}.
\end{equation}

\end{cor}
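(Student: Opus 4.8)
The plan is to deduce Corollary~\ref{corollary a=1} from the GRH lower bound for the weighted moment of $\psi_\eta(\e^t;q,1)-\psi_\eta(\e^t,\chi_{0,q})/\phi(q)$ displayed just before the statement (the one established in Proposition~\ref{proposition moments a=1}), combined with a removal of the smooth weight $\eta$ in the spirit of the classical Littlewood-type argument. First I would fix $m$ to be a large even integer (chosen at the end in terms of $\eps$ and $M$), apply the displayed moment bound with this $m$, and choose $\Phi \in \U$ compactly supported so that the error term $(K_{\delta,\eta}\log q)^{2m}/T$ is negligible once $T$ is taken large enough relative to $q$. This gives
\[
\frac{1}{T\int_0^\infty \Phi}\int_{\R} \Phi\Big(\frac tT\Big)\Big(\psi_\eta(\e^t;q,1)-\frac{\psi_\eta(\e^t,\chi_{0,q})}{\phi(q)}\Big)^{2m}\d t \;\geq\; \tfrac12 \mu_{2m}\Big(\frac{\alpha(\widehat\eta^2)\log q}{\phi(q)}\Big)^m,
\]
say, for $q$ large; since the integrand is nonnegative, there must exist a specific $t$ (hence $x_q := \e^t$) in the support of $\Phi(\cdot/T)$ — i.e. with $t\leq c\, T$ — for which
\[
\Big|\psi_\eta(x_q;q,1)-\frac{\psi_\eta(x_q,\chi_{0,q})}{\phi(q)}\Big| \;\gg\; \frac{(\log q)^{m/2}}{\phi(q)^{m/2}}\Big/\big(\text{something subexponential in }m\big)^{1/(2m)} \;\gg_\eps\; \frac{(\log q)^{1/2}}{\phi(q)^{1/2}},
\]
where I absorb the $2m$-th root of $\mu_{2m}$ and of the normalisation constants into the implied constant, using that these are bounded by $C^m$ for an absolute $C$; this is where the choice of $m$ enters, guaranteeing the exponent of $\log q$ comes out to exactly $1/2$ and the loss in the constant is at most $\eps$.

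Next I would translate this statement about the smoothly-weighted sum $\psi_\eta(x_q;q,1) = \sum_{n\equiv 1(q)}\Lambda(n) n^{-1/2}\eta(\log(n/x_q))$ into one about the sharp-cutoff sum $\sum_{n\leq x_q, n\equiv 1(q)}\Lambda(n)$. The standard device is partial summation together with the rapid decay $\eta(t),\eta'(t)\ll \e^{-(1/2+\delta)|t|}$: writing $\psi_\eta(x;q,1)-\psi_\eta(x,\chi_{0,q})/\phi(q)$ as a weighted integral of $E(y):=\sum_{n\leq y,\,n\equiv1(q)}\Lambda(n)-\phi(q)^{-1}\sum_{n\leq y,(n,q)=1}\Lambda(n)$ against $\d\big(y^{-1/2}\eta(\log(y/x))\big)$, one sees that if $|E(y)| \leq \delta_0 (y\log q)^{1/2}/\phi(q)^{1/2}$ held for \emph{all} $y$ in a dyadic-type neighbourhood of $x_q$ (and the trivial bound $E(y)\ll y\log y$ controls the tails, thanks to the weight's decay), then $|\psi_\eta(x_q;q,1)-\psi_\eta(x_q,\chi_{0,q})/\phi(q)|$ would be $\ll \delta_0 (\log q)^{1/2}/\phi(q)^{1/2}$, contradicting the lower bound above once $\delta_0$ is small. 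Hence there is some $y$ near $x_q$ — which I rename $x_q$ — with $|E(x_q)|\gg (x_q\log q)^{1/2}/\phi(q)^{1/2}$, which is precisely the claimed inequality. Here I should be slightly careful: the weight $\eta$ is supported on all of $\R$, not compactly, so strictly speaking I replace the "neighbourhood" argument by splitting the integral over $\log(n/x)$ into $|\log(n/x)|\leq A$ and $|\log(n/x)|>A$ for a large constant $A$; on the outer range the contribution is $\ll x^{1/2}\e^{-\delta A}\cdot(\text{trivial bound})$, which is $o((x\log q)^{1/2}/\phi(q)^{1/2})$ since $q$ is a small power of $\log x$, and on the inner range I run the contradiction argument with $y\in[x\e^{-A},x\e^{A}]$.

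The remaining point is bookkeeping on the ranges of $q$ relative to $x_q$: the GRH moment bound (via Proposition~\ref{proposition moments a=1} and the optimisation of $T$ against $q$ carried out there, analogous to the proofs of Theorems~\ref{thunconditional n=2}--\ref{thunconditional n>2}) produces $x_q$ with $\log x_q$ in a window determined by $g$, and requiring $g(u)\leq u^{1/M}$ together with the constraint $m\leq \log q/(2\log_2 x_q)$ forces $q$ to lie in the stated range $g((\log x_q)^{1-1/M-2\eps})\leq q\leq g(\log x_q)$; the exponent $1-1/M-2\eps$ is exactly what one gets after paying $1/M$ for the size of $g$ and $2\eps$ for the slack needed so that $m$ can be taken large enough to make the $2m$-th roots cost at most $\eps$ in the constant. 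The main obstacle I anticipate is not any single estimate but the careful coordination of the three free parameters $m$, $T$ and the window for $x_q$: one must check that $m$ can simultaneously be large enough that $\mu_{2m}^{1/(2m)}$ and $C_{\delta,\eta}^{m/(2m)}$ contribute only an $\eps$-loss, small enough that $m\leq \log q/(2\log_2 x_q)$ holds throughout the admissible $q$-range, and that $T$ can then be chosen large enough (a fixed power of $q$ suffices, by the shape of the error term) without pushing $x_q=\e^t$ outside the window dictated by $g$. Once these compatibility conditions are laid out explicitly, the argument is routine; but getting the exponent $1-1/M-2\eps$ rather than something weaker requires tracking the $\eps$'s through each of these steps.
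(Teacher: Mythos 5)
Your overall route is the same as the paper's: apply Proposition~\ref{proposition moments a=1} to get a lower bound for a weighted even moment of $\psi_\eta(\e^t;q,1)-\psi_\eta(\e^t,\chi_{0,q})/\phi(q)$, localize a large value of the smoothed quantity by playing the averaged lower bound against the pointwise upper bound $\ll\log q$, and then remove the weight by partial summation (the paper runs this last step as a contradiction argument, exactly as in its proof of Corollary~\ref{corollary montgome}). However, your choice of a \emph{large} $m$ is a genuine error, not a harmless flexibility. First, it is unnecessary: for every $m$ the $2m$-th moment is $\asymp(\log q/\phi(q))^m$, so its $2m$-th root already has the exponent $\tfrac12$ in $\log q$, and the conclusion is only a $\gg$, so no constant needs tuning; $m=1$ suffices, and is what the paper uses. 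Second, and more seriously, large $m$ destroys the claimed localization. The window for $\log x_q$ produced by the sup-versus-average argument has multiplicative width $\asymp\phi(q)^{m}(\log q)^{m}C^{m}\mu_{2m}^{-1}$, because the pointwise bound on the $2m$-th power is $(K\log q)^{2m}$ while the average is only $\mu_{2m}(\log q/\phi(q))^{m}$. With $f=g^{-1}$ and $f(q)\geq q^{M}$ this gives $\log x_q\leq f(q)^{1+\frac mM+o(1)}$, hence only $q\geq g\big((\log x_q)^{1-\frac{m}{M+m}-o(1)}\big)$. For $m=1$ the exponent $1-\frac1{M+1}$ beats the required $1-\frac1M-2\eps$, but already for $m=2$ and $M$ large compared to $1/\eps$ one has $1-\frac{2}{M+2}<1-\frac1M-2\eps$, and for $m$ "large in terms of $\eps$ and $M$" the exponent tends to $0$. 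So your accounting of where the $2\eps$ comes from is backwards: it is slack for the single factor $\phi(q)(\log q)^{O(1)}$ at $m=1$, not room to let $m$ grow. (You also misquote the constraint of the Proposition, which is $m\leq\log q/(2\log_2 q)$, not $\log_2 x_q$; with $m=1$ this is vacuous.)

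Two smaller points on the de-smoothing step. The paper avoids your tail-splitting entirely by \emph{choosing} $\eta=\eta_0\star\eta_0$ with $\eta_0$ smooth, even and supported in $[-1,1]$, so that $\eta$ is supported in $[-2,2]$ and the partial-summation identity only involves $y\in[\e^{-2}x,\e^{2}x]$; this is legitimate since such an $\eta$ lies in $\S_\delta$ and has $\widehat\eta=\widehat{\eta_0}^2\geq0$. If you insist on a general $\eta\in\S_\delta$, your "large constant $A$" does not suffice: the tail contribution is of order $\e^{-\delta A}(\log x)^{O(1)}$ after normalization, and since $\log x$ can be as large as any function of $q$ (e.g. $g(u)=\log u$ forces $\log x\geq \e^{q}$), you need $A\gg\delta^{-1}\log\log x$, growing with $x$; this is still compatible with the window $[x\e^{-A},x\e^{A}]$ lying inside the range where the contradiction hypothesis holds, but it must be said. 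With $m=1$ and the compactly supported $\eta$, the rest of your argument matches the paper's and the exponent $1-\frac1M-2\eps$ comes out correctly.
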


In the next theorem we will show under GRH that $M_n(\e^t,q;\eta) $ has a limiting distribution as $t\rightarrow \infty$ (which we will denote by $H_n(q;\eta)$), and estimate the corresponding moments. 
\begin{definition}
\label{definition limiting distribution}
 For $n\geq 1$, we say that 
$E: \mathbb R_{\geq 0} \rightarrow \mathbb R^{n}$ has limiting distribution $\mu$ if $\mu$ is a probability distribution on $\mathbb R$ such that for any bounded continuous $f:\mathbb R^{n} \rightarrow \mathbb R$ we have that
\begin{equation}
    \label{equation definition limiting distribution}
 \lim_{T\rightarrow \infty} \frac 1T \int_0^T f(E(t)) \d t = \int_{\mathbb R^{n}} f \d \mu. 
 \end{equation}
\end{definition}
\noindent We will establish lower bounds for all centered moments 
\begin{equation}
\label{defMtn}
\mathbb V_{s,n}( q;\eta ):=\mathbb E\big[(H_{n}(q;\eta ) -\mathbb E[ H_{n}(q;\eta ) ])^{s}\big].
\end{equation}
Under the additional hypothesis LI below, we will show that these bounds are sharp.
\medskip
\smallskip

\noindent \textbf{Hypothesis LI}: 
For any $q\geq 3$, the multiset $$Z(q):=\Big\{ \Im m(\rho_{\chi}) \geq 0 : L(\rho_{\chi},\chi)=0,\quad \chi\bmod q , \quad \Re e(\rho_\chi)\geq \tfrac 12\Big\}$$ is linearly independent over $\mathbb Q$. 

\begin{remark}
LI implies that the zeros of $L(s,\chi)$ are simple, and that $L(\frac 12,\chi) \neq 0$.
\end{remark}

\begin{theorem}
\label{thmomentscentres2r} Assume GRH, and let $\delta>0$, $\eta\in\S_\delta$, $q\geq 3$,  $ n\geq 2$, $r,m\geq 1$. 
Then, $M_n(\e^t,q;\eta) $ posesses a limiting distribution of as $t\rightarrow \infty$, which will be denoted by $H_n(q;\eta)$.
If moreover $q$ is large enough in terms of $\delta$ and $\eta$, $n\leq \log q/\log_2 q$ and $r\leq \log q$, then the variance defined in~\eqref{defMtn} satisfies the lower bounds
\begin{equation}\label{inegmoment2r}
    \mathbb V_{2r,n}( q;\eta )\geq \mu_{2r}V_n(q;\eta)^r\Big( \Big( 1+O_{\delta,\eta}\Big( n \frac{\log_2 q}{\log q}\Big)\Big)^r+O_{\delta,\eta}\Big( \frac r{\log q}\Big)\Big);
\end{equation}
 \begin{equation}      
    \label{inegmoment2r+1}
     \begin{split}
  (-1)^n  \mathbb V_{2r+1,n}(q;\eta )\geq \frac{(2r+1)!}{2^r (r-1)!}&
\frac{\vartheta_{n,2r+1}}{\phi(q)^{\frac 12}}  V_n(q;\eta)^{\frac{2r+1}2}  \\ &\times\Big( \Big( 1+O_{\delta,\eta}\Big( n \frac{\log_2 q}{\log q}\Big)\Big)^r+O_{\delta,\eta}\Big( \frac r{\log q}\Big)\Big).
\end{split}
 \end{equation}
Assuming in addition that LI holds, we have that  $\mathbb E[H_{2m+1}(q;\eta) ]=0$ and
\begin{equation}
\mathbb E[H_{2m}(q;\eta) ]=  \mu_{2m}\Big( \frac{\alpha(\widehat \eta^2) \log q+\beta_q(\widehat \eta^2)}{\phi(q)}\Big)^m  +O_m\Big(\frac{ (\log q)^{m }}{\phi(q)^{m+1}}\Big), 
\label{equation asymptotic mth moment LI}
\end{equation}
and for higher moments we have the asymptotic estimates
\begin{align}
\label{equation asymptotic high even moments LI}
\mathbb V_{2r,n}( q;\eta )&= \mu_{2r}V_n(q;\eta)^r\Big(1+O_{n,r}\Big(\frac{\log_2q}{\log q}\Big)\Big);
\\
     \mathbb V_{2r+1,n}(q;\eta )&=(-1)^n\frac{(2r+1)!}{2^r (r-1)!}
\frac{\vartheta_{n,2r+1}}{\phi(q)^{\frac 12}} V_n(q;\eta)^{\frac{2r+1}2}\Big(1+O_{n,r}\Big(\frac{\log_2q}{\log q}\Big)\Big).
\label{equation asymptotic high odd moments LI}
\end{align}  
\end{theorem}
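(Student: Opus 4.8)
The plan is to reduce everything to a careful analysis of the random sum $H_n(q;\eta)$ defined in~\eqref{defHnq}, exploiting the fact that the $Z_\gamma$ are explicit exponentials with the multiplicative independence encoded in~\eqref{espZ}. First I would establish that $H_n(q;\eta)$ is the limiting distribution of $M_n(\e^t,q;\eta)$: starting from the explicit formula for $\psi_\eta(\e^t;q,a)$ (which under GRH expresses it as $-\sum_{\chi}\bar\chi(a)\sum_{\gamma_\chi}\widehat\eta(\gamma_\chi/2\pi)\e^{i\gamma_\chi t}$ plus the $\alpha,\beta$ terms), one opens the $n$-th power and averages over $a\bmod q$, which forces $\chi_1\cdots\chi_n=\chi_{0,q}$ by orthogonality. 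The resulting finite-$t$ object is a Dirichlet polynomial in the frequencies $\gamma_{\chi_1}+\cdots+\gamma_{\chi_n}$; a standard Kronecker–Weyl equidistribution argument (exactly as in~\cite{RS94}-type arguments, using the absolute convergence guaranteed for $\eta\in\S_\delta$ by~\eqref{condiavecdelta}) shows the $t$-average converges to the expectation against the random vector $(X)$, which is precisely $\E[f(H_n(q;\eta))]$. The existence of $m_n(q;\eta)=\E[H_n(q;\eta)]$ then follows by dominated convergence, and the stated formula for it comes from~\eqref{espZ}.

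Next, for the moment lower bounds~\eqref{inegmoment2r}–\eqref{inegmoment2r+1}, the key point is the positivity of $\widehat\eta$. Expanding $\V_{s,n}(q;\eta)=\E[(H_n(q;\eta)-\E H_n(q;\eta))^s]$ produces, after using~\eqref{espZ}, a sum over $s\cdot n$ characters (constrained to multiply to $\chi_{0,q}$ within each of the $s$ blocks) and over $sn$ zeros whose total imaginary part vanishes, weighted by a product of $sn$ factors $\widehat\eta(\gamma/2\pi)\geq 0$. Because every term in this combinatorial sum is nonnegative, one obtains a valid lower bound by retaining only a convenient sub-family of terms. The natural choice is the ``diagonal'' family: pair the characters as $\chi\leftrightarrow\bar\chi$ and the zeros as $\gamma\leftrightarrow-\gamma$ (using GRH so that $\gamma$ real and the functional equation symmetry $\gamma\leftrightarrow-\gamma$ of zeros of $L(s,\chi)$ and $L(s,\bar\chi)$). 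On the diagonal, each pair contributes $\widehat\eta(\gamma/2\pi)^2$ and the $Z$-product is $1$; summing over $\gamma$ with multiplicity, the diagonal contribution is governed by $\sum_{\chi\neq\chi_{0,q}}\sum_{\gamma_\chi}\widehat\eta(\gamma_\chi/2\pi)^2$, which by the explicit formula / Riemann–von Mangoldt estimate equals $\alpha(\widehat\eta^2)\log q+\beta_q(\widehat\eta^2)+O(1)$, explaining the appearance of $V_n(q;\eta)$ in~\eqref{defKn}. The combinatorial constants $\nu_n,\nu_n',\nu_n''$ (and $\mu_{2r}$, $\frac{(2r+1)!}{2^r(r-1)!}$) arise from counting the ways of forming such pairings inside an array of $sn$ indices organized into $s$ blocks of $n$, with the constraint $k\geq 2$ in~\eqref{defKn} coming from the fact that a character must be nontrivial so cannot appear alone in its block. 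The error terms $O(n\log_2 q/\log q)$ and $O(r/\log q)$ track the off-diagonal pairings that are not discarded and the overcounting corrections, bounded using the pointwise decay $\widehat\eta(\xi)\ll(|\xi|+1)^{-1}(\log)^{-2-\delta}$ together with the restriction $n\leq\log q/\log_2 q$, $r\leq\log q$.

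Finally, for the asymptotics~\eqref{equation asymptotic mth moment LI}–\eqref{equation asymptotic high odd moments LI}, I would invoke Hypothesis LI, which removes all the off-diagonal contributions: under LI the only vanishing sums $\gamma_{\chi_1}+\cdots+\gamma_{\chi_{sn}}=0$ with $\chi_j\neq\chi_{0,q}$ are those coming from an exact pairing $\chi_j=\overline{\chi_\ell}$, $\gamma_j=-\gamma_\ell$. Thus the lower bounds above become equalities up to the error from the residual diagonal combinatorics (collapses where several paired zeros coincide, which is where the $O(\log_2 q/\log q)$ comes from — using simplicity of zeros and spacing estimates) and from the $\alpha\log q+\beta_q$ versus $\sum\widehat\eta^2$ comparison. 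For $\E[H_{2m+1}]=0$ one notes that an odd number of indices cannot be perfectly paired, so under LI the expectation vanishes identically; similarly the sign $(-1)^n$ and the vanishing $\vartheta_{n,2r+1}=0$ for odd $n$ follow from parity of the pairing count. The main obstacle I anticipate is the bookkeeping in the combinatorial lower-bound step: one must simultaneously (i) identify precisely which sub-family of nonnegative terms to keep so that the main term is clean, (ii) show the discarded positive terms do not swamp an eventual matching upper bound under LI, and (iii) control the dependence on $n$ and $r$ uniformly in the stated ranges — this is the analogue of the ``arrays of characters fixed by involutions'' combinatorics advertised in the abstract, and it is where all the real work lies; the equidistribution and explicit-formula inputs are, by comparison, routine.
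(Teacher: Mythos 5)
Your proposal follows essentially the same route as the paper: the explicit formula plus a Kronecker--Weyl type equidistribution theorem gives the limiting distribution; positivity of $\widehat\eta$ lets you retain only arrays of zeros and characters fixed by fixed-point-free involutions for the lower bounds (with $\mu_{2r}$, $\nu_n$, $\nu_n'$, $\nu_n''$ counting exactly the pairing structures you describe, and the $k\geq 2$ constraint arising as you say); and LI forces every vanishing sum of ordinates to come from such a pairing, turning the lower bounds into asymptotics. The step you defer --- matching the LI upper bound --- is indeed where the remaining work sits, and it is worth noting it is not purely combinatorial: after LI one must still show that involutions linking some row to two or more other rows impose an extra independent multiplicative relation on the characters, so the corresponding character count drops by a full factor of $\phi(q)$ (this is a small linear-algebra argument on the partial products $\prod_{j\in J_{\mu,\nu}}\chi_{\mu,j}$), and one needs the \emph{variance} of $b(\chi;\widehat\eta^2)$ about $\alpha(\widehat\eta^2)\log q$, not just its mean, to multiply the $nr$ constrained factors with the stated $\bigl(1+O(n\log_2 q/\log q)\bigr)^r$ error.
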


\begin{remark}
In the specific case $m=1$, one can obtain an upper bound of the correct order of magnitude in~\eqref{equation asymptotic mth moment LI} assuming GRH only, by using Hooley's arguments~\cite[Theorem 1]{H76}. However, for higher moments, applying the same approach would yield an upper bound which is  $= 1/\phi(q)^{1+o(1)}$.
\end{remark}

In other words, as $q \rightarrow \infty$, $H_n(q;\eta)$ has variance asymptotic to $V_n(q;\eta)$,
and the normalized random variable $(H_n(q;\eta) - \E[H_n(q;\eta)])/\V[H_n(q;\eta) ]^{\frac 12}$ has Gaussian moments up to a certain point. This does not persist for very large moments (in terms of $q$); $H_n(q;\eta)$ is bounded by $(K_{\delta,\eta}\log q)^n/\phi(q)$ for some $K_{\delta,\eta}>0$, and thus $ \mathbb V_{s,n}(q;\eta ) \ll (2K_{\delta,\eta}\log q)^{ns}/\phi(q)^s$. The presence of the constant $\nu_n$ (rather than the variance of the $n$-th power of the Gaussian) in the variance $V_n(q;\eta)$ can be explained by the fact that the values of $\psi_{\eta} (\e^t,q;a)-\psi_{\eta}(\e^t,\chi_{0,q})/\phi(q) $ are not independent as $a$ runs over the invertible residues modulo $q$. Indeed, a calculation similar to~\cite[Lemma 2.1]{L12} shows that the covariances are non-zero.

We deduce the following conjecture which generalizes and refines Hooley's conjecture~\cite{H77}.

\begin{conjecture} 
\label{conjecturemoments}
Let $\varepsilon>0$, $n\geq 1$, $\delta>0$ and $\eta \in \S_\delta$.
In the range $(\log_2 x)^{1+\varepsilon } \leq q \leq x^{1-\eps}$ and for fixed values of $n$ we have the asymptotic 
\begin{equation}
    M_{n}(x,q;\eta) = (\mu_{n}+o_{q,x\rightarrow\infty}(1)) \Big( \frac{\alpha( \eta^2 )\log q}{\phi(q)}\Big)^{\frac n2}.
    \label{equation asymptotic conjecture}
\end{equation}
 
\end{conjecture}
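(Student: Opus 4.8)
The natural route is to establish matching lower and upper bounds separately, under GRH together with Hypothesis LI (and, for the upper bound, an effective strengthening of it). The lower bound is non-trivial only when $n=2m$ is even, and there one can leverage the machinery already in place: Theorem~\ref{thmomentscentres2r} (via~\eqref{equation asymptotic mth moment LI} and~\eqref{equation asymptotic high even moments LI}) shows that $H_n(q;\eta)$ has mean $\mu_n(\alpha(\widehat\eta^2)\log q/\phi(q))^{n/2}(1+o(1))$ and variance $V_n(q;\eta)\ll(\log q)^n/\phi(q)^{n+1}$, a factor $\asymp 1/\phi(q)$ below the square of the mean; since (same theorem) $H_n(q;\eta)$ is the limiting distribution of $M_n(\e^t,q;\eta)$, a Chebyshev argument gives $M_n(\e^t,q;\eta)\geq(\mu_n-\eps)(\alpha(\widehat\eta^2)\log q/\phi(q))^{n/2}$ for all $t$ outside a set of density $o(1)$. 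To reach \emph{every} $x$ in the conjectured range one would couple this with an equidistribution estimate controlling the exceptional set (see the obstacle below); absent that, this step already yields the lower bound along a density-one sequence of $x$, and one may hope to rescue a sparse unconditional version by the positivity device used for Theorems~\ref{thunconditional n=2}--\ref{thunconditional n>2}.

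The crux is the matching \emph{upper} bound $M_n(x,q;\eta)\leq(\mu_n+\eps)(\alpha(\widehat\eta^2)\log q/\phi(q))^{n/2}$ (for $n$ odd this reads $M_n(x,q;\eta)=o((\log q/\phi(q))^{n/2})$). Starting from the truncated explicit formula one writes, for $x=\e^t$,
\[
M_n(\e^t,q;\eta)=\frac{(-1)^n}{\phi(q)^n}\sum_{\substack{\chi_1,\dots,\chi_n\neq\chi_{0,q}\\ \chi_1\cdots\chi_n=\chi_{0,q}}}\ \sum_{\gamma_{\chi_1},\dots,\gamma_{\chi_n}}\ \prod_{j=1}^n\widehat\eta\Big(\frac{\gamma_{\chi_j}}{2\pi}\Big)\,\e^{\i(\gamma_{\chi_1}+\cdots+\gamma_{\chi_n})t}\ +\ (\text{admissible error}),
\]
the error coming from GRH bounds on the tail of the sum over zeros and from the prime number theorem (the $n=1$ piece). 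One then splits into a \emph{diagonal} part --- the arrays $\{(\chi_j,\gamma_{\chi_j})\}_{j\le n}$ fixed by the involution $(\chi,\gamma)\mapsto(\overline\chi,-\gamma)$, equivalently those with $\gamma_{\chi_1}+\cdots+\gamma_{\chi_n}=0$ under LI --- and an off-diagonal remainder. Enumerating the diagonal is the combinatorial heart: each such array is a perfect matching of $\{1,\dots,n\}$, so there are $\mu_n$ of them ($0$ if $n$ is odd), and summing $\prod_{\text{pairs}}\phi(q)^{-1}\sum_{\chi\neq\chi_{0,q}}\sum_{\gamma_\chi}|\widehat\eta(\gamma_\chi/2\pi)|^2$ reproduces, through the one-level-density evaluation underlying~\eqref{equation definition alpha beta}, exactly $\mu_n(\alpha(\widehat\eta^2)\log q/\phi(q))^{n/2}$ with the right secondary term. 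The off-diagonal, where $0<|\gamma_{\chi_1}+\cdots+\gamma_{\chi_n}|$, is bounded by averaging $\e^{\i(\gamma_{\chi_1}+\cdots+\gamma_{\chi_n})t}$ over $t\in[0,T]$: truncating the $\gamma$'s at height $S$ at negligible cost (GRH), the $q$-analogue of the Montgomery--Monach effective linear independence conjecture stated in the Remark after Theorem~\ref{thmomentscentres2rT} gives $|\gamma_{\chi_1}+\cdots+\gamma_{\chi_n}|\geq\exp(-(qS)^{1+\eps})$, whence the oscillatory average is $o(1)$ once $T\gg\exp((qS)^{1+\eps})$, i.e.\ $q\leq(\log T)^{1-\eps}$.

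The main obstacle is visible in that last constraint: the effective-LI route to the upper bound is confined to $q\leq(\log_2 x)^{1-\eps}$, which is \emph{disjoint} from the conjectured range $q\geq(\log_2 x)^{1+\eps}$. In the conjectured regime $\log x$ is super-polynomially large in $q$, so morally there is ``more room'', but the averaging-over-$t$ argument must be replaced by a pointwise estimate at the individual value $x$; the natural substitute is a discrepancy bound (Erd\H os--Tur\'an) for the $n$-tuple $(\gamma_{\chi_1}\log x/2\pi,\dots,\gamma_{\chi_n}\log x/2\pi)\bmod 1$, which again reduces to a lower bound for $|\gamma_{\chi_1}+\cdots+\gamma_{\chi_n}|$ --- precisely the Diophantine input that is unavailable unconditionally and, even conditionally, too weak here. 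Thus the two genuinely hard points are: (i) establishing a matching upper bound at all, which requires an effective form of linear independence for Dirichlet $L$-functions, far beyond present knowledge; and (ii) transferring any $t$-averaged statement to every individual $x$ in the stated window. Removing GRH (the conjecture being unconditional) is a further layer of difficulty; as with Theorems~\ref{thunconditional n=2}--\ref{thunconditional n>2}, one can at best expect to rescue the lower-bound half on a sparse set via positivity in the explicit formula, the full asymptotic requiring GRH and (effective) LI.
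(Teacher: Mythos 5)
This statement is a \emph{conjecture} in the paper; the authors give no proof of it, and indeed none is currently possible. What the paper does is motivate~\eqref{equation asymptotic conjecture} from Theorem~\ref{thmomentscentres2r}: under GRH and LI the limiting distribution $H_n(q;\eta)$ of $M_n(\e^t,q;\eta)$ has mean $\mu_n(\alpha(\widehat\eta^2)\log q/\phi(q))^{n/2}(1+o(1))$ and variance $\asymp V_n(q;\eta)$, smaller than the square of the mean by a factor $\phi(q)$, so that $M_n(\e^t,q;\eta)$ concentrates around $\mu_n(\alpha(\widehat\eta^2)\log q/\phi(q))^{n/2}$ for almost all $t$ (note $\alpha(\eta^2)=\alpha(\widehat\eta^2)=\int\eta^2$ by Plancherel, so the constants agree). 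Your proposal reconstructs exactly this heuristic, and your identification of the obstacles matches the paper's own remarks: the lower-bound half is recovered on sparse sets by positivity (Theorems~\ref{thunconditional n=2} and~\ref{thunconditional n>2}), while the upper bound would require the effective $q$-analogue of the Montgomery--Monach linear-independence hypothesis discussed in the remark following Theorem~\ref{thmomentscentres2rT}, and even that only reaches $q\leq(\log_2 x)^{1-\eps}$ --- disjoint, as you correctly observe, from the conjectured range $q\geq(\log_2 x)^{1+\eps}$. Your diagnosis of the two hard points (a genuine upper bound at all, and passage from a $t$-average to an individual $x$) is accurate. Just be careful to present this as the heuristic justification for a conjecture rather than as a proof sketch with fillable gaps: the missing Diophantine input is not a technical lacuna but the essential open problem.
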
 

\begin{remark}
\label{remark very small q}
We also conjecture that~\eqref{equation asymptotic conjecture} holds in the range $(\log_2 x)^{1+\varepsilon } \leq q \leq x^{1-\eps}$ with the function $\eta_0(t) = \e^{\frac t2}1_{t\leq 0}$, that is with the classical weight (since then $\eta_0(\log(n/x))/n^{\frac 12} =x^{-\frac 12} 1_{n\leq x} $). This generalizes~\cite[Conjecture 1.1]{F15b}.
Moreover, the lower bound in the range $(\log_2 x)^{1+\eps} \leq q \leq x^{1-\eps}$ is essentially best possible. Indeed, by~\cite[Theorem 1.2]{FM20} and H\"older's inequality, for $  \eps \log_2 x / \log_3 x \leq q \leq \eps \log_2 x$, we have that the moments $M_{2m}(x,q;\eta_0)$ can be as large as $(\frac{\log q}{\phi(q)})^m (\frac{\log_2 x} q)^m$. Moreover,
one can show that~\eqref{equation asymptotic conjecture} does not hold for $q$ very close to $x$ (indeed in this range $M_{2m}(x,q;\eta)$ can be as large as $(\log q)^{2m-1} /\phi(q)^m$).
\end{remark}

Conjecture~\ref{conjecturemoments} suggests the following subsequent conjecture about the distribution of primes in arithmetic progressions.

\begin{conjecture}
\label{conjecture distribution} Let $\varepsilon>0$.
In the range $ (\log_2 x)^{1+\eps} \leq q\leq x^{1-\eps}$ and for fixed $V\in \mathbb R$,
$$  \frac 1{\phi(q)} \Big|\Big\{ a\bmod q : \psi(x;q,a)-\frac{\psi(x,\chi_{0,q})}{\phi(q)} > V \frac{(x\log q)^{\frac 12}}{\phi(q)^{\frac 12}}  \Big\}\Big| = \int_V^{\infty} \e^{-\frac {u^2}2} \frac{\d u}{\sqrt{2\pi}}+o_{q,x\rightarrow\infty}(1). $$
\end{conjecture}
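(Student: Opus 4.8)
The plan is to deduce Conjecture~\ref{conjecture distribution} from Conjecture~\ref{conjecturemoments} -- more precisely, from the variant recorded in Remark~\ref{remark very small q} for the classical weight $\eta_0(t)=\e^{t/2}1_{t\leq 0}$ -- by the method of moments. Since $\psi_{\eta_0}(x;q,a)=\psi(x;q,a)/x^{1/2}$ and $\psi_{\eta_0}(x,\chi_{0,q})=\psi(x,\chi_{0,q})/x^{1/2}$, and since $\alpha(\eta_0^2)=\int_{-\infty}^{0}\e^{t}\,\d t=1$, the $\eta_0$-version of~\eqref{equation asymptotic conjecture} is exactly the assertion that, for every fixed $n\geq 1$,
\[
\E[Y_{x,q}^{n}]=\frac{\phi(q)^{n/2}}{(\log q)^{n/2}}\,M_n(x,q;\eta_0)=\mu_n+o_{q,x\to\infty}(1),
\]
where $Y_{x,q}$ denotes the random variable $\phi(q)^{1/2}\big(\psi(x;q,a)-\psi(x,\chi_{0,q})/\phi(q)\big)\big/(x\log q)^{1/2}$, with $a$ chosen uniformly among the $\phi(q)$ reduced residues modulo $q$ (the first equality being a trivial reformulation of the definition of $M_n$). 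Thus the whole distributional statement is encoded in the convergence of all moments of $Y_{x,q}$ to the Gaussian moments $\mu_n$.

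Granting this, the remaining argument is routine. The sequence $(\mu_n)_{n\geq 0}$ is the moment sequence of the standard normal law, which is moment-determinate by Carleman's criterion ($\sum_{n}\mu_{2n}^{-1/(2n)}=\infty$, since $\mu_{2n}^{1/(2n)}\asymp n^{1/2}$). Hence, by the Fr\'echet--Shohat theorem, along any sequence $(x_k,q_k)$ with $(\log_2 x_k)^{1+\eps}\leq q_k\leq x_k^{1-\eps}$ and $x_k\to\infty$, the laws of $Y_{x_k,q_k}$ converge weakly to $N(0,1)$; since the standard Gaussian distribution function is everywhere continuous, it follows that $\PP(Y_{x_k,q_k}>V)\to\int_V^{\infty}\e^{-u^2/2}\,\d u/\sqrt{2\pi}$ for each fixed $V$. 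A standard subsequence argument then upgrades this to $\PP(Y_{x,q}>V)=\int_V^{\infty}\e^{-u^2/2}\,\d u/\sqrt{2\pi}+o_{q,x\to\infty}(1)$, which is exactly the content of Conjecture~\ref{conjecture distribution}.

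The main obstacle is, of course, that Conjecture~\ref{conjecturemoments} is itself open; but even granting it in the form stated for $\eta\in\S_\delta$, a genuine gap remains: one must pass to the sharp cutoff $\eta_0$, which does not belong to $\S_\delta$ (its decay rate $\e^{-|t|/2}$ being exactly borderline) and is not even. The natural route is to sandwich $x^{-1/2}\psi(x;q,a)$ between $\psi_{\eta^{\pm}}(x;q,a)$ for one-sided weights $\eta^{\pm}$ majorising and minorising $\eta_0$ and agreeing with it outside a window of logarithmic width tending to $0$, to expand the moments $M_n(x,q;\eta^{\pm})$, and to bound the contribution of that window via Cauchy--Schwarz together with a second-moment estimate for $\psi(x;q,a)$ in progressions; the evenness constraint built into $\S_\delta$ would then be handled by symmetrising the test functions and controlling the resulting one-sided tails. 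It is this approximation step -- rather than the method-of-moments part, which is standard -- where the real work lies, and it is precisely why Remark~\ref{remark very small q} records the $\eta_0$-statement as a separate conjecture. (The lower endpoint $q\geq(\log_2 x)^{1+\eps}$, on the other hand, cannot be substantially relaxed: as recalled there, for $q$ near $\eps\log_2 x$ or for $q$ very close to $x$ the Gaussian law already fails.)
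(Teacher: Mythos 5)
This statement is a conjecture: the paper offers no proof, only the one-line remark that Conjecture~\ref{conjecturemoments} ``suggests'' it. Your proposal fleshes out exactly that reasoning --- the normalization $\alpha(\eta_0^2)=1$, the reformulation of $M_n(x,q;\eta_0)$ as the $n$-th moment of the normalized random variable over $a\bmod q$, and the Fr\'echet--Shohat/moment-determinacy step --- and you are appropriately explicit both that the input (Conjecture~\ref{conjecturemoments} in its $\eta_0$-form from Remark~\ref{remark very small q}) is open and about the technical gap in passing from $\eta\in\S_\delta$ to the sharp, non-even cutoff $\eta_0$; this is consistent with, and more detailed than, what the paper records.
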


Conjectures~\ref{conjecturemoments} and~\ref{conjecture distribution} are $q$-analogues of the conjectures of Montgomery and Soundararajan on primes in short intervals~\cite[Conjectures 1,2]{MS04} (see also the recent article~\cite{BF20}).

\section*{Acknowledgements}

The work of the second author was supported at the University of Ottawa by an NSERC discovery grant.

\section{Weil explicit formula} 
\label{section explicit formula}

The goal of this short section is to express $M_n(x,q;\eta)$ as a sum over zeros of Dirichlet $L$-functions.
For $\eta\in \S_\delta$, we will study the moments of $\psi_{\eta}(x;q,a)$ via the explicit formula for 
$$ \psi_{\eta}(x,\chi):= \sum_{n=1}^{\infty}\frac{\Lambda(n)\chi(n)}{n^{\frac 12}} \eta(\log (n/x)),$$
where $\chi \bmod q$ is a Dirichlet character of conductor $q_\chi\geq 3$. By orthogonality,
we have the identity 
\begin{equation}\label{eqMn}\begin{split}
M_n(x,q;\eta)
&= \frac {1}{\phi(q)^{n}} \sum_{\substack{\chi_1,\ldots,\chi_n\neq \chi_{0,q} \\
\chi_1 \cdots \chi_n= \chi_{0,q}}} \psi_{\eta}(x,\chi_1)\cdots \psi_{\eta}(x,\chi_n).
\end{split}\end{equation}
 We recall that $\eta$ is even and real-valued, and hence for all $\xi\in \mathbb R$, $\widehat{\eta}(-\xi) = \overline{\widehat{\eta}(\xi)}=\widehat{\eta}(\xi)$.
 We begin with the following bound on "ramified primes".
\begin{lemma} Let  $\delta>0$ and  $\eta\in\S_\delta$. Let $\chi$ be a non-principal character  modulo $q$ with associated primitive character $\chi^*$. 
For any $t\geq 0$ and $q\geq 3$, we have the bound
$$\psi_{\eta}(\e^t,\chi) -\psi_{\eta}(\e^t,\chi^*)  \ll_{\delta,\eta} \e^{-\frac t2}  \log q.  $$ 
\label{lemma ramified primes}
\end{lemma}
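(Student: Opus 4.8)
The plan is to express the difference $\psi_{\eta}(\e^t,\chi) -\psi_{\eta}(\e^t,\chi^*)$ as a short sum supported on prime powers $p^k$ with $p \mid q$, and then to estimate that sum directly. Recall that if $\chi$ is induced by the primitive character $\chi^*$ modulo $q_\chi$, then $\chi(n) = \chi^*(n)$ whenever $(n,q)=1$ and $\chi(n)=0$ otherwise, while $\chi^*(n)$ may be nonzero for $n$ sharing a factor with $q$ but coprime to $q_\chi$. Since $\Lambda(n)$ is supported on prime powers, we get
\begin{equation*}
\psi_{\eta}(\e^t,\chi) -\psi_{\eta}(\e^t,\chi^*) = -\sum_{\substack{p \mid q \\ p \nmid q_\chi}} \chi^*(p) \sum_{k\geq 1} \frac{\log p}{p^{k/2}} \, \eta(\log(p^k/\e^t)) + (\text{correction for } p\mid q_\chi),
\end{equation*}
but in fact the cleanest route is to bound the full sum over all $p\mid q$ of the terms that appear in at most one of the two sums, namely $\big|\psi_{\eta}(\e^t,\chi) -\psi_{\eta}(\e^t,\chi^*)\big| \leq \sum_{p\mid q}\sum_{k\geq 1} \frac{\log p}{p^{k/2}} \big|\eta(\log(p^k/\e^t))\big|$.

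First I would use the decay hypothesis on $\eta\in\S_\delta$: for all $s\in\R$, $|\eta(s)| \ll_{\delta,\eta} \e^{-(\frac12+\delta)|s|} \leq \e^{-\frac12|s|}$. Applying this with $s = \log(p^k/\e^t) = k\log p - t$ gives $|\eta(\log(p^k/\e^t))| \ll \e^{-\frac12|k\log p - t|} \leq \e^{\frac t2} p^{-k/2}$ (dropping the absolute value inside in the favourable direction, i.e. using $\e^{-\frac12|k\log p-t|}\leq \e^{-\frac12(k\log p - t)} = \e^{t/2}p^{-k/2}$). Hence each term is $\ll \frac{\log p}{p^{k/2}}\cdot \e^{t/2}p^{-k/2} = \e^{t/2}\frac{\log p}{p^k}$. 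Summing over $k\geq 1$ gives a geometric-type series $\sum_{k\geq1}\frac{\log p}{p^k} = \frac{\log p}{p-1} \ll \frac{\log p}{p}$, and then summing over $p\mid q$ gives $\sum_{p\mid q}\frac{\log p}{p-1} \ll \log q$ by the standard bound (this is Mertens-type; $\sum_{p\mid q}\frac{\log p}{p-1} \leq \sum_{p\mid q}\log p \ll \log q$ since the product of primes dividing $q$ is at most $q$). Wait — this gives $\e^{t/2}\log q$, which is the wrong sign of exponential; so one must be more careful and split according to whether $k\log p$ is larger or smaller than $t$.

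The correct bookkeeping: split the inner sum at $k_0 := \lceil t/\log p\rceil$. For $k \geq k_0$ (so $k\log p \geq t$), use $|\eta(\log(p^k/\e^t))| \ll \e^{-\frac12(k\log p - t)} = \e^{t/2}p^{-k/2}$, giving a contribution $\ll \e^{t/2}\sum_{k\geq k_0}\frac{\log p}{p^k} \ll \e^{t/2}\frac{\log p}{p^{k_0}} \ll \e^{t/2}\cdot\e^{-t}\log p = \e^{-t/2}\log p$, since $p^{k_0}\geq p^{t/\log p} = \e^{t}$. For $k < k_0$ (so $k\log p < t$), use $|\eta(\log(p^k/\e^t))| \ll \e^{-\frac12(t - k\log p)} = \e^{-t/2}p^{k/2}$, giving a contribution $\ll \e^{-t/2}\sum_{1\leq k < k_0}\frac{\log p}{p^{k/2}} \ll \e^{-t/2}\log p$, the series $\sum_{k\geq1}p^{-k/2}\log p$ being convergent with sum $\ll \log p/\sqrt p \ll \log p$. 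So in both regimes the contribution from a fixed $p\mid q$ is $\ll_{\delta,\eta} \e^{-t/2}\log p$, and summing over $p\mid q$ yields $\ll_{\delta,\eta} \e^{-t/2}\sum_{p\mid q}\log p \ll_{\delta,\eta} \e^{-t/2}\log q$, as claimed.

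The main obstacle is simply the sign issue in the exponential factor just described: a naive application of the decay bound on $\eta$ produces $\e^{t/2}\log q$ rather than $\e^{-t/2}\log q$, so the argument genuinely requires splitting the sum over $k$ at the point $k\log p \approx t$ and using the two-sided exponential decay of $\eta$ in each range. Everything else — orthogonality of the induced vs.\ primitive character on prime powers, the geometric series in $k$, and the elementary bound $\sum_{p\mid q}\log p \ll \log q$ — is routine. One minor point to record is that the implied constant depends only on $\delta$ and $\eta$ (through the implied constant in $|\eta(s)|\ll \e^{-(\frac12+\delta)|s|}$), uniformly in $t\geq 0$ and $q\geq 3$, which is exactly what the statement asserts.
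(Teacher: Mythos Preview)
Your approach is exactly the paper's --- bound by the absolute sum over $p\mid q$ and $k\geq 1$, split at $k_0=\lceil t/\log p\rceil$, and treat the two ranges separately --- but there is a slip in the range $k<k_0$ that hides a real issue. You write that the contribution is $\ll \e^{-t/2}\sum_{1\leq k<k_0}\frac{\log p}{p^{k/2}}$; however, after inserting $|\eta(\log(p^k/\e^t))|\ll \e^{-\frac12(t-k\log p)}=\e^{-t/2}p^{k/2}$ the factor $p^{k/2}$ \emph{cancels} the $p^{-k/2}$ already present, so each term is $\asymp \e^{-t/2}\log p$ and the sum over $1\le k<k_0$ is $\ll \e^{-t/2}\log p\cdot k_0\ll \e^{-t/2}\,t$, not $\e^{-t/2}\log p$. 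Summed over $p\mid q$ this gives only $\e^{-t/2}\,t\,\omega(q)$, which is too weak.

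The remedy is precisely the $\delta$ you discarded: using $|\eta(u)|\ll_{\delta,\eta}\e^{-(\frac12+\delta)|u|}$ in the range $k<k_0$ gives terms
\[
\frac{\log p}{p^{k/2}}\,\e^{-(\frac12+\delta)(t-k\log p)}=\e^{-(\frac12+\delta)t}(\log p)\,p^{\delta k},
\]
a geometric progression with ratio $p^{\delta}>1$. Its sum is dominated by the last term, and since $p^{\delta k_0}\le \e^{\delta t}p^{\delta}$ one obtains $\ll_{\delta}\e^{-t/2}\log p$ as desired. (The range $k\ge k_0$ is fine even without the $\delta$, as you show, because there the series $\sum_{k\ge k_0}p^{-k}$ genuinely converges.) With this correction your argument matches the paper's proof.
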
  

\begin{proof}
By the triangle inequality,
\begin{align*}\big|\psi_{\eta}(\e^t,\chi) -\psi_{\eta}(\e^t,\chi^*)\big|   \leq  &\sum_{(n,q)>1} \frac{\Lambda(n)}{n^{\frac 12}}|\eta( t-\log  n  )|\cr&= \sum_{p\mid q}    \log p  
\sum_{k\geq 1}\frac{1}{p^{\frac{k}2}}|\eta( t-k\log p )| .\end{align*}
As $\eta(u)\ll \e^{-(\frac 12+\delta) |u|}$, we split the sum over $k$ into two parts, according to whether  $k\log p \geq  t$ or not.  A straightforward calculation shows that both of these sums are $\ll \e^{-\frac t2}$. The claimed bound follows.
\end{proof} 
 
 We are ready to apply the explicit formula.
\begin{lemma}
\label{lemma:explicit formula}
Let $q\geq 3$, let $\chi \bmod q$ be a non-principal Dirichlet character, and let $\delta>0$ and $\eta \in\S_\delta$. Then for $t\geq 0$ we have the formula
\begin{equation}
\psi_{\eta}(\e^t ,\chi)=-
\sum_{\rho_{\chi}} \e^{  (\rho_\chi-\frac 12)  t} \widehat \eta\Big(\frac{\rho_\chi-\frac 12}{2\pi i}\Big)  +O_{\delta,\eta}(\e^{-\frac t2}\log q),
\label{equation explicit formula psi chi}
\end{equation}
where $\rho_\chi$
runs over the non-trivial zeros of $L(s,\chi)$.
\end{lemma}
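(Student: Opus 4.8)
The plan is to reduce to the classical Weil explicit formula for the primitive character $\chi^*$ inducing $\chi$, and then track the error introduced by passing from $\chi$ to $\chi^*$ using Lemma \ref{lemma ramified primes}. First I would write $\psi_\eta(\e^t,\chi) = \psi_\eta(\e^t,\chi^*) + O_{\delta,\eta}(\e^{-t/2}\log q)$ by Lemma \ref{lemma ramified primes}, so that it suffices to prove the claimed formula with $\chi$ replaced by $\chi^*$ and with $L(s,\chi^*)$ in place of $L(s,\chi)$ (the non-trivial zeros of the two $L$-functions coincide, since they differ only by finitely many Euler factors which vanish nowhere in the critical strip). Note that the conductor $q_{\chi^*} \le q$, so any error of the shape $\e^{-t/2}\log q_{\chi^*}$ is absorbed into $O_{\delta,\eta}(\e^{-t/2}\log q)$.

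Next I would apply the explicit formula to $\psi_\eta(\e^t,\chi^*)$ directly. Writing $\psi_\eta(\e^t,\chi^*) = \sum_n \Lambda(n)\chi^*(n) n^{-1/2}\eta(\log(n/\e^t))$, set $F(u) := \e^{-u/2}\eta(u-t)$, so that $\psi_\eta(\e^t,\chi^*) = \sum_n \Lambda(n)\chi^*(n) F(\log n)$. Because $\eta(u), \eta'(u) \ll \e^{-(1/2+\delta)|u|}$, the function $F$ and its derivative decay like $\e^{-(1+\delta)u}$ as $u\to+\infty$ and are bounded (indeed decay) as $u \to -\infty$ after the $\e^{-u/2}$ factor is accounted for together with the exponential decay of $\eta$ on the left; in particular $F$ is admissible for the Weil explicit formula for $L(s,\chi^*)$. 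That formula expresses $\sum_n \Lambda(n)\chi^*(n)F(\log n)$ as (minus) a sum over non-trivial zeros $\rho$ of the Mellin-type transform $\int_{\R} F(u)\e^{(\rho-1/2)u}\,\d u$, plus an archimedean term coming from the Gamma-factor (and, for $\chi^*$ non-principal, no pole term). One computes $\int_\R F(u)\e^{(\rho-1/2)u}\d u = \e^{(\rho-1/2)t}\int_\R \eta(v)\e^{(\rho-1/2)v}\d v = \e^{(\rho-1/2)t}\,\widehat\eta\big(\tfrac{\rho-1/2}{2\pi i}\big)$ after the substitution $v = u-t$ and recalling $\widehat\eta(\xi) = \int \eta(v)\e^{-2\pi i \xi v}\d v$. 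This yields exactly the main term $-\sum_{\rho_\chi}\e^{(\rho_\chi-1/2)t}\widehat\eta\big(\tfrac{\rho_\chi-1/2}{2\pi i}\big)$. The convergence of this sum over zeros is guaranteed by the decay hypothesis \eqref{condiavecdelta} on $\widehat\eta$ together with the standard density estimate for zeros of $L(s,\chi^*)$, and the fact that on the critical line $\tfrac{\rho-1/2}{2\pi i}$ is real; if GRH is not assumed one still has $\Re(\rho-1/2) \le 1/2+\delta$-type control from the classical zero-free region, but here it is cleaner to note the decay of $\widehat\eta$ extends to a horizontal strip by the exponential decay of $\eta$.

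It remains to bound the archimedean contribution by $O_{\delta,\eta}(\e^{-t/2}\log q)$. This term is of the form $\frac{1}{2\pi i}\int_{(c)} F^{\vee}(s)\,\frac{\Gamma'}{\Gamma}$-type expression evaluated against the completed $L$-function's Gamma factor, which after unfolding becomes an integral of $F(u)$ against an explicit kernel of size $O(1)$ on $u \ge 0$ plus a $\log q_{\chi^*}$ term from the conductor; since $F(u) = \e^{-u/2}\eta(u-t) \ll \e^{-u/2}\e^{-(1/2+\delta)|u-t|}$, integrating over $u$ gives $\ll \e^{-t/2}$, and the conductor contributes the factor $\log q_{\chi^*} \le \log q$. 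I expect \textbf{this last step} — making the archimedean/conductor term explicit and confirming it is genuinely $O_{\delta,\eta}(\e^{-t/2}\log q)$ rather than, say, $O(\e^{-t/2}\log(q(|t|+2)))$ — to be the main technical point; it hinges on the fact that the weight $F$ is localized near $u = t$ where the $\e^{-u/2}$ prefactor already supplies the decay $\e^{-t/2}$, and that the $\Gamma$-factor kernel grows only logarithmically, which is dominated by the exponential localization of $\eta$. With that in hand, combining the three contributions (the zero sum, the $O(\e^{-t/2}\log q)$ archimedean term, and the $O(\e^{-t/2}\log q)$ ramified-primes error from Lemma \ref{lemma ramified primes}) gives \eqref{equation explicit formula psi chi}.
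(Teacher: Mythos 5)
Your plan follows essentially the same route as the paper: reduce to the primitive character $\chi^*$ via Lemma~\ref{lemma ramified primes}, apply Weil's explicit formula from \cite[Theorem 12.13]{MV07} to a test function that is $\eta$ shifted by $t$, identify the transform at a zero $\rho$ with $\e^{(\rho-\frac 12)t}\widehat\eta\big(\frac{\rho-\frac 12}{2\pi i}\big)$, and bound the archimedean/conductor terms by $O_{\delta,\eta}(\e^{-t/2}\log q)$ using the exponential decay of $\eta$ and $\eta'$; your worry about a spurious $\log(|t|+2)$ is unfounded for exactly the reason you give (the untruncated formula's archimedean contribution is $\eta(t)(\log q_\chi+O(1))$ plus a convergent integral of differences $2\eta(t)-\eta(t+x)-\eta(t-x)$, all $\ll\e^{-t/2}\log q$). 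Two bookkeeping points to fix when writing this out. First, your normalization is internally inconsistent: with $F(u)=\e^{-u/2}\eta(u-t)$ the relevant transform is $\int_\R F(u)\e^{\rho u}\,\d u$, not $\int_\R F(u)\e^{(\rho-\frac12)u}\,\d u$ (or else take $F(u)=\eta(u-t)$ and keep the $n^{-1/2}$ on the prime side, as the paper does); the end formula you state is nonetheless correct. Second, the symmetric form of Weil's formula in \cite[Theorem 12.13]{MV07} produces, besides $\sum_n\Lambda(n)n^{-1/2}\chi^*(n)\eta(t-\log n)$, the dual sum $\sum_n\Lambda(n)n^{-1/2}\overline{\chi^*}(n)\eta(t+\log n)$, which your plan does not mention; it is handled by the triangle inequality and $\eta(t+\log n)\ll\e^{-(1/2+\delta)(t+\log n)}$, giving $O_{\delta,\eta}(\e^{-t/2})$, so this is an omission rather than an obstruction.
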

\begin{proof}
Let $\chi^*$ be the primitive character modulo $q_\chi$ which induces $\chi$, and define as usual $a(\chi) = 0$ when $\chi(-1) = 1$ and $a(\chi) =1$ when $\chi(-1) = -1$.
We will evaluate the sum over zeros on the right hand side of~\eqref{equation explicit formula psi chi}. We can replace $L(s,\chi)$ by $L(s,\chi^*)$, since these have the same non-trivial zeros. We apply Weil's explicit formula for $L(s,\chi^*)$. 
 Taking $F(x):=\eta(2\pi x+t)$ in  \cite[Theorem 12.13]{MV07}, we find that $\widehat F(\frac{\rho_\chi-\frac 12}i)=\frac{1}{2\pi}\e^{(\rho_\chi-\frac 12) t} \widehat \eta(\frac{\rho_\chi-\frac 12}{2\pi i} )$, and it follows that
\begin{equation}
    \begin{split} 
\sum_{\rho_{\chi}} \e^{  (\rho_\chi-\frac 12)  t} \widehat \eta\Big(\frac{\rho_\chi-\frac 12}{2\pi i}\Big)  =&  \Big( \log \frac {q_\chi}{\pi} +\frac{\Gamma'}{\Gamma} \Big( \frac 14+\frac{a(\chi)}2\Big)\Big)\eta(t)
\cr & -\sum_{n=1}^{\infty} \frac{\Lambda(n)}{n^{\frac 12}}  \Big( \chi^*(n) \eta(t-\log n)+\overline{\chi^*}(n) \eta(t+\log n)\Big) 
\cr &
+\int_0^{\infty} \frac{\e^{-(\frac 12+a(\chi)) x}}{1-\e^{-2 x}} \big( 2\eta(t) -\eta( x+t)-\eta( -x+t)\big)\d x.\end{split}
\label{equation explicit formula 1}
\end{equation}
Since $\eta\in\S_\delta$, the first and third terms are clearly $\ll_\eta \e^{-\frac t2} \log q $. For the term involving $\overline{\chi^*}(n)$, 
the triangle inequality yields that
$$
\sum_{n=1}^{\infty}\frac{\Lambda(n)}{n^{\frac 12}}\overline{\chi^*}(n) \eta(t+\log n) \ll_{\delta,\eta} \e^{-\frac t2}.$$
 The proof is completed by applying Lemma~\ref{lemma ramified primes}.
\end{proof}
\goodbreak

Throughout the paper, we will use the Riemann-von Mangoldt formula (see for instance~\cite[Corollary 14.7]{MV07})
 \begin{equation}\label{estNTchi}N(T,\chi):= \#\big\{ \rho_\chi : |\Im m(\rho_{\chi})|\leq T \big\} =\frac{T}{ \pi} \log\Big(\frac{q_\chi T}{2\pi \e}\Big)+O\big(\log(q_\chi (T+2))\big) \hspace{.5cm} (T\geq 1).
 \end{equation}

The following slightly weaker version of GRH will appear naturally in this paper.

\medskip
\noindent\textbf{Hypothesis GRH$_{\widehat \eta}$.} \textit{For all $q\geq 3$, if $\chi$ is any primitive character to the modulus $q$ and $\rho_{\chi}$ is a non-trivial zero of $L(s,\chi)$, then either $\Re e(\rho_\chi)=\tfrac 12$, or $\widehat \eta\big(\frac{\rho_\chi-\frac 12}{2\pi i}\big)=0$.}
 \medskip
 
 We observe that Hypothesis GRH$_{\widehat \eta}$ is independent of the Riemann Hypothesis for $\zeta(s)$ (since we have excluded the principal character).

As a corollary of Lemma~\ref{lemma:explicit formula}, we obtain the following uniform bounds.

\begin{cor}
\label{corollary sup bound}
Let $q\geq 3$ and $n\geq 2,$ let $\chi \bmod q$ be a non-principal Dirichlet character, and let $\delta>0$ and $\eta \in\S_\delta$. Then, under GRH$_{\widehat \eta}$, for $t\geq 0$ and for $q$ large enough we have the bounds
\begin{equation}
      |\psi_{\eta}(\e^t,\chi)| \ll_{\delta,\eta}  \log q; 
     \label{leqlogq}
\end{equation}
\begin{equation}
     M_n(\e^t,q;\eta)\ll \frac{(C_{\delta,\eta}
     \log q)^n}{\phi(q)}, 
     \label{equation pointwise bound M_n}
\end{equation}
where $C_{\delta,\eta}$ is a positive constant depending only on $\delta$ and $\eta$.
\end{cor}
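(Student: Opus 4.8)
The plan is to deduce both bounds directly from the explicit formula in Lemma~\ref{lemma:explicit formula}, using GRH$_{\widehat\eta}$ to put all contributing zeros on the critical line and then controlling the resulting sum over zeros via the positivity and decay hypotheses built into $\S_\delta$ together with the Riemann--von Mangoldt count~\eqref{estNTchi}. First I would establish~\eqref{leqlogq}. Under GRH$_{\widehat\eta}$, every zero $\rho_\chi$ of $L(s,\chi)$ either satisfies $\Re e(\rho_\chi)=\tfrac12$, or else contributes nothing to the sum in~\eqref{equation explicit formula psi chi} because $\widehat\eta\big(\tfrac{\rho_\chi-1/2}{2\pi i}\big)=0$. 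So only zeros with $\rho_\chi=\tfrac12+i\gamma_\chi$ ($\gamma_\chi\in\R$) matter, and for such a zero $\e^{(\rho_\chi-\frac12)t}=\e^{i\gamma_\chi t}$ has modulus $1$ and $\widehat\eta\big(\tfrac{\rho_\chi-1/2}{2\pi i}\big)=\widehat\eta\big(\tfrac{\gamma_\chi}{2\pi}\big)$. Hence
$$ |\psi_\eta(\e^t,\chi)| \leq \sum_{\gamma_\chi} \Big|\widehat\eta\Big(\frac{\gamma_\chi}{2\pi}\Big)\Big| + O_{\delta,\eta}(\e^{-t/2}\log q) \ll_{\delta,\eta} \sum_{\gamma_\chi}\Big|\widehat\eta\Big(\frac{\gamma_\chi}{2\pi}\Big)\Big| + \log q. $$
To bound the sum over zeros, I would use~\eqref{condiavecdelta}, namely $\widehat\eta(\xi)\ll (|\xi|+1)^{-1}(\log(|\xi|+2))^{-2-\delta}$, combined with partial summation against the counting function $N(T,\chi)\ll T\log(q_\chi T) + \log(q_\chi)$ from~\eqref{estNTchi}. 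Splitting the zeros dyadically according to $|\gamma_\chi|\in[2^k,2^{k+1})$, each dyadic block contributes $\ll (2^k\log(q\,2^k)) \cdot (2^k)^{-1}(\log 2^k)^{-2-\delta} \ll \log(q\,2^k)(\log 2^k)^{-2-\delta}$, and the block near $|\gamma_\chi|\ll 1$ contributes $\ll \log q$; summing over $k\geq 1$ the series $\sum_k (k+\log q)(k+1)^{-2-\delta}$ converges and is $\ll_\delta \log q$. This yields~\eqref{leqlogq}.

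Next I would deduce~\eqref{equation pointwise bound M_n} from~\eqref{leqlogq} via the identity~\eqref{eqMn}. Taking absolute values there,
$$ |M_n(\e^t,q;\eta)| \leq \frac{1}{\phi(q)^n}\sum_{\substack{\chi_1,\dots,\chi_n\neq\chi_{0,q}\\ \chi_1\cdots\chi_n=\chi_{0,q}}} |\psi_\eta(\e^t,\chi_1)|\cdots|\psi_\eta(\e^t,\chi_n)| \leq \frac{(C_{\delta,\eta}\log q)^n}{\phi(q)^n}\cdot \# S_n(q), $$
where $S_n(q)$ is the set of tuples $(\chi_1,\dots,\chi_n)$ of non-principal characters mod $q$ whose product is principal. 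Since the first $n-1$ coordinates are free among the $\phi(q)-1$ non-principal characters and the last is then determined (and one must check it is indeed non-principal in the relevant tuples, or simply bound crudely), $\#S_n(q)\leq (\phi(q))^{n-1}$. Therefore $|M_n(\e^t,q;\eta)|\ll (C_{\delta,\eta}\log q)^n/\phi(q)$, which is~\eqref{equation pointwise bound M_n} after relabelling the constant.

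The only mild subtlety — and the step I would be most careful about — is the convergence of the sum over zeros in the bound for~\eqref{leqlogq}: one must verify that the decay rate $(\log(|\xi|+2))^{-2-\delta}$ in~\eqref{condiavecdelta} is exactly what is needed to beat the logarithmic growth $\log(q\,T)$ coming from the zero-counting function $N(T,\chi)$ when summed dyadically, and to track the dependence on $q$ so that the final bound is genuinely $O_{\delta,\eta}(\log q)$ with an implied constant independent of $q$. The exponent $2+\delta$ (rather than merely $1+\delta$) is used precisely here: after partial summation one gains one power of $\log$ back from differentiating, leaving $\sum_k (k+\log q)(k+1)^{-1-\delta}\ll_\delta \log q$. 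Everything else is the triangle inequality and the two already-proved inputs (Lemma~\ref{lemma:explicit formula} and~\eqref{eqMn}).
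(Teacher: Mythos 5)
Your argument is correct and is essentially the paper's own proof: apply the explicit formula of Lemma~\ref{lemma:explicit formula}, use GRH$_{\widehat\eta}$ to reduce to critical-line zeros, bound the resulting sum over zeros by combining \eqref{condiavecdelta} with the Riemann--von Mangoldt count \eqref{estNTchi} via partial summation, and then insert \eqref{leqlogq} into \eqref{eqMn} together with the count $\leq \phi(q)^{n-1}$ of admissible character tuples. One small caveat: the display in your closing remark, $\sum_k (k+\log q)(k+1)^{-1-\delta}\ll_\delta \log q$, is false for small $\delta$ (the piece $\sum_k k(k+1)^{-1-\delta}$ diverges when $\delta\leq 1$); the correct and sufficient computation is the one you already carried out in the body of the proof, namely $\sum_k (k+\log q)(k+1)^{-2-\delta}\ll_\delta \log q$, which is precisely where the exponent $2+\delta$ in \eqref{condiavecdelta} is needed.
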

\begin{proof}
The bound~\eqref{leqlogq} is obtained by applying~\eqref{condiavecdelta} to the 
sum over zeros in Lemma~\ref{lemma:explicit formula}, followed by a summation by parts using~\eqref{estNTchi}.
The bound on $M_n(\e^t,q;\eta)$ follows from inserting this bound into~\eqref{eqMn}.
\end{proof}

\section{Conductors and convergent sums over zeros}
\label{section convergent sums}

In this section, we fix $\delta>0$ and consider the set $\T_\delta$ of non-trivial measurable functions $h: \R \rightarrow \R$ having the following properties.
We require that $\xi\mapsto \xi h(\xi)$ is integrable, and that, for all $\xi \in \R$, we have the bounds 
 $$0 \leq h(\xi)\ll (1+|\xi|)^{-1}\big(\log(2+|\xi|)\big)^{-2-2\delta }.$$
Moreover, for all $t\in \R$, we have that\footnote{The integrability of $\xi\mapsto \xi h(\xi)$ implies that $\widehat h$ is differentiable (see \cite[p. 430]{KF89}).}
 $$\widehat h(t),\widehat h'(t)\ll \e^{-(\frac 12+\frac \delta 2)|t|}.  $$
Note that if $\eta \in \S_\delta$ is non-trivial, then $h_\eta:= \widehat \eta^2 
\in \T_{\delta/2}$. 
We extended $h$ to $\{s\in \mathbb C:  |\Im m(z)| \leq  \frac {1}{4\pi } \}$ by writing
\begin{equation}
     h(s):= \int_{\mathbb R} \e^{2\pi i s \xi} \widehat h(\xi) \d \xi.  
     \label{equation h Fourier}
\end{equation}
Our goal is to compute
various sums involving the quantity
\begin{equation}
b (\chi;h):=
\sum_{\rho_\chi}  {h\Big(\frac{\rho_{\chi}-\frac 12}{2\pi i} \Big) }
\label{equation definition b(chi,h)}
\end{equation}
with $h\in \T_\delta$, where $\chi$ is a non-principal character modulo $q\geq 3$, and where the sum is running over the non-trivial zeros $\rho_\chi$ of $L(s,\chi)$. 
To do so, we will first need to bound the sum
\begin{equation}
    b_2(\chi;h):=  -\sum_{n=1}^\infty \frac{\Lambda(n)}{ n^{\frac 12}}\Big(\chi^*(n)\widehat h( \log n)+\overline{\chi^*}(n)\widehat h(-\log n)\Big),
    \label{equation definition b_2}
\end{equation}
 where $\chi^*$ is the primitive character modulo $q_\chi$ inducing $\chi$.

 \begin{lemma}
\label{lemma truncation b_2}
Let  $\delta>0$ and $h\in \T_{\delta/2}$. For $q\geq 3$ and $\chi\bmod q$ non-principal, we have the estimate
\begin{equation}
    \frac 1{\phi(q)}\sum_{\substack{ \chi \bmod q \\ \chi \neq \chi_{0,q}}} b_2(\chi;h) \ll_{\delta,h}  \frac{\log q}{\phi(q)}. \end{equation}
\end{lemma}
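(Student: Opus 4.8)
The plan is to expand the average over $\chi$ and exploit orthogonality of Dirichlet characters. Writing out the definition~\eqref{equation definition b_2} and summing over all non-principal $\chi\bmod q$, the term involving $\chi^*(n)\widehat h(\log n)$ contributes
$$-\frac 1{\phi(q)}\sum_{\substack{\chi\bmod q\\ \chi\neq\chi_{0,q}}}\sum_{n\geq 1}\frac{\Lambda(n)}{n^{1/2}}\chi^*(n)\widehat h(\log n),$$
and similarly for the conjugate term. The subtlety is that the sum runs over $\chi^*$, the primitive character inducing $\chi$, rather than over $\chi$ itself; but since $\chi(n)=\chi^*(n)$ whenever $(n,q)=1$ and $\chi^*(n)$ can differ from $\chi(n)=0$ only at $n$ sharing a factor with $q$, I would first replace $\chi^*(n)$ by $\chi(n)$ at the cost of an error bounded exactly as in the proof of Lemma~\ref{lemma ramified primes}: the discrepancy is $\sum_{p\mid q}\log p\sum_{k\geq 1}p^{-k/2}|\widehat h(k\log p)|$, which, using $\widehat h(t)\ll\e^{-(\frac12+\frac\delta2)|t|}$, is $O_{\delta,h}(\log q)$ after summing over the (at most $O(\log q)$) prime divisors of $q$ — wait, more carefully, each prime $p\mid q$ contributes $\ll \log p\cdot p^{-1/2}\cdot p^{-(\frac12+\frac\delta2)\log p}\ll \log p / p$, so the total is $O(\log\log q)=O(\log q)$. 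Dividing by $\phi(q)$ and accounting for the second sum being over $\phi(q)-1$ characters gives the stated shape; I need only that the error is $\ll_{\delta,h}\log q$ before dividing by $\phi(q)$, which holds.

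Next I would apply orthogonality: $\sum_{\chi\bmod q}\chi(n)=\phi(q)\mathbf 1_{n\equiv 1\bmod q}$, so $\sum_{\chi\neq\chi_{0,q}}\chi(n)=\phi(q)\mathbf 1_{n\equiv 1\bmod q}-\mathbf 1_{(n,q)=1}$. Hence the main sum becomes
$$-\frac1{\phi(q)}\Big(\phi(q)\!\!\sum_{\substack{n\geq 1\\ n\equiv 1\bmod q}}\!\!\frac{\Lambda(n)}{n^{1/2}}\widehat h(\log n)-\sum_{\substack{n\geq1\\(n,q)=1}}\frac{\Lambda(n)}{n^{1/2}}\widehat h(\log n)\Big)$$
plus the analogous expression with $\widehat h(-\log n)$. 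Since $\widehat h$ is even (as $h$ is even and real-valued, $\widehat h(-t)=\overline{\widehat h(t)}=\widehat h(t)$), the two contributions coincide. Now each of the remaining sums is $\ll_{\delta,h}1$: for the sum over $(n,q)=1$, $\Lambda(n)\leq\log n$ and $\widehat h(\log n)\ll n^{-(\frac12+\frac\delta2)}$, so $\sum_n \Lambda(n)n^{-1/2}\widehat h(\log n)\ll\sum_n (\log n)\, n^{-1-\delta/2}\ll_\delta 1$; the $n\equiv1\bmod q$ sum is dominated by this one. Therefore the $\phi(q)\sum_{n\equiv1}$ term is $O_{\delta,h}(\phi(q))$ and the $\sum_{(n,q)=1}$ term is $O_{\delta,h}(1)$; dividing by $\phi(q)$ yields $O_{\delta,h}(1)+O_{\delta,h}(1/\phi(q))$, which is too weak — I get $O_{\delta,h}(1)$, not $O_{\delta,h}(\log q/\phi(q))$.

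The discrepancy means the real saving must come from cancellation in $\sum_{n\equiv1\bmod q}\Lambda(n)n^{-1/2}\widehat h(\log n)$: the only $n\equiv1\bmod q$ with $\Lambda(n)\neq0$ are prime powers $p^k\equiv1\bmod q$, which forces $p^k\geq q+1$, hence $n\geq q$, so this sum is $\ll_{\delta,h}\sum_{n\geq q}(\log n)n^{-1-\delta/2}\ll_{\delta,h} (\log q)\,q^{-\delta/2}$, which after multiplying by $\phi(q)$ and dividing by $\phi(q)$ is $\ll_{\delta,h}(\log q)q^{-\delta/2}\ll_{\delta,h}\log q/\phi(q)$ (using $q^{1-\delta/2}\gg\phi(q)$ — false in general; better: $q^{-\delta/2}\ll 1$ and we want it $\ll 1/\phi(q)$, which needs $\phi(q)\ll q^{\delta/2}$, false). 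So I would instead keep the prime-power constraint more honestly: actually it suffices that the sum over $n\equiv 1\bmod q$, $n>1$, of $\Lambda(n)n^{-1/2}|\widehat h(\log n)|$ is $\ll_{\delta,h} q^{-1/2}\log q$, giving $\phi(q)\cdot q^{-1/2}\log q/\phi(q)=q^{-1/2}\log q$; combined with the $(n,q)=1$ term contributing $O(1/\phi(q))$ this is still not quite $\log q/\phi(q)$. The cleanest route, and the one I expect the authors take, is: the combined main-sum expression equals $-\sum_{n\equiv1\bmod q}\Lambda(n)n^{-1/2}(\widehat h(\log n)+\widehat h(-\log n))+\frac1{\phi(q)}\sum_{(n,q)=1}\Lambda(n)n^{-1/2}(\widehat h(\log n)+\widehat h(-\log n))$; the first term, having no $n=1$ contribution, is $\ll_{\delta,h}q^{-1/2}$ by the above and $q^{-1/2}\ll\log q/\phi(q)$ is false only for huge $q$ — in fact $q^{-1/2}\leq \log q/\phi(q)$ iff $\phi(q)\leq q^{1/2}\log q$, which fails. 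I therefore expect the intended bound absorbs the $n\equiv1$ term into the error by noting it is genuinely $O(1/\phi(q))$-negligible after the correct bookkeeping of the ramified-prime correction from Lemma~\ref{lemma ramified primes}, and the honest statement is that the whole average is $\ll_{\delta,h}\log q/\phi(q)$ because the genuine main term $-\sum_{n\equiv1}\cdots$ is $O(q^{-1/2})$ while the $\frac1{\phi(q)}\sum_{(n,q)=1}\cdots$ piece is $O(1/\phi(q))$ and the ramified correction is $O(\log q/\phi(q))$.

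The main obstacle is precisely this: getting the right power of $\phi(q)$ in the denominator. Orthogonality produces a diagonal term $n\equiv1\bmod q$ which is small because such $n$ are large, but tracking whether its contribution is $O(\log q/\phi(q))$ requires combining (i) the size constraint $n\geq q+1$, (ii) the rapid decay of $\widehat h$, and (iii) the factor $\phi(q)$ from orthogonality against the factor $1/\phi(q)$ out front. I would carry this out by writing the average as
$$\frac1{\phi(q)}\sum_{\chi\neq\chi_{0,q}}b_2(\chi;h)= -\sum_{\substack{n>1\\ n\equiv1\bmod q}}\frac{\Lambda(n)}{n^{1/2}}\big(\widehat h(\log n)+\widehat h(-\log n)\big)+\frac1{\phi(q)}\sum_{\substack{n>1\\(n,q)=1}}\frac{\Lambda(n)}{n^{1/2}}\big(\widehat h(\log n)+\widehat h(-\log n)\big)+E,$$
where $E\ll_{\delta,h}\log q/\phi(q)$ is the ramified-prime correction (from replacing $\chi^*$ by $\chi$, bounded via Lemma~\ref{lemma ramified primes}'s computation). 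Bounding the first sum by $\ll_{\delta,h}\sum_{n\geq q}(\log n)n^{-1-\delta/2}\ll_{\delta,h}q^{-\delta/2}\log q$ and the second by $\ll_{\delta,h}\phi(q)^{-1}\sum_n(\log n)n^{-1-\delta/2}\ll_{\delta,h}\phi(q)^{-1}$ gives the claim, provided one uses $q^{-\delta/2}\log q\ll_\delta \log q/\phi(q)$, i.e. $\phi(q)\ll_\delta q^{\delta/2}$ — which is false, so the first sum must actually be bounded by $q^{-1}$-type decay; this is where I would instead invoke that prime powers $\equiv1\bmod q$ satisfy $p^k\geq q+1$ together with $\sum_{p^k\equiv1(q),\,p^k>1}\Lambda(p^k)(p^k)^{-1/2}\ll q^{-1/2}\sum_{m\geq1}\Lambda$-weighted terms — ultimately the cleanest is to note the first sum is $O_{\delta,h}(1/q)$ by summing the geometric-type series starting at $n\geq q$ against $n^{-1-\delta/2}$: $\sum_{n\geq q}n^{-1-\delta/2}\ll_\delta q^{-\delta/2}$, and then absorbing $q^{-\delta/2}\log q$ into $\log q/\phi(q)$ fails, so the honest final bound the lemma should be read as is $\ll_{\delta,h}\log q/\phi(q)+q^{-\delta/2}\log q$, with the understanding that in all applications $\phi(q)\leq q\ll q^{\delta/2}\cdot(\text{something})$; I will defer to the authors' precise bookkeeping here, since the structure — orthogonality, ramified correction, trivial bound on the resulting Dirichlet-type series — is forced and only the constant-tracking distinguishes $\log q/\phi(q)$ from the slightly weaker $q^{-\delta/2}\log q$.
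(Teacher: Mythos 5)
Your skeleton is the same as the paper's: reduce to a single sum via $b_2(\chi;h)=-2\Re e(\sum_n\Lambda(n)n^{-1/2}\chi^*(n)\widehat h(\log n))$, correct for the discrepancy between $\chi^*$ and $\chi$, apply orthogonality to produce $\phi(q)\sum_{n\equiv 1\bmod q}-\sum_{(n,q)=1}$, and bound the resulting Dirichlet-type sums. But you do not actually close the argument, and you say so yourself. The missing step is elementary: when you bound the diagonal sum $\sum_{n\equiv 1\bmod q,\,n>1}\Lambda(n)n^{-1/2}|\widehat h(\log n)|$ you only retain the consequence $n\geq q+1$ and drop the congruence, getting $\sum_{n\geq q}(\log n)n^{-1-\delta/2}\ll q^{-\delta/2}\log q$. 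Keeping the congruence, the sum runs over the sparse set $n=1+qj$, $j\geq 1$, so it is
$$\ll \sum_{j\geq 1}\frac{\log(1+qj)}{(1+qj)^{1+\delta/2}}\ll_{\delta}\frac{\log q}{q^{1+\delta/2}},$$
a full factor of $q$ smaller than your bound. Multiplying by the orthogonality factor $\phi(q)$ gives $O(\log q\cdot q^{-\delta/2})=O(\log q)$, and dividing by $\phi(q)$ yields exactly the claimed $\log q/\phi(q)$. This one line is the paper's final display and is what resolves all of your back-and-forth about whether $q^{-\delta/2}\log q$ can be absorbed (it cannot, and does not need to be).

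A secondary issue: your treatment of the $\chi^*$ versus $\chi$ correction is per-character, bounding the discrepancy for one $\chi$ by $\sum_{p\mid q}\log p\sum_k p^{-k/2}|\widehat h(k\log p)|$. Summed over the $\phi(q)-1$ non-principal characters this would a priori give $O(\phi(q))$, not $O(\log q)$; the paper instead invokes~\cite[Proposition 3.4]{FM13}, which evaluates $\sum_{\chi\bmod q}\chi^*(p^e)$ exactly and shows it vanishes unless $p^e\equiv 1\bmod{q/p^\nu}$ (forcing $p^e$ to be large), which is where the saving to $O(\log q)$ for the \emph{averaged} correction comes from. Your conclusion for this error term happens to be correct, but the justification you give does not support it.
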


\begin{proof}
Firstly, since  $h$ is real-valued, 
$$  b_2(\chi;h) = -2\Re e\Big(\sum_{n=1}^\infty\frac{\Lambda(n)}{ n^{\frac 12}}\chi^*(n)\widehat h( \log n)\Big).$$
We   use the same method as
~\cite[Lemma 3.2]{F15b}. 
To do so, we apply~\cite[Proposition 3.4]{FM13}, and deduce that 
\begin{align*}
 \sum_{\substack{ \chi \bmod q \\ \chi \neq \chi_{0,q}}}\Big( b_2(\chi;h) +2\Re e\Big( \sum_{n= 1}^\infty \frac{\Lambda(n)}{ n^{\frac 12}}\chi(n)\widehat h(\log n)\Big)\Big) &\ll_h \sum_{\substack{ p^\nu \parallel q  \\ p^e \equiv 1 \bmod {q/p^\nu}  \\ \nu,e\geq 1}} \frac{\log p}{p^{e (1+\frac \delta 2)}} \phi\Big ( \frac q{p^\nu }\Big)\\& \ll  \log q, 
 \end{align*}
 where the second inequality follows from an argument similar to that in the proof of~\cite[Lemma 3.2]{F15b}.
Hence, we can apply the orthogonality relations and obtain the estimate
$$
     \sum_{\substack{ \chi \bmod q \\ \chi \neq \chi_{0,q}}} b_2(\chi;h) = - 2 \Re e \Big(\Big( \phi(q)\sum_{\substack{n\geq 1 \\ n\equiv 1 \bmod q }} -\sum_{n\geq 1} \Big) \frac{\Lambda(n)}{ n^{\frac 12}}\widehat h(\log n)\Big)+O_h(\log q).
$$ 
 The proof is finished by noting that 
 $$ \sum_{\substack{n\geq 1 \\ n\equiv 1 \bmod q }}\frac{\Lambda(n)}{ n^{1+\frac \delta 2}} \ll \sum_{j=1}^{\infty} \frac{\log (1+qj)}{(1+qj)^{1+\frac \delta 2}} \ll_{\delta} \frac{\log q}{q^{1+\frac \delta 2}}. $$
\end{proof}

We are now ready to estimate the average of $b(\chi;h).$ The proof will involve the first moment of $\log q_{\chi}$, which was computed in~\cite{FM13}.

\begin{proposition}
\label{proposition first moment b(chi,h)} 
Let  $\delta>0$, $h\in \T_{\delta/2}$ and 
 $q\geq 3$.
 We have the average estimate
\begin{equation}\label{2emeest}
\frac 1{\phi(q)} \sum_{ \substack{ \chi \bmod q \\ \chi \neq \chi_{0,q}}} b(\chi;h)= \alpha(h) \log q +\beta_q(h)+O_{\delta,h}\Big(\frac{\log q }{\phi(q)}\Big),
\end{equation} 
where $\alpha(h)$ and $\beta_q(h)$ are defined in~\eqref{equation definition alpha beta}. Moreover, we have the pointwise estimate
\begin{equation} 
  b(\chi;h) = \alpha(h)\log q_\chi +O_{\delta,h}(1).
  \label{equation pointwise b(chi) }
\end{equation}
\end{proposition}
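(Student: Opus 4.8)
The plan is to feed $h$ directly into Weil's explicit formula, in the spirit of Lemma~\ref{lemma:explicit formula} but stripped of the exponential weight $\e^{(\rho_\chi-\frac12)t}$. By~\eqref{equation h Fourier}, $h$ is the inverse Fourier transform of $\widehat h$, and by the definition of $\T_\delta$ the function $\widehat h$ is $C^1$ with $\widehat h,\widehat h'\ll_\delta \e^{-(\frac12+\frac\delta2)|t|}$, hence an admissible test function for~\cite[Theorem 12.13]{MV07}. Taking there $F(x):=\widehat h(-2\pi x)$, one computes $\widehat F(\xi)=\tfrac1{2\pi}h(\xi/2\pi)$, so that $\widehat F\big(\tfrac{\rho_\chi-\frac12}{i}\big)=\tfrac1{2\pi}h\big(\tfrac{\rho_\chi-\frac12}{2\pi i}\big)$ and the spectral side of the explicit formula is precisely $b(\chi;h)$. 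Repeating the computation that led to~\eqref{equation explicit formula 1} (with $t=0$, with $\eta$ replaced by $\widehat h$, and recalling $a(\chi^*)=a(\chi)$) gives
\begin{equation*}
b(\chi;h)=\Big(\log\tfrac{q_\chi}{\pi}+\tfrac{\Gamma'}{\Gamma}\big(\tfrac14+\tfrac{a(\chi)}2\big)\Big)\widehat h(0)+b_2(\chi;h)+\int_0^\infty\frac{\e^{-(\frac12+a(\chi))x}}{1-\e^{-2x}}\big(2\widehat h(0)-\widehat h(x)-\widehat h(-x)\big)\,\d x,
\end{equation*}
where $b_2(\chi;h)$ is the prime sum of~\eqref{equation definition b_2} (up to a complex conjugation which is immaterial below). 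Absolute convergence of the zero sum over the critical-line zeros follows from~\eqref{estNTchi}, the bound $h(\xi)\ll(1+|\xi|)^{-1}(\log(2+|\xi|))^{-2-2\delta}$ and partial summation, while the contribution of any off-line zeros is subsumed in Weil's formula itself.

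The pointwise estimate~\eqref{equation pointwise b(chi) } is then immediate: the digamma term is $O_h(1)$ since $a(\chi)\in\{0,1\}$; the prime sum satisfies $|b_2(\chi;h)|\ll_h\sum_n\Lambda(n)n^{-\frac12}|\widehat h(\pm\log n)|\ll_{\delta,h}\sum_n\Lambda(n)n^{-1-\frac\delta2}\ll_\delta1$; and the integral is $O_h(1)$, using near $x=0$ the bound $2\widehat h(0)-\widehat h(x)-\widehat h(-x)\ll_h x$ (a consequence of $\widehat h'\ll\e^{-(\frac12+\frac\delta2)|t|}$) against $(1-\e^{-2x})^{-1}\asymp x^{-1}$, and away from $0$ the exponential decay of $\widehat h$. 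Hence $b(\chi;h)=\widehat h(0)\log q_\chi+O_{\delta,h}(1)=\alpha(h)\log q_\chi+O_{\delta,h}(1)$.

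For the average~\eqref{2emeest}, I sum the displayed identity over $\chi\neq\chi_{0,q}$ and divide by $\phi(q)$, using four inputs. First, the conductor term is handled by the first moment of $\log q_\chi$ from~\cite{FM13}, namely $\tfrac1{\phi(q)}\sum_{\chi\neq\chi_{0,q}}\log q_\chi=\log q-\sum_{p\mid q}\tfrac{\log p}{p-1}$. Second, since $-1$ has order $2$ in the group of reduced residues modulo $q$ for $q\ge3$, exactly $\phi(q)/2$ characters are even and $\phi(q)/2$ are odd, so up to an $O(1/\phi(q))$ error the digamma term averages to $\tfrac12\widehat h(0)\big(\tfrac{\Gamma'}{\Gamma}(\tfrac14)+\tfrac{\Gamma'}{\Gamma}(\tfrac34)\big)=-\widehat h(0)(\gamma_0+3\log2)$, via the classical identity $\tfrac{\Gamma'}{\Gamma}(\tfrac14)+\tfrac{\Gamma'}{\Gamma}(\tfrac34)=-2\gamma_0-6\log2$; combined with the $-\widehat h(0)\log\pi$ coming from $\log(q_\chi/\pi)$ this produces $-\widehat h(0)(\gamma_0+\log(8\pi))$. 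Third, the same parity split collapses the average of the integral term, up to $O(1/\phi(q))$, to
\begin{equation*}
\tfrac12\int_0^\infty\frac{\e^{-x/2}(1+\e^{-x})}{1-\e^{-2x}}\big(2\widehat h(0)-\widehat h(x)-\widehat h(-x)\big)\,\d x=\tfrac12\int_0^\infty\frac{\e^{-x/2}}{1-\e^{-x}}\big(2\widehat h(0)-\widehat h(x)-\widehat h(-x)\big)\,\d x.
\end{equation*}
Fourth, Lemma~\ref{lemma truncation b_2} bounds the $b_2$-contribution by $O_{\delta,h}(\log q/\phi(q))$. Collecting these and comparing with the definition~\eqref{equation definition alpha beta} of $\alpha(h)$ and $\beta_q(h)$ yields~\eqref{2emeest}.

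The work here is essentially bookkeeping rather than conceptual: one must match the Fourier normalisation of~\cite{MV07} exactly (so that $F(x)=\widehat h(-2\pi x)$ indeed yields the argument $\tfrac{\rho_\chi-\frac12}{2\pi i}$ and the archimedean coefficient $F(0)=\widehat h(0)=\alpha(h)$), check $\widehat h$ against the hypotheses of the explicit formula, and control the several $O(1/\phi(q))$ discrepancies arising from deleting $\chi_{0,q}$ and from the exact parity count among non-principal characters. The latter is the one point requiring a little care, since those discrepancies are multiplied by the constants $\widehat h(0)\tfrac{\Gamma'}{\Gamma}(\cdot)$ and by the bounded integral before being absorbed into the stated error $O_{\delta,h}(\log q/\phi(q))$.
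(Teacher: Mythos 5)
Your proof is correct and follows essentially the same route as the paper: apply Weil's explicit formula to $F(x)=c\,\widehat h(-2\pi x)$ to get the decomposition $b(\chi;h)=b_1+b_2+b_3$, read off the pointwise bound from the decay and differentiability of $\widehat h$, and average using the first moment of $\log q_\chi$ from~\cite{FM13} together with Lemma~\ref{lemma truncation b_2} for the prime sum. The only cosmetic difference is that you average the archimedean terms by counting even and odd characters directly, whereas the paper first rewrites the digamma values via \cite[(C.15), (C.16)]{MV07} as $-\gamma_0-3\log 2-\frac{\pi}{2}\chi(-1)$ and then averages $\chi(-1)$; the two computations are identical in substance.
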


\begin{proof}

We generalize~\cite[Lemma 3.2]{F15b} by
applying the explicit formula~\cite[Theorem 12.13]{MV07} with $F(x):=2\pi \widehat h(-2\pi x). $ For $\chi \bmod q$ non-principal, this yields that
\begin{equation}\label{bsumbj}b(\chi;h)=b_1(\chi;h)+b_2(\chi;h)+b_3(\chi;h),\end{equation}
where $b_2(\chi;h)$ is defined in~\eqref{equation definition b_2} and
\begin{equation}\label{defbj}\begin{split}b_1(\chi;h)&:=\Big(\log\Big(\frac{q_\chi}{\pi}\Big)+\frac{\Gamma'(\tfrac14+\tfrac12a(\chi))}{\Gamma (\tfrac14+\tfrac12a(\chi))}\Big)\widehat h(0);\cr
b_3(\chi;h)&:=
 \int_0^\infty \frac{\e^{-(\frac 12+a(\chi)) x}}{1-\e^{-2 x}}\big(2\widehat h(0)-\widehat h( x)-\widehat h(- x)\big)\d x,\end{split}\end{equation}
with $a(\chi)$ defined as in the proof of Lemma~\ref{lemma:explicit formula}. 
We first prove~\eqref{equation pointwise b(chi) }. We clearly have that $b_1(\chi;h) = \alpha(h) \log q_\chi +O_h(1)$. As for $b_3(\chi;h)$, after recalling that $h\in \T_{\delta/2}$ implies that $\widehat h$ is differentiable,
we deduce that 
$b_3(\chi;h)\ll \sup_{t\in [0,1]}|\widehat h'(t)|+ O_h(1)\ll_h 1 $. Finally, 
$$ b_2(\chi;h) \ll_h \sum_{n\geq 1} \frac{\Lambda(n)}{n^{1+\frac \delta 2}} \ll  \delta^{-1}.$$ 
The claimed estimate follows.

We now move to~\eqref{2emeest}. The formulas  \cite[(C.15), (C.16)]{MV07}
imply that \begin{equation}b_1(\chi;h)=\alpha(h)\Big(\log\Big(\frac{q_\chi}{8\pi}\Big) -\gamma_0-  \frac \pi 2   \chi(-1)\Big).
\label{equation expression for b_1(chi)}
\end{equation}
Now, by~\cite[Proposition 3.3]{FM13} we have that
\begin{equation}
\label{equation first moment log conductor}
 \frac 1{\phi(q)} \sum_{\substack{\chi \bmod q\\ \chi\neq \chi_{0,q} }} \log q_\chi = \log q -\sum_{p \mid q } \frac{\log p}{p-1},
 \end{equation}
and hence
\begin{equation}\label{estsumb1}\frac{1}{\phi(q)} \sum_{\substack{\chi \bmod q\\ \chi\neq \chi_{0,q} }}b_1(\chi;h)
=\alpha(h)\Big(\log\Big(\frac{q }{8\pi}\Big) -\sum_{p \mid q } \frac{\log p}{p-1}-\gamma_0 \Big)  +O_h\Big(\frac{1}{\phi(q)} \Big). \end{equation}

The computation of the average of $b_2(\chi;h)$ was done in Lemma~\ref{lemma truncation b_2}, and that of $b_3(\chi;h)$ is straightforward. 

\end{proof}

\begin{remark}

In a subsequent article we will show how the techniques of this section allow to obtain an estimate for $$b_{2n}(\chi):= \sum_{\rho_\chi}\frac{1}{|\rho_\chi|^{2n}}.$$
\end{remark}

We will also need to estimate incomplete sums over zeros. To handle these, we will establish a pair correlation result. We first need to estimate the following sum over characters.

\begin{lemma}
\label{lemma orthogonality second moment}
Let $m,n,q\in \mathbb N$ with $q\geq 3$, let $\chi\bmod q$ be a fixed character, and define
$$S_q(m,n) := \sum_{\substack{\chi_1,\chi_2 \bmod q \\ \chi_1\chi_2=\chi   }} \chi_1^*(m) \chi_2^*(n).$$
Then, for any primes $p_1,p_2$ and integers $e_1,e_2\geq 1$ and $\nu_1,\nu_2\geq 0$ such that $p_1^{\nu_1},p_2^{\nu_2} \parallel q, $ we have the formula
$$  S_q(p_1^{e_1},p_2^{e_2})= \begin{cases}
 \chi(p_1^{e_1}) \phi(q)1_{p_1^{e_1}\equiv p_2^{e_2} \bmod q}   & \text{ if } (p_1p_2,q)=1 ,\\
 \chi(p_2^{e_2}) \phi(q/p_1^{\nu_1}) 1_{p_1^{e_1} \equiv p_2^{e_2} \bmod {q/ p_1^{\nu_1}}}  & \text{ if } p_1\mid q, p_2\nmid q ,\\
  \chi(p_1^{e_1}) \phi(q/p_2^{\nu_2}) 1_{p_1^{e_1} \equiv p_2^{e_2} \bmod {q/ p_2^{\nu_2}}}  & \text{ if } p_1\nmid q, p_2\mid q ,\\
1_{{\rm cond}(\chi)\mid q/p_1^{\nu_1}} \chi(p_1^{e_1}+q/p_1^{\nu_1}) \phi(q/p_1^{\nu_1}) 1_{p_1^{e_1}\equiv p_1^{e_2} \bmod {q/p_1^{\nu_1}}}  &\text{ if } p_1\mid q,  p_2=p_1,\\
\big\{\chi^{(1)}(p_2^{e_2}+q/p_2^{\nu_2}) \chi^{(2)}(p_1^{e_1})  \chi^{(3)}(p_2^{e_2}) \phi(q/(p_1^{\nu_1}p_2^{\nu_2}))\times \\ 1_{p_1^{e_1}\equiv p_2^{e_2} \bmod {q/(p_1^{\nu_1}p_2^{\nu_2})}}\big\} & \text{ if }p_1,p_2\mid q, p_1\neq p_2,
\end{cases} $$
where by definition ${\rm cond}(\chi)$ is the conductor of $\chi$, and for $p_1\neq p_2$, $\chi =\chi^{(1)}\chi^{(2)}\chi^{(3)}$ with $\chi^{(1)} \bmod {p_1^{\nu_1}}$, $\chi^{(2)} \bmod {p_2^{\nu_2}}$ and $\chi^{(3)} \bmod {q/(p_1^{\nu_1}p_2^{\nu_2})}$.
\end{lemma}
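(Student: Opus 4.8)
The plan is to reduce the evaluation of $S_q(p_1^{e_1},p_2^{e_2})$ to the orthogonality relations for Dirichlet characters, by carefully parametrizing the pairs $(\chi_1,\chi_2)$ with $\chi_1\chi_2=\chi$ in terms of the factorization of $q$ into prime powers. First I would use the Chinese Remainder Theorem decomposition of the group of Dirichlet characters mod $q$: writing $q = \prod_p p^{v_p}$, every character $\chi \bmod q$ factors uniquely as a product of characters mod the prime power components, and similarly for $\chi_1,\chi_2$. Since the primitive character $\chi_i^*$ inducing $\chi_i$ is obtained by dropping the trivial components, the behaviour of $\chi_i^*(p_j^{e_j})$ depends entirely on whether $p_j \mid q$ and, if so, on the component of $\chi_i$ at $p_j$. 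This is why the statement splits into cases according to which of $p_1,p_2$ divide $q$ and whether $p_1=p_2$.

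The key steps, in order: (1) In the coprime case $(p_1p_2,q)=1$, we have $\chi_i^*(p_j^{e_j})=\chi_i(p_j^{e_j})$, so $S_q(p_1^{e_1},p_2^{e_2})=\sum_{\chi_1\chi_2=\chi}\chi_1(p_1^{e_1})\chi_2(p_2^{e_2})$; substituting $\chi_2=\chi\chi_1^{-1}$ turns this into $\chi(p_2^{e_2})\sum_{\chi_1}\chi_1(p_1^{e_1}\overline{p_2^{e_2}})$, and orthogonality over all characters mod $q$ gives $\phi(q)$ times the indicator that $p_1^{e_1}\equiv p_2^{e_2}\bmod q$ (and one rewrites $\chi(p_2^{e_2})=\chi(p_1^{e_1})$ on that event). (2) When exactly one of the primes, say $p_1$, divides $q$: here $\chi_i^*(p_1^{e_1})$ equals the value of the away-from-$p_1$ part of $\chi_i$ at $p_1^{e_1}$, i.e. $\chi_i$ viewed mod $q/p_1^{v_1}$. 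The sum over $\chi_1\chi_2=\chi$ then factors as a sum over the $p_1$-components (which is free and contributes a factor $\phi(p_1^{v_1})$, eaten up by the relation $\phi(q)=\phi(p_1^{v_1})\phi(q/p_1^{v_1})$ — in fact it contributes nothing to the argument of the indicator) times a sum over the away-from-$p_1$ components, and the latter is an orthogonality sum mod $q/p_1^{v_1}$, giving the stated formula. The symmetric case $p_2\mid q$, $p_1\nmid q$ is identical with the roles swapped. (3) When $p_1\mid q$ and $p_2=p_1$: now both arguments lose their $p_1$-component, so $\chi_i^*(p_1^{e_i})$ is the value at $p_1^{e_i}$ of $\chi_i$ mod $q/p_1^{v_1}$; however one must be careful that $p_1^{e_i}$ need not be invertible mod $q$ but is invertible mod $q/p_1^{v_1}$, and one re-expresses $\chi(p_1^{e_1})$ (ill-defined mod $q$ unless $\mathrm{cond}(\chi)\mid q/p_1^{v_1}$) via $\chi(p_1^{e_1}+q/p_1^{v_1})$, which is why the factor $1_{\mathrm{cond}(\chi)\mid q/p_1^{v_1}}$ appears; orthogonality mod $q/p_1^{v_1}$ then yields the answer. (4) When $p_1,p_2\mid q$ with $p_1\neq p_2$: both components at $p_1$ and at $p_2$ are stripped off, so we factor $\chi=\chi^{(1)}\chi^{(2)}\chi^{(3)}$ as in the statement, the sums over the $p_1$- and $p_2$-components of $(\chi_1,\chi_2)$ decouple, the away-from-$p_1p_2$ part gives orthogonality mod $q/(p_1^{v_1}p_2^{v_2})$, and the leftover $\chi^{(i)}$ factors appear (with the same shift trick $p_j^{e_j}\mapsto p_j^{e_j}+q/p_j^{v_j}$ to make the evaluation well-defined at a non-unit).

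The main obstacle I expect is purely bookkeeping rather than conceptual: keeping track of exactly how $\chi_i^*$ (as opposed to $\chi_i$) evaluates at prime powers dividing $q$, and of the fact that $p_j^{e_j}$ may fail to be a unit modulo $q$ or modulo $q/p_j^{v_j}$, which forces the congruence-shift $p_j^{e_j}\mapsto p_j^{e_j}+q/p_j^{v_j}$ and the conditions like $1_{\mathrm{cond}(\chi)\mid q/p_1^{v_1}}$. One has to verify in each case that the shift produces a genuine unit in the relevant modulus and that the resulting character value is independent of the choice of representative. Everything else — the CRT factorization of character groups, the substitution $\chi_2=\chi\chi_1^{-1}$, and the orthogonality relation $\sum_{\psi \bmod m}\psi(a)=\phi(m)1_{a\equiv 1}$ for $(a,m)=1$ — is standard, so once the prime-power components are correctly isolated the computation in each of the six cases is routine.
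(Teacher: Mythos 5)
Your overall strategy (CRT factorization of the character group plus local orthogonality) is the same as the paper's for the harder cases, but there is a genuine error in how you evaluate $\chi_i^*$ at ramified primes, and it propagates through cases (2)--(5). You assert that for $p_1\mid q$ the value $\chi_i^*(p_1^{e_1})$ "equals the value of the away-from-$p_1$ part of $\chi_i$ at $p_1^{e_1}$", and consequently that the sum over the $p_1$-components of $(\chi_1,\chi_2)$ is "free and contributes a factor $\phi(p_1^{\nu_1})$". This is false. Writing $\chi_1=\alpha_1\beta_1$ with $\alpha_1\bmod p_1^{\nu_1}$ and $\beta_1 \bmod{q/p_1^{\nu_1}}$, one has
$$\chi_1^*(p_1^{e_1})=1_{\alpha_1=\chi_{0,p_1^{\nu_1}}}\;\beta_1(p_1^{e_1}),$$
because $\chi_1^*(p_1^{e_1})=0$ as soon as $p_1$ divides the conductor of $\chi_1$, i.e.\ as soon as $\alpha_1$ is non-principal. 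Hence the sum over the $p_1$-components is not free: it is pinned to the single term $\alpha_1=\chi_{0,p_1^{\nu_1}}$, which forces $\alpha_2=\chi^{(1)}$. This pinning is the whole mechanism of the lemma: it is what replaces $\phi(q)$ by $\phi(q/p_1^{\nu_1})$ in the second case, it is what forces $1_{{\rm cond}(\chi)\mid q/p_1^{\nu_1}}$ in the fourth case (both local components must be principal, so $\chi$ must be induced from $q/p_1^{\nu_1}$), and it is what produces the factors $\chi^{(1)}(p_2^{e_2}+q/p_2^{\nu_2})\chi^{(2)}(p_1^{e_1})$ in the fifth case.

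If one follows your accounting literally in case (2), the $p_1$-component sum is $\chi^{(1)}(p_2^{e_2})\sum_{\alpha_1\bmod p_1^{\nu_1}}\overline{\alpha_1}(p_2^{e_2})=\chi^{(1)}(p_2^{e_2})\,\phi(p_1^{\nu_1})\,1_{p_2^{e_2}\equiv 1\bmod p_1^{\nu_1}}$, so combined with the unramified orthogonality sum you would get $\phi(q)$ times a spurious extra congruence instead of $\phi(q/p_1^{\nu_1})$; note that your own bookkeeping ("$\phi(p_1^{\nu_1})$ eaten up by $\phi(q)=\phi(p_1^{\nu_1})\phi(q/p_1^{\nu_1})$") already contradicts the formula you are trying to prove. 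The repair is exactly the observation displayed above, which is how the paper proceeds: it restricts from the outset to those $\chi_i$ that are induced by characters modulo $q/p_i^{\nu_i}$ (quoting \cite[Proposition 3.4]{FM13} in case (2)) and only then applies orthogonality in the unramified part. The coprime case and the shift $p_j^{e_j}\mapsto p_j^{e_j}+q/p_j^{\nu_j}$ are handled correctly in your write-up.
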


\begin{proof}
The easiest case is when $ (p_1p_2,q)=1,$ in which we have that
\begin{equation*}
S_q(p_1^{e_1},p_2^{e_2})
= \sum_{\substack{\chi_1 \bmod q }}
 \chi_1(p_1^{e_1}) ( \chi \overline{\chi_1})(p_2^{e_2}) =  \chi(p_2^{e_2})\phi(q)1_{p_1^{e_1}\equiv p_2^{e_2} \bmod q}.
\end{equation*}
If $p_1\mid q$ but $p_2\nmid q$, then
$$S_q(p_1^{e_1},p_2^{e_2})=\sum_{\substack{\chi_1,\chi_2 \bmod q  \\ \chi_1\chi_2=\chi  }} \chi_1^*(p_1^{e_1}) \chi_2(p_2^{e_2}) = \sum_{\substack{\chi_1 \bmod q    }} \chi_1^*(p_1^{e_1}) ( \chi \overline{\chi_1})(p_2^{e_2}),$$
which by~\cite[Proposition 3.4]{FM13} is equal to
$ \chi(p_2^{e_2}) \phi(q/p_1^{\nu_1}) 1_{p_1^{e_1} \equiv p_2^{e_2} \bmod {q/ p_1^{\nu_1}}}.$

Let us now assume that $p_1, p_2 \mid q$. First observe that
$$ S_q(p_1^{e_1},p_2^{e_2})
=\sum_{\substack{\chi_1 \bmod {q/p_1^{\nu_1}}\\\chi_2 \bmod {q/p_2^{\nu_2}} \\ \forall x: (x,q)=1, \chi_1\chi_2(x)=\chi(x)   }} \chi_1(p_1^{e_1}) \chi_2(p_2^{e_2}).$$
We claim that given $\chi \bmod q$ and $\chi_1 \bmod {q/p_1^{\nu_1}}$, the condition $\forall x: (x,q)=1, \chi_1\chi_2(x)=\chi(x)$ uniquely determines $\chi_2 \bmod {q/p_2^{\nu_2}}.$ Indeed, we have that for all $f\geq 1$, $\chi_2(p_2^f) =\chi_2(p_2^f+ q/p_2^{\nu_2}), $ and $(p_2^f+ q/p_2^{\nu_2},q)=1.$ Moreover, if  ${\rm cond}  (\chi \overline{\chi_1})\mid  q/p_2^{\nu_2}$, then such a $\chi_2$ always exists. As a result, $S_q(p_1^{e_1},p_2^{e_2})$ is equal to
$$\sum_{\substack{\chi_1 \bmod {q/p_1^{\nu_1}} \\ {\rm cond}(\chi \overline{\chi_1})\mid  q/p_2^{\nu_2}}} \chi_1(p_1^{e_1}) (\chi \overline{\chi_1})(p_2^{e_2}+q/p_2^{\nu_2})= \chi(p_2^{e_2}+q/p_2^{\nu_2})\!\!\sum_{\substack{\chi_1 \bmod {q/p_1^{\nu_1}}  \\ {\rm cond}(\chi \overline{\chi_1})\mid  q/p_2^{\nu_2}}}\!\!\!\! \chi_1(p_1^{e_1})  \overline{\chi_1}(p_2^{e_2}+q/p_2^{\nu_2}) .$$

Let us now assume that $p_1\neq p_2$, in which case $\chi\bmod q $ can be decomposed as $\chi=\chi^{(1)}\chi^{(2)}\chi^{(3)}$, with $\chi^{(1)} \bmod {p_1^{\nu_1}}$, $\chi^{(2)} \bmod {p_2^{\nu_2}}$ and $\chi^{(3)} \bmod {q/(p_1^{\nu_1}p_2^{\nu_2})}$. Similarly, we can decompose any $\chi_1\bmod {q/p_1^{\nu_1}}$ as $\chi_1=\chi_1^{(2)}\chi_1^{(3)}$, with $\chi_1^{(2)} \bmod {p_2^{\nu_2}}$ and $\chi_1^{(3)} \bmod {q/(p_1^{\nu_1}p_2^{\nu_2})}$. The condition ${\rm cond}(\chi \overline{\chi_1})\mid  q/p_2^{\nu_2}$ is equivalent to saying that $\chi^{(2)}\overline{\chi_1^{(2)}} = \chi_{0,p_2^{\nu_2}} $. In other words, 
 $$S_q(p_1^{e_1},p_2^{e_2})=\chi(p_2^{e_2}+q/p_2^{\nu_2}) \chi^{(2)}(p_1^{e_1})  \overline{\chi^{(2)}}(p_2^{e_2}+q/p_2^{\nu_2})\!\!\!\sum_{\substack{\chi_1 \bmod {q/(p_1^{\nu_1}p_2^{\nu_2})}  }} \!\!\!\!\chi_1(p_1^{e_1})  \overline{\chi_1}(p_2^{e_2}+q/p_2^{\nu_2}) ,$$
 which by the orthogonality relations is equal to
$$ \chi(p_2^{e_2}+q/p_2^{\nu_2}) \chi^{(2)}(p_1^{e_1})  \overline{\chi^{(2)}}(p_2^{e_2}+q/p_2^{\nu_2}) \phi(q/(p_1^{\nu_1}p_2^{\nu_2})) 1_{p_1^{e_1}\equiv p_2^{e_2}+q/p_2^{\nu_2} \bmod {q/(p_1^{\nu_1}p_2^{\nu_2})}}. $$

The final case to consider is $p_1=p_2 \mid q$, in which 
the condition ${\rm cond}(\chi \overline{\chi_1})\mid  q/p_1^{\nu_1}$ is equivalent to ${\rm cond}(\chi)\mid q/p_1^{\nu_1}$. The sum $S_q(p_1^{e_1},p_2^{e_2})$ is then equal to
\begin{multline*}
    1_{{\rm cond}(\chi)\mid q/p_1^{\nu_1}}\chi(p_1^{e_2}+q/p_1^{\nu_1}) \sum_{\substack{\chi_1 \bmod {q/p_1^{\nu_1}}  }}\!\!\!\! \chi_1(p_1^{e_1})  \overline{\chi_1}(p_1^{e_2}) \\ =1_{{\rm cond}(\chi)\mid q/p_1^{\nu_1}} \chi(p_1^{e_2}+q/p_1^{\nu_1}) \phi(q/p_1^{\nu_1}) 1_{p_1^{e_1}\equiv p_1^{e_2} \bmod {q/p_1^{\nu_1}}}, 
\end{multline*}
 by the orthogonality relations.
\end{proof}

Here is the pair correlation result we will need. We will assume GRH$_{h}$ for brevity, however it is very possible that this assumption can be removed.
\begin{proposition}
\label{proposition pair correlation}
Let $q\geq 3$, and fix $\chi \bmod q$ and $z\in\R$. Let $h\in\T_{\delta/2}$, and assume GRH$_{h}$. Let $\Phi \in \mathcal L^1(\mathbb R)$ be even and supported in $[-1,1]$.
Then in the range $  1\leq  L\leq  \log q-\log_2 q$ we have the bound
$$  P(h;\chi,z):= \!\! \sum_{\substack{\chi_1,\chi_2 \neq \chi_{0,q} \\ \chi_1\chi_2 = \chi}} \sum_{\substack{\gamma_1,\gamma_2 }}   h\Big(\frac{\gamma_{1}}{2\pi } \Big) h\Big(\frac{\gamma_{2}}{2\pi } \Big)\widehat\Phi\Big(\frac {L}{2\pi} (z-\gamma_1-\gamma_2)\Big)  \ll_{\delta,h} \phi(q)  (\log q)\Big( 1+\frac{\log q}L\Big),$$
where $\rho_j=\frac 12+i\gamma_j$ runs through the zeros of $L(s,\chi_j)$. The implied constant is independent of $\chi$ and $z$.
\end{proposition}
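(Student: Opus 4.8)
The plan is to expand the definition of $P(h;\chi,z)$ via the explicit formula for each factor $\sum_{\gamma_j}h(\gamma_j/2\pi)$, reducing the pair-correlation sum to a sum over primes, and then to bound the resulting arithmetic sum using Lemma~\ref{lemma orthogonality second moment}. The first step is to use the explicit formula $F(x):=2\pi\widehat h(-2\pi x)$ as in Proposition~\ref{proposition first moment b(chi,h)}, but now with the extra factor $\widehat\Phi(\tfrac{L}{2\pi}(z-\gamma_1-\gamma_2))$ present. Since $\Phi$ is supported in $[-1,1]$, the function $t\mapsto \widehat\Phi(\tfrac{L}{2\pi}t)$ has Fourier transform supported in $[-L,L]$; writing $\widehat\Phi(\tfrac{L}{2\pi}(z-\gamma_1-\gamma_2))=\int_{-L}^{L}\Phi(u)\e^{i u(z-\gamma_1-\gamma_2)}\,\tfrac{\d u}{L}$ (up to normalisation), we can absorb the $\e^{-iu\gamma_j}$ into the test function. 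Concretely, for each fixed $u$ the factor $\sum_{\gamma_j}h(\gamma_j/2\pi)\e^{-iu\gamma_j}$ is an explicit-formula expression with test function $h(\xi)\e^{-2\pi i u\xi}$, whose Fourier transform is $\widehat h$ translated by $u/2\pi$; since $|u|\leq L\leq \log q$, the support of the translated $\widehat h$-object (which controls the prime sum) stays in an interval of length $O(\log q)$.

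Carrying this through, I expect $P(h;\chi,z)$ to decompose, after applying the explicit formula to each of the two $\gamma$-sums, into a main diagonal-type contribution plus a prime sum of the shape
\begin{equation*}
\frac{1}{L}\int_{-L}^{L}\Phi(u)\e^{iuz}\Big(\sum_{\chi_1\chi_2=\chi,\ \chi_j\neq\chi_{0,q}} b_1^{(u)}(\chi_1)b_1^{(u)}(\chi_2) + \text{prime}\times\text{prime} + \text{cross}\Big)\d u,
\end{equation*}
where the genuinely new term is the double prime sum
\begin{equation*}
\sum_{m,n\geq 1}\frac{\Lambda(m)\Lambda(n)}{(mn)^{1/2}}\widehat h(\log m - u)\widehat h(\log n - u)\, S_q(m,n),
\end{equation*}
with $S_q(m,n)$ as in Lemma~\ref{lemma orthogonality second moment} (for $\chi_1\chi_2=\chi$). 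Here the rapid decay $\widehat h(t)\ll \e^{-(\frac12+\frac\delta2)|t|}$ forces $\log m,\log n = u+O(1)$, so effectively $m,n\asymp \e^{u}$ with $\e^{u}\leq q$. The sum over $m,n$ splits according to the cases of Lemma~\ref{lemma orthogonality second moment}: the dominant case is $(mn,q)=1$ with $m\equiv n\bmod q$, which since $m,n\leq q^{1+o(1)}$ forces $m=n$, giving $\phi(q)\sum_{m}\Lambda(m)^2 m^{-1}\widehat h(\log m-u)^2 \ll \phi(q)\log q$ (after integrating the trivial $u$-factor and using $\sum_{m\asymp \e^u}\Lambda(m)^2/m \ll u \ll \log q$). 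The remaining cases of $S_q(m,n)$ (ramified primes dividing $q$, or $m\equiv n$ to a proper divisor of $q$) contribute a $\phi(q)$-saving in the modulus but allow more solutions; one checks these are lower order, contributing $O(\phi(q)\log q)$ at worst. The term $b_1^{(u)}(\chi_j) = (\log(q_{\chi_j}/\pi)+\cdots)\widehat h(-u/2\pi)$ has absolute value $\ll \log q$, and summing $\sum_{\chi_1\chi_2=\chi}1 = \phi(q)$ this product term contributes $\ll \phi(q)(\log q)^2$, which after dividing by $L$ and integrating over $u\in[-L,L]$ gives $\ll \phi(q)(\log q)^2 \cdot \frac{L}{L}= \phi(q)(\log q)^2$; the extra factor $L^{-1}$ from the normalisation $\tfrac1L\int_{-L}^L$ combined with the range $|u|\lesssim 1$ (again forced by decay of $\widehat h(-u/2\pi)$, or more carefully by decay of the cross/prime terms) recovers the stated $\phi(q)(\log q)(1+\tfrac{\log q}{L})$. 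Finally the cross terms (one $b_1$-type factor, one prime factor) are handled by Cauchy--Schwarz against the two previous bounds.

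\textbf{Main obstacle.} I expect the principal difficulty to be bookkeeping the normalisation and the $u$-integral carefully enough to extract the sharp dependence $1+\tfrac{\log q}{L}$ rather than a cruder bound. The factor $\widehat\Phi(\tfrac{L}{2\pi}(z-\gamma_1-\gamma_2))$ concentrates $\gamma_1+\gamma_2$ near $z$ in a window of width $\asymp 1/L$, so when $L$ is large the pair-correlation sum is genuinely restricted and one should win; but the diagonal prime contribution ($m=n$, $\chi_1=\overline{\chi_2}$, $\gamma_1=-\gamma_2$, hence $\gamma_1+\gamma_2=0$) survives only if $z$ is within $1/L$ of $0$, and accounting for whether or not $z\approx 0$ — uniformly in $z$ as required — is where the two regimes $L\lesssim \log q$ and $L\gtrsim \log q$ separate. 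The secondary technical point is justifying the interchange of the (absolutely convergent, by $\eta\in\S_\delta$) sum over zeros with the explicit formula and the $u$-integral, and controlling the error terms in Weil's formula uniformly over all $\chi_j$ of conductor up to $q$; this is routine given Lemma~\ref{lemma ramified primes}, Lemma~\ref{lemma:explicit formula} and~\eqref{estNTchi}, but must be done with the shifted test function $h(\xi)\e^{-2\pi i u\xi}$ and uniformity in $u$.
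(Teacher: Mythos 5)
Your plan follows essentially the same route as the paper's proof: expand $\widehat\Phi\big(\tfrac{L}{2\pi}(z-\gamma_1-\gamma_2)\big)$ as a Fourier integral, apply the explicit formula to each zero-sum with the shifted test function so that the main term becomes the double prime sum weighted by $S_q(m,n)$ and Lemma~\ref{lemma orthogonality second moment} applies, with the diagonal $m=n$ giving $\ll\phi(q)L\leq\phi(q)\log q$ and the $b_1$-type terms giving $\ll\phi(q)(\log q)^2/L$, exactly as in the paper. The only point to tighten is that $m\equiv n\bmod q$ does not literally force $m=n$ (the exponential tails of $\widehat h$ allow $n=m+kq$ with $k\geq 1$); this off-diagonal contribution is $\ll\phi(q)\e^{L}(\log q+L)/q$, and controlling it is precisely where the hypothesis $L\leq\log q-\log_2 q$ enters.
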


\begin{proof}
We apply~\eqref{equation explicit formula 1} with $\eta:=\widehat h$; we obtain the identity 
$$ b(\chi;h, t  ):=\sum_{\gamma_{\chi}} h\Big( \frac{\rho_\chi-\frac 12}{2\pi i} \Big)\e^{(\rho_\chi-\frac 12)  t  } =b_1(\chi;h, t  )+b_2(\chi;h, t  )+b_3(\chi;h, t  )+b_4(\chi;h, t  ),
$$ 
 where 
\begin{align*}
     b_1(\chi;h, t  ):=& \Big( \log \frac{q_{\chi}}{\pi} +\frac{\Gamma'}{\Gamma}\Big(\frac 14+\frac{a(\chi)}2 \Big)\Big) \widehat h( t  );\cr 
  b_2(\chi;h, t  ):=&-\sum_{n\geq 1} \frac{\Lambda(n)}{n^{\frac 12}} \chi^*(n) \widehat h( t  -\log n); 
  \cr b_3(\chi;h, t  ):=&\int_0^\infty \frac{\e^{-(\frac 12+a(\chi))y}}{1-\e^{-2y}} \Big(2\widehat h( t  ) - \widehat h( t  +y)-\widehat h( t  -y) \big) \d y;\cr b_4(\chi;h, t  ):=&-\sum_{n\geq 1} \frac{\Lambda(n)}{n^{\frac 12}} \overline{\chi^*}(n) \widehat h( t  +\log n). 
\end{align*}
We have the bounds $b_1(\chi;h, t   ) \ll |\widehat h( t   )|\log q \leq  \widehat h(0)\log q$; $b_3(\chi;h, t   ),b_4(\chi;h, t   ) \ll_h 1 ;$ $b(\chi;h, t   )\ll_h\log q$, and as a result, $b_2(\chi;h, t   )\ll_{h} \log q$ as well.
We deduce that
\begin{align*}
P(h;\chi,z)=&\int_{\mathbb R} \Phi(\xi) \e^{- i L z \xi} \sum_{\substack{\chi_1,\chi_2 \neq \chi_{0,q} \\ \chi_1\chi_2 = \chi}} \sum_{\substack{\gamma_1,\gamma_2}}   h\Big(\frac{\gamma_{1}}{2\pi } \Big) h\Big(\frac{\gamma_{2}}{2\pi } \Big) \e^{ i L \gamma_1 \xi}\e^{ i L \gamma_2 \xi}\d \xi\\
 = &\int_{\mathbb R} \Phi(\xi) \e^{- i L z \xi} \sum_{\substack{\chi_1,\chi_2 \neq \chi_{0,q} \\ \chi_1\chi_2 = \chi}} b_2(\chi_2,h,L\xi)\sum_{\substack{\gamma_1}}   h\Big(\frac{\gamma_{1}}{2\pi } \Big) \e^{ i L \gamma_1 \xi}\d \xi\\
&+O_{\delta,h}\Big(\phi(q) (\log q)^2 \int_{\mathbb R} |\widehat h(L\xi)| \d \xi + \phi(q) \log q\Big).
\end{align*}
Therefore, we have that
$$ P(h;\chi,z)= \int_{\mathbb R} \Phi(\xi) \e^{- i L z \xi}\!\!\!\! \sum_{\substack{\chi_1,\chi_2 \neq \chi_{0,q} \\ \chi_1\chi_2 = \chi}}\!\!\!\!\! b_2(\chi_1,h,L\xi) b_2(\chi_2,h,L\xi)\d \xi\\
+O_{\delta,h}\Big(\phi(q) \frac{(\log q)^2}L +\phi(q)\log q \Big).$$
The main term is equal to
\begin{equation}
P_M(h,z,L):= \sum_{\substack{m,n\geq 1  }} \frac{\Lambda(m)\Lambda(n)}{(mn)^{\frac 12}} I_{m,n}(h,z,L)\sum_{\substack{\chi_1,\chi_2 \bmod q \\ \chi_1,\chi_2 \neq \chi_{0,q} \\ \chi_1\chi_2 = \chi}}  \chi_1^*(m)\chi_2^*(n), 
 \label{equation before applying lemma orthogonality}
\end{equation}
where 
$$ I_{m,n}(h,z,L):=\int_{\mathbb R} \Phi(\xi) \e^{- i L z \xi}  
\widehat h(\log m -L\xi)\widehat h(\log n-L\xi)\d \xi. $$
We note that $ I_{m,n}(h,z,L)\ll L^{-1}$, and moreover for $m\leq n$,
\begin{equation}\label{majImn}
I_{m,n}(h,z,L)\ll \begin{cases}
(mn)^{-\frac 12-\delta} L^{-1}\e^{L(1+2\delta)} & \text{ if }   \e^L \leq m\leq n ,\\
\big( \frac mn\big)^{\frac 12+\delta} \big(  \frac 1L + 1-\frac{\log m}L\big) &\text{ if } m\leq \e^L \leq  n,
\\ \big( \frac mn\big)^{\frac 12+\delta} \big(  \frac 1L + \frac{\log (n/m)}L\big) &\text{ if } m\leq n\leq \e^L.
\end{cases}
\end{equation}
In particular, if  $m\leq n  $ and $m\leq \e^L$ we have the bound $I_{m,n}(h,z,L)\ll ( \frac mn)^{\frac 12+\delta}$.

We now apply Lemma~\ref{lemma orthogonality second moment}. The contribution to $P_M(h;\chi,z)$ of those $n,m$ for which $(mn,q)=1$ is given by
\begin{align*}
 P_{(mn,q)=1}:&=  \sum_{\substack{m,n\geq 1  \\ (mn,q)=1}} \frac{\Lambda(m)\Lambda(n)}{(mn)^{\frac 12}}  I_{m,n}(h,z,L) \big( \chi(m)\phi(q) 1_{n\equiv m\bmod q}-\chi(m)-\chi(n) + 1\big)
 \\ &\ll J+\phi(q) L  + \e^{L},
  \end{align*}
where the contribution of the non-diagonal terms in the part of the sum involving the factor $\chi(m)\phi(q) 1_{n\equiv m\bmod q}$ is bounded by a constant times
$$ J := \phi(q)  \sum_{\substack{m,k\geq 1  }} \frac{\Lambda(m)\Lambda(m+kq)}{(m(m+kq))^{\frac 12}} I_{m,m+kq}(h,z,L)\ll \phi(q)\frac{\e^L (\log q+L)}{q}. $$
In other words, we have shown that $P_{(mn,q)=1}$ is an admissible error term. 

For the remaining terms in $P_M(h,z,L)$ we first note that we may drop the conditions $\chi_1,\chi_2\neq \chi_{0,q}$ in~\eqref{equation before applying lemma orthogonality} at the cost of the error term $O(\e^L)$; we will denote the resulting sum by $Q(h;\chi,z)$. By Lemma~\ref{lemma orthogonality second moment}, the contribution to $Q(h;\chi,z)$ of those $n,m$ for which either $(m,q)>1$ and $(n,q)=1$, or $ (n,q)>1$ and $(m,q)=1$, is then 
$$ \ll  \sum_{\substack{n\geq 1  \\ (n,q)=1}}   \sum_{\substack{p^{\nu}\parallel q \\ \nu,e\geq 1 }}\frac{\Lambda(n)\log p}{(np^e)^{\frac 12}} I_{n,p^e}(h,z,L)   \phi(q/p^{\nu}) 
 \ll \phi(q)\log_2q.$$
 As
 for the contribution of those $m,n$ for which $m=p_1^{e_1}, n=p_2^{e_2}$ with $p_1\neq p_2$ and $p_1,p_2\mid q$, it is 
  $$ \ll \sum_{\substack{p_1^{\nu_1}\parallel q \\ \nu_1,e_1\geq 1 }} \sum_{\substack{p_2^{\nu_2}\parallel q \\ \nu_2,e_2\geq 1  \\ p_2^{e_2} \geq p_1^{e_1}}}\frac{\log p_1\log p_2}{(p_1^{e_1}p_2^{e_2})^{\frac 12}} I_{p_1^{e_1},p_2^{e_2}}(h,z,L)  \phi(q/(p_1^{\nu_1}p_2^{\nu_2})) 
 \ll_\eps \phi(q) \log_2 q .$$
 To show this last bound, we used the inequalities
$$\sum_{\substack{\nu_1\geq 1\\ \nu_2\geq 1  }}
\phi(q/(p_1^{\nu_1}p_2^{\nu_2})) \leq \sum_{\substack{\nu_1\geq 1\\ \nu_2\geq 1  }} 
\frac{\phi(q)}{\phi(p_1^{\nu_1}p_2^{\nu_2})}
\ll 
\frac{\phi(q)}{p_1p_2}  $$   and applied the bounds \eqref{majImn}.

Finally, the contribution of those $n,m$ for which $m=p^{e_1}, n=p^{e_2}$ with $p^\nu\mid q$ and $\nu\geq 1$ is 
\begin{align*}
  \ll \sum_{\substack{p^{\nu}\parallel q \\ \nu,e_1,e_2\geq 1 }} \frac{(\log p)^2}{p^{\frac {e_1+e_2}2}} I_{p^{e_1},p^{e_2}}(h,z,L)  \phi(q/p^{\nu}) 
 &\ll\frac 1L \sum_{\substack{p^\nu\parallel q \\ \nu\geq 1 }}\frac{(\log p)^2}{p}\phi(q/p^{\nu}) 
 \ll \phi(q).
\end{align*} 
 The proof is finished.
\end{proof}

The next step is to estimate the variance of $\log q_\chi$. This was done in \cite[Lemma 3.3]{F15b}, which we refine and rectify here. 
 
\begin{lemma}
\label{lemmavarlogq}
For $q\geq 3$, we have the estimate 
$$\frac{1}{\phi(q)} \sum_{\substack{\chi \bmod q\\ \chi\neq \chi_{0,q} }} \Big(\log q_{\chi} -\log q +\sum_{p \mid q } \frac{\log p}{p-1} \Big)^2   =\sum_{p\mid q }  \frac{(\log p)^2}{p} +O(1) \ll (\log_2q)^2 .  $$
Moreover, for $h\in \T_{\delta/2}$ we have that
$$ \frac 1{\phi(q)}  \sum_{\substack{ \chi \bmod q \\ \chi \neq \chi_{0,q}}} \big(b(\chi;h) - \alpha(h) \log q  \big)^2 \ll  (\alpha(h)\log_2 q)^2 +O_{\delta,h}(1).$$
\end{lemma}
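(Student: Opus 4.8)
The plan is to establish first the variance formula for $\log q_\chi$, and then to deduce the bound for $b(\chi;h)$ from it via the pointwise estimate~\eqref{equation pointwise b(chi) }. For the former, I would exploit multiplicativity of the conductor: writing $q=\prod_{p^\nu\|q}p^\nu$ and decomposing $\chi$ through the CRT isomorphism as $\chi=\prod_{p\mid q}\chi_p$ with $\chi_p$ a character mod $p^\nu$, one has $\log q_\chi=\sum_{p\mid q}e_p(\chi_p)\log p$, where $e_p(\chi_p)$ denotes the exponent of $p$ in $\mathrm{cond}(\chi_p)$. Under the uniform measure on characters modulo $q$ the components $\chi_p$ are independent, so $\frac1{\phi(q)}\sum_{\chi\bmod q}(\log q_\chi-\overline L)^2=\sum_{p\mid q}(\log p)^2 W_p$, where $\overline L:=\frac1{\phi(q)}\sum_{\chi\bmod q}\log q_\chi$ and $W_p$ is the variance of $e_p(\chi_p)$ over the $\phi(p^\nu)$ characters mod $p^\nu$. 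I would compute $W_p$ from the standard count that the number of characters mod $p^\nu$ of conductor exactly $p^j$ equals the number of primitive characters mod $p^j$, i.e.\ $1$ if $j=0$ and $\phi(p^j)-\phi(p^{j-1})$ if $1\le j\le\nu$ (with $\phi(p^0):=1$); this holds for $p=2$ as well. An Abel summation gives the mean of $e_p(\chi_p)$ equal to $\nu-\tfrac1{p-1}$, consistent with~\eqref{equation first moment log conductor} (so that $\overline L=\log q-\sum_{p\mid q}\tfrac{\log p}{p-1}$), and a comparable computation of the second moment yields $W_p=\tfrac1p+O(p^{-2})$, uniformly in $\nu\ge1$. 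Summing over $p\mid q$ and using convergence of $\sum_p(\log p)^2p^{-2}$ gives $\frac1{\phi(q)}\sum_{\chi\bmod q}(\log q_\chi-\overline L)^2=\sum_{p\mid q}\tfrac{(\log p)^2}{p}+O(1)$.

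To reach the stated identity it remains to delete the principal character. Since $\log q_{\chi_{0,q}}=0$ and $0\le\overline L\le\log q$, removing $\chi_{0,q}$ alters the average of $(\log q_\chi-\overline L)^2$ by at most $\overline L^2/\phi(q)\ll(\log q)^2/\phi(q)\ll1$ for all $q\ge3$, giving $\frac1{\phi(q)}\sum_{\chi\neq\chi_{0,q}}\big(\log q_\chi-\log q+\sum_{p\mid q}\tfrac{\log p}{p-1}\big)^2=\sum_{p\mid q}\tfrac{(\log p)^2}{p}+O(1)$. The final bound is then the elementary estimate $\sum_{p\mid q}\tfrac{(\log p)^2}{p}\ll(\log_2q)^2$, which I would prove by splitting at $p=\log q$: partial summation from Mertens' formula $\sum_{p\le x}\tfrac{\log p}{p}=\log x+O(1)$ bounds $\sum_{p\le\log q}\tfrac{(\log p)^2}{p}\ll(\log_2q)^2$, while there are at most $\log q/\log_2 q$ primes $p\mid q$ with $p>\log q$, each contributing $(\log p)^2/p\le(\log_2q)^2/\log q$ since $t\mapsto(\log t)^2/t$ decreases for $t\ge e^2$, for a total $O(\log_2q)$.

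For the bound on $b(\chi;h)$, the pointwise estimate~\eqref{equation pointwise b(chi) } gives $b(\chi;h)-\alpha(h)\log q=\alpha(h)(\log q_\chi-\log q)+O_{\delta,h}(1)$, hence $\big(b(\chi;h)-\alpha(h)\log q\big)^2\le2\alpha(h)^2(\log q_\chi-\log q)^2+O_{\delta,h}(1)$. Averaging over $\chi\neq\chi_{0,q}$, bounding $(\log q_\chi-\log q)^2\le2\big(\log q_\chi-\log q+\sum_{p\mid q}\tfrac{\log p}{p-1}\big)^2+2\big(\sum_{p\mid q}\tfrac{\log p}{p-1}\big)^2$, and using the first part together with the standard estimate $\sum_{p\mid q}\tfrac{\log p}{p-1}\ll\log_2q$, we get $\frac1{\phi(q)}\sum_{\chi\neq\chi_{0,q}}(\log q_\chi-\log q)^2\ll(\log_2q)^2$, and therefore $\frac1{\phi(q)}\sum_{\chi\neq\chi_{0,q}}\big(b(\chi;h)-\alpha(h)\log q\big)^2\ll(\alpha(h)\log_2q)^2+O_{\delta,h}(1)$.

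The argument presents no essential obstacle; the delicate point, and presumably the one requiring rectification relative to~\cite[Lemma 3.3]{F15b}, is the per-prime computation of $W_p$: one must evaluate the second moment of $e_p(\chi_p)$ exactly and check that the error in $W_p=p^{-1}+O(p^{-2})$ is uniform in $\nu$, so that these errors sum to $O(1)$ rather than accumulating with the number of prime factors of $q$. One must also carefully track the difference between $\log q$ and $\overline L=\log q-\sum_{p\mid q}\tfrac{\log p}{p-1}$ wherever it is not already eliminated by squaring. No analytic input beyond Mertens' theorem and~\eqref{equation first moment log conductor} is needed.
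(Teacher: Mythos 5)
Your proposal is correct and follows essentially the same route as the paper: the paper likewise computes the variance of $\log q_\chi$ by expanding over the prime-power components of the conductor (following and correcting \cite[Lemma 3.3]{F15b}), obtaining the per-prime contribution $\frac{p(\log p)^2}{(p-1)^2}\big(1-\frac{2}{p^r}\big)$, which agrees exactly with your $W_p=\tfrac1p+O(p^{-2})$, and then deduces the bound on $b(\chi;h)$ from~\eqref{equation pointwise b(chi) } just as you do. The only step you assert rather than carry out is the second-moment computation of $e_p(\chi_p)$, but it does evaluate to $\frac{p}{(p-1)^2}\big(1-\frac{2}{p^\nu}\big)$ uniformly in $\nu$, so the claim stands.
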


\begin{proof} 
Performing a computation analogous\footnote{The first display on page 4434 should have a factor $2$ in front of the sum in parentheses on the right hand side.} to that in \cite[Lemma 3.3]{F15b}, 
we see that
\begin{align*}\frac{1}{\phi(q)}& \sum_{\substack{\chi \bmod q\\ \chi\neq \chi_{0,q} }} (\log q_\chi-\log q)^2\cr&=\sum_{p^r\parallel q} \frac{p(\log p)^2}{(p-1)^2}\Big(1+\frac{1}{p}- \frac{2}{p^r}\Big)
+2\sum_{\substack{p_1,p_2\mid  q\\ p_1<p_2}}\frac{( \log p_1)(\log p_2)}{(p_1-1) (p_2-1)}+O\Big(\frac{(\log q)^2}{\phi(q)}\Big)
\cr&=\Big(\sum_{p \mid q } \frac{\log p}{p-1}\Big)^2+\sum_{p^r\parallel q} \frac{p(\log p)^2}{(p-1)^2}\Big(1- \frac{2}{p^r}\Big)+O\Big(\frac{(\log q)^2}{\phi(q)}\Big).
\end{align*} 
We conclude that 
\begin{align*}\frac{1}{\phi(q)} \sum_{\substack{\chi \bmod q\\ \chi\neq \chi_{0,q} }} \Big(\log q_\chi-\log q+\sum_{p \mid q } \frac{\log p}{p-1}\Big)^2&=\sum_{p^r\parallel q} \frac{p(\log p)^2}{(p-1)^2}\Big(1
 -\frac{2}{p^r}\Big)+O\Big(\frac{(\log q)^2}{\phi(q)}\Big) \cr& =\sum_{p\mid q }  \frac{(\log p)^2}{p } +O(1) \ll (\log_2q)^2,
\end{align*}  
and the first claimed estimate follows. For the second, we combine the last bound with~\eqref{equation pointwise b(chi) }.
\end{proof}

We now establish the final lemma of this section, which will be useful in our computation of higher moments.

\begin{lemma}
\label{lemme moyenne multiple b(chi)} 
Let $\delta>0$, $h\in \T_{\delta/2}$, and
let $k\geq 2$. If $q$ is large enough in terms of $\delta$ and $h$ and $\chi$ is a character modulo $q$, then in the range $k\leq \log q/\log_2q $ we have the estimate 
\begin{equation}\label{sumb1bk}\sum_{\substack{\chi_1,\ldots,\chi_k  \bmod q\\ \chi_1,\ldots,\chi_k \neq \chi_{0,q} \\ \chi_1 \cdots \chi_k = \chi }} b(\chi_1;h) \cdots b(\chi_k;h)= \phi(q)^{k-1}(\alpha(h)\log q)^{k}\Big( 1+O \Big(k \frac {\log_2 q  }{ \log q } \Big) \Big).  \end{equation}
Moreover, if we assume GRH$_h$, then uniformly for $z\in \R$ we have the bound
\begin{equation}\sum_{\substack{\chi_1,\ldots,\chi_k  \bmod q\\ \chi_1,\ldots,\chi_k \neq \chi_{0,q} \\ \chi_1 \cdots \chi_k = \chi }}\sum_{\substack{\rho_{\chi_1}, \dots, \rho_{\chi_k} \\ \gamma_{\chi_1} + \dots + \gamma_{\chi_k} = z}}  {h\Big(\frac{\gamma_{\chi_1}}{2\pi } \Big) } \cdots {h\Big(\frac{\gamma_{\chi_k}}{2\pi } \Big) }  \ll_{\delta,h}  \phi(q)^{k-1}(\alpha(h)\log q)^{k-1}, 
\label{equation average sum over zero one removed}
\end{equation}
where we used the shorthand $\gamma_{\chi_j} := (\rho_{\chi_j}-\frac 12)/i$.
\end{lemma}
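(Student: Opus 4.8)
The proof has two independent parts: the estimate~\eqref{sumb1bk} is unconditional and rests only on the first- and second-moment information about $b(\chi;h)$ already obtained in Proposition~\ref{proposition first moment b(chi,h)} and Lemma~\ref{lemmavarlogq}, whereas~\eqref{equation average sum over zero one removed} is where GRH$_h$ enters, through the pair correlation bound of Proposition~\ref{proposition pair correlation}.

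For~\eqref{sumb1bk}, write $b(\chi_j;h) = \alpha(h)\log q + E(\chi_j)$ with $E(\chi) := b(\chi;h) - \alpha(h)\log q$. By Lemma~\ref{lemmavarlogq} combined with the Cauchy--Schwarz inequality, and by the pointwise bound $b(\chi;h)=\alpha(h)\log q_\chi + O_{\delta,h}(1)$ of Proposition~\ref{proposition first moment b(chi,h)} together with $q_\chi\leq q$, one has, for $q$ large in terms of $\delta$ and $h$,
$$\sum_{\chi\neq\chi_{0,q}}|E(\chi)|\ \ll_{\delta,h}\ \phi(q)\log_2 q\qquad\text{and}\qquad \max_{\chi\neq\chi_{0,q}}|E(\chi)|\ \ll_{\delta,h}\ \log q,$$
and in particular $\big|\sum_{\chi\neq\chi_{0,q}}E(\chi)\big|\ll_{\delta,h}\phi(q)\log_2 q$. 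Expanding $\prod_{j=1}^k\big(\alpha(h)\log q + E(\chi_j)\big)$ multinomially and summing over $(\chi_1,\dots,\chi_k)$ with all $\chi_j\neq\chi_{0,q}$ and $\chi_1\cdots\chi_k=\chi$, I would group the contributions by the set $S\subseteq\{1,\dots,k\}$ of indices carrying an $E$-factor. For $S=\emptyset$, an inclusion--exclusion over which $\chi_j$ are principal shows there are $\phi(q)^{k-1}(1+O(k/\phi(q)))$ admissible tuples, which yields the main term. For $1\leq |S|\leq k-1$, the $|S|$ characters indexed by $S$ may be chosen freely among nonprincipal characters while the remaining ones form a tuple of length $\geq 1$ with prescribed product (count $\asymp\phi(q)^{k-1-|S|}$), so the $E$-part factorises and is $\ll_{\delta,h}\big(\phi(q)\log_2 q\big)^{|S|}$; multiplying by $(\alpha(h)\log q)^{k-|S|}$, by the tuple count and by $\binom{k}{|S|}$, then summing over $|S|$, this batch of terms is $\ll_{\delta,h}\big(\exp\!\big(O_{\delta,h}(k\log_2 q/\log q)\big)-1\big)\,\phi(q)^{k-1}(\alpha(h)\log q)^k$, which in the range $k\leq\log q/\log_2 q$ is $\ll_{\delta,h}(k\log_2 q/\log q)\,\phi(q)^{k-1}(\alpha(h)\log q)^k$. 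Finally, the top term $|S|=k$ equals the $k$-fold convolution $f^{\ast k}(\chi)$ over the group of characters modulo $q$, where $f(\psi):=E(\psi)$ for $\psi\neq\chi_{0,q}$ and $f(\chi_{0,q}):=0$; by Young's inequality $\|f^{\ast k}\|_\infty\leq \|f\|_\infty\|f\|_1^{k-1}\ll_{\delta,h}\log q\,(\phi(q)\log_2 q)^{k-1}$, which is $O_{\delta,h}\big((\log_2 q/\log q)^{k-1}\big)$ times the main term and hence negligible for $k\geq 2$. Collecting the three contributions gives~\eqref{sumb1bk}.

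For~\eqref{equation average sum over zero one removed}, fix once and for all an even $\Phi\in\mathcal L^1(\R)$ supported in $[-1,1]$ with $\widehat\Phi\geq 0$ and $\widehat\Phi(0)>0$ (for instance $\Phi=\psi\star\psi(-\,\cdot\,)$ for a nonnegative smooth $\psi$ supported in $[-\tfrac12,\tfrac12]$), and set $L:=\log q-\log_2 q$. Since $h\geq 0$ and $\widehat\Phi\geq 0$ with $\widehat\Phi(0)=\sup\widehat\Phi$, one has the pointwise majorisation
$$ 1_{\,\gamma_{\chi_1}+\dots+\gamma_{\chi_k}=z}\ \leq\ \frac{1}{\widehat\Phi(0)}\,\widehat\Phi\!\Big(\frac{L}{2\pi}\big(z-\gamma_{\chi_1}-\dots-\gamma_{\chi_k}\big)\Big),$$
so the left-hand side of~\eqref{equation average sum over zero one removed} is at most $\widehat\Phi(0)^{-1}$ times the same sum with the linear constraint removed and this smooth weight inserted (absolute convergence of the resulting multiple sum follows from $h\geq 0$, $|\widehat\Phi|\leq\widehat\Phi(0)$ and $\sum_{\gamma_\chi}h(\gamma_\chi/2\pi)=b(\chi;h)<\infty$). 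Holding $\chi_3,\dots,\chi_k$ (each $\neq\chi_{0,q}$) and their zeros fixed, the inner double sum over $\chi_1,\chi_2$ with $\chi_1\chi_2=\chi\,\overline{\chi_3\cdots\chi_k}$ and over $\gamma_{\chi_1},\gamma_{\chi_2}$ is exactly $P\big(h;\chi\,\overline{\chi_3\cdots\chi_k},\,z-\gamma_{\chi_3}-\dots-\gamma_{\chi_k}\big)$, which by Proposition~\ref{proposition pair correlation} is $\ll_{\delta,h}\phi(q)(\log q)(1+\log q/L)\ll_{\delta,h}\phi(q)\log q$ with our choice of $L$, uniformly in the remaining data. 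Summing this over $\chi_3,\dots,\chi_k$ and their zeros, the result factorises as $\phi(q)\log q$ times $\big(\sum_{\chi\neq\chi_{0,q}}\sum_{\gamma_\chi}h(\gamma_\chi/2\pi)\big)^{k-2}=\big(\sum_{\chi\neq\chi_{0,q}}b(\chi;h)\big)^{k-2}$, and by Proposition~\ref{proposition first moment b(chi,h)} together with $|\beta_q(h)|\ll_{\delta,h}\log_2 q$ this last quantity is $\phi(q)^{k-2}(\alpha(h)\log q)^{k-2}\big(1+O_{\delta,h}(\log_2 q/\log q)\big)^{k-2}\ll_{\delta,h}\phi(q)^{k-2}(\alpha(h)\log q)^{k-2}$ in the range $k\leq\log q/\log_2 q$. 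Multiplying through and absorbing the fixed constant $\alpha(h)^{-1}$ gives~\eqref{equation average sum over zero one removed}.

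I expect the main difficulty to be the error bookkeeping in the first part: one must verify that every ``off-diagonal'' contribution loses at least a factor $\log_2 q/\log q$ per $E$-factor, uniformly enough that summing the $\binom{k}{|S|}$ such terms over $|S|$ costs only an overall factor $O_{\delta,h}(k\log_2 q/\log q)$. This is precisely where the hypothesis $k\leq\log q/\log_2 q$ is used, both here and (through the exponent $k-2$) in the pair-correlation step of the second part. The second part is otherwise routine once Proposition~\ref{proposition pair correlation} is available; the only genuine idea there is to peel off exactly two characters and to majorise the delta-type constraint by a nonnegative smoothed average, which is legitimate exactly because of the sign conditions $h\geq 0$ and $\widehat\Phi\geq 0$.
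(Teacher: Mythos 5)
Your proposal is correct and follows essentially the same route as the paper: for~\eqref{sumb1bk} you expand around the mean $\alpha(h)\log q$, control the fluctuation $E(\chi)=b(\chi;h)-\alpha(h)\log q$ in $\mathcal L^1$ via Cauchy--Schwarz and the variance bound of Lemma~\ref{lemmavarlogq} (plus the pointwise bound for the all-$E$ term, which the paper handles by keeping one $\mathcal L^\infty$ factor exactly as your Young's-inequality step does), and sum the binomial series using $k\leq \log q/\log_2 q$; for~\eqref{equation average sum over zero one removed} you peel off two characters and majorise the linear constraint by a nonnegative Fej\'er-type kernel at scale $L\asymp\log q$ so that Proposition~\ref{proposition pair correlation} applies, which is precisely the paper's argument with $\Phi_0(x)=\max(0,1-|x|)$. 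The only differences are organisational (telescoping identity and Cauchy--Schwarz inside the product versus your $S$-decomposition and Young's inequality), not mathematical.
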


\begin{proof} 

We first establish~\eqref{equation average sum over zero one removed}. The left hand side of this equation is
\begin{multline*}
  \leq\sum_{\substack{\chi_3,\ldots,\chi_k  \bmod q\\ \chi_3,\ldots,\chi_k \neq \chi_{0,q}  }}\sum_{\substack{\rho_{\chi_3}, \dots, \rho_{\chi_k} }}   h\Big(\frac{\gamma_{3}}{2\pi }\Big)\cdots  h\Big(\frac{\gamma_{k}}{2\pi }\Big)\times  \\  \sup_{{ z'}\in \mathbb R} \sup_{\chi \bmod q} \sum_{\substack{\chi_1,\chi_2 \neq \chi_{0,q} \\ \chi_1\chi_2 = \chi}} \sum_{\substack{\gamma_1,\gamma_2 }}   h\Big(\frac{\gamma_{1}}{2\pi } \Big) h\Big(\frac{\gamma_{2}}{2\pi } \Big) \widehat\Phi_0\Big(\frac {\log q}{3\pi} ({ z'}-\gamma_1-\gamma_2)\Big)\Big),    
\end{multline*}
where $\Phi_0(x):=\max(0,1-|x|)$.
 Here, we replaced the sum over $\chi=(\chi_3\cdots\chi_k)^{-1}$ with $z'=z-\gamma_3-\ldots-\gamma_k$ by the supremum over $z'$ and $\chi $, and deleted the condition $\gamma_1+\gamma_2=z'$ by adding the factor involving $\widehat\Phi_0$. 

The claimed bound follows from Proposition~\ref{proposition pair correlation} and~\eqref{equation pointwise b(chi) }.

We now move to~\eqref{sumb1bk}. By expanding the product $ a_1 \cdots a_k=(a_1-b_1+b_1)(a_2-b_2+b_2)\cdots (a_k-b_k+b_k)   $ we deduce the identity
$$ a_1 \cdots a_k - b_1\cdots b_k = \sum_{\substack{(\eps_1,\dots ,\eps_k)\subset \{0,1 \}^k \\ (\eps_1,\dots ,\eps_k) \neq (0,\dots,0)}} c(\eps_1)\cdots c(\eps_k), $$
 where 
 $$ c(\eps_j):= \begin{cases} a_j-b_j &\text{ if } \eps_j = 1 \\
 b_j &\text{ otherwise}.
 \end{cases} $$
We observe that for $\ell\geq 2$,
\begin{equation}\label{def+estCkq}
    C_{\ell}(q;\chi):=\big|\big\{(\chi_1,\ldots,\chi_{\ell}) \,:\, \chi_1 \cdots \chi_{\ell} = \chi, \, \chi_j\neq \chi_{0,q}\big\}\big|=(\phi(q)-1)^{\ell-1}+O(\phi(q)^{\ell-2}),
\end{equation}
and hence by setting $a_j:=b(\chi_j;h)$ and $b_j:=\alpha(h)\log q$ it follows that 
\begin{multline*}
   \sum_{\substack{\chi_1,\ldots,\chi_k  \bmod q\\ \chi_1,\ldots,\chi_k \neq \chi_{0,q} \\ \chi_1 \cdots \chi_k = \chi }} b(\chi_1;h) \cdots b(\chi_k;h)- C_k(q;\chi)(\alpha(h)\log q)^{k}\\ = \sum_{\varnothing \neq J\subset \{1,\dots,k\} }  (\alpha(h)\log q)^{k-|J|} \sum_{\substack{\chi_1,\ldots,\chi_k  \bmod q\\ \chi_1,\ldots,\chi_k \neq \chi_{0,q} \\ \chi_1 \cdots \chi_k = \chi }} \prod_{j\in J}(b(\chi_j;h)-\alpha(h)\log q).
    \end{multline*}
Now,
noting that $C_1(q;\chi) \in \{0,1 \}$, the triangle and Cauchy-Schwarz inequalities and the esimtate~\eqref{def+estCkq} imply that for any fixed $J\subsetneq \{ 1,\dots,k\}$,
\begin{align*}
 \Big|\sum_{\substack{\chi_1,\ldots,\chi_k  \bmod q\\ \chi_1,\ldots,\chi_k \neq \chi_{0,q} \\ \chi_1 \cdots \chi_k = \chi }} \prod_{j\in J}(b(\chi_j;h)-\alpha(h)\log q) \Big| &\ll
 (\phi(q)-1)^{k-|J|-1}
\prod_{j\in J}\Big( \sum_{ \chi_j\neq \chi_{0,q}  }\!\!\! |b(\chi_j;h)-\alpha(h)\log q| \Big)   \\
 & \leq \phi(q)^{k-\frac {|J|}2-1} \Big(\sum_{ \substack{\chi \neq \chi_{0,q}}} |b(\chi;h)-\alpha(h)\log q|^2\Big)^{\frac {|J|}2} \\
 & \ll \phi(q)^{k-1} \big( \alpha(h)\log_2 q+c_{\delta,h}\big)^{|J|},
 \end{align*}
 by Lemma~\ref{lemmavarlogq}, where 
 $c_{\delta,h}>0$ depends on $\delta$ and $h$. When $J= \{ 1,\dots,k\}$, this is $\ll \phi(q)^{k-1} ( \alpha(h)\log_2 q+c_{\delta,h})^{k-1} \cdot  (\alpha(h)\log q+c_{\delta,h})$, by the estimate~\eqref{equation pointwise b(chi) } and the trivial bound $\log q_{\chi} \leq \log q$.
 Consequently, we have the bound 
\begin{align*}
  \sum_{\substack{\chi_1,\ldots,\chi_k  \bmod q\\ \chi_1,\ldots,\chi_k \neq \chi_{0,q} \\ \chi_1 \cdots \chi_k = \chi }} & b(\chi_1;h) \cdots b(\chi_k;h)- C_k(q)(\alpha(h)\log q)^{k} \\ & \ll \phi(q)^{k-1}\sum_{j=1}^{k-1} \binom kj \big(\alpha(h)\log q\big)^{k-j}  (\alpha(h)\log_2 q+c_{\delta,h})^{j} \\
  &\hspace{3cm}\qquad+ \phi(q)^{k-1} \big( \alpha(h)\log_2 q+c_{\delta,h}\big)^{k-1}  (\alpha(h)\log q+c_{\delta,h}) \\
  & = \phi(q)^{k-1} (\alpha(h) \log q)^k \Big(\Big(1+\frac{\log_2 q+c_{\delta,h} \alpha(h)^{-1}}{\log q} \Big)^k-1+O \Big( \Big( 2\frac{\log_2 q}{\log q} \Big)^{k-1}\Big)\Big)
  \\ 
& 
\ll  k \frac{\log_2 q+c_{\delta,h} \alpha(h)^{-1}}{\log q}  \phi(q)^{k-1} (\alpha(h) \log q)^k \Big(1+\frac{\log_2 q+c_{\delta,h} \alpha(h)^{-1}}{\log q} \Big)^{k-1}.
  \end{align*}
 The claim follows.
\end{proof}

  \section{The probabilistic model}

The two goals of this section is to show that under GRH$_{\widehat \eta}$ (see the beginning of Section~\ref{section explicit formula} for the definition of this hypothesis), the function $M_n(\e^t,q;\eta)$ has a limiting distribution, and moreover we can compute its moments. We begin with the following lemma.

\label{section limiting distributions}
\begin{lemma}
For $n,m\in \mathbb N$, 
let $\bm E=( u_{\mu,j},v_{\mu,j})_{\substack{ 1\leq \mu \leq m \\ 1\leq j\leq n}}$ be an array of functions $u_{\mu,j},v_{\mu,j}:\R_{\geq 0} \rightarrow \R $ such that $\bm E$ has a limiting distribution in $\mathbb R^{2mn}$. Then, the function 
$$ F(t):= \Re e\Big(\sum_{\mu=1}^m \prod_{j=1}^n \big(u_{\mu,j}(t)+iv_{\mu,j}(t)\big)\Big) $$
has a limiting distribution in $\mathbb R$.
\label{lemme distribution limite de somme et produit}
\end{lemma}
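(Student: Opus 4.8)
The plan is to reduce the statement to the defining property of a limiting distribution (Definition~\ref{definition limiting distribution}), exploiting the fact that the map sending the array $\bm E$ to $F$ is given by a fixed polynomial (hence continuous) function $\R^{2mn}\to\R$. Write $P:\R^{2mn}\to\R$ for the polynomial
$$ P\big((x_{\mu,j},y_{\mu,j})_{\mu,j}\big):= \Re e\Big(\sum_{\mu=1}^m \prod_{j=1}^n (x_{\mu,j}+iy_{\mu,j})\Big), $$
so that $F(t)=P(\bm E(t))$. Let $\mu_{\bm E}$ denote the limiting distribution of $\bm E$ on $\R^{2mn}$, and let $\mu_F := P_*\mu_{\bm E}$ be the pushforward, which is a probability measure on $\R$. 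The claim is that $\mu_F$ is the limiting distribution of $F$; by Definition~\ref{definition limiting distribution} this means showing that for every bounded continuous $f:\R\to\R$,
$$ \lim_{T\to\infty}\frac 1T\int_0^T f(F(t))\,\d t = \int_\R f\,\d\mu_F. $$

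First I would fix such an $f$. Then $g:=f\circ P:\R^{2mn}\to\R$ is continuous, but crucially \emph{not} bounded in general (since $P$ itself is unbounded when $n\geq 1$), so we cannot directly apply the definition of limiting distribution of $\bm E$ to the test function $g$. The standard fix is a truncation argument: for $R>0$ let $\chi_R:\R^{2mn}\to[0,1]$ be a continuous cutoff equal to $1$ on the ball of radius $R$ and supported in the ball of radius $2R$. Then $g_R:= g\cdot\chi_R$ is bounded and continuous, so by hypothesis
$$ \lim_{T\to\infty}\frac 1T\int_0^T g_R(\bm E(t))\,\d t = \int_{\R^{2mn}} g_R\,\d\mu_{\bm E}. $$
The error committed in replacing $g$ by $g_R$ is controlled by $\|f\|_\infty$ times the proportion of time (resp. the $\mu_{\bm E}$-mass) spent outside the ball of radius $R$; the latter tends to $0$ as $R\to\infty$ since $\mu_{\bm E}$ is a probability measure, and the former is handled by applying the limiting-distribution hypothesis once more to a bounded continuous function that dominates $1_{\|\cdot\|\geq R}$ (for instance $1-\chi_R$), giving a $\limsup_T$ of the time-average of $1_{\|\bm E(t)\|\geq R}$ bounded by $\mu_{\bm E}(\{\|\cdot\|\geq R/2\})$, uniformly in $T$. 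Combining, one sends $T\to\infty$ and then $R\to\infty$ and uses $\int g_R\,\d\mu_{\bm E}\to\int g\,\d\mu_{\bm E}=\int f\,\d\mu_F$ by dominated convergence (dominating function $\|f\|_\infty$). This yields exactly~\eqref{equation definition limiting distribution} for $F$ with limiting measure $\mu_F$.

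The main obstacle, and the only place where some care is needed, is precisely this unboundedness of $P$: one must verify that the "tails" of the time-averages are uniformly small, which is not automatic from convergence of time-averages of bounded functions alone — it genuinely uses that the limiting object is a probability measure (total mass $1$, no escape of mass). Everything else is routine: continuity of $P$ is clear, and the pushforward $P_*\mu_{\bm E}$ is manifestly a probability measure on $\R$. I would also remark that in our application the array $\bm E$ will consist of truncated sums over zeros of Dirichlet $L$-functions whose joint limiting distribution is established by Kronecker–Weyl-type equidistribution, so the hypothesis of the lemma will be available; but the lemma itself is a purely soft statement about limiting distributions and continuous maps.
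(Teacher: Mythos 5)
Your approach is the same as the paper's: push forward the limiting measure of $\bm E$ under the polynomial map $P$ (the paper calls it $G$) and verify Definition~\ref{definition limiting distribution} for $F=P\circ\bm E$. However, the ``main obstacle'' you identify is illusory: for a bounded continuous $f:\R\to\R$, the composition $g=f\circ P$ satisfies $|g(x)|=|f(P(x))|\leq \|f\|_{\infty}$ for all $x$, so $g$ is bounded and continuous regardless of the unboundedness of $P$ itself, and you may apply the defining property of $\mu_{\bm E}$ directly to the test function $g$. Your truncation argument is therefore superfluous (and contains a small slip: $1-\chi_R$ dominates $1_{\|\cdot\|\geq 2R}$, not $1_{\|\cdot\|\geq R}$; you would need $1-\chi_{R/2}$ to get the bound by $\mu_{\bm E}(\{\|\cdot\|\geq R/2\})$ that you state), though it does reach the correct conclusion. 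With the observation that $f\circ P$ is already bounded, the proof collapses to the one-line verification $\lim_{T}\frac1T\int_0^T f(F(t))\,\d t=\lim_{T}\frac1T\int_0^T (f\circ P)(\bm E(t))\,\d t=\int (f\circ P)\,\d\mu_{\bm E}=\int f\,\d(P_*\mu_{\bm E})$, which is exactly what the paper intends by ``one can check.''
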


\begin{proof}
This follows from the definition of limiting distribution. Defining
$G: \R^{2mn} \rightarrow \R$ by $G((x_{\mu,j},y_{\mu,j})_{\mu,j}):= \Re e(\sum_{\mu=1}^m\prod_{j=1}^n (x_{\mu,j}+iy_{\mu,j}))$, one can check that the measure defined by
$$ \mu_F(f):= \mu_{\bm E}( f\circ G ) $$
is a limiting distribution for $F$.  In other words, $\mu_F$ is the pushforward measure $G_* \mu_{\bm E}$. 
\end{proof}
We can now show the existence of the limiting distribution. 
\begin{lemma}
Let  $\delta>0$, $\eta\in \S_{\delta}$, and assume GRH$_{\widehat \eta}$. Then for any $n\geq 2$ and $q\geq 3$, $M_{n}(\e^t,q;\eta)$ has a limiting distribution.  
\end{lemma}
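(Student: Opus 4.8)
The strategy is to reduce to Lemma~\ref{lemme distribution limite de somme et produit} by writing $M_n(\e^t,q;\eta)$, via the orthogonality identity~\eqref{eqMn} and the explicit formula of Lemma~\ref{lemma:explicit formula}, as the real part of a finite sum of products of functions of $t$ that form an array admitting a joint limiting distribution. First I would start from~\eqref{eqMn}, which expresses $M_n(\e^t,q;\eta)$ as $\phi(q)^{-n}$ times a sum over tuples $(\chi_1,\dots,\chi_n)$ of non-principal characters with $\chi_1\cdots\chi_n=\chi_{0,q}$ of the product $\psi_\eta(\e^t,\chi_1)\cdots\psi_\eta(\e^t,\chi_n)$. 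Since there are only finitely many such tuples for fixed $q$, this is a finite sum of products, which is the shape required by Lemma~\ref{lemme distribution limite de somme et produit} (with $m$ = number of tuples, and the $j$-th factor in the $\mu$-th product being $\psi_\eta(\e^t,\chi_j^{(\mu)})$).

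**Reducing to the zeros.** Next, by Lemma~\ref{lemma:explicit formula}, under GRH$_{\widehat\eta}$ (which forces $\rho_\chi=\tfrac12+i\gamma_\chi$ with $\gamma_\chi\in\R$, or else $\widehat\eta$ vanishes at the corresponding point), we have
$$\psi_\eta(\e^t,\chi)=-\sum_{\gamma_\chi}\widehat\eta\Big(\frac{\gamma_\chi}{2\pi}\Big)\e^{i\gamma_\chi t}+O_{\delta,\eta}(\e^{-t/2}\log q).$$
The error term tends to $0$ as $t\to\infty$ and thus does not affect the limiting distribution (a function that is $o(1)$ has limiting distribution $\delta_0$, and adding such a perturbation to a function with a limiting distribution leaves the limiting distribution unchanged — this can be checked directly from Definition~\ref{definition limiting distribution} using uniform continuity of bounded continuous $f$ on compact sets together with the boundedness of $\psi_\eta(\e^t,\chi)$ from~\eqref{leqlogq}). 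So it suffices to show that the truncated sums over zeros, and ultimately the whole family of functions $t\mapsto \e^{i\gamma_\chi t}$ (equivalently their real and imaginary parts $\cos(\gamma_\chi t),\sin(\gamma_\chi t)$, over all $\chi\neq\chi_{0,q}$ and all $\gamma_\chi$), admit a joint limiting distribution in the appropriate (infinite-dimensional, then truncated to finite-dimensional) sense.

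**The main point: Kronecker–Weyl.** The heart of the argument is the classical Kronecker–Weyl equidistribution theorem: for any finite set of real frequencies $\gamma_1,\dots,\gamma_N$, the curve $t\mapsto(\e^{i\gamma_1 t},\dots,\e^{i\gamma_N t})$ is equidistributed in a sub-torus of $(\R/\Z)^N$ (the closure of the one-parameter subgroup), with respect to its Haar measure; hence $(\cos\gamma_j t,\sin\gamma_j t)_{j\le N}$ has a limiting distribution on $\R^{2N}$. One then passes from finite truncations to the full (absolutely convergent, by $\eta\in\S_\delta$ and the bound~\eqref{condiavecdelta} together with~\eqref{estNTchi}) sums over zeros: the tail of the sum over $|\gamma_\chi|>Y$ is $O_{\delta,\eta}(\text{something}\to0\text{ as }Y\to\infty)$ uniformly in $t$, so the array of truncated partial sums converges uniformly (in $t$, for all $t$) to $\psi_\eta(\e^t,\chi)$ as the truncation grows, and a standard approximation argument promotes the limiting distributions of the truncations to a limiting distribution of the full array $\bm E=(\psi_\eta(\e^t,\chi_j^{(\mu)}))$, viewed as real and imaginary parts. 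Feeding this into Lemma~\ref{lemme distribution limite de somme et produit} gives that $M_n(\e^t,q;\eta)$, being $\phi(q)^{-n}\Re e$ of a finite sum of products of these, has a limiting distribution, which we denote $G_n(q;\eta)$.

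**Anticipated obstacle.** The only genuinely delicate point is the passage from finite truncations to the infinite sum over zeros: one must argue that the limiting measures of the truncated arrays converge weakly to a limiting measure of the full array, and that this limiting measure is indeed a limiting distribution in the sense of Definition~\ref{definition limiting distribution} (not merely a subsequential weak limit of empirical measures). This is handled by a standard $3\eps$-argument — uniform-in-$t$ tail control gives uniform approximation, which interchanges with the time-average and the test function $f$ — but it requires being careful that the tail bound is uniform in $t\ge0$, which follows from $\bigl|\sum_{|\gamma_\chi|>Y}\widehat\eta(\gamma_\chi/2\pi)\e^{i\gamma_\chi t}\bigr|\le\sum_{|\gamma_\chi|>Y}\widehat\eta(\gamma_\chi/2\pi)$ and summation by parts via~\eqref{estNTchi} and~\eqref{condiavecdelta}. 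Everything else (the explicit-formula input, the finiteness of the character sum, and the application of the two preceding lemmas) is routine.
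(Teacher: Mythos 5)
Your proposal is correct and follows essentially the same route as the paper: the paper likewise starts from \eqref{eqMn}, writes $M_n(\e^t,q;\eta)=\sum_{\mu}\prod_j(u_{\mu,j}(t)+iv_{\mu,j}(t))$ with $u_{\mu,j},v_{\mu,j}$ the real and imaginary parts of $\psi_\eta(\e^t,\psi_{\mu,j})$ given by the explicit formula, and then concludes via Lemma~\ref{lemme distribution limite de somme et produit}. The only difference is that where you re-derive the existence of a joint limiting distribution for the array $\bm E$ from scratch (Kronecker--Weyl on finite truncations plus uniform-in-$t$ tail control via \eqref{condiavecdelta} and \eqref{estNTchi}), the paper simply invokes \cite[Theorem 1.4]{ANS14}, whose hypotheses are verified by exactly the estimates you use; your sup-norm convergence of the truncations is in fact stronger than the $B^2$-almost-periodicity that result requires, so your self-contained argument is valid.
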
 

\begin{proof}
In the expression~\eqref{eqMn}, we order the set of vectors of characters  $(\chi_1,\dots, \chi_n) $ modulo $q$ such that $\chi_j\neq \chi_{0,q}$, $\chi_1\dots \chi_n  = \chi_{0,q}$, as $\{(\psi_{\mu,1},\dots, \psi_{\mu,n}) : 1\leq \mu\leq C_n(q)
\}$,
where $C_n(q)$ is defined in \eqref{def+estCkq}.
It follows that
$$ 
M_n(\e^t,q;\eta) 
= \sum_{\mu=1}^m \prod_{j=1}^n (u_{\mu,j} (t) +iv_{\mu,j} (t) ),
$$
where $m=C_n(q)$, $u_{\mu,j} (t):=\Re e(\psi_{\eta}(\e^t,\psi_{\mu,j}))$, $v_{\mu,j} (t):=\Im m(\psi_{\eta}(\e^t,\psi_{\mu,j}))$. By Lemma~\ref{lemma:explicit formula}, we have that 
$$ u_{\mu,j}(t) = -\Re e\Big(\sumb_{\gamma_{\psi_{\mu,j}} } \e^{ i t \gamma_{\psi_{\mu,j}} }  \widehat \eta\Big(\frac{\gamma_{\psi_{\mu,j}}}{2\pi} \Big)\Big) +O(\e^{-\frac t2} \log q);$$
$$v_{\mu,j}(t) = -\Re e\Big(i\sumb_{\gamma_{\psi_{\mu,j}} } \e^{ i t \gamma_{\psi_{\mu,j}} }  \widehat \eta\Big(\frac{\gamma_{\psi_{\mu,j}}}{2\pi} \Big)\Big) +O(\e^{-\frac t2} \log q),  $$ 
where $\rho_{\psi_{\mu,j}}= \frac 12+i \gamma_{\psi_{\mu,j}}$ is running over the non-trivial zeros $\rho_{\psi_{\mu,j}}$ of $L(s,\psi_{\mu,j})$, and where the $\flat$ over the sum indicates that we are summing over zeros $\rho_{\psi_{\mu,j}}$ for which $\widehat \eta ( (\rho_{\psi_{\mu,j}}-\frac 12)/(2\pi i) ) \neq 0$. In particular, since we are assuming GRH$_{\widehat \eta}$, we are only summing over zeros on the critical line; in other words, $\gamma_{\psi_{\mu,j}}\in\mathbb R$.
We now apply~\cite[Theorem 1.4]{ANS14} (see also~\cite{D20}), whose conditions are satisfied thanks to the fact that $\eta\in \S_\delta$ and the Riemann-von Mangoldt formula~\eqref{estNTchi}. It follows that $\bm E(t):=( u_{\mu,j}(t),v_{\mu,j}(t))_{\substack{ 1\leq \mu \leq m \\ 1\leq j\leq n}}$ admits a limiting distribution on $\mathbb R^{2mn}$, and the same is true for $M_n(\e^t,q;\eta)$ by Lemma~\ref{lemme distribution limite de somme et produit}.
\end{proof}  
Our next goal is to give an expression for the moments of $H_n(q;\eta)$.

\begin{lemma}
\label{lemma moments of limiting distribution as limit of V(T)}
  Let $\delta>0$ and $\eta \in \S_\delta$, and assume $GRH_{\widehat\eta}$. Let $s\in \mathbb N$, $n\geq 2,$ $q\geq 3$, and let $H_n(q;\eta)$ be the random variable associated to the limiting distribution of $M_n(\e^t,q;\eta)$. 
Then we have the formulas
$$ \E[ H_n(q;\eta)^s] = \lim_{T\rightarrow \infty}  \frac {1}T \int_{0}^{T} M_n(\e^t,q;\eta)^s\d t; $$
$$ \E\big[ (H_n(q;\eta)-\mathbb E[H_n(q;\eta)])^s\big] = \lim_{T\rightarrow \infty}  \frac {1}T \int_{0}^{T} \big(M_n(\e^t,q;\eta)-\mathbb E[H_n(q;\eta)]\big)^s\d t. $$ 
\label{lemma formula for moments of limiting distribution}
\end{lemma}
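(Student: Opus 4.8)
The plan is to deduce both identities directly from the defining property~\eqref{equation definition limiting distribution} of the limiting distribution. The only obstacle is that the monomials $x\mapsto x^s$ are unbounded, whereas~\eqref{equation definition limiting distribution} applies only to bounded continuous test functions; this will be circumvented by observing that, for each fixed $q\ge 3$, the function $t\mapsto M_n(\e^t,q;\eta)$ is bounded uniformly in $t\ge 0$. Indeed, this follows from~\eqref{equation pointwise bound M_n} for $q$ large, and directly from Lemma~\ref{lemma:explicit formula} together with~\eqref{condiavecdelta} and the Riemann--von Mangoldt formula~\eqref{estNTchi} (inserted into~\eqref{eqMn}) for all $q$. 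Fix such a bound $M_0=M_0(q,\delta,\eta)$, so that $|M_n(\e^t,q;\eta)|\le M_0$ for all $t\ge 0$, and let $G:=G_n(q;\eta)$ be the random variable attached to the limiting distribution $\mu$ of $M_n(\e^t,q;\eta)$ (whose existence was established in the previous lemma).

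The first step is to show that $\mu$ is supported in $[-M_0,M_0]$. For $R>M_0$, pick a continuous $f_R:\mathbb R\to[0,1]$ with $f_R\equiv 0$ on $[-M_0,M_0]$ and $f_R\equiv 1$ outside $[-R,R]$; then $f_R(M_n(\e^t,q;\eta))=0$ for every $t$, so~\eqref{equation definition limiting distribution} gives $\mu(\mathbb R\setminus[-R,R])\le\int_{\mathbb R}f_R\,\d\mu=0$. Letting $R\downarrow M_0$ shows $\mu([-M_0,M_0])=1$; in particular $G$ has finite moments of all orders. Now fix $s\in\mathbb N$ and let $\phi_s:\mathbb R\to\mathbb R$ be any bounded continuous function agreeing with $x\mapsto x^s$ on $[-M_0,M_0]$ (for instance the $s$-th power of the truncation $x\mapsto\max(-M_0,\min(M_0,x))$). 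Then $\phi_s(M_n(\e^t,q;\eta))=M_n(\e^t,q;\eta)^s$ identically in $t$, while $\int_{\mathbb R}\phi_s\,\d\mu=\int_{\mathbb R}x^s\,\d\mu=\E[G^s]$ since $\mu$ is carried by $[-M_0,M_0]$; applying~\eqref{equation definition limiting distribution} to the bounded continuous function $\phi_s$ yields the first formula.

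For the centered moments, I would first invoke the case $s=1$ just established to conclude that $\E[G]=\lim_{T\to\infty}T^{-1}\int_0^T M_n(\e^t,q;\eta)\,\d t$ is a finite real number with $|\E[G]|\le M_0$, and then repeat the previous paragraph with the bounded continuous function $\psi_s(x):=\big(\max(-M_0,\min(M_0,x))-\E[G]\big)^s$, which agrees with $(x-\E[G])^s$ on $[-M_0,M_0]$; this gives the second formula. The whole argument has no genuine difficulty beyond this truncation device: the only substantive input is the uniform-in-$t$ bound of Corollary~\ref{corollary sup bound}, which compactifies the support of $\mu$ and thereby reduces both statements to the definition of limiting distribution, and I would not write out the elementary verifications ($\phi_s,\psi_s$ bounded continuous, measure-theoretic limits) in more detail than above.
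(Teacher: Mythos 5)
Your argument is correct and follows essentially the same route as the paper: both use the uniform bound from Corollary~\ref{corollary sup bound} to conclude that the limiting distribution is compactly supported, and then apply the definition~\eqref{equation definition limiting distribution} to a bounded continuous function that agrees with $x^s$ (resp.\ $(x-\E[G])^s$) on the support. The only difference is cosmetic — you spell out the verification that the measure is carried by $[-M_0,M_0]$ and use a clamped power instead of the paper's explicit cut-off $x^s(B+1-|x|)$ — so there is nothing further to add.
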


\begin{proof}
The proof is direct since $M(\e^t,q;\eta)$ is bounded by Corollary~\ref{corollary sup bound}, and thus $H_n(q;\eta)$ is compactly supported. Hence, with $B(q;\eta):= \sup_{t\geq 0}|M(\e^t,q;\eta)| $, one can take 
$$f(x)= \begin{cases}
x^s 
&\text{ if } |x|\leq B(q;\eta); \\
x^s (B(q;\eta)+1-|x|) &\text{ if } B(q;\eta)<|x|\leq B(q;\eta) +1 ;\\ 
0  &\text{ if } |x|> B(q;\eta) +1
\end{cases}$$ in~\eqref{equation definition limiting distribution}, and deduce that 
$$ \lim_{T\rightarrow \infty} \frac 1T \int_{ t\leq T} M(\e^t,q;\eta)^s \d t = \int_{|x|\leq B(q;\eta)} x^s \d \mu_{H_n(q;\eta)}(x)=  \E[H_n(q;\eta)^s]. $$
The proof is similar for centered moments.
\end{proof}

  In order to give an explicit description of the moments of $H_n(q;\eta)$, we apply the explicit formula in Lemma~\ref{lemma:explicit formula}. The following notation will simplify the resulting expression.
  For $s,n\in \mathbb N$, we introduce the set of arrays of characters
  \begin{equation}\label{defXsn}
      X_{s,n} :=\left\{  \bchi=(\chi_{\mu,j})_{\substack{1\leq \mu\leq s\\ 1\leq j\leq n}} :\,  { 
   \chi_{\mu,j}  \neq \chi_{0,q} \, (1\leq j\leq n),\,\, \atop  \chi_{\mu,1}\cdots \chi_{\mu,n} =\chi_{0,q}\,\, ( 1\leq \mu\leq s)}
      \right\}
  \end{equation}
For $\bchi\in X_{s,n} $, we define the set of associated arrays of non-trivial zeros
\begin{equation}
\Gamma(\bchi):=\Big\{  \bfgamma=( \gamma_{\chi_{\mu,j}})_{\substack{1\leq \mu\leq s\\ 1\leq j\leq n}}  \,:\quad L(\tfrac 12+i\gamma_{\chi_{\mu,j}},\chi_{\mu,j})=0, |\Im m(\gamma_{\chi_{\mu,j}})|< \tfrac 12\Big\},
\label{equation definition Gamma(chi)}
\end{equation}
as well as the subset 
$$\Gamma_0(\bchi):=\Big\{ \bfgamma \in \Gamma(\bchi)\,:\quad 
\sum_{\substack{1\leq \mu\leq s\\ 1\leq j\leq n}} \gamma_{\chi_{\mu,j}}=0\Big\}.$$
For $\bm \gamma \in \Gamma(\bchi)$, we will use the notations
\begin{equation}\label{defetabfgamma}
    \widehat \eta(\bm \gamma):= \prod_{\substack{1\leq \mu\leq s\\ 1\leq j\leq n}} \widehat \eta\Big( \frac{\gamma_{\chi_{\mu,j}}}{2\pi }\Big); \qquad \Sigma_{\bm \gamma} :=\frac 1{2\pi}\sum_{\substack{1\leq \mu\leq s\\ 1\leq j\leq n}} \gamma_{\chi_{\mu,j}} . 
\end{equation}
Note that under GRH$_{\widehat \eta}$, for each $\bm \gamma \in \Gamma(\bm \chi)$ we have that either $\bm \gamma \in \R^{sn}$, or $\widehat \eta(\bm \gamma)=0$.

  \begin{lemma}
\label{lemma explicit formula for moments of nu n}
Let  $\delta>0$, $\eta \in \S_\delta$, $\Phi\in \mathcal L^1(\mathbb R)$, $\Phi$ even, and assume GRH$_{\widehat \eta}$. For $s  \in \mathbb N$, $n\geq 2,$  $T\geq 1 $ and $q\geq 3$, we have the formula  
\begin{multline*}
\frac {1}{T} \int_0^\infty \Phi\Big(\frac tT\Big)   M_n(\e^t,q;\eta)^s 
\d t 
=
\frac {(-1)^{sn}}{2\phi(q)^{sn}}\!\!\! \sum_{\substack{\bchi\in X_{s,n}}}  \sum_{\substack{ \bm \gamma \in \Gamma(\bchi) }} \widehat \eta(\bm \gamma) \widehat \Phi\big(T\Sigma_{\bm \gamma }\big)+O_\Phi\Big( \frac{( K_{\delta,\eta}\log q)^{sn}}{T\phi(q)^s} \Big),
\end{multline*}
where $K_{\delta,\eta}>0$ is a constant. 
In particular,
  \begin{align*}
\mathbb E[H_n(q;\eta)^s] &=    \frac{(-1)^{sn}} {\phi(q)^{sn}}\sum_{\substack{\bchi\in X_{s,n}}}  \sum_{\substack{{\bm \gamma}\in \Gamma_0(\bchi)}}\widehat \eta(\bm \gamma).
\end{align*}  
\end{lemma}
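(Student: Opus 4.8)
The plan is to start from the decomposition~\eqref{eqMn} of $M_n(\e^t,q;\eta)$ as a sum over tuples of non-principal characters $(\chi_1,\dots,\chi_n)$ with $\chi_1\cdots\chi_n=\chi_{0,q}$, raise this to the $s$-th power, and then substitute the explicit formula of Lemma~\ref{lemma:explicit formula} for each factor $\psi_\eta(\e^t,\chi_{\mu,j})$. First I would write $M_n(\e^t,q;\eta)^s$ as $\phi(q)^{-sn}$ times a sum over arrays $\bchi\in X_{s,n}$ of $\prod_{\mu,j}\psi_\eta(\e^t,\chi_{\mu,j})$; here the index set $X_{s,n}$ is exactly~\eqref{defXsn}, since raising the single sum to the power $s$ introduces the outer index $1\le\mu\le s$. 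Into each factor I substitute~\eqref{equation explicit formula psi chi}, namely $\psi_\eta(\e^t,\chi_{\mu,j}) = -\sum_{\rho}\e^{(\rho-\frac12)t}\widehat\eta(\frac{\rho-\frac12}{2\pi i}) + O_{\delta,\eta}(\e^{-t/2}\log q)$. Under GRH$_{\widehat\eta}$ the zeros contributing to the main term have $\rho=\frac12+i\gamma$ with $\gamma\in\R$ (or else $\widehat\eta$ vanishes there), so the product of the $sn$ main terms becomes $(-1)^{sn}\sum_{\bm\gamma\in\Gamma(\bchi)}\widehat\eta(\bm\gamma)\,\e^{i t\sum_{\mu,j}\gamma_{\chi_{\mu,j}}} = (-1)^{sn}\sum_{\bm\gamma\in\Gamma(\bchi)}\widehat\eta(\bm\gamma)\,\e^{2\pi i t\Sigma_{\bm\gamma}}$, using the notation of~\eqref{defetabfgamma}.

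Next I would insert this into $\frac1T\int_0^\infty\Phi(t/T)(\cdot)\,\d t$ and integrate term by term. The main term integrates to $\frac1T\int_0^\infty\Phi(t/T)\e^{2\pi i t\Sigma_{\bm\gamma}}\,\d t$; since $\Phi$ is even and integrable, extending the integral to all of $\R$ (at the cost of a negligible tail, or by noting $\Phi$ may be taken even so the half-line integral relates to the full transform) this equals $\widehat\Phi(T\Sigma_{\bm\gamma})$ up to the normalization, which is where the factor $\frac12$ in the displayed formula comes from. The absolute convergence needed to interchange sum and integral is guaranteed by $\eta\in\S_\delta$: the bound~\eqref{condiavecdelta} on $\widehat\eta$ together with the Riemann–von Mangoldt formula~\eqref{estNTchi} makes $\sum_{\rho_\chi}|\widehat\eta(\frac{\rho_\chi-\frac12}{2\pi i})|$ converge, hence so does the $sn$-fold product sum over $\Gamma(\bchi)$, and the outer sum over $\bchi\in X_{s,n}$ is finite. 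The error terms arise from the cross terms in the product where at least one factor contributes its $O(\e^{-t/2}\log q)$ piece; bounding the remaining $\psi_\eta$-factors by $\ll\log q$ via~\eqref{leqlogq}, each such term contributes $\ll(C_{\delta,\eta}\log q)^{sn}\e^{-t/2}$, and $\frac1T\int_0^\infty\Phi(t/T)\e^{-t/2}\,\d t\ll_\Phi 1/T$; summing over the $O(\phi(q)^{sn-s})$ arrays in $X_{s,n}$ gives the stated error $O_\Phi((K_{\delta,\eta}\log q)^{sn}/(T\phi(q)^s))$.

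For the ``in particular'' statement, I would take $\Phi$ a fixed admissible even function with $\int_0^\infty\Phi>0$ and let $T\to\infty$. By Lemma~\ref{lemma formula for moments of limiting distribution} (equivalently Lemma~\ref{lemma moments of limiting distribution as limit of V(T)}), the left-hand side tends to $(\int_0^\infty\Phi)\cdot\E[G_n(q;\eta)^s]$ after the appropriate normalization; on the right-hand side the error term vanishes, and in the main term $\widehat\Phi(T\Sigma_{\bm\gamma})\to\widehat\Phi(0)=\int_\R\Phi$ when $\Sigma_{\bm\gamma}=0$ (i.e.\ $\bm\gamma\in\Gamma_0(\bchi)$) while $\widehat\Phi(T\Sigma_{\bm\gamma})\to 0$ otherwise by Riemann–Lebesgue, the passage to the limit being justified by dominated convergence using the absolute convergence established above together with $\widehat\Phi$ bounded. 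Matching the normalizations $\frac12\cdot 2=1$ and dividing out $\int_0^\infty\Phi$ (note $\int_\R\Phi=2\int_0^\infty\Phi$ by evenness, accounting for the disappearance of the $\frac12$) yields $\E[G_n(q;\eta)^s]=(-1)^{sn}\phi(q)^{-sn}\sum_{\bchi\in X_{s,n}}\sum_{\bm\gamma\in\Gamma_0(\bchi)}\widehat\eta(\bm\gamma)$.

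The main obstacle I anticipate is the careful bookkeeping of which zeros actually appear. Under GRH$_{\widehat\eta}$ rather than full GRH one must consistently restrict to zeros with $\widehat\eta(\frac{\rho-\frac12}{2\pi i})\neq 0$, on which $\Re\rho=\frac12$; the notation $\Gamma(\bchi)$ in~\eqref{equation definition Gamma(chi)} already encodes $|\Im(\gamma_{\chi_{\mu,j}})|<\frac12$, and one checks that outside this the coefficient $\widehat\eta(\bm\gamma)$ vanishes, so the remark after~\eqref{defetabfgamma} is exactly what makes the sum over $\Gamma(\bchi)$ legitimate as a sum of real frequencies. A secondary technical point is justifying the interchange of the (infinite) sum over $\bm\gamma$ with the $t$-integral and with the limit $T\to\infty$: this is where the precise decay~\eqref{condiavecdelta} built into $\S_\delta$ is essential, since it is exactly strong enough (being summable against the $\log$-density of zeros from~\eqref{estNTchi}) to give absolute convergence and hence license dominated convergence. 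None of the individual steps is deep, but the constants and the factor-of-two normalizations must be tracked with care.
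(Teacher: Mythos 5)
Your proposal is correct and follows essentially the same route as the paper, which runs the identical computation in the reverse direction: it starts from the double sum over zeros, writes $\widehat \Phi\big(T\Sigma_{\bm \gamma}\big)$ as $\frac 1T\int_{\R}\Phi(t/T)\e^{-2\pi i t\Sigma_{\bm\gamma}}\d t$, splits $\int_{\R}=\int_0^{\infty}+\int_{-\infty}^0$, and identifies each half with $\frac 1T\int_0^{\infty}\Phi(t/T)M_n(\e^t,q;\eta)^s\d t$ via the explicit formula (the conjugated version~\eqref{equation explicit formual conjuguate} for one half, the plain one after $u=-t$ for the other), then specializes to $\Phi_0=1_{[-1,1]}$ for the limit statement. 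The one point you should state more carefully is the factor $\tfrac 12$: for an individual $\bm\gamma$ the half-line integral is \emph{not} half of $\widehat\Phi(T\Sigma_{\bm\gamma})$ and there is no ``negligible tail''; the identity only holds after summing over all $\bm\gamma$ and $\bchi\in X_{s,n}$, where the conjugate symmetry of the character arrays and zero sets (equivalently, the reality of $M_n(\e^t,q;\eta)^s$) makes the two half-line contributions equal.
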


\begin{proof}
By Lemma~\ref{lemma:explicit formula}, we have that, for any non-principal character $\chi$ of modulus $q$ and $t\geq 0$,

\begin{equation}
 \psi_{\eta}(\e^t,\chi) = -\sum_{\gamma_{\chi}} \e^{ i\gamma_{\chi}  t} \widehat \eta\Big(\frac{\gamma_{\chi}}{2\pi}\Big)+ O(\e^{-\frac t2} \log q), 
 \label{equation explicit formula before conjugation}
\end{equation}
where the sum is taken over the zeros $\rho_{\chi}=\frac 12+i\gamma_{\chi}$ of $L(s,\chi)$. In particular, under GRH$_{\widehat \eta}$, either $\gamma_{\chi}\in \mathbb R$ or $ \widehat \eta(\frac{\gamma_{\chi}}{2\pi})=0$. 
Taking complex conjugates, it follows that 
\begin{equation}
 \psi_{\eta}(\e^t,\overline{\chi}) = -\sum_{\gamma_{\chi}} \e^{-  i\gamma_{\chi}  t} \widehat \eta\Big(\frac{\gamma_{\chi}}{2\pi}\Big)+ O(\e^{-\frac t2} \log q). 
 \label{equation explicit formual conjuguate}
\end{equation}
 We see that 
\begin{align*}
 \frac {(-1)^{sn}}{\phi(q)^{sn}} & \sum_{\substack{ \bchi\in X_{s,n}}}   \sum_{\substack{ \bm \gamma \in \Gamma(\bchi)}} \widehat \eta(\bm \gamma) \widehat \Phi(T\Sigma_{\bm \gamma})  
 = \frac {1}{T} \int_\R \Phi\Big(\frac tT\Big) \frac {(-1)^{sn}}{\phi(q)^{sn}}     \sum_{\substack{\bchi\in X_{s,n}}} \sum_{\substack{ \bm\gamma\in \Gamma(\bchi)  }} \widehat \eta(\bm \gamma)   \e^{-2\pi  i t\Sigma_{\bm \gamma}}\d t\\
& = \frac {1}{T} \int_\R \Phi\Big(\frac tT\Big)  \Big(\frac {(-1)^{n}}{\phi(q)^{n}}      \sum_{\substack{ \bchi\in X_{1,n}}} \sum_{\substack{  \gamma_{\chi_1} ,\dots ,\gamma_{\chi_n} }} \widehat \eta\Big( \frac{\gamma_{\chi_1}}{2\pi }\Big) \cdots \widehat \eta\Big( \frac{\gamma_{\chi_n}}{2\pi }\Big)  \e^{- i t (\gamma_{\chi_1}+\dots +\gamma_{\chi_n})}\Big)^s\d t.
\end{align*}
We split the integral as $ \int_\mathbb R =  \int_0^{\infty}+\int_{-\infty}^0$. In the first of these, an application of~\eqref{equation explicit formual conjuguate} and~\eqref{leqlogq} shows that it is
\begin{align*}
&=\frac {1}{T} \int_0^\infty \Phi\Big(\frac tT\Big)   \Big(\frac 1{\phi(q)^{n}}\sum_{\substack{ \bchi\in X_{1,n}}}\psi_\eta(\e^{t},\overline{\chi_1})\cdots \psi_\eta(\e^{t},\overline{\chi_n}) \Big)^s \d t
 +O_\Phi\Big( \frac{( K_{\delta,\eta}\log q)^{sn}}{T\phi(q)^s} \Big) \\
&=\frac {1}{T} \int_0^\infty \Phi\Big(\frac tT\Big)   M_n(\e^t,q;\eta)^s 
\d t  +O_\Phi\Big( \frac{( K_{\delta,\eta}\log q)^{sn}}{T\phi(q)^s} \Big),
\end{align*}
by~\eqref{eqMn} and since $M_n(\e^t,q;\eta)$ is real.
Making the change of variables $u=-t$ and applying~\eqref{equation explicit formula before conjugation}, we see that the same holds for the integral between $-\infty$ and $0$, and the claimed estimate follows.

As for the claimed identity, we specialize to $ \Phi=\Phi_0=1_{[-1,1]} $. Taking $T\rightarrow \infty$ and applying Lemma~\ref{lemma formula for moments of limiting distribution}, we obtain that
$$ \mathbb E[G_n(q;\eta)^s]= \lim_{T\rightarrow \infty}\frac {(-1)^{sn}}{2\phi(q)^{sn}} \sum_{\substack{\bchi\in X_{s,n}}}  \sum_{\substack{ \bm \gamma \in \Gamma(\bchi)}} \widehat \eta(\bm \gamma) \widehat \Phi_0(T\Sigma_{\bm \gamma}).   $$
Now, $\widehat \Phi_0(\xi) = \sin(2\pi \xi)/\pi\xi$, and by dominated convergence (note that for any $\eta\in \S_\delta$, the sum over $\bm \gamma$ converges absolutely) we can interchange the limit and the sum over $\bm \gamma$. The claimed identity follows.
\end{proof}

\section{Expected value of moments}
\label{section mean of first moment} 
 
This section serves as a warm-up for the following one. We recall that $\U\subset \mathcal L^1(\R)$ is the set of non-trivial even integrable functions $\Phi:\R \rightarrow \R$ such that $\widehat \Phi\geq 0$.  
Our goal is to estimate the first moment of the $n$-th moment 
 \begin{equation} \cM_{1,n}(T,q;\eta,\Phi) :=\frac {1}{T \int_0^\infty \Phi} \int_{0}^{\infty} \Phi\Big( \frac tT \Big) M_n(\e^t,q;\eta)\d t.
\label{equation definition M_1,n} 
 \end{equation}
We begin with the following combinatorial lemma.
 \begin{lemma}
 Let $m\geq 1$,  $\delta>0$, $\eta \in \S_{\delta}$, and assume GRH$_{\widehat{\eta}}$. Then, for $q\geq 3$,  we have the lower bound
 \begin{equation}\begin{split} 
\frac{1}{\phi(q)^{2m}}& \sum_{\substack{\bchi\in X_{1,2m}}} \sum_{\substack{\gamma_{\chi_1} , \dots, \gamma_{\chi_{2m}} \\ \gamma_{\chi_1} + \dots + \gamma_{\chi_{2m}}=0}} 
\widehat\eta\Big(\frac{\gamma_{\chi_1}}{2\pi}\Big)\cdots \widehat\eta\Big(\frac{\gamma_{\chi_{2m}}}{2\pi}\Big)  \\& \geq  \mu_{2m}\Big( \frac{ \alpha(\widehat \eta^2)\log q+\beta_q(\widehat \eta^2) }{\phi(q)}\Big)^m   +O\Big(\mu_{2m}\frac{ (C_{\delta,\eta}\log q)^{m}}{\phi(q)^{m+1 }} \Big),
\end{split}\label{equation lemma lower bound sum zeros}
   \end{equation}
where the $\rho_{\chi_j}=\frac 12+i \gamma_{\chi_j}$ are running over the non-trivial zeros of $L(s,\chi_j)$, and $C_{\delta,\eta}>0$ is a constant. 
 \label{lemma lower bound sum over zeros}
 \end{lemma}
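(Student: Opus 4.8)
The proof rests on positivity. Under GRH$_{\widehat\eta}$ together with~\eqref{condiavecdelta}, every summand $\widehat\eta(\gamma_{\chi_1}/(2\pi))\cdots\widehat\eta(\gamma_{\chi_{2m}}/(2\pi))$ on the left of~\eqref{equation lemma lower bound sum zeros} is nonnegative: it vanishes as soon as one $\rho_{\chi_j}$ lies off the critical line, and is a product of nonnegative reals otherwise. Hence the left-hand side does not decrease when terms are dropped, and the plan is to keep only a well-chosen ``diagonal'' subfamily.

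To each fixed-point-free involution $\sigma$ of $\{1,\dots,2m\}$ — there are $(2m-1)!!=\mu_{2m}$ of them — I attach the configurations in which $\chi_{\sigma(j)}=\overline{\chi_j}$ for all $j$ (so every $\chi_j$ is non-principal and $\chi_1\cdots\chi_{2m}=\chi_{0,q}$) and $\gamma_{\chi_{\sigma(j)}}=-\gamma_{\chi_j}$ (which forces $\sum_j\gamma_{\chi_j}=0$). Because $\widehat\eta$ is even, the weight of such a configuration factors over the $m$ pairs of $\sigma$, and summing over \emph{all} the configurations attached to a fixed $\sigma$ produces $\big(\sum_{\psi\ne\chi_{0,q}}b(\psi;\widehat\eta^2)\big)^m$, with $b(\psi;h)$ as in~\eqref{equation definition b(chi,h)} and $h=\widehat\eta^2\in\T_\delta$. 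Forming the union over the $\mu_{2m}$ involutions is what will produce the Gaussian factor $\mu_{2m}$ in the final bound, provided overcounting can be avoided.

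To that end I will further restrict, for each $\sigma$, to the configurations whose $m$ pair-representatives $(\psi_l,\gamma_l)$ are ``generic'', meaning that no two of them satisfy $(\psi_l,\gamma_l)\in\{(\psi_{l'},\gamma_{l'}),(\overline{\psi_{l'}},-\gamma_{l'})\}$. For such a configuration $\sigma$ is the unique involution realizing the diagonal relations above (if $\sigma'(j)\ne\sigma(j)$ then the two indices $\sigma(j)$ and $\sigma'(j)$ carry identical character–ordinate data, forcing two distinct pairs of $\sigma$ to coincide, against genericity), so the generic parts attached to distinct involutions are pairwise disjoint and the restricted sum is exactly $\mu_{2m}$ times the sum over the generic part of one reference involution. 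The discarded non-generic configurations are controlled by a union bound over the $\binom m2$ pairs $\{l,l'\}$ that could coincide: a coinciding pair contributes at most $\big(\sup_\xi\widehat\eta(\xi)\big)^2\sum_{\psi\ne\chi_{0,q}}b(\psi;\widehat\eta^2)$ in place of $\big(\sum_\psi b(\psi;\widehat\eta^2)\big)^2$, so the loss is $O_{\eta}\big(m^2\big(\sum_\psi b(\psi;\widehat\eta^2)\big)^{m-1}\big)$, one factor of $\sum_\psi b(\psi;\widehat\eta^2)$ below the main term.

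It then remains to feed in Proposition~\ref{proposition first moment b(chi,h)}, which (since $\widehat\eta^2\in\T_\delta$) gives $\sum_{\psi\ne\chi_{0,q}}b(\psi;\widehat\eta^2)=\phi(q)\big(\alpha(\widehat\eta^2)\log q+\beta_q(\widehat\eta^2)\big)+O_{\delta,\eta}(\log q)$, a quantity that is $\asymp_{\delta,\eta}\phi(q)\log q$ once $q$ is large; expanding the $m$-th power and dividing by $\phi(q)^{2m}$ yields~\eqref{equation lemma lower bound sum zeros}, with the various $m$-dependent combinatorial constants ($m^2$, $\binom m2$, the binomial coefficients from the expansion) absorbed into $C_{\delta,\eta}^m$. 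I expect the only delicate point to be the combinatorial bookkeeping of the third paragraph — isolating the generic configurations so that different involutions contribute disjointly (thereby sidestepping inclusion–exclusion and keeping a single, $m$-uniform factor $\mu_{2m}$) and estimating the non-generic remainder; everything else is the average estimate of Proposition~\ref{proposition first moment b(chi,h)} and routine expansion.
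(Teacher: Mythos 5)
Your argument is correct and follows essentially the same route as the paper's proof: positivity under GRH$_{\widehat \eta}$ to restrict to configurations paired off by one of the $\mu_{2m}$ fixed-point-free involutions, a distinctness (genericity) condition making the contributions of distinct involutions disjoint, removal of that condition at the cost of an error one factor of $\phi(q)\log q$ below the main term, and finally Proposition~\ref{proposition first moment b(chi,h)}. The one detail worth flagging is that when zeros have multiplicity the sum over a single matched pair equals $\sum_{\gamma}\big(\ord_{s=\frac12+i\gamma}L(s,\psi)\big)^2\,\big|\widehat\eta(\gamma/2\pi)\big|^2$ (the quantity the paper calls $b^+(\psi;\widehat\eta^2)$) rather than $b(\psi;\widehat\eta^2)$, so your displayed identity is really an inequality $\geq$ via $b^+\geq b$ — which points the right way for the lower bound — and in the non-generic remainder one should likewise pass to $b\ll_{\delta,\eta}\log q$ before estimating, as the paper does, to land on the stated error shape $(C_{\delta,\eta}\log q)^m/\phi(q)^{m+1}$.
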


\begin{proof}
By positivity of $\widehat\eta(\frac{\gamma_{\chi_j}}{2\pi})$ (GRH$_{\widehat \eta}$ is crucial in this step), we can restrict the sum over $(\gamma_{\chi_i})_{1\leq i\leq n}$ to those ordinates of zeros for which $\forall i, \exists j\neq i$ s.t. $\gamma_{\chi_i}=-\gamma_{\chi_j}$ and $\overline{\chi_j}=\chi_i$. 
We can further restrict the sum such that the associated pairs $\{ i_k,j_k\} $ are disjoint, and such that $\chi_{i_k}\notin\{ \chi_\ell, \overline{\chi_\ell}\quad \forall \ell\neq i_k,j_k\} $. We may also take $i_k,j_k$ such that for any $k$ the index $i_k$ satisfies 
$$i_k=\min \{ i_\ell, j_\ell \quad (\ell\geq k)\}.$$ In particular $i_1=1$. The number of such 
 $ i_1,j_1,\ldots  i_k, j_k, \ldots,  i_m, j_m $ is
$$=(2m-1)(2m-3)\cdots  1 =\mu_{2m}.$$
For a given choice of sets $\{ i_k,j_k\} $, the contribution to the left hand side of~\eqref{equation lemma lower bound sum zeros} is
$$\geq\frac{ 1}{\phi(q)^{2m}} \sum_{\chi_{i_1}\neq \chi_{0,q}}b^+(\chi_{i_1};\widehat \eta^2) 
\sum_{\chi_{i_2}\notin\{ \chi_{0,q},\chi_{i_1},\overline{\chi_{i_1}}\}}\!\!\! b^+(\chi_{i_2};\widehat \eta^2) \ldots \!\!\!\!\!\!\!\!\!\!\!\!\!\!\!\!
\sum_{\chi_{i_m}\notin\{ \chi_{0,q},\chi_{i_1},\ldots, \chi_{i_{m-1},
\overline{\chi_{i_1}},\ldots, \overline{\chi_{i_{m-1}}}}\}}\!\!\!\!\!\!\!\!\!\!\!\!\!\!\!\!\!\!\!\!\!\!\!\! b^+(\chi_{i_m};\widehat \eta^2),$$
where denoting by $\sum^*$ a sum without multiplicities,
\begin{equation}\label{inegb+b}b^+(\chi;\widehat \eta^2):= \sums_{\gamma_{\chi}}\big(\ord_{s=\frac12+i\gamma_{\chi} }L(s,\chi)\big)^2 \Big|\widehat \eta\Big(\frac{\gamma_{\chi}}{2\pi}\Big)\Big|^2\geq b(\chi;\widehat \eta^2). 
\end{equation}
After applying this lower bound, 
we can forget the conditions $$\chi_{i_k}\notin\{ \chi_{i_1 },\ldots, \chi_{i_{k-1}}, \overline{\chi_{i_1}},\ldots, \overline{\chi_{i_{k}}}\}$$
by adding an error term bounded by
$\ll m^2 \phi(q)^{-m-1}(K_{\delta,\eta}\log q)^{m}.$ This yields that the left hand side of~\eqref{equation lemma lower bound sum zeros} is
$$  \geq     \mu_{2m}\frac{ 1}{\phi(q)^{2m}} \Big(\sum_{\substack{ \chi \bmod q \\ \chi \neq \chi_{0,q}}} b(\chi;\widehat \eta^2)\Big)^{m}+O\Big(  m^2\mu_{2m}\frac{(K_{\delta,\eta}\log q)^{m}}{\phi(q)^{m+1}} \Big),$$
which by Proposition~\ref{proposition first moment b(chi,h)} is
\begin{align*} \geq   &  \mu_{2m}\Big( \frac{ \alpha(\widehat \eta^2)\log q+\beta_q(\widehat \eta^2) }{\phi(q)}\Big)^m   +O\Big(\mu_{2m}\frac{ (2K_{\delta,\eta}\log q)^{m}}{\phi(q)^{m+1 }} \Big).
\end{align*}  
The claimed lower bound follows. 
\end{proof}

We deduce the following inequalities on the mean of moments $\cM_{1,2m}(T,q;\eta,\Phi)$, which is defined in \eqref{equation definition M_1,n}. 
\begin{proposition} Let $m\geq 1$, $\Phi\in \U$,  $\delta>0$, $\eta\in \S_\delta$, and assume GRH$_{\widehat \eta}$. For any $T\geq 1$ and $q\geq 3$, we have that
$$\cM_{1,2m}(T,q;\eta,\Phi)\geq   \mu_{2m}\Big( \frac{ \alpha(\widehat \eta^2)\log q+\beta_q(\widehat \eta^2) }{\phi(q)}\Big)^m +O\Big(\mu_{2m} \frac{ (C_{\delta,\eta}\log q)^{m}}{\phi(q)^{m+1 }}+ \frac{(K_{\delta,\eta}\log q)^{2m}}{ T\phi(q)}\Big);$$
$$ 
\cM_{1,2m+1}(T,q;\eta,\Phi)\leq O\Big( \frac{(K_{\delta,\eta}\log q)^{2m+1}}{ T\phi(q)}\Big),
$$
where $K_{\delta,\eta},C_{\delta,\eta}>0$ are constants. 
\label{proposition first moment of moments}
\end{proposition}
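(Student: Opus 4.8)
The plan is to combine the explicit formula identity from Lemma~\ref{lemma explicit formula for moments of nu n} (in the case $s=1$) with the positivity lemma just proved. First I would apply Lemma~\ref{lemma explicit formula for moments of nu n} with $s=1$ to write
$$
\cM_{1,n}(T,q;\eta,\Phi)=\frac{1}{T\int_0^\infty\Phi}\int_0^\infty\Phi\Big(\frac tT\Big)M_n(\e^t,q;\eta)\,\d t
=\frac{(-1)^{n}}{2\phi(q)^{n}\int_0^\infty\Phi}\sum_{\bchi\in X_{1,n}}\sum_{\bm\gamma\in\Gamma(\bchi)}\widehat\eta(\bm\gamma)\,\widehat\Phi(T\Sigma_{\bm\gamma})+O_\Phi\Big(\frac{(K_{\delta,\eta}\log q)^{n}}{T\phi(q)}\Big).
$$
The key point is that under $\mathrm{GRH}_{\widehat\eta}$ every nonzero term has $\bm\gamma\in\R^{n}$, so $\widehat\eta(\bm\gamma)\ge 0$ (recall $\widehat\eta\ge 0$ since $\eta\in\S_\delta$), and $\widehat\Phi\ge 0$ since $\Phi\in\U$; hence the whole double sum is nonnegative. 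Its value is controlled from below by keeping only the ``diagonal'' contribution $\Sigma_{\bm\gamma}=0$, i.e.\ $\bm\gamma\in\Gamma_0(\bchi)$, on which $\widehat\Phi(T\Sigma_{\bm\gamma})=\widehat\Phi(0)=\int_{\R}\Phi = 2\int_0^\infty\Phi>0$.

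More precisely, I would split $\sum_{\bm\gamma\in\Gamma(\bchi)}=\sum_{\bm\gamma\in\Gamma_0(\bchi)}+\sum_{\bm\gamma\in\Gamma(\bchi)\setminus\Gamma_0(\bchi)}$. When $n=2m$ is even, the first sum times $(-1)^{2m}/(2\phi(q)^{2m})$ times $\widehat\Phi(0)/\int_0^\infty\Phi=2$ is exactly $\phi(q)^{-2m}\sum_{\bchi}\sum_{\bm\gamma\in\Gamma_0(\bchi)}\widehat\eta(\bm\gamma)$, which by Lemma~\ref{lemma lower bound sum over zeros} is
$\ge\mu_{2m}\big((\alpha(\widehat\eta^2)\log q+\beta_q(\widehat\eta^2))/\phi(q)\big)^m+O\big(\mu_{2m}(C_{\delta,\eta}\log q)^m/\phi(q)^{m+1}\big)$;
the off-diagonal sum, after multiplication by $(-1)^{2m}=1$, is again nonnegative by the same positivity, so it can simply be dropped from a lower bound. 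Combining with the error term from the explicit formula gives the first displayed inequality. When $n=2m+1$ is odd, $(-1)^{n}=-1$, so we get
$\cM_{1,2m+1}(T,q;\eta,\Phi)=-\tfrac12\phi(q)^{-(2m+1)}\big(\text{nonnegative sum}\big)+O_\Phi((K_{\delta,\eta}\log q)^{2m+1}/(T\phi(q)))$,
and discarding the nonnegative sum yields the stated upper bound $\cM_{1,2m+1}(T,q;\eta,\Phi)\le O((K_{\delta,\eta}\log q)^{2m+1}/(T\phi(q)))$.

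I expect the proof to be essentially a bookkeeping exercise once the positivity is set up, so there is no serious obstacle; the only care needed is in tracking the normalization factors ($\int_0^\infty\Phi$ versus $\int_\R\Phi=\widehat\Phi(0)$, and the factor $\tfrac12$ coming from the $\int_\R=\int_0^\infty+\int_{-\infty}^0$ symmetrization in Lemma~\ref{lemma explicit formula for moments of nu n}), and in confirming that the off-diagonal terms genuinely have the sign $(-1)^n$ rather than an undetermined sign — this is where $\bm\gamma\in\R^{sn}$ and $\widehat\eta,\widehat\Phi\ge 0$ are used in an essential way, exactly as flagged in the remark after Theorem~\ref{thmomentscentres2rT}. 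One should also note that the constant $C_{\delta,\eta}$ in the final statement can be taken to absorb the $K_{\delta,\eta}$ of Lemma~\ref{lemma lower bound sum over zeros} and the implied constant there, and that $m$ is fixed so the factors of $m^2$ appearing in the proof of that lemma are harmless.
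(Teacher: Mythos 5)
Your proposal is correct and follows essentially the same route as the paper: apply Lemma~\ref{lemma explicit formula for moments of nu n} with $s=1$, use positivity of $\widehat\eta$ (under GRH$_{\widehat\eta}$) and of $\widehat\Phi$ to retain only the terms with $\Sigma_{\bm\gamma}=0$ for the lower bound (or discard everything in the odd case), and then invoke Lemma~\ref{lemma lower bound sum over zeros} for the even case. Your bookkeeping of the normalizations ($\widehat\Phi(0)=2\int_0^\infty\Phi$ cancelling the factor $\tfrac12$) is exactly what makes the constants come out as stated.
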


\begin{proof}
We apply positivity in Lemma~\ref{lemma explicit formula for moments of nu n}, noting that  $\widehat \eta , \widehat \Phi \geq 0$. We obtain the lower bound
$$(-1)^n\cM_{1,n}(T,q;\eta,\Phi)\geq \frac {1}{\phi(q)^{n}}\!\!\! \sum_{\substack{\bchi\in X_{1,n}}} \sum_{\substack{\gamma_{\chi_1} , \dots, \gamma_{\chi_{n}} \\ \gamma_{\chi_1} + \dots + \gamma_{\chi_{n}}=0}} \!\!\!
\widehat \eta\Big(\frac{\gamma_{\chi_1}}{2\pi}\Big)\cdots \widehat\eta\Big(\frac{\gamma_{\chi_{n}}}{2\pi}\Big)
+O\Big( \frac{(K_{\delta,\eta}\log q)^{n} }{ T\phi(q)}\Big). 
$$
If $n=2m+1$, then the claimed bound follows at once from positivity of $\widehat \eta(\frac{\gamma_{\chi_j}}{2\pi})$ (hypothesis GRH$_{\widehat \eta}$ is crucial here). If $n=2m$, then we apply Lemma~\ref{lemma lower bound sum over zeros} and obtain the desired result.
\end{proof}

We are ready to prove the second part of Theorem~\ref{thmomentscentres2rT}.
\begin{proof}[Proof of Theorem~\ref{thmomentscentres2rT}, second part]
The claimed bounds follow from positivity in Lemma~\ref{lemma explicit formula for moments of nu n} (recall also Lemma~\ref{lemma moments of limiting distribution as limit of V(T)}), and from Proposition~\ref{proposition first moment of moments}.
\end{proof}

 \section{Higher moments of moments}
\label{section higher moments of moments}
The goal of this section is to prove Theorem~\ref{thmomentscentres2rT}.
We first apply the inclusion-exclusion principle in order to evaluate the main terms which we will obtain later. For $s\in \mathbb N$, $ \bm \sigma = (\sigma_1,\dots,\sigma_s)\in \R^s,$  $T\in \R_{>0}$ and $\Phi\in \mathcal U$,
we define the quantities
\begin{equation}\label{defDelta2r} \Delta_{s}(\bfsigma ):= \sum_{I\subset\{ 1,\ldots, s\}}(-1)^{s-|I|}  \delta_0\Big(   \sum_{\mu \in I}\sigma_\mu  \Big) 
\prod_{\mu \notin I}\delta_0(   \sigma_\mu  ) ;
\end{equation}
  \begin{equation}\label{defDelta2rT} \Delta_{s}(\bfsigma;\Phi,T):= \frac 1{\widehat \Phi(0)}\sum_{I\subset\{ 1,\ldots, s\}}(-1)^{s-|I|}  \widehat\Phi\Big(  T\sum_{\mu \in I}\sigma_\mu  \Big) 
\prod_{\mu \notin I}\delta_0(   \sigma_\mu  ) ,
\end{equation}
where 
$$ \delta_0(x):= \begin{cases}
1 &\text{ if } x=0; \\
0& \text{ otherwise.}
\end{cases}$$
Note that 
$$\lim_{T\to+\infty}\Delta_{s}(\bfsigma;\Phi,T)=
\Delta_{s}(\bfsigma ).$$

\begin{lemma}\label{inegDelta} Let $s\in \mathbb N$, $ \bfsigma \in \R^s,$  $T\in \R_{>0}$ and $\Phi\in \mathcal U$. The function $\Delta_{s}(\bfsigma)$ is the indicator function of the 
 $\bfsigma$ such that $ \sum_{\mu=1}^{s}\sigma_\mu =0$ and $\sigma_\mu\neq 0$ for all $1\leq \mu\leq s$. Moreover, 
 
 \begin{equation}
      \Delta_{s}(\bfsigma;\Phi, T)\geq 
\Delta_{s}(\bfsigma ).
\label{equation lower bound Delta T}
 \end{equation}
\end{lemma}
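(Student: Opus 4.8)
The first claim — that $\Delta_s(\bfsigma)$ is the indicator function of $\{\bfsigma : \sum_{\mu}\sigma_\mu = 0, \ \sigma_\mu \neq 0\ \forall \mu\}$ — I would prove by a direct inclusion–exclusion argument. Fix $\bfsigma$ and let $Z = \{\mu : \sigma_\mu = 0\}$ be the set of vanishing coordinates. In the sum $\sum_{I\subset\{1,\dots,s\}}$, the factor $\prod_{\mu\notin I}\delta_0(\sigma_\mu)$ forces $\{1,\dots,s\}\setminus I \subset Z$, i.e. $Z^c \subset I$; so only sets $I$ with $Z^c \subset I \subset \{1,\dots,s\}$ contribute, and such $I$ is determined by the choice of $I\cap Z$. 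Writing $I = Z^c \cup J$ with $J\subset Z$, we get $\sum_{\mu\in I}\sigma_\mu = \sum_{\mu\in Z^c}\sigma_\mu$ (the coordinates in $Z$ are zero), so $\delta_0(\sum_{\mu\in I}\sigma_\mu) = \delta_0(\sum_{\mu=1}^s \sigma_\mu)$ is independent of $J$. Hence $\Delta_s(\bfsigma) = \delta_0\big(\sum_{\mu=1}^s\sigma_\mu\big)\sum_{J\subset Z}(-1)^{|Z|-|J|} = \delta_0\big(\sum_{\mu=1}^s\sigma_\mu\big)\cdot [\,|Z| = 0\,]$, since $\sum_{J\subset Z}(-1)^{|Z|-|J|} = (1-1)^{|Z|}$ vanishes unless $Z = \varnothing$. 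This is exactly the asserted indicator function.

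For the inequality~\eqref{equation lower bound Delta T}, I would run the same bookkeeping on $\Delta_s(\bfsigma;\Phi,T)$. Again only $I \supset Z^c$ contribute to the sum (because of the $\prod_{\mu\notin I}\delta_0(\sigma_\mu)$ factor), and for such $I$ the argument of $\widehat\Phi$ is $T\sum_{\mu\in Z^c}\sigma_\mu$, independent of the choice of $J = I\cap Z$. Thus
$$\Delta_s(\bfsigma;\Phi,T) = \frac{\widehat\Phi\big(T\sum_{\mu\in Z^c}\sigma_\mu\big)}{\widehat\Phi(0)}\sum_{J\subset Z}(-1)^{|Z|-|J|} = \frac{\widehat\Phi\big(T\sum_{\mu=1}^s\sigma_\mu\big)}{\widehat\Phi(0)}\cdot[\,|Z| = 0\,].$$
Now split into the two cases corresponding to whether $\Delta_s(\bfsigma) = 1$ or $0$. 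If some $\sigma_\mu = 0$ (i.e. $Z\neq\varnothing$), then both sides vanish and~\eqref{equation lower bound Delta T} holds with equality $0 = 0$. If all $\sigma_\mu \neq 0$, then $\Delta_s(\bfsigma;\Phi,T) = \widehat\Phi(T\sum_\mu\sigma_\mu)/\widehat\Phi(0)$, which is $\geq 0$ because $\Phi\in\mathcal U$ means $\widehat\Phi\geq 0$ (and $\widehat\Phi(0) = \int\Phi > 0$). When additionally $\sum_\mu\sigma_\mu = 0$ we get $\Delta_s(\bfsigma;\Phi,T) = \widehat\Phi(0)/\widehat\Phi(0) = 1 = \Delta_s(\bfsigma)$, and when $\sum_\mu\sigma_\mu\neq 0$ we get $\Delta_s(\bfsigma;\Phi,T)\geq 0 = \Delta_s(\bfsigma)$. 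In every case the inequality holds, which proves~\eqref{equation lower bound Delta T}.

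There is no serious obstacle here: the entire argument is the observation that the $\delta_0$-factors collapse the set $I$ down to a choice of subset of the "zero coordinates" $Z$, after which the alternating sum over that choice is $(1-1)^{|Z|}$, and the positivity of $\widehat\Phi$ finishes the comparison. The only point requiring a little care is keeping track of which $I$ actually contribute — namely that $\prod_{\mu\notin I}\delta_0(\sigma_\mu)\neq 0$ forces $\{1,\dots,s\}\setminus I\subset Z$, equivalently $Z^c\subset I$ — and noting that on those $I$ the remaining $\widehat\Phi$ (or $\delta_0$) argument does not see the coordinates in $Z$. The stated limit $\lim_{T\to\infty}\Delta_s(\bfsigma;\Phi,T) = \Delta_s(\bfsigma)$ is consistent with this: termwise, $\widehat\Phi(Tx)/\widehat\Phi(0)\to \delta_0(x)$ pointwise is not literally true for continuous $\widehat\Phi$, so that limit statement must be read in the sense already used in the paper (it is not needed for the present lemma and I would not dwell on it).
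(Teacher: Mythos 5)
Your proof is correct and follows essentially the same route as the paper: the observation that $\prod_{\mu\notin I}\delta_0(\sigma_\mu)$ forces $I\supset Z^c=K_{\bfsigma}$, that the remaining $\delta_0$ or $\widehat\Phi$ factor is then independent of $I\cap Z$, and that the alternating sum collapses to $(1-1)^{|Z|}$ is exactly the paper's computation of $\Delta_{s}(\bfsigma;\Phi,T)$, and you simply apply the same mechanism to $\Delta_{s}(\bfsigma)$, where the paper instead phrases the first part as inclusion--exclusion over the sets $A_i=\{\bfsigma : \sum_{\mu}\sigma_\mu=\sigma_i=0\}$ (an equivalent, purely cosmetic difference). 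One small correction to your closing aside: the pointwise limit $\lim_{T\to\infty}\Delta_{s}(\bfsigma;\Phi,T)=\Delta_{s}(\bfsigma)$ is in fact literally true, since when $\sum_{\mu}\sigma_\mu\neq 0$ the Riemann--Lebesgue lemma gives $\widehat\Phi\bigl(T\sum_{\mu}\sigma_\mu\bigr)\to 0$ as $T\to\infty$, while for $\sum_{\mu}\sigma_\mu=0$ the expression is constant in $T$.
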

\begin{proof}
    
We observe that 
$$\widehat \Phi(0) \Delta_{s}(\bfsigma;\Phi,T)=\sum_{\substack{I\subset\{ 1,\ldots, s\}: \\ \forall \mu \notin I,\sigma_\mu = 0 }}(-1)^{s-|I|}\widehat \Phi\Big(\sum_{\mu =1}^s \sigma_\mu\Big)
=\widehat \Phi\Big(\sum_{\mu =1}^s \sigma_\mu\Big)\prod_{\mu=1}^s \big(1-\delta_0(\sigma_\mu)\big).
$$
We then deduce that \begin{equation}
    \label{Deltas}\Delta_{s}(\bfsigma;\Phi,T) =
\begin{cases} \frac{1}{\widehat \Phi(0)} \widehat\Phi\Big(  T\sum_{\mu=1} ^ s \sigma_\mu \Big) &\text{ if } \sigma_{\mu} \neq 0 \text{ for all } \mu,\\ 
 0 & \text{ otherwise.} 
 \end{cases}
\end{equation}
 The claimed bound follows.  
\end{proof}

We recall the definition~\eqref{defV2rnT} of $\cV_{s,n}(T,q;\eta,\Phi) $, that of $\Gamma(\bm \chi)$ in~\eqref{equation definition Gamma(chi)},  
and moreover for $\bm \gamma \in \Gamma(\bm \chi)$ we define
\begin{equation}
    \bm \sigma_{\bm \gamma} :=\Big(\frac 1{2\pi}\sum_{j=1}^n \gamma_{\chi_{\mu,j}}\Big)_{1\leq \mu\leq s} \in \C^s.\label{defsigmagamma}
\end{equation}
We also recall the definition~\eqref{defXsn}. Our next goal is to express $\cV_{s,n}(T,q;\eta,\Phi)$ as a sum over zeros of Dirichlet $L$-functions.

\goodbreak
\begin{lemma}
\label{lemma explicit formula higher moments of moments}
Assume GRH$_{\widehat \eta}$, and let  $\delta>0$, $\eta \in \S_\delta$, $\Phi\in \mathcal L^1(\R),$ $\Phi$ even, $s \geq 1$ and $n\geq 2$. For $T\geq 1 $ and $q\geq 3$,  we have the formula 
\begin{equation}
      \cV_{s,n}(T,q;\eta,\Phi) = \frac {(-1)^{sn}}{\phi(q)^{sn}} 
\sum_{\substack{ \bchi\in X_{s,n}}}  \sum_{\substack{ \bm \gamma \in \Gamma(\bchi) }} \widehat \eta(\bm \gamma) \Delta_{s}(\bm \sigma_{\bm\gamma};\Phi,T) +O_\Phi\Big( \frac{( K_{\delta,\eta}\log q)^{sn}}{T\phi(q)^s} \Big),
\label{equation lemma lower bound centered moments}
\end{equation} 
where   
$\widehat \eta(\bm \gamma)$ is defined in~\eqref{defetabfgamma}, $\Gamma(\bchi)$ is defined in~\eqref{equation definition Gamma(chi)} and $K_{\delta,\eta}>0$ is a constant.   
\end{lemma}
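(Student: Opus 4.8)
The starting point is the expansion of $M_n(\e^t,q;\eta)^s$ in terms of zeros. We already have, from Lemma~\ref{lemma explicit formula for moments of nu n}, the identity
$$\frac 1T \int_0^\infty \Phi\Big(\frac tT\Big) M_n(\e^t,q;\eta)^s\,\d t = \frac{(-1)^{sn}}{2\phi(q)^{sn}}\sum_{\bchi\in X_{s,n}}\sum_{\bm\gamma\in\Gamma(\bchi)}\widehat\eta(\bm\gamma)\widehat\Phi(T\Sigma_{\bm\gamma})+O_\Phi\Big(\frac{(K_{\delta,\eta}\log q)^{sn}}{T\phi(q)^s}\Big).$$
The point of the present lemma is the analogous formula for the \emph{centered} quantity $\cV_{s,n}$, where $M_n(\e^t,q;\eta)$ is replaced by $M_n(\e^t,q;\eta)-m_n(q;\eta)$. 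So the first step is to expand the $s$-th power of a difference by the binomial theorem, $ (M_n(\e^t)-m_n)^s = \sum_{I\subset\{1,\dots,s\}}(-1)^{s-|I|}m_n^{\,s-|I|}\prod_{\mu\in I}M_n(\e^t)$, equivalently to view $\cV_{s,n}$ as an alternating sum over subsets $I\subset\{1,\dots,s\}$ of (a constant power of $m_n(q;\eta)$ times) the $|I|$-th analogue of the quantity appearing in Lemma~\ref{lemma explicit formula for moments of nu n}.

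**Key steps.** For each $I$ one applies Lemma~\ref{lemma explicit formula for moments of nu n} with $s$ replaced by $|I|$ (or more precisely, one applies the explicit formula for each factor $M_n(\e^t,q;\eta)$ separately, as in the proof of that lemma, writing $M_n(\e^t,q;\eta)=\frac{(-1)^n}{\phi(q)^n}\sum_{\bchi\in X_{1,n}}\prod_j\psi_\eta(\e^t,\overline{\chi_j})+O(\e^{-t/2}\log q)$ and using \eqref{leqlogq} to control the cross terms). This produces, for the factors indexed by $\mu\in I$, a sum over arrays $(\chi_{\mu,j})_{\mu\in I,\,j}$ with associated ordinates $(\gamma_{\chi_{\mu,j}})$ and the oscillating factor $\e^{-2\pi i t\sum_{\mu\in I}\sigma_{\mu}}$ where $\sigma_\mu=\frac1{2\pi}\sum_j\gamma_{\chi_{\mu,j}}$; meanwhile, by the formula for $m_n(q;\eta)$ recorded after~\eqref{espZ}, each factor $m_n(q;\eta)$ indexed by $\mu\notin I$ contributes a sum over arrays $(\chi_{\mu,j})_j$ with the \emph{constraint} $\sigma_\mu=0$, i.e. a factor $\delta_0(\sigma_\mu)$. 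Collecting the $t$-integral, the $\mu\in I$ factors generate $\frac1{\widehat\Phi(0)}\widehat\Phi(T\sum_{\mu\in I}\sigma_\mu)$ (after the integral $\frac1T\int\Phi(t/T)\e^{-2\pi i t\Sigma}\,\d t=\frac1{2\pi?}$ — more precisely $\widehat\Phi(T\Sigma)$, with the normalization chosen in \eqref{defV2rnT}), and the $\mu\notin I$ factors generate $\prod_{\mu\notin I}\delta_0(\sigma_\mu)$. Summing the alternating contributions over $I$ with sign $(-1)^{s-|I|}$ reassembles exactly the combination $\Delta_s(\bm\sigma_{\bm\gamma};\Phi,T)$ defined in~\eqref{defDelta2rT}. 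The full array $\bchi$ now ranges over $X_{s,n}$ and the full array of ordinates over $\Gamma(\bchi)$, and one arrives at~\eqref{equation lemma lower bound centered moments}. Throughout, absolute convergence of the sums over zeros (guaranteed by $\eta\in\S_\delta$ via~\eqref{condiavecdelta} and~\eqref{estNTchi}) is used to interchange summation and integration, and GRH$_{\widehat\eta}$ is used so that $\widehat\eta(\bm\gamma)=0$ unless all $\gamma_{\chi_{\mu,j}}$ are real, keeping everything well-defined.

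**Error term.** The error term $O_\Phi((K_{\delta,\eta}\log q)^{sn}/(T\phi(q)^s))$ is inherited from the $O(\e^{-t/2}\log q)$ remainders in Lemma~\ref{lemma:explicit formula}: when one of the $sn$ character-factors $\psi_\eta(\e^t,\overline{\chi})$ is replaced by its $O(\e^{-t/2}\log q)$ error and the remaining $\le sn-1$ factors are bounded by $O(\log q)$ via~\eqref{leqlogq}, one is left with $\frac1T\int_0^\infty|\Phi(t/T)|\e^{-t/2}(C_{\delta,\eta}\log q)^{sn}\,\d t\,\phi(q)^{-s}\ll_\Phi (C_{\delta,\eta}\log q)^{sn}/(T\phi(q)^s)$. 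The same bound controls the replacement of any $m_n(q;\eta)$ factor's contribution, since $m_n(q;\eta)$ is itself $O((C_{\delta,\eta}\log q)^n/\phi(q))$. Bookkeeping these $O(sn)$ terms only changes the implied constant.

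**Main obstacle.** The conceptual content is routine given Lemmas~\ref{lemma:explicit formula}, \ref{lemma explicit formula for moments of nu n} and the formula for $m_n(q;\eta)$; the one place demanding care is the \emph{bookkeeping of the inclusion-exclusion}: one must check that the alternating sum over $I\subset\{1,\dots,s\}$, after the $t$-integration turns the $\mu\in I$ block into $\widehat\Phi(T\sum_{\mu\in I}\sigma_\mu)/\widehat\Phi(0)$ and the $\mu\notin I$ block into $\prod_{\mu\notin I}\delta_0(\sigma_\mu)$, matches \eqref{defDelta2rT} \emph{term by term} — in particular that the powers of $\phi(q)$ and the signs $(-1)^{sn}$ come out correctly, and that subtracting $m_n(q;\eta)$ (rather than the empirical mean) is exactly what makes the $\mu\notin I$ factors collapse to clean $\delta_0$'s, as flagged in the footnote to~\eqref{defV2rnT}. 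Once the index set of the combined array is verified to be $X_{s,n}$ with ordinates in $\Gamma(\bchi)$, and $\bm\sigma_{\bm\gamma}$ is matched with \eqref{defsigmagamma}, the formula follows.
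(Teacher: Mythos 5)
Your proposal is correct and follows essentially the same route as the paper: the paper's proof expands $\Delta_s(\bm\sigma_{\bm\gamma};\Phi,T)$ over subsets $I$, writes $\widehat\Phi$ as an integral of $\Phi$ against exponentials, identifies the $\mu\in I$ block with $M_n(\e^t,q;\eta)^{|I|}$ via the explicit formula and the $\mu\notin I$ block with $m_n(q;\eta)^{s-|I|}$ via Lemmas~\ref{lemma moments of limiting distribution as limit of V(T)} and~\ref{lemma explicit formula for moments of nu n}, and then reassembles $(M_n-m_n)^s$ by the binomial theorem — i.e.\ exactly your computation read in the reverse direction, with the same error-term bookkeeping via \eqref{leqlogq}.
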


\begin{proof} 
The proof follows the lines of that of Lemma~\ref{lemma explicit formula for moments of nu n}. 
We see that
\begin{align*}
&L_{s,n}(T,q;\eta,\Phi):=\frac {(-1)^{sn}}{\phi(q)^{sn}}\!\!\! \sum_{\substack{ \bchi\in X_{s,n}}}  \sum_{\substack{ \bm \gamma \in \Gamma(\bchi) }} \widehat \eta(\bm \gamma) \Delta_{s}(\bm \sigma_{\bm\gamma};\Phi,T)  \\
& = \frac {1}{T\widehat \Phi(0)} \int_\R \Phi\Big(\frac tT\Big) \frac {(-1)^{sn}}{\phi(q)^{sn}} \sum_{I\subset\{ 1,\ldots, s\}}(-1)^{s-|I|}     \sum_{\substack{ \bchi\in X_{s,n}}} \sum_{\substack{ \bm \gamma     \in \Gamma(\bchi) \\\forall \mu \notin I,\,\,  \sum_{j=1}^{n} \gamma_{\chi_{\mu,j}}=0}} \widehat \eta(\bm \gamma)   \e^{- i t\sum_{\gamma \in \bm\gamma} \gamma}\d t\\
& = \frac {1}{T\widehat \Phi(0)} \int_\R \Phi\Big(\frac tT\Big) \frac {(-1)^{sn}}{\phi(q)^{sn}} \sum_{I\subset\{ 1,\ldots, s\}}(-1)^{s-|I|}    \!\!\! \!\!\!\!\!  \sum_{\substack{ \chi_{\mu,1},\dots,\chi_{\mu,n} \neq \chi_{0,q} \\ \chi_{\mu,1}\cdots \chi_{\mu,n} =\chi_{0,q}\\ \mu \in I}}  \sum_{\substack{ \bm \gamma = (\gamma_{\chi_{\mu,j}})_{ \substack{\mu \in I \\ j\leq n}}  }} \widehat \eta(\bm \gamma)   \e^{- i t\sum_{\gamma \in \bm\gamma} \gamma} \\
&
\hspace{8cm}\times  \!\!\! \sum_{\substack{ \chi_{\mu,1},\dots,\chi_{\mu,n} \neq \chi_{0,q} \\ \chi_{\mu,1}\cdots \chi_{\mu,n} =\chi_{0,q}\\ \mu \notin I}} \sum_{\substack{ \bm \gamma = (\gamma_{\chi_{\mu,j}})_{ \substack{\mu \notin I \\ j\leq n}}  \\\forall \mu \notin I ,\, \sum_{j=1}^{n} \gamma_{\chi_{\mu,j}}=0}} \widehat \eta(\bm \gamma)
\d t.
\end{align*}
Now, the sums over $\chi_{\mu,j}$ depend on $|I|$ rather than on $I$ itself, and thus we deduce that
\begin{align*}
L_{s,n}(T,q;\eta,\Phi)&= \frac {1}{T\widehat \Phi(0)} \int_\R \Phi\Big(\frac tT\Big) \frac {(-1)^{sn}}{\phi(q)^{sn}} \sum_{k=0}^s(-1)^{s-k}  \binom sk  \Big(\!\!\! \sum_{\substack{ \bchi\in X_{1,n}}}   \prod_{j=1}^n  \sum_{\substack{ \gamma_{\chi_j} }}\widehat\eta\Big(\frac{\gamma_{\chi_j}}{2\pi} \Big)   \e^{- i t \gamma_{\chi_j}} \Big)^k \\
&
\hspace{4cm}\times \Big(  \sum_{\substack{ \bchi\in X_{1,n}}} \sum_{\substack{\gamma_{\chi_1} , \dots, \gamma_{\chi_{n}} \\ \gamma_{\chi_1} + \dots + \gamma_{\chi_{n}}=0}} \prod_{j=1}^n \widehat\eta\Big(\frac{\gamma_{\chi_j}}{2\pi} \Big)\Big)^{s-k}
\d t.
\end{align*}
By Lemmas~\ref{lemma moments of limiting distribution as limit of V(T)} and~\ref{lemma explicit formula for moments of nu n}, the second term in parentheses is equal to $ (-1)^n\phi(q)^nm_n(q;\eta)$. We split the integral as $ \int_\mathbb R =  \int_0^{\infty}+\int_{-\infty}^0$. In the first of these, an application of~\eqref{equation explicit formual conjuguate} and~\eqref{leqlogq} shows that it is
\begin{align*}
&=\frac {1}{T\widehat \Phi(0)} \int_0^\infty \Phi\Big(\frac tT\Big) \frac 1{\phi(q)^{sn}} \sum_{k=0}^s(-1)^{s-k}  \binom sk \Big(\sum_{\substack{ \bchi\in X_{1,n}}}\psi_\eta(\e^{t},\overline{\chi_1})\cdots \psi_\eta(\e^{t},\overline{\chi_n}) \Big)^k \times 
 \\ &\hspace{4cm} \big(\phi(q)^nm_n(q;\eta)\big)^{s-k}\d t
 +O_\Phi\Big( \frac{( K_{\delta,\eta}\log q)^{sn}}{T\phi(q)^s} \Big) \\
&=\frac {1}{T\widehat \Phi(0)} \int_0^\infty \Phi\Big(\frac tT\Big)  \sum_{k=0}^s(-1)^{s-k} \binom sk  M_n(\e^t,q;\eta) ^k m_n(q;\eta)^{s-k}
\d t +O_\Phi\Big( \frac{( K_{\delta,\eta}\log q)^{sn}}{T\phi(q)^s} \Big),
\end{align*}
by~\eqref{eqMn}.
Once more, the same holds for the integral between $-\infty$ and $0$. An application of the binomial theorem achieves the proof.
\end{proof}

We will establish a lower bound for $\cV_{s,n}(T,q;\eta,\Phi)$ through positivity of $\widehat \eta$ and $\widehat \Phi$. This will involve the following combinatorial object.
\begin{definition}
For $r \in \mathbb N$ and  $n\geq 2,$ we define $F_{2r,n}$ to be the set of involutions $\pi: \{ 1,\dots,2r\} \times \{ 1,\dots, n\} \rightarrow \{ 1,\dots,2r\} \times \{ 1,\dots, n\}  $ having no fixed point and with the following two properties. First, for each fixed $1\leq \mu \leq 2r   $, there exists a unique $ 1\leq \nu \leq  2r $, $\nu \neq \mu$ such that $J_{\mu,\nu}(\pi)\neq 0$, where
\begin{equation}
J_{\mu,\nu}(\pi):=      (\{\mu\} \times [1,n]) \cap \pi (\{ \nu\}\times [1,n]) 
\label{equation definition Jmunu}
\end{equation}
is set of of elements on the "row" $\mu$ which are sent to some element on the "row" $\nu$ through~$\pi$. Second, we require that for each $1\leq \mu\leq 2r$,
$$ \sum_{\substack{1\leq \nu\leq 2r\\ \nu\neq \mu}}|J_{\mu,\nu}(\pi)| \geq 2.$$ 
Given an involution $\pi\in F_{2r,n}$ and an array $\bchi=(\chi_{\mu,j})_{\substack{ 1\leq \mu \leq 2r \\ 1\leq j\leq n}}$ of characters modulo $q$, we will write $\pi(\bchi):= (\chi_{\pi(\mu,j)})_{\substack{ 1\leq \mu \leq 2r \\ 1\leq j\leq n}}$.
\label{definition F}
\end{definition}
One can identify $F_{2r,n}$ with the set $\mathcal F_{2r,n}$ of pairs $\{(\mu,j),(\nu,i)\}$ with $1\leq \mu,\nu\leq 2r$, $1\leq j,i\leq n$,  $(\mu,j)\neq(\nu,i)$, and such that for each fixed $\mu$, 
$$ \big| \big\{ \nu \in [1,2r]\smallsetminus\{ \mu\} \mid \exists j,i :\quad   \{(\mu,j),(\nu,i)\} \in  \mathcal F_{2r,n} \big\}\big| = 1,  $$
$$ \sum_{\substack{1\leq \nu \leq 2r \\ \nu \neq \mu} } \big| \big\{ j \in [1,n] \mid \exists i : \quad  \{(\mu,j),(\nu,i)\} \in  \mathcal F_{2r,n} \big\}\big| \geq 2. $$
We note that for any $\pi \in F_{2r,n}$ we have that $|J_{\nu,\mu}(\pi)|=|J_{\mu,\nu}(\pi)|$, and moreover for any fixed $1\leq \mu \leq 2r$,
\begin{equation}\label{equation conditionsurk}|J_{\mu,\mu}(\pi)|\equiv 0 \bmod 2, \qquad\sum_{\nu=1}^{2r}|J_{\mu,\nu}(\pi)|=n.
\end{equation}

We begin with the following technical lemma, whose proof gives an idea on how we will count characters.

\begin{lemma}
\label{lemma removal star}
Let $ r\in \mathbb N$, $n\geq 2,$ $q\geq 3$ and let $\pi \in F_{2r,n}$ be such that $ J_{\mu,\nu}(\pi) 
\neq \varnothing $
if either $\mu=\nu$ or $\{ \mu,\nu\} = \{ 2u-1,2u \}$ for some $1\leq u\leq r$, and $J_{\mu,\nu}(\pi)=\varnothing $ otherwise. Let also $1\leq u \leq r$, and let $(\mu_0,j_0), (\mu_1,j_1)$ be such that $1\leq \mu_1\leq \mu_0, \mu_0\in \{2u-1,2u\}, 1\leq j_0,j_1\leq n$, and $\pi(\mu_0,j_0) \neq (\mu_1,j_1)$. Then, given a fixed array of characters $ (\chi_{\mu,j})_{ \substack{ 1\leq \mu \leq 2u-2 \\ 1\leq j \leq n}}$
we have the bound
\label{lemma bound non distinct}
$$\sum_{\substack{ \chi_{2u-1,1},\dots,\chi_{2u-1,n}, \chi_{2u,1},\dots,\chi_{2u,n} \\ \Theta_{\pi}(2u)=\chi_{0,q}\\ 
 \forall j,\,\, \chi_{\pi(\mu,j)}  =\overline{ \chi_{\mu,j}}(\mu=2u-1,2u) \\ 
  \chi_{\mu,j} \in \{\overline{\chi_{\nu,i}} |  i\leq n, \nu\leq \mu ,(\i,\nu)\notin\{ (j,\mu),\pi(j,\mu)\} \}
 \chi_{\mu_0,j_0}= \overline{ \chi_{\mu_1,j_1}}  
} }
1 \ll   \phi(q)^{n-2},$$
where $\Theta_{\pi}(2u):=\prod_{\substack{ 1\leq  j\leq n \\  
j\notin J_{2u,2u}}} \chi_{2u,j} $.
The implied constant is absolute.
\end{lemma}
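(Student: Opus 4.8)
The plan is to count the characters $\chi_{2u-1,j}$ and $\chi_{2u,j}$ ($1\le j\le n$) subject to the constraints in the summation, using the fact that the involution $\pi$ pairs up coordinates. First I would describe the structure: because $J_{\mu,\nu}(\pi)=\varnothing$ unless $\mu=\nu$ or $\{\mu,\nu\}=\{2u-1,2u\}$, the relations $\chi_{\pi(\mu,j)}=\overline{\chi_{\mu,j}}$ for $\mu\in\{2u-1,2u\}$ partition the $2n$ characters $\chi_{2u-1,1},\dots,\chi_{2u,n}$ into pairs, each pair consisting of two of these characters that are forced to be complex conjugates of each other. (The coordinates lying in $J_{2u-1,2u-1}$ or $J_{2u,2u}$ are paired within their own row, and those in $J_{2u-1,2u}$ are paired across the two rows.) Hence, before imposing $\Theta_\pi(2u)=\chi_{0,q}$, the number of free characters is exactly $n$: choosing one representative from each of the $n$ conjugate pairs determines all $2n$ of them.

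Next I would incorporate the two additional constraints. The condition $\Theta_\pi(2u)=\chi_{0,q}$, which says $\prod_{j\notin J_{2u,2u}}\chi_{2u,j}=\chi_{0,q}$, is one nontrivial multiplicative relation among the free characters: it expresses one of the free characters in terms of the others, provided $J_{2u,2u}\ne\{1,\dots,n\}$ (if $J_{2u,2u}=\{1,\dots,n\}$ the product is empty and the condition is vacuous, but then one checks the count is even better). This cuts the count from $\phi(q)^n$ down to at most $\phi(q)^{n-1}$. The last condition $\chi_{\mu_0,j_0}=\overline{\chi_{\mu_1,j_1}}$ imposes one further equation between two characters which, by the hypothesis $\pi(\mu_0,j_0)\ne(\mu_1,j_1)$ together with $\mu_1\le\mu_0$ and $\mu_0\in\{2u-1,2u\}$, is a relation not already implied by the pairing from $\pi$ — it links two distinct conjugate-pair classes, or forces a member of a class to be self-conjugate-conjugate-related to another, in any case reducing the dimension by a further one. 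So the total count is $\ll \phi(q)^{n-2}$, with an absolute implied constant since all the reductions are exact (equalities of characters), not inequalities.

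I would organize the write-up as: (i) set up the equivalence classes induced by $\pi$ on $\{2u-1,2u\}\times[1,n]$ and note there are exactly $n$ of them, each of size $2$; (ii) observe the map from a choice of class representatives to the full array is a bijection onto the set of arrays satisfying the $\pi$-conjugacy constraints, giving $\phi(q)^n$ such arrays; (iii) show $\Theta_\pi(2u)=\chi_{0,q}$ is equivalent to a linear relation of the form $\chi_{a}=\prod_{b\ne a}\chi_b^{\pm 1}$ among representatives, saving one factor of $\phi(q)$; (iv) show $\chi_{\mu_0,j_0}=\overline{\chi_{\mu_1,j_1}}$, given it is not forced by $\pi$, is a relation between two representatives (or between a representative and itself composed with the previous relation) that is genuinely new, saving another factor. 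The main obstacle is step (iv): one must verify carefully, using the precise hypotheses on $(\mu_0,j_0),(\mu_1,j_1)$ and the block structure of $\pi$, that this extra equation really is independent of the already-imposed relations in all the cases (same row vs. different rows, and whether either coordinate lies in a diagonal block $J_{\mu,\mu}$), so that it always drops the count by a full factor of $\phi(q)$ rather than being redundant; a short case analysis handles this, and in the degenerate case $J_{2u,2u}=\{1,\dots,n\}$ one argues directly that the bound still holds.
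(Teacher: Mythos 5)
Your counting strategy is essentially the paper's: the $\pi$-conjugacy relations leave $n$ free characters, the condition $\Theta_{\pi}(2u)=\chi_{0,q}$ is one multiplicative relation saving a factor $\phi(q)$, and the extra coincidence $\chi_{\mu_0,j_0}=\overline{\chi_{\mu_1,j_1}}$ is meant to save a second. The paper merely imposes the constraints in a different order (diagonal blocks first, then the cross block together with the extra coincidence, then the relation $\Theta_{\pi}(2u)=\chi_{0,q}$), and, like you, settles the crucial point by a case analysis of where $(\mu_0,j_0)$ and $(\mu_1,j_1)$ sit, carrying out one case and asserting the others are similar.

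The gap is exactly at your step (iv), which you flag as the main obstacle and then dismiss with ``a short case analysis handles this.'' It does not: the independence of the extra relation from the relations already imposed genuinely fails in some admissible configurations. Concretely, take $n=4$, $r=u=1$, and let $\pi$ pair $(1,1)\leftrightarrow(2,1)$, $(1,2)\leftrightarrow(2,2)$, $(1,3)\leftrightarrow(1,4)$, $(2,3)\leftrightarrow(2,4)$, so that $J_{1,2}(\pi)=\{(1,1),(1,2)\}$ has the minimal size $2$ permitted by the definition of $F_{2,4}$; take $(\mu_0,j_0)=(1,2)$ and $(\mu_1,j_1)=(1,1)$, which satisfy every hypothesis of the lemma. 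The $\pi$-conjugacy relations leave the four free characters $\chi_{1,1},\chi_{1,2},\chi_{1,3},\chi_{2,3}$, and the relation $\Theta_{\pi}(2)=\chi_{2,1}\chi_{2,2}=\overline{\chi_{1,1}}\,\overline{\chi_{1,2}}=\chi_{0,q}$ is \emph{identical} to the extra condition $\chi_{1,2}=\overline{\chi_{1,1}}$. The two relations therefore cut the count only once, and the sum is $\asymp\phi(q)^{3}=\phi(q)^{n-1}$, not $\ll\phi(q)^{n-2}$. The same collapse occurs whenever the unexpected coincidence links the two elements of a cross block (or of a diagonal block) of size exactly $2$, so your claim that the extra equation is ``genuinely new \dots in all the cases'' is false as stated, and the proposed case analysis cannot close this without either an additional hypothesis excluding these degenerate pairs or a separate treatment of them. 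To be fair, the paper's own proof has the same blind spot — its final sentence invokes the row relations for the last saving without checking that they are independent of the condition $\chi_{\mu_0,j_0}=\overline{\chi_{\mu_1,j_1}}$ already imposed — so your proposal reproduces a defect of the source rather than introducing a new one; but as written it does not establish the stated bound.
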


\begin{proof} The second relation of \eqref{equation conditionsurk} applied for $\mu=2u$ and $\mu=2u-1$ gives
$$|J_{2u,2u}|+|J_{2u,2u-1}| +|J_{2u-1,2u-1}|+ |J_{2u-1,2u}|
=|J_{2u,2u}|+2|J_{2u,2u-1}| +|J_{2u-1,2u-1}|=2n.$$ 
The pairs $(\mu_0,j_0),(\mu_1,j_1)$ 
are elements of either   $J_{2u-1,2u}$, $J_{2u,2u-1}$, $J_{2u,2u}$ or $J_{2u-1,2u-1}$. 
First we will treat the case where $(\mu_0,j_0),(\mu_1,j_1)\in J_{2u-1,2u-1}$, and thus $(\mu_0,j_0)=(2u-1,j_0)$, $ (\mu_1,j_1)=(2u-1,j_1)$ and $\pi(\mu_0,j_0)=(2u-1,j_0')$, $ \pi(\mu_1,j_1)=(2u-1,j_1')$ with $j_0,j_0',j_1,j_1'$ all distinct. We have that  
 $$\sum_{\substack{ \chi_{\mu,j}; (\mu,j)\in J_{2u-1,2u-1}\cup J_{2u,2u}\\ \chi_{2u-1,j_0} =\overline{ \chi_{2u-1,j_1}} 
} }   
1   \leq \phi(q)^{\frac{|J_{2u,2u}|}2+\frac{|J_{2u-1,2u-1}|}2-1}.$$
 Indeed, the character $\chi_{2u-1,j_0}$ is determined by the value of $\chi_{2u-1,j_1}$ through the relation $\chi_{2u-1,j_0} =\overline{ \chi_{2u-1,j_1}} 
.$
Moreover, we claim that 
$$\sum_{\substack{ \chi_{\mu,j}; (\mu,j)\in J_{2u,2u-1}  \\  
 \forall j,\,\, \chi_{\pi(\mu,j)}  =\overline{ \chi_{\mu,j}}(\mu=2u-1,2u)    
\\ \prod_{   
j\in J_{2u,2u-1}} \chi_{2u,j}=\chi_{0,q} } }  
1   \leq \phi(q)^{|J_{2u-1,2u}|-{1}}.$$
Indeed, in this sum we may only sum over the characters $\chi_{\mu,j}$ for $(\mu,j) \in J_{2u-1,2u} $, as these determine the values of the characters for $(\mu,j) \in J_{2u,2u-1} $ thanks to the relation $\chi_{\pi(\mu,j)}  =\overline{ \chi_{\mu,j}}$.
Finally, one of $\chi_{\mu,j}$ for $(\mu,j) \in J_{2u,2u-1} $ is determined by the other characters $\chi_{\mu,i}$ for $(\mu,i) \in J_{2u,2u-1} $ thanks to the relation $\prod_{ 
j\in J_{2u-1,2u }  }  \chi_{2u-1,j}=\chi_{0,q} $.  This implies the claimed upper bound. 
The bound for the case $(\mu_0,j_0)\in J_{2u-1,2u-1},(\mu_1,j_1)\in J_{2u ,2u }$ follows along the same lines.\par Secondly, 
we treat the case where $(\mu_0,j_0),(\mu_1,j_1)\in J_{2u-1,2u}$ (in particular this implies that $\mu_0=\mu_1=2u-1$); the proof for the other cases is similar. 
If $\pi(2u-1,j_0)=(2u,j_0')$ and $\pi(2u-1,j_1)=(2u,j_1')$, then the sum we are interested in is actually equal to
$$ \sum_{\substack{ \chi_{2u-1,1},\dots,\chi_{2u-1,n}, \chi_{2u,1},\dots,\chi_{2u,n} \\ \Theta_{\pi'}(2u)=\chi_{0,q}\\ 
 \forall j,\,\, \chi_{\pi'(\mu,j)}  =\overline{ \chi_{\mu,j}}(\mu=2u-1,2u) \\ 
 \chi_{2u-1,j_0}= \overline{ \chi_{2u,j_0'}}  
} }
1, $$
where $\pi'$ is equal to $\pi$ except for the values $ \pi'(2u-1,j_0)=(2u-1,j_1)$ and $ \pi'(2u,j_0')=(2u,j_1'). $
Thus, we have reduced the problem to the case treated in the first part of the proof, and the result follows.
\end{proof}

We continue to estimate sums over characters in the following lemma.
\begin{lemma}
Let $ r\in \mathbb N$, $n\geq 2,$ $q\geq 3$ and let $\pi \in F_{2r,n}$. We have the estimate
\label{lemma sum over characters}
\begin{equation}\label{equation C_{2r,n}}
C_{2r,n}(q;\pi):=\sum_{\substack{\bchi\in X_{2r,n}
\\   \pi(\bchi) =\overline{ \bchi}\\  \chi_{\mu,j} =\overline{\chi_{\nu,i}} \Rightarrow (\mu,j)\in \{ (\nu,i),\pi(\nu,i)\}    } }    1 =  \phi(q)^{(n-1)r} \Big(1+ O\Big(   \frac {n^2r^2}{\phi(q)}\Big) \Big).
\end{equation}
If moreover $q$ is large enough in terms of $\delta$ and $\eta$, then in the ranges $n\leq \log q/\log_2q $ and $r\leq c\phi(q)^{\frac 12}/n$ with $c>0$ a small enough absolute constant,
we have the estimate
\begin{equation}
\!\!\!\sum_{\substack{\bchi \in X_{2r,n} \\ \pi(\bchi)=\bchi \\ \chi_{\mu,j} = \overline{\chi_{\nu,i}} \Rightarrow (\mu,j) \in \{ (\nu,i),\pi(\nu,i) \} }} \prod_{\substack{ 1\leq \mu \leq 2r \\ 1\leq j \leq n }}b(\chi_{\mu,j};\widehat \eta^2)^{\frac 12}  = \phi(q)^{(n-1)r}(\alpha(\widehat \eta^2)\log q)^{nr}   \Big( 1+O \Big( n \frac {\log_2 q}{ \log q } \Big) \Big)^r.
\label{equation average b(chi) F_2r}
   \end{equation}
  Under the additional assumption of GRH$_{\widehat \eta}$, if $1\leq \mu_0 \neq \nu_0 \leq 2r$ are such that $J_{\mu_0,\nu_0}(\pi) \neq \varnothing$, then we have the bound
    \begin{multline}
\!\!\!\!\!    \sum_{\substack{\bchi \in X_{2r,n} \\ \pi(\bchi)=\bchi \\ \chi_{\mu,j} = \overline{\chi_{\nu,i}} \Rightarrow (\mu,j) \in \{ (\nu,i),\pi(\nu,i) \} }}  \!\!\!\!\!\sup_{\substack{z\in \R }}\Big\{ \!\!\!\!\! \!\!
\sums_{\substack{\rho_{\chi_{\mu_0,1}}, \dots, \rho_{\chi_{\mu_0,n}}\\ \rho_{\chi_{\nu_0,1}}, \dots, \rho_{\chi_{\nu_0,n}}  \\ \forall j,  \,\,\,  \gamma_{\chi_{\mu,j}} +\gamma_{\chi_{\pi(\mu,j)} }=0 \,\,\, (\mu \in \{ \mu_0,\nu_0\})\\ \gamma_{\chi_{\mu_0,1}} + \dots + \gamma_{\chi_{\mu_0,k}} = z}} \!\!\!\!\!\!\!\!\!\!\!\!  \prod_{\substack{ 1\leq j\leq n \\ \mu \in \{ \mu_0,\nu_0\} }}\Big(\widehat \eta\Big(\frac{\gamma _{\chi_{\mu,j}}}{2\pi } \Big) (\ord_{s=\rho_{\chi_{\mu,j}}}L(s,\chi_{\mu,j}))^{\frac 12}  \Big) \Big\} \times 
\\ \prod_{\substack{ 1\leq \mu \leq 2r \\ 1\leq j \leq n \\ \mu\notin \{\mu_0,\nu_0\} }}b(\chi_{\mu,j};\widehat \eta^2)^{\frac 12} 
\ll_{\delta,\eta}  \phi(q)^{(n-1)r}(\alpha(\widehat \eta^2)\log q)^{nr-1}.
\label{equation average b(chi) with one zero removed}
\end{multline}
\end{lemma}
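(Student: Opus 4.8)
The plan is to exploit the block structure carried by an involution $\pi\in F_{2r,n}$. By Definition~\ref{definition F} and~\eqref{equation conditionsurk}, the $2r$ rows split into $r$ pairs $\{\mu,\nu\}$, where $\nu=\nu(\mu)$ is the unique index $\neq\mu$ with $J_{\mu,\nu}(\pi)\neq\varnothing$; within such a pair, $\pi$ matches positions internally in row $\mu$ (the set $J_{\mu,\mu}(\pi)$, of even cardinality $2a$), internally in row $\nu$ (the set $J_{\nu,\nu}(\pi)$, of the same even cardinality $2a$), and positions of row $\mu$ with positions of row $\nu$ (the set $J_{\mu,\nu}(\pi)$, of cardinality $b:=|J_{\mu,\nu}(\pi)|\geq 2$), with $2a+b=n$. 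Imposing $\pi(\bchi)=\bchi$ forces $\chi_{\mu,j'}=\overline{\chi_{\mu,j}}$ on each $\mu$-internal matched pair, the analogue on each $\nu$-internal pair, and $\chi_{\nu,i}=\overline{\chi_{\mu,j}}$ on each cross pair; since $\chi\overline\chi=\chi_{0,q}$, the row-product relations $\chi_{\mu,1}\cdots\chi_{\mu,n}=\chi_{0,q}$ collapse to the single relation $\prod_{j\in J_{\mu,\nu}(\pi)}\chi_{\mu,j}=\chi_{0,q}$ among the $b$ characters shared by the block. Hence each block carries exactly $a+a+(b-1)=n-1$ free characters, and — once the condition linking distinct blocks is dropped — every sum in the statement factors as a product over the $r$ blocks.

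To prove~\eqref{equation C_{2r,n}}, factor over blocks after dropping the implication ``$\chi_{\mu,j}=\overline{\chi_{\nu,i}}\Rightarrow\ldots$''. A single block contributes $(\phi(q)-1)^{2a}\,|\{\omega_1,\dots,\omega_b\neq\chi_{0,q}:\omega_1\cdots\omega_b=\chi_{0,q}\}|$, which by~\eqref{def+estCkq} is $\phi(q)^{n-1}(1+O(n/\phi(q)))$; the product over $r$ blocks is $\phi(q)^{(n-1)r}(1+O(nr/\phi(q)))$. Removing the arrays with an accidental coincidence $\chi_{\mu,j}=\overline{\chi_{\nu,i}}$, $(\mu,j)\notin\{(\nu,i),\pi(\nu,i)\}$, costs at most $\ll(rn)^2\phi(q)^{(n-1)r-1}$ (choosing the offending positions, $O((rn)^2)$ ways, imposes one further linear relation), a relative error $O((rn)^2/\phi(q))$, which gives~\eqref{equation C_{2r,n}}. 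The proof of~\eqref{equation average b(chi) F_2r} runs identically, only the weight changes: as each character is $\pi$-matched to its conjugate and $b(\overline\chi;\widehat\eta^2)=b(\chi;\widehat\eta^2)$, over one block $\prod b(\chi_{\mu,j};\widehat\eta^2)^{1/2}$ equals $\prod b(\chi;\widehat\eta^2)$, the product running over the $a$ free characters of row $\mu$, the $a$ free characters of row $\nu$, and the $b$ characters shared by the block (subject to $\prod=\chi_{0,q}$). Summing out the $2a$ unconstrained ones by Proposition~\ref{proposition first moment b(chi,h)} — which gives $\sum_{\chi\neq\chi_{0,q}}b(\chi;\widehat\eta^2)=\phi(q)\alpha(\widehat\eta^2)\log q\,(1+O(\log_2 q/\log q))$ once one checks $\beta_q(\widehat\eta^2)\ll\log_2 q$ in the range $n\leq\log q/\log_2 q$ — and the $b$ shared ones by~\eqref{sumb1bk} of Lemma~\ref{lemme moyenne multiple b(chi)} (legitimate since $2\leq b\leq n\leq\log q/\log_2 q$), one block contributes $\phi(q)^{n-1}(\alpha(\widehat\eta^2)\log q)^n(1+O(n\log_2 q/\log q))$. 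Multiplying over the $r$ blocks produces the stated main term, and the accidental-coincidence arrays carry total weight $\ll(rn)^2\phi(q)^{(n-1)r-1}(\alpha(\widehat\eta^2)\log q)^{nr}$ (using $b(\chi;\widehat\eta^2)\ll\alpha(\widehat\eta^2)\log q$ from~\eqref{equation pointwise b(chi) }), a relative error $O((rn)^2/\phi(q))$ that in the range $r\leq\phi(q)^{1/2}/n$ is absorbed into $(1+O(n\log_2 q/\log q))^r-1$.

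For~\eqref{equation average b(chi) with one zero removed} we use the same block decomposition, singling out the block $\{\mu_0,\nu_0\}$. Dropping the inter-block coincidence condition factors the left side into the contribution of that block and of the other $r-1$ blocks; the latter is $\ll\phi(q)^{(n-1)(r-1)}(\alpha(\widehat\eta^2)\log q)^{n(r-1)}$ by the upper-bound half of~\eqref{equation average b(chi) F_2r}. In the block $\{\mu_0,\nu_0\}$, the $2a$ internally matched pairs do not meet the constraint $\gamma_{\chi_{\mu_0,1}}+\dots+\gamma_{\chi_{\mu_0,k}}=z$ (here $k$ may be taken so that $\{1,\dots,k\}$ labels exactly the cross-matched positions of row $\mu_0$, whose zeros cancel against those of $\nu_0$), so they are summed out into $2a$ factors each $\asymp\phi(q)\alpha(\widehat\eta^2)\log q$ as before; what remains is a sum over the zeros of the $b$ shared characters, which obey $\prod\chi_{\mu_0,j}=\chi_{0,q}$ and whose heights are constrained to sum to $z$. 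By~\eqref{equation average sum over zero one removed} of Lemma~\ref{lemme moyenne multiple b(chi)} (with $k=b\geq 2$, under GRH$_{\widehat\eta}$) this is $\ll\phi(q)^{b-1}(\alpha(\widehat\eta^2)\log q)^{b-1}$, one power of $\alpha(\widehat\eta^2)\log q$ short of the unconstrained count. Hence the block $\{\mu_0,\nu_0\}$ contributes $\ll\phi(q)^{2a+b-1}(\alpha(\widehat\eta^2)\log q)^{2a+b-1}=\phi(q)^{n-1}(\alpha(\widehat\eta^2)\log q)^{n-1}$, and combining with the remaining $r-1$ blocks gives~\eqref{equation average b(chi) with one zero removed}.

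The main obstacle is the last step: one must reconcile the linear constraint $\gamma_{\chi_{\mu_0,1}}+\dots+\gamma_{\chi_{\mu_0,k}}=z$ with the $\pi$-matching of the zeros, so that — after the internally matched pairs (whose zeros cancel pairwise) are summed out — it becomes a genuine constraint on the cross-matched zeros to which~\eqref{equation average sum over zero one removed}, equivalently Proposition~\ref{proposition pair correlation}, applies; and, throughout, one has to check that the accidental-coincidence and inclusion–exclusion errors are genuinely negligible in the ranges $n\leq\log q/\log_2 q$ and $r\leq\phi(q)^{1/2}/n$, which is what forces those ranges.
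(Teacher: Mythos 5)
Your argument is correct and is essentially the paper's own proof: the same decomposition of the $2r$ rows into $r$ blocks paired by $\pi$, the same count of $n-1$ free characters per block (internal conjugate pairs plus the cross part subject to $\prod\chi=\chi_{0,q}$), the same use of Proposition~\ref{proposition first moment b(chi,h)} for the internally matched characters and of Lemma~\ref{lemme moyenne multiple b(chi)} for the cross-matched ones, and the same $O(n^2r^2/\phi(q))$ loss from removing accidental coincidences. The only imprecision is in the last part, where you invoke \eqref{equation average sum over zero one removed}, whose bound has the quantifier order $\sup_z\sum_{\bchi}$, while \eqref{equation average b(chi) with one zero removed} needs $\sum_{\bchi}\sup_z$; this is harmless because the proof of \eqref{equation average sum over zero one removed} bounds the innermost pair of zero-sums by $\sup_z\sup_{\chi}P(h;\chi,z)$ via Proposition~\ref{proposition pair correlation} before any character is summed, so the identical argument yields the stronger quantifier order you actually use.
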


\begin{proof}
We begin with~\eqref{equation C_{2r,n}}. Fix  $\pi \in F_{2r,s}$. In the sum defining $C_{2r,n}(q;\pi)$ and by definition of $F_{2r,n}$, we can change the indexing in the variable $\mu$ in such a way that 
$ J_{\mu,\nu}(\pi) 
\neq \varnothing $ (recall~\eqref{equation definition Jmunu}) if $\{ \mu,\nu\} \in \{ 2u-1,2u \}$ for some $1\leq u\leq r$, and $J_{\mu,\nu}(\pi)=\varnothing $ otherwise.

We distinguish two different sets of indices:
\begin{align*}
      J_\pi&:= \bigcup_{\mu=1}^{2r}J_{\mu,\mu}(\pi) = \big\{ (\mu,j) : \pi(\mu,j) \in \{\mu\}\times [1,n] \big\}; \cr I_\pi&:= \bigcup_{1\leq \mu\neq \nu\leq 2r} J_{\mu,\nu}(\pi)= \big\{ (\mu,j) : \pi(\mu,j) \notin \{\mu\}\times [1,n] \big\},  
\end{align*}
and we recall the relations~\eqref{equation conditionsurk}.
For each fixed $1\leq u\leq r$, the conditions $\chi_{2u-1,1}\cdots \chi_{2u-1,n} =\chi_{2u,1}\cdots \chi_{2u,n} =\chi_{0,q}$ are equivalent to $\Theta_{\pi}(2u):=\prod_{\substack{ 1\leq  j\leq n \\ (j,2u)\in I_\pi}} \chi_{2u,j} = \chi_{0,q}$. Note also that the condition $\chi_{\mu,j} =\overline{\chi_{\nu,i}} \Rightarrow (\mu,j)\in \{ (\nu,i),\pi(\nu,i)\}  $ implies that for $(\nu,i)\notin \{ (\mu,j),\pi(\mu,j) \}$, $\chi_{\nu,i} \neq \chi_{\mu,j}$.

The choice of $\pi$ implies that we can write  
$\pi(\bchi)=(\pi_2(\bchi_{2}),\pi_4(\bchi_{4}),\ldots,\pi_{2r}(\bchi_{2r})) $ 
where
$\bchi=(\bchi_2,\bchi_4,\ldots,\bchi_{2r}) $ and 
$\bchi_{2u}=(\chi_{\mu,j})_{\substack{\mu\in \{ 2u-1, 2u\}\\ 1\leq j\leq n}}$. 
Then, we can write 
$$
C_{2r,n}(q;\pi)=\sum_{\substack{ \bchi_2\in X_{2,n}\\ \Theta_{\pi}(2)=\chi_{0,q} \\ \pi_{2}(\bchi_{2} ) =\overline{ \bchi_{2}}
} }\sum_{\substack{
\bchi_4\in X_{2,n}\\ \Theta_{\pi}(4)=\chi_{0,q} \\ \pi_{4}(\bchi_{4} ) =\overline{ \bchi_{4}}
} } \cdots   \underset{ \substack{\bchi_{2r}\in X_{2,n}\\ \Theta_{\pi}(2r)=\chi_{0,q} \\ \pi_{2r}(\bchi_{2r} ) =\overline{ \bchi_{2r}}} }{{\sum}^*} 1 
, $$
where the star means that in each of the sums we have the extra condition 
\begin{equation}\label{condition mu j}
 \chi_{\mu,j} \notin \big\{\overline{\chi_{\nu,i}} |  i\leq n, \nu\leq \mu ,( i,\nu)\notin\{ (\mu,j),\pi(\mu,j)\} \big\}\hspace{.5cm}  (\mu =  2u-1,2u;  1 \leq j \leq n).
\end{equation}

We first treat $C_{2r,n}(q;\pi)$ as if there were no star on the sums; we will come back to the original sums later. For any fixed $1\leq u\leq r$, we have that 
$$\sum_{\substack{ \bchi_{2u}\in X_{2,n}
\\ \Theta_{\pi}(2u)=\chi_{0,q}\\ \pi_{2u}(\bchi_{2u})=\overline{\bchi_{2u}}
}}  1 
= \sum_{\substack{ 
\chi_{\mu,j} \neq \chi_{0,q}\\ \mu \in \{2u-1,2u\} ,  (\mu,j) \in J_\pi\\ \forall (\mu,j)\in J_{\pi},\,\,  \chi_{\pi(\mu,j)}  =\overline{ \chi_{\mu,j}} (\mu =2u-1,2u)  
}}  1 \cdot  \sum_{\substack{ \chi_{2u,j} \neq \chi_{0,q}\\  j \in J_{2u,2u-1} \\ \Theta_{\pi}(2u)=\chi_{0,q}  } }  1. $$
The second factor on the right hand side is equal to 
$$ (\phi(q)-1)^{|J_{2u,2u-1}|-1}\Big( 1+O\Big(\frac 1{\phi(q)}\Big)\Big),$$
since $|J_{2u,2u-1}|\geq 2$. As for the first, it is equal to the square of
$$   \sum_{\substack{ \chi_{2u,j} \neq \chi_{0,q}\\ j\in J_{2u,2u}\\  \forall j\in J_{2u,2u}, \,\,\chi_{\pi(2u,j)} =\overline{ \chi_{2u,j}}    }}  1 = (\phi(q)-1)^{\frac12|J_{ 2u,2u}| }. $$
The analogue of the estimate~\eqref{equation C_{2r,n}}, with no star on the sums, follows. We now indicate how to remove the condition~\eqref{condition mu j} using Lemma~\ref{lemma removal star}. 
This is done by summing over the $\leq (2rn)^2$ possibilities for the pair of pairs $(\mu_0,j_0),(\mu_1,j_1)$ for which $\chi_{\mu_0,j_0} =\overline{\chi_{\mu_1,j_1}}$ and $\pi(\mu_0,j_0)\neq (\mu_1,j_1)$; in each of these sums we can apply Lemma~\ref{lemma removal star}. The claimed bound follows.

We now move to~\eqref{equation average b(chi) F_2r}.
Following the steps above, we may once more assume that $ J_{\mu,\nu}(\pi) 
\neq \varnothing $ (recall~\eqref{equation definition Jmunu}) if $\{ \mu,\nu\} \in \{ 2u-1,2u \}$ for some $1\leq u\leq r$, and $J_{\mu,\nu}(\pi)=\varnothing $ otherwise; this implies that the left hand side of~\eqref{equation average b(chi) F_2r} is equal to $$L_{2r,n}(q;\pi):=\sum_{\substack{ \bchi_2\in X_{2,n}\\ \Theta_{\pi}(2)=\chi_{0,q} \\ \pi_{2}(\bchi_{2} ) =\overline{ \bchi_{2}}
} }\sum_{\substack{  
\bchi_4\in X_{2,n}\\ \Theta_{\pi}(4)=\chi_{0,q} \\ \pi_{4}(\bchi_{4} ) =\overline{ \bchi_{4}}
} } \cdots   \underset{ \substack{\bchi_{2r}\in X_{2,n}\\ \Theta_{\pi}(2r)=\chi_{0,q} \\ \pi_{2r}(\bchi_{2r} ) =\overline{ \bchi_{2r}}} }{{\sum}^*} \prod_{\substack{ 1\leq \mu \leq 2r \\ 1\leq j \leq n }}b(\chi_{\mu,j};\widehat \eta^2)^{\frac 12}.  
$$
As before, we may remove the star by introducing an admissible error term; this is possible thanks to  Proposition~\ref{proposition first moment b(chi,h)}
in which we use the bound $q_\chi \leq q$. Now, for any fixed $1\leq u\leq r$, keeping in mind that $ J_{2u,2u-1}\neq \varnothing$, we notice that by Lemma~\ref{lemme moyenne multiple b(chi)}, 
$$ \sum_{\substack{ \chi_{2u,j} \neq \chi_{0,q}\\  j \in J_{2u,2u-1} \\ \Theta_{\pi}(2u)=\chi_{0,q}  } }   \prod_{j\in J_{2u,2u-1}}b(\chi_{2u,j};\widehat \eta^2) = \phi(q)^{|J_{2u,2u-1}|-1}(\alpha(\widehat \eta^2)\log q)^{|J_{2u,2u-1}|}\Big( 1+O \Big(n \frac {\log_2 q  }{ \log q } \Big) \Big). $$
The argument is similar for the factors involving $J_{2u,2u}$ and $J_{2u-1,2u-1}$, and this concludes the proof. The proof of~\eqref{equation average b(chi) with one zero removed} is similar, thanks to Lemma~\ref{lemme moyenne multiple b(chi)}.
 \end{proof}

We are ready to obtain our first lower bound on $\cV_{2r,n}(T,q;\eta,\Phi)$ in terms of $|F_{2r,n}|$.

\begin{lemma}
\label{lemma lower bound centered moments even s}
 Let $\delta>0$, $\eta \in \S_\delta$, $\Phi\in \U,$  $r \geq 1$ and $n\geq 2$, and assume GRH$_{\widehat \eta}$. For $T\geq 1$, $ q$ large enough in terms of $\delta$ and $\eta$, and in the ranges $n\leq \log q/\log_2 q$ and $r\leq \log q$, we have the lower bound
   \begin{equation}\begin{split}
  \cV_{2r,n}(T,q;\eta,\Phi) \geq \frac{V_n(q;\eta)^r}{\nu_n^r} |F_{2r,n}| \Big( \Big( 1+O_{\delta,\eta}\Big( n \frac{\log_2 q}{\log q}\Big)\Big)^r+O_{\delta,\eta}\Big( \frac r{\log q}\Big)\Big)
\\ +O_\Phi\Big( \frac{(K_{\delta,\eta}\log q)^{2rn}}{T\phi(q)^{2r}} \Big),
\end{split}
\label{equation lemma lower bound centered moments even s GRH}
\end{equation}
where $K_{\delta,\eta}>0$ is a constant.
\end{lemma}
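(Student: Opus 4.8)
The plan is to start from the exact expansion of Lemma~\ref{lemma explicit formula higher moments of moments} with $s=2r$. Since $(-1)^{2rn}=1$ it reads
$$\cV_{2r,n}(T,q;\eta,\Phi)=\frac{1}{\phi(q)^{2rn}}\sum_{\bchi\in X_{2r,n}}\sum_{\bm\gamma\in\Gamma(\bchi)}\widehat\eta(\bm\gamma)\,\Delta_{2r}(\bm\sigma_{\bm\gamma};\Phi,T)+O_\Phi\Big(\frac{(K_{\delta,\eta}\log q)^{2rn}}{T\phi(q)^{2r}}\Big).$$
Under GRH$_{\widehat\eta}$ one has $\widehat\eta(\bm\gamma)\geq 0$, with $\widehat\eta(\bm\gamma)=0$ unless every coordinate of $\bm\gamma$ is real; moreover $\Delta_{2r}(\bm\sigma;\Phi,T)\geq\Delta_{2r}(\bm\sigma)$ by~\eqref{equation lower bound Delta T}, and by Lemma~\ref{inegDelta} the latter is the indicator of $\{\sum_\mu\sigma_\mu=0,\ \sigma_\mu\neq 0\ \forall\mu\}$. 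Discarding the off-critical-line zeros and bounding $\Delta_{2r}$ from below then gives
$$\cV_{2r,n}(T,q;\eta,\Phi)\geq\frac{1}{\phi(q)^{2rn}}\sum_{\bchi\in X_{2r,n}}\ \sum_{\substack{\bm\gamma\in\Gamma(\bchi)\cap\R^{2rn}\\ \Delta_{2r}(\bm\sigma_{\bm\gamma})=1}}\widehat\eta(\bm\gamma)+O_\Phi\Big(\frac{(K_{\delta,\eta}\log q)^{2rn}}{T\phi(q)^{2r}}\Big),$$
so it remains to bound the inner double sum from below.

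To do this I would fix an involution $\pi\in F_{2r,n}$ and retain only those $(\bchi,\bm\gamma)$ with $\chi_{\pi(\mu,j)}=\overline{\chi_{\mu,j}}$ and $\gamma_{\chi_{\pi(\mu,j)}}=-\gamma_{\chi_{\mu,j}}$ for all $(\mu,j)$, together with the genericity restriction $\chi_{\mu,j}=\overline{\chi_{\nu,i}}\Rightarrow(\mu,j)\in\{(\nu,i),\pi(\nu,i)\}$; all such terms are non-negative, so this is a legitimate lower bound. For such configurations the pairs inside one row (the set $J_{\mu,\mu}(\pi)$, of even size by~\eqref{equation conditionsurk}) contribute $0$ to $\sigma_\mu$, the cross-block $J_{\mu,\nu}(\pi)$ sent to the unique partner row $\nu$ contributes a sum of $|J_{\mu,\nu}(\pi)|\geq 2$ ordinates of pairwise non-conjugate $L$-functions, and $\sum_\mu\sigma_\mu=0$ holds automatically. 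Summing one free ordinate per $\pi$-pair produces, for each free character $\chi$, a factor $\geq b(\chi;\widehat\eta^2)$ in the guise of $b^+(\chi;\widehat\eta^2)$ from~\eqref{inegb+b}; the resulting sum over admissible $\bchi$ of $\prod_{\mu,j}b(\chi_{\mu,j};\widehat\eta^2)^{1/2}$ is evaluated by~\eqref{equation average b(chi) F_2r} as $\phi(q)^{(n-1)r}(\alpha(\widehat\eta^2)\log q)^{nr}(1+O(n\log_2q/\log q))^r$. The one thing left to subtract is the part of this $\pi$-block on which some $\sigma_\mu=0$: there the vanishing imposes an extra additive relation among the ordinates of a cross-block, so one of those zeros can be solved for in terms of the others, and~\eqref{equation average b(chi) with one zero removed} (which is where Proposition~\ref{proposition pair correlation} enters) shows the contribution drops by a factor $\ll(\alpha(\widehat\eta^2)\log q)^{-1}$; after summing over the $O(r)$ choices of distinguished row this is absorbed into the term $O_{\delta,\eta}(r/\log q)$ of~\eqref{equation lemma lower bound centered moments even s GRH}.

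Finally I would sum over all $\pi\in F_{2r,n}$. The genericity condition makes the $\pi$-blocks attached to distinct involutions essentially disjoint (their common refinements being lower order, bounded as above), so the lower bounds add and produce the full count $|F_{2r,n}|$; the character count per $\pi$ is $C_{2r,n}(q;\pi)=\phi(q)^{(n-1)r}(1+O(n^2r^2/\phi(q)))$ as in~\eqref{equation C_{2r,n}}, consistent with the main term above. Dividing by $\phi(q)^{2rn}$ and using
$$\frac{\phi(q)^{(n-1)r}(\alpha(\widehat\eta^2)\log q)^{nr}}{\phi(q)^{2rn}}=\Big(\frac{(\alpha(\widehat\eta^2)\log q)^{n}}{\phi(q)^{n+1}}\Big)^{r}=\frac{V_n(q;\eta)^r}{\nu_n^r}$$
yields~\eqref{equation lemma lower bound centered moments even s GRH}. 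The hypotheses $n\leq\log q/\log_2q$ and $r\leq\log q$ are exactly what is needed for the estimates of Lemma~\ref{lemma sum over characters} to apply (in particular $nr\leq\phi(q)^{1/2}$ for large $q$) and for the error terms to take the stated shape. The main obstacle is not the algebra but the handling of the constraint $\sigma_\mu\neq0$: one must both justify the reduction to $\pi$-matched generic configurations — using positivity of $\widehat\eta$ and $\widehat\Phi$, which is where GRH$_{\widehat\eta}$ is essential — and quantify the loss coming from accidental vanishing of a cross-block sum of ordinates, for which the pair-correlation bound and the precise combinatorial shape of $F_{2r,n}$ (each row having a unique partner row and at least two cross-edges) are both indispensable.
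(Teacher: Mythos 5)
Your proposal is correct and follows essentially the same route as the paper's proof: positivity of $\widehat\eta$ and of $\Delta_{2r}(\cdot;\Phi,T)\geq\Delta_{2r}(\cdot)$ to restrict to involution-matched generic arrays indexed by $\pi\in F_{2r,n}$, evaluation of the main term via~\eqref{equation average b(chi) F_2r} (with $b^+\geq b$ from~\eqref{inegb+b}), and removal of the constraints $\sigma_\mu\neq 0$ via the pair-correlation-based bound~\eqref{equation average b(chi) with one zero removed}, contributing the $O_{\delta,\eta}(r/\log q)$ term. The only cosmetic difference is that the genericity condition actually makes the $\pi$-blocks exactly disjoint (up to the self-conjugate-character degeneracies already absorbed in Lemma~\ref{lemma sum over characters}), not merely ``essentially'' so.
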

\begin{proof}
We recall that we can view the $\pi\in F_{2r,n}$ as involutions on the set of arrays of characters $\bchi=(\chi_{\mu,j})_{\substack{1\leq \mu \leq 2r \\ 1\leq j\leq n}}$ by setting  $\pi(\bchi) = (\chi_{\pi(\mu,j)})_{\substack{1\leq \mu \leq 2r \\ 1\leq j\leq n}} $.

By Lemma~\ref{inegDelta} and by positivity of $\widehat \eta$ and of $\Delta_{2r}(\bm \sigma_{\bm\gamma};\Phi,T)$,
we can restrict the sum on the right hand side of~\eqref{equation lemma lower bound centered moments} as follows:
\begin{equation}
    \begin{split}
      &  \cV_{2r,n}(T,q;\eta,\Phi) 
      \\  &\geq
\frac 1{\phi(q)^{2rn}}\sum_{\substack{ \bchi\in X_{2r,n}\\  \forall (\mu,j),\exists!(\nu,i)\neq (\mu,j): \chi_{\mu,j} =\overline{\chi_{\nu,i}} }}  \sum_{\substack{ \bm \gamma \in \Gamma(\bchi)}} \widehat \eta(\bm \gamma) \Delta_{2r}(\bm\sigma_{\bm\gamma};\Phi,T)+O_\Phi\Big( \frac{( K_{\delta,\eta}\log q)^{sn}}{T\phi(q)^s} \Big)
\\ & \geq\frac 1{\phi(q)^{2rn}}\sum_{\pi \in F_{2r,n}}
\!\!\!\!\!\!\!\!\!\!\!\!\sum_{\substack{\bchi\in X_{2r,n}\\   \pi(\bchi ) =\overline{\bchi} \\
\chi_{\mu,j} =\overline{\chi_{\nu,i}} \Rightarrow (\mu,j)\in\{ (\nu,i),\pi(\nu,i)\} }}  \sum_{\substack{ \bm \gamma \in \Gamma(\bchi) : \\  
\forall \mu,j, \,\,\,  \gamma_{\chi_{\mu,j}} +\gamma_{\chi_{\pi(\mu,j)} }=0 \\  \forall \mu, \,\,\,\sum_{j=1}^n \gamma_{\chi_{\mu,j}} \neq 0}}\!\!\! \widehat \eta(\bm \gamma)+O_\Phi\Big( \frac{( K_{\delta,\eta}\log q)^{sn}}{T\phi(q)^s} \Big).
 \end{split}\label{equation lemma restriction sum over zeros }
\end{equation}
Now, we may bound the inner sum as follows: 
$$\sum_{\substack{ \bm \gamma \in \Gamma(\bchi) : \\  
\forall \mu,j, \,\,\,  \gamma_{\chi_{\mu,j}} +\gamma_{\chi_{\pi(\mu,j)} }=0 \\  \forall \mu, \,\,\,\sum_{j=1}^n \gamma_{\chi_{\mu,j}} \neq 0}} \widehat \eta(\bm \gamma) \geq \sums_{\substack{ \bm \gamma \in \Gamma(\bchi) : \\  
\forall \mu,j, \,\,\,  \gamma_{\chi_{\mu,j}} +\gamma_{\chi_{\pi(\mu,j)} }=0 \\  \forall \mu, \,\,\,\sum_{j=1}^n \gamma_{\chi_{\mu,j}} \neq 0}} \widehat \eta(\bm \gamma) \prod_{\substack{ 1\leq \mu \leq 2r \\ 1\leq j\leq n }} \big(\ord_{s=\frac 12+i\gamma_{\chi_{\mu,j}}}L(s,\chi_{\mu,j})\big)^{\frac 12}. $$
Applying Lemma~\ref{lemma sum over characters}, we can remove the conditions $\sum_{j=1}^n \gamma_{\chi_{\mu,j}} \neq 0$ at the cost of an admissible error term. The resulting sum is handled with~\eqref{equation average b(chi) F_2r}, and the claimed estimate follows.
\end{proof}

The final step is to compute $|F_{2r,n}|$.

\begin{lemma}
\label{lemma combinatorics even s}
For $r \in \mathbb N$ and $n\geq 2$, we have the formula
$$ |F_{2r,n}| =\mu_{2r}
\nu_n^r
,$$ 
where $F_{2r,n}$ was defined in Definition~\ref{definition F} and $\nu_n$ in \eqref{defKn}.
\end{lemma}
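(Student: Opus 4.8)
The plan is to count the involutions in $F_{2r,n}$ by first choosing how the $2r$ rows are paired up, and then, for a fixed pairing, counting the ways to decorate each pair of rows with an involution satisfying the two structural conditions of Definition~\ref{definition F}. For the first step, recall that for $\pi\in F_{2r,n}$ each row $\mu$ is "linked" to exactly one other row $\nu\neq\mu$ (the unique $\nu$ with $J_{\mu,\nu}(\pi)\neq\varnothing$), and this linking relation is symmetric by the identity $|J_{\nu,\mu}(\pi)|=|J_{\mu,\nu}(\pi)|$ noted after Definition~\ref{definition F}. Hence it induces a perfect matching on $\{1,\dots,2r\}$, and the number of such matchings is exactly $(2r-1)(2r-3)\cdots 1=\mu_{2r}$. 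So $|F_{2r,n}| = \mu_{2r}\cdot N$, where $N$ is the number of involutions on $\{1,2\}\times\{1,\dots,n\}$ with no fixed point, with $J_{1,2}\neq\varnothing$ (equivalently, every row linked to the other), and with $\sum_{\nu}|J_{1,\nu}|\geq 2$ and $\sum_{\nu}|J_{2,\nu}|\geq 2$; by symmetry this count $N$ does not depend on which pair of rows we look at.

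For the second step I would parametrize such a two-row involution by the data $(J_{1,1},J_{2,2},J_{1,2})$ together with the bijections realizing them. Write $|J_{1,1}|=2a$ and $|J_{2,2}|=2b$ (even by~\eqref{equation conditionsurk}), and $|J_{1,2}|=|J_{2,1}|=k$; then $2a+k=n$ and $2b+k=n$, forcing $a=b$ and $k=n-2a$. The constraint $\sum_{\nu}|J_{1,\nu}|\geq 2$ reads $2a+k\geq 2$, i.e. $n\geq 2$, which is automatic; but we also need $k=|J_{1,2}|\geq 1$ for the rows to be linked, so $a$ ranges over $0\leq a\leq \lfloor (n-1)/2\rfloor$, i.e. $k\geq 2$ when... wait — here I must be careful: the requirement is that row $1$ is linked to row $2$, which only needs $k\geq 1$; however the condition $\sum_{\nu\neq\mu}|J_{\mu,\nu}(\pi)|\geq 2$ in Definition~\ref{definition F} is $\sum_{\nu\neq 1}|J_{1,\nu}|\geq 2$, and since the only nonzero term is $|J_{1,2}|=k$, this forces $k\geq 2$. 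So $k\geq 2$ and $a=(n-k)/2$ with $k\equiv n\bmod 2$. Now I count the decorations: choose which $k$ columns of row $1$ lie in $J_{1,2}$ and which $k$ columns of row $2$ lie in $J_{2,1}$ ($\binom nk^2$ ways), choose the bijection between these ($k!$ ways), and on the remaining $2a$ columns of row $1$ choose a fixed-point-free involution ($(2a-1)!! = \mu_{2a}$ ways), similarly for row $2$ ($\mu_{2a}$ ways). This gives $N=\sum_{k}\binom nk^2 k!\, \mu_{2a}^2$ with $2a=n-k$; writing $2a=2m$ so $n=k+2m$ and $\mu_{2m}=(2m)!/(2^m m!)$, and using $\binom nk^2 k! = n!^2/(k!\,(2m)!^2)$, one gets $N = n!^2\sum_{k+2m=n,\,k\geq 2}\frac{1}{k!\,(2m)!^2}\cdot\frac{(2m)!^2}{2^{2m}m!^2} = n!^2\sum_{k+2m=n,\,k\geq 2}\frac{1}{k!\,2^{2m}m!^2} = \nu_n$, by the definition~\eqref{defKn}. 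Hence $|F_{2r,n}|=\mu_{2r}\nu_n^r$ once I observe that the decorations on the $r$ distinct pairs of rows are chosen independently, contributing $N^r=\nu_n^r$.

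The main obstacle I anticipate is purely bookkeeping: making the identification "involution on $F_{2r,n}$" $\leftrightarrow$ "(matching of rows) $\times$ (tuple of per-pair decorations)" genuinely bijective, i.e. checking that every $\pi\in F_{2r,n}$ decomposes uniquely this way and conversely every such choice yields a valid $\pi$. In particular one must verify that the structural conditions of Definition~\ref{definition F} localize: the condition "each row linked to exactly one other" is exactly the matching; the condition $\sum_{\nu\neq\mu}|J_{\mu,\nu}|\geq 2$ becomes, for a matched pair $\{\mu,\nu\}$, the single inequality $|J_{\mu,\nu}|\geq 2$, which is what pins down $k\geq 2$ in the per-pair count; and the no-fixed-point and involution conditions are inherited by each block since $\pi$ maps $\{\mu,\nu\}\times[1,n]$ to itself. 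Once this is in place, the arithmetic collapsing $\binom nk^2 k!\,\mu_{n-k}^2$ to the summand of $\nu_n$ is a one-line manipulation, and the factor $\mu_{2r}$ is exactly the number of perfect matchings of $2r$ points, completing the proof.
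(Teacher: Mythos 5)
Your proposal is correct and follows essentially the same route as the paper: factor out the $\mu_{2r}$ perfect matchings of the rows, then count the per-pair decorations by the size $2\ell$ of the diagonal blocks, obtaining $\binom{n}{2\ell}^2(n-2\ell)!\,\mu_{2\ell}^2$ per pair and summing to $\nu_n$. Your resolution of the $k\geq 2$ constraint (the condition $\sum_{\nu\neq\mu}|J_{\mu,\nu}|\geq 2$ localizing to $|J_{\mu,\nu}|\geq 2$ on the matched pair) is exactly the point that pins down the range in~\eqref{defKn}.
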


\begin{proof}
We can view the elements of $F_{2r,n}$ as pairs $ \{(\mu,j), (\nu,i)\}$, where $1\leq \mu,\nu\leq 2r$, $1\leq j,i\leq n$, and $\pi(\mu,j)= (\nu,i)$. 
Clearly, 
$ |F_{2r,n}| = \mu_{2r} |G_{2r,n}|, $ where $G_{2r,n}$ is the set of involutions $\pi: \{ 1,\dots,2r\} \times \{ 1,\dots, n\} \rightarrow \{ 1,\dots,2r\} \times \{ 1,\dots , n\}  $ having no fixed point and such that (recall~\eqref{equation definition Jmunu})
$$ \{  (\mu,\nu) \in [1,2r]^2 : 
  J_{\mu,\nu}(\pi)\neq \varnothing \} = \{ (2u,2u-1),(2u-1,2u): 1\leq u \leq r \}, $$
  and $|J_{2u,2u-1}| \geq 2 $.
For $\pi \in G_{2r,n}$ and $1\leq u \leq r$, we let  $  k_u(\pi):= |J_{2u-1,2u-1}(\pi) |$ (recall~\eqref{equation definition Jmunu}),
and $\bm{k}(\pi) = (k_u(\pi))_{1\leq u\leq r}$. We recall~\eqref{equation conditionsurk}; in particular, $\bm \ell(\pi):= \bm{k}(\pi)/2 \in \mathbb (\mathbb Z_{\geq 0})^r $. For any given $\bm \ell\in \{0,\dots \lceil \frac {n-2}2\rceil\}^r$, there are exactly $\prod_{1\leq u\leq r}\binom{n}{2 \ell_u}^2 (n-2 \ell_u)!\big( \frac{(2\ell_u)!}{2^{\ell_u}\ell_u!} \big)^2 $ involutions $\pi\in G_{2r,n}$ for which $\bm \ell(\pi)=\bm \ell$. The claim follows.
\end{proof}

  We now work with odd values $s=2r+1$. The combinatorics are more complicated here, and we need to change the definition of the set $F_{2r,n}$ accordingly.
  \begin{definition}
  \label{definition F odd s}
For  $r \in \mathbb N$ and $n\geq 2$, we define   $F_{2r+1,n}$ to be the set of involutions $$\pi: \{ 1,\dots,2r+1\} \times \{ 1,\dots , n\} \rightarrow \{ 1,\dots,2r+1\} \times \{ 1,\dots , n\}  $$ having no fixed point and such that there exists a set $N_{\pi} \subset \{ 1,\dots,2r+1\} $ of cardinality $3$ for which for each fixed $ \mu \in S_\pi:=\{1,\dots,2r+1\} \smallsetminus N_\pi,   $ there exists a unique $ 1\leq \nu \leq  2r+1 $, $\nu \neq \mu$ such that $  J_{\mu,\nu}\neq \varnothing$, and moreover $|\{( \mu,\nu) :  \mu, \nu\in N_\pi, \mu < \nu, J_{\mu,\nu}(\pi)\neq \varnothing\}| \in \{2,3\} $. In other words, $\pi$ is such that there is a set of "rows" $S_\pi$ that are paired two by two through $\pi$, and its complement $N_\pi$ is such that its rows are either paired in a $3$-cycle, or for which there is one line which is paired with the other two. Moreover, we require that for each $1\leq \mu \leq 2r+1 $,
$$  \sum_{\substack{ 1\leq \nu  \leq 2r+1 \\ \nu \neq \mu}} |J_{\mu,\nu}| \geq 2.$$
  \end{definition}

Note that the relations~\eqref{equation conditionsurk} (with $2r+1$ in place of $2r$) hold for any $\pi  \in F_{2r+1,n}$. We first prove an analogue of Lemma~\ref{lemma sum over characters}. 

\begin{lemma}
Let  $r \in \mathbb N$, $n\geq 2$ and $q\geq 3$, and let $\pi \in F_{2r+1,n}$. We have the estimate
\label{lemma sum over characters odd}
\begin{equation}
C_{2r+1,n}(q;\pi):=\sum_{\substack{\bchi\in X_{2r+1,n}\\  \pi(\bchi ) =\overline{\bchi}\\  \chi_{\mu,j} =\overline{\chi_{\nu,i}} \Rightarrow (\mu,j)\in \{ (\nu,i),\pi(\nu,i)\}    } } \!\!\!\!\!\! 1 =  \phi(q)^{(n-1)\frac{2r+1}2 -\frac 12} \Big(1+ O\Big(   \frac {n^2r^2}{\phi(q)}\Big) \Big).
\label{equation lemma sum characters odd}
\end{equation}
If moreover $q$ is large enough in terms of $\delta$ and $\eta$, then in the ranges $n\leq \log q/\log_2q $ and $r\leq \phi(q)^{\frac 12}/n$ we have the estimate
\begin{align*}
\sum_{\substack{\bchi \in X_{2r+1,n} \\ \pi(\bchi)=\bchi \\ \chi_{\mu,j} = \overline{\chi_{\nu,i}} \Rightarrow (\mu,j) \in \{ (\nu,i),\pi(\nu,i) \} }} \prod_{\substack{ 1\leq \mu \leq 2r+1 \\ 1\leq j \leq n }}b(\chi_{\mu,j};\widehat \eta^2)^{\frac 12}  = \phi(q)^{(n-1)\frac{2r+1}2-\frac 12}&(\alpha(\widehat \eta^2)\log q)^{n\frac{2r+1}2} \times \\  &\Big( 1+O \Big( n \frac {\log_2 q}{ \log q } \Big) \Big)^{r+1}
\end{align*}
Finally, the analogue of~\eqref{equation average b(chi) with one zero removed} holds with $\phi(q)^{(n-1)\frac{2r+1}2-\frac 12}(\alpha(\widehat \eta^2)\log q)^{n\frac{2r+1}2-1}$ in place of $\phi(q)^{(n-1)r}(\alpha(\widehat \eta^2)\log q)^{nr-1}$.
\end{lemma}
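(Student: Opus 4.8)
The plan is to run the argument of Lemma~\ref{lemma sum over characters} essentially verbatim, isolating the one genuinely new ingredient, which comes from the three exceptional rows forming $N_\pi$. First I would relabel the rows so that the $r-1$ pairs making up $S_\pi$ are $\{1,2\},\dots,\{2r-3,2r-2\}$ and the three rows of $N_\pi$ are $a=2r-1$, $b=2r$, $c=2r+1$; by Definition~\ref{definition F odd s} the block $\{a,b,c\}$ is then in one of two configurations: the \emph{$3$-cycle}, where $J_{a,b},J_{b,c},J_{c,a}$ are all nonempty, or the \emph{star}, where (say) $J_{a,b},J_{a,c}$ are nonempty and $J_{b,c}=\varnothing$. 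Exactly as in Lemma~\ref{lemma sum over characters}, I would discard the condition forbidding coincidences $\chi_{\mu,j}=\overline{\chi_{\nu,i}}$ outside the prescribed pairs by summing over the $O((nr)^2)$ possible coincidences and invoking the obvious analogue of Lemma~\ref{lemma removal star}, which costs a relative error $O(n^2r^2/\phi(q))$ throughout. After this the sum factors as a product over the $r-1$ pairs of $S_\pi$ — each contributing $\phi(q)^{n-1}$, resp.\ $\phi(q)^{n-1}(\alpha(\widehat\eta^2)\log q)^n(1+O(n\log_2 q/\log q))$ in the $b$-weighted statement, precisely as in Lemma~\ref{lemma sum over characters} — times the contribution of the block $\{a,b,c\}$. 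Since $(r-1)(n-1)+\tfrac{3n}{2}-2=(n-1)\tfrac{2r+1}{2}-\tfrac12$ and $(r-1)n+\tfrac{3n}{2}=n\tfrac{2r+1}{2}$, it remains to show that the block $\{a,b,c\}$ contributes $\phi(q)^{\frac{3n}{2}-2}$, resp.\ $\phi(q)^{\frac{3n}{2}-2}(\alpha(\widehat\eta^2)\log q)^{\frac{3n}{2}}(1+O(n\log_2 q/\log q))^2$, and, for the last assertion, that the relevant block then loses one factor $\alpha(\widehat\eta^2)\log q$.

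For the block I would first absorb the ``diagonal'' pairs $J_{a,a},J_{b,b},J_{c,c}$: these contribute $\big(\sum_{\chi\neq\chi_{0,q}}b(\chi;\widehat\eta^2)\big)^{(|J_{a,a}|+|J_{b,b}|+|J_{c,c}|)/2}$, which by Proposition~\ref{proposition first moment b(chi,h)} equals $(\phi(q)\alpha(\widehat\eta^2)\log q)^{(\dots)/2}(1+O(n\log_2 q/\log q))$. What is left is a sum over the characters on the off-diagonal blocks subject to $\Theta_\pi(a)=\Theta_\pi(b)=\Theta_\pi(c)=\chi_{0,q}$. Setting $U=\prod_{j\in J_{a,b}}\chi_{a,j}$, $V=\prod_{j\in J_{a,c}}\chi_{a,j}$, and in the cycle case $W=\prod_{j\in J_{b,c}}\chi_{b,j}$, the pairing relations $\chi_{\pi(\mu,j)}=\overline{\chi_{\mu,j}}$ turn these into $UV=\chi_{0,q}$ together with either $\overline U W=\chi_{0,q}$ (cycle) or $\overline U=\overline V=\chi_{0,q}$ (star). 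The crux is the elementary observation that in \emph{both} configurations exactly two of these three relations are independent: they pin down $V$ (and $W$) in terms of $U$ while leaving $U$ free. A short degree count using $\sum_\nu|J_{\mu,\nu}|=n$ shows there are $3n/2$ free characters on the block before imposing the relations, so the number of admissible arrays is $\phi(q)^{3n/2-2}$; combined with the diagonal contribution this gives the first assertion.

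For the $b$-weighted statement I would then sum over $U$ running through all characters mod $q$, estimating each off-diagonal block by Lemma~\ref{lemme moyenne multiple b(chi)}: a block of size $k\geq 2$ with prescribed product $\chi$ contributes $\phi(q)^{k-1}(\alpha(\widehat\eta^2)\log q)^k(1+O(k\log_2 q/\log q))$ \emph{uniformly in $\chi$}, while a block of size $1$ simply contributes $b(\,\cdot\,;\widehat\eta^2)$ of the prescribed character. In the star configuration the degree conditions $\sum_{\nu\neq\mu}|J_{\mu,\nu}|\geq2$ force $|J_{a,b}|,|J_{a,c}|\geq2$ and the relations force $U=V=\chi_{0,q}$, so everything multiplies out at once to the claimed quantity. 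In the cycle configuration one has a free sum over $U$; whenever some off-diagonal block has size $1$ the uniform evaluations leave a residual sum $\sum_{U\neq\chi_{0,q}}b(U;\widehat\eta^2)^{j}$ with $0\le j\le 3$, and I would show each such sum is $\phi(q)(\alpha(\widehat\eta^2)\log q)^j(1+O(\log_2 q/\log q))$ — using Proposition~\ref{proposition first moment b(chi,h)} for $j=1$, Lemma~\ref{lemmavarlogq} for $j=2$, and Lemma~\ref{lemmavarlogq} together with the crude pointwise bound $b(\chi;\widehat\eta^2)\ll_{\delta,\eta}\log q$ to control the third absolute moment of $\log q_\chi$ for $j=3$ — after which the powers of $\phi(q)$ and of $\alpha(\widehat\eta^2)\log q$ add up to the claimed exponents $\tfrac{3n}{2}-2$ and $\tfrac{3n}{2}$. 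The ``one zero removed'' analogue of~\eqref{equation average b(chi) with one zero removed} is obtained the same way, applying the bound~\eqref{equation average sum over zero one removed} of Lemma~\ref{lemme moyenne multiple b(chi)} (or directly Proposition~\ref{proposition pair correlation}) to a single off-diagonal block in place of Lemma~\ref{lemme moyenne multiple b(chi)}, which costs exactly one factor $\alpha(\widehat\eta^2)\log q$.

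The step I expect to be the main obstacle is the case analysis of the three-row block: checking that precisely two of the three product relations $\Theta_\pi(a)=\Theta_\pi(b)=\Theta_\pi(c)=\chi_{0,q}$ are independent in each of the cycle and star configurations, and, in the $b$-weighted version, handling the degenerate cycle configuration $|J_{a,b}|=|J_{a,c}|=|J_{b,c}|=1$, where one genuinely needs the second and third moments of $\log q_\chi$ rather than just its mean. Everything else is a transcription of the even-$s$ argument.
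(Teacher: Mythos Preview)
Your approach is correct and essentially identical to the paper's. The paper places the three exceptional rows at positions $1,2,3$ rather than at the end, writes $\bchi=(\bchi_1,\bchi_2)$ with $\bchi_1\in X_{3,n}$, reduces the $\bchi_2$ part to Lemma~\ref{lemma sum over characters}, and then handles the three-row block by the same star/cycle dichotomy; the core observation that exactly two of the three product relations $\Theta_\pi(a)=\Theta_\pi(b)=\Theta_\pi(c)=\chi_{0,q}$ are independent is the same as yours.

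The one difference worth noting is that for the $b$-weighted and ``one zero removed'' estimates the paper simply says ``follow as in the proof of Lemma~\ref{lemma sum over characters}'' without spelling out the cycle subcase where some $|J_{a,b}|$, $|J_{a,c}|$, $|J_{b,c}|$ equal $1$. Your treatment via the residual sums $\sum_{U\neq\chi_{0,q}} b(U;\widehat\eta^2)^j$ for $j\le 3$, handled by Proposition~\ref{proposition first moment b(chi,h)} and Lemma~\ref{lemmavarlogq} together with the pointwise bound~\eqref{equation pointwise b(chi) }, is correct and fills in detail the paper elides.
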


\begin{proof}
Fix  $\pi \in F_{2r+1,n}$. In the sum defining $C_{2r+1,n}(q;\pi)$ and by definition of $F_{2r+1,n}$, we can change the indexing in the variable $\mu$ in such a way that 
$ J_{\mu,\nu}(\pi) 
\neq \varnothing $ if either  $\{ \mu,\nu\} \subset \{ 1,2,3 \}$ or $\{ \mu,\nu\} = \{ 2u,2u+1 \}$ for some $2\leq u\leq r$, and $J_{\mu,\nu}(\pi)=\varnothing $ otherwise.
For $\bchi\in X_{2r+1,n}$, let us write $\bchi=(\bchi_1,\bchi_2)$, where
$ 
\bchi_1=(\chi_{\mu,j})_{\substack{1\leq \mu \leq 3\\ 1\leq j\leq n}}\in X_{3,n} $ and $\bchi_2 =(\chi_{\mu,j})_{\substack{4\leq \mu \leq 2r+1\\ 1\leq j\leq n}}\in X_{2r-2,n} .$ With this notation, we can define $\pi_1$ and $\pi_2$ in such a way that 
$\pi(\bchi)=(\pi_1(\bchi_1),\pi_2(\bchi_2)).$ Then the condition $ \pi(\bchi)=\overline\bchi$ is equivalent to 
$ \pi_1(\bchi_1)=\overline{\bchi_1}$ and $ \pi_2(\bchi_2)=\overline{\bchi_2}$.
This is possible since $\pi_1(\bchi_1)$ does not depend on $\bchi_2$ and $\pi_2(\bchi_2)$ does not depend on $\bchi_1.$
Consequently, we may write
$$ C_{2r+1,n}(q;\pi) =  \sum_{\substack{ 
\bchi_1 \in X_{3,n} 
\\  \pi_1(\bchi_1 ) =\overline{\bchi_1}\\ \chi_{\mu,j} =\overline{\chi_{\nu,i}} \Rightarrow (\mu,j)\in \{ (\nu,i),\pi(\nu,i)\}  
} } 
\sum_{\substack{ 
\bchi_2 \in X_{2r-2,n} \\   \pi_2(\bchi_2 ) =\overline{\bchi_2}
\\  \chi_{\mu,j} \notin \{ \overline{\chi_{\nu,i}} |\nu \leq 2r+1, j\leq n, (\nu,i)\notin \{ (\mu,j),\pi(\mu,j)\}\}    } }  1. $$
The innermost sum can be computed using the same arguments as in Lemma~\ref{lemma sum over characters}, and we deduce that it is sufficient to prove~\eqref{equation lemma sum characters odd} for $r=1$. Recalling Definition~\ref{definition F odd s}, this means that $N_\pi=\{ 1,2,3\}$ and $S_\pi = \varnothing$. 

We first treat the case where 
$|\{  (\mu,\nu) : \mu<\nu,\, J_{\mu,\nu}(\pi) \neq \varnothing\} |=2; $ by reordering $\mu$ in the array of characters $\bchi=(\chi_{\mu,j})_{\substack{1\leq \mu\leq 3\\ 1\leq j\leq n}}$, we may assume that $ J_{1,2}(\pi),J_{1,3}(\pi) \neq \varnothing$, and $J_{2,3}(\pi)=\varnothing$. Under the condition $\pi(\bchi )= \overline{\bchi }$,
the relations $\prod_{j =1}^{n}\chi_{\mu,j} = \chi_{0,q}$ $ (\mu=1,2,3) $ are equivalent to
$$ \Theta_{\mu,1}(\pi):=\prod_{j \in J_{\mu,1}(\pi)}\chi_{\mu,j} = \chi_{0,q}\qquad  (\mu=2,3) $$
(since the remaining relation is automatically satisfied). 
We reformulate the relations for $j\in J_{\mu,\mu}$ which follow from the equality $\pi (\bchi)=\overline{\bchi}$, with $\bchi\in X_{3,n}$. To do so, we denote $\bchi_{\mu}:=(\chi_{\mu,j})_{  j\in J_{\mu,\mu}}$ and $\pi_{\mu} (\bchi_{\mu}):=(\chi_{\pi(\mu,j)})_{  j\in J_{\mu,\mu}}$.
Then, the condition
$\pi (\bchi)=\overline{\bchi}$ implies that 
$\pi_{\mu} (\bchi_{\mu})=\overline{\bchi_{\mu}}$ for  $1\leq \mu\leq 3$. This condition also implies that the values of $\chi_{1,j}$ with $j\notin J_{1,1} $ are determined by those of $\chi_{\mu,j}$ with $\mu\in \{ 2,3\}$. 
We deduce that 
\begin{align*}
 C_{3,n}(q;\pi)&= \sum_{\substack{ \chi_{2,1}, \dots ,\chi_{2,n} ,\chi_{3,1} ,\dots ,\chi_{3,n} \neq \chi_{0,q}\\  \Theta_{2,1}(\pi)=\Theta_{3,1}(\pi)=\chi_{0,q} \\ \pi_{\mu} (\bchi_{\mu})=\overline{\bchi_{\mu}}
 \,(\mu=2,3)
 \\ \chi_{\mu,j} =\overline{\chi_{\nu,i}} \Rightarrow (\mu,j)\in \{ (\nu,i),\pi(\nu,i)\} } } \sum_{\substack{ \chi_{1,j} \neq \chi_{0,q}  \,(j\in J_{1,1})\\  
 \pi_1 (\bchi_{1})=\overline{\bchi_{1}}
 \\ \chi_{1,j}  \notin \{\overline{\chi_{\nu,i}} |  i\leq n, 1\leq  \nu\leq 3, (\nu,i)\notin\{ (1,j),\pi(1,j) \} \} }  }1
 \\ &= \phi(q)^{\frac{|J_{2,2}|}2 + |J_{2,1}|-1} \cdot \phi(q)^{\frac{|J_{3,3}|}2 + |J_{3,1}|-1} \cdot \phi(q)^{\frac{|J_{1,1}|}{2}} \Big(1+O\Big( \frac {n^2}{\phi(q)} \Big)\Big).
\end{align*}
Applying~\eqref{equation conditionsurk}, we deduce that the exponent of $\phi(q)$ is equal to
$$ \frac{|J_{2,2}|}2 + |J_{2,1}|-1+\frac{|J_{3,3}|}2 + |J_{3,1}|-1+\frac{|J_{1,1}|}{2}=n+\frac{|J_{2,2}|+|J_{3,3}|-|J_{1,1}|}2-2. $$
By our hypothesis on $\pi$ we have that $ n- |J_{1,1}| = n-|J_{2,2}|+n-|J_{3,3}|$, which concludes the proof in this case.

We now move to the case where 
$|\{  (\mu,\nu) :  \mu<\nu,\,   J_{\mu,\nu}(\pi) \neq \varnothing\} |=3.$ Given an array of characters $\bchi=( \chi_{\mu,j})_{\mu,j}$ such that $\pi(\bchi)= \overline{\bchi}$  and keeping the notation
$$\Theta_{\mu,\nu}(\pi):= \prod_{j\in J_{\mu,\nu}} \chi_{\mu,j} ,$$ we can rewrite the conditions $ \prod_{j=1}^n \chi_{\mu,j} =\chi_{0,q} $ $ (1\leq \mu \leq 3)$ as
$$  \Theta_{1,2}(\pi)= \overline{\Theta_{1,3}(\pi)} ;\qquad \Theta_{1,3}(\pi)= \overline{\Theta_{2,3}(\pi)}.  $$
 This new set of conditions is now minimal, since $J_{1,3}\cap J_{2,3}=\varnothing$. We can now write 
 \begin{align*}
 C_{3,n}(q;\pi)&= \sum_{\substack{ \chi_{1,1}, \dots, \chi_{1,n} \neq \chi_{0,q} \\  \Theta_{1,2}(\pi)=\overline{\Theta_{1,3}(\pi)} \\  \pi_{1} (\bchi_{1})=\overline{\bchi_{1}} \\ \chi_{\mu,j} =\overline{\chi_{\nu,i}} \Rightarrow (\mu,j)\in \{ (\nu,i),\pi(\nu,i)\} } }\sum_{\substack{ \chi_{2,j}  \neq \chi_{0,q} \, (j\in J_{2,2}\cup J_{2,3})\\  \Theta_{2,3}(\pi)=\overline{\Theta_{1,3}(\pi)} \\  \pi_{2} (\bchi_{2})=\overline{\bchi_{2}}  \\ \chi_{\mu,j} =\overline{\chi_{\nu,i}} \Rightarrow (\mu,j)\in \{ (\nu,i),\pi(\nu,i)\} } }\sum_{\substack{  \chi_{3,j} \neq \chi_{0,q}\, (j\in  J_{3,3} ) \\  \pi_3 (\bchi_3)=\overline{\bchi_3} \\ \chi_{\mu,j} =\overline{\chi_{\nu,i}} \Rightarrow (\mu,j)\in \{ (\nu,i),\pi(\nu,i)\} } }1
 \\ &= \phi(q)^{\frac{|J_{1,1}|}2 + |J_{1,2}|+|J_{1,3}|-1} \cdot \phi(q)^{\frac{|J_{2,2}|}2 + |J_{2,3}|-1} \cdot \phi(q)^{\frac{|J_{3,3}|}{2}} \Big(1+O\Big( \frac {n^2}{\phi(q)} \Big)\Big).
\end{align*}
The exponent of $\phi(q)$ is now 
$$ \frac{|J_{1,1}|+|J_{2,2}|+|J_{3,3}|}{2}+ |J_{1,2}|+|J_{1,3}|+|J_{2,3}|-2, $$
which is seen to be equal to $\frac 32 n-2 $ by adding the equations $n=|J_{\mu,1}|+|J_{\mu,2}|+|J_{\mu,3}|$ for $1\leq \mu \leq 3$ and using the relation $|J_{\mu,\nu}|=|J_{\nu,\mu}|$.

The second and third claimed estimates follow as in the proof of Lemma~\ref{lemma sum over characters}.
 \end{proof}

\begin{lemma}
\label{lemma lower bound centered moments odd s}
Assume GRH$_{\widehat \eta}$, and let  $\delta>0$, $\eta \in \S_\delta$, $\Phi\in \U,$ and  $r \in \mathbb N$, $n\geq 2$, with $n$ even. For $T\geq 1 $, $q$ large enough in terms of $\delta$ and $\eta$ and in the ranges $n\leq \log q/\log_2 q$ and $r\leq \log q$, we have the lower bound
\begin{multline}
  \cV_{2r+1,n}(T,q;\eta,\Phi) \geq   \frac {1 }{\phi(q)^{ \frac 12} } 
  \Big(\frac{V_n(q;\eta)}{\nu_n}\Big)^{\frac{2r+1}2 }
  |F_{2r+1,n}|  \Big( \Big( 1+O_{\delta,\eta}\Big( n \frac{\log_2 q}{\log q}\Big)\Big)^r+O_{\delta,\eta}\Big( \frac r{\log q}\Big)\Big)
\\ +O_\Phi\Big( \frac{(K_{\delta,\eta}\log q)^{(2r+1)n}}{T\phi(q)^{2r+1}} \Big),
\label{equation lemma lower bound centered moments odd s}
\end{multline}
Moreover, for all $q\geq 3$ and all $n\geq 1$ we have the weaker bound
 $$ (-1)^n\cV_{2r+1,n}(T,q;\eta,\Phi) \geq O_\Phi\Big( \frac{(K_{\delta,\eta}\log q)^{(2r+1)n}}{T\phi(q)^{2r+1}} \Big).$$
Here, $K_{\delta,\eta}>0$ is a constant.
\end{lemma}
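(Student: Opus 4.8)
The plan is to transcribe, almost verbatim, the proof of Lemma~\ref{lemma lower bound centered moments even s}, replacing the set of involutions $F_{2r,n}$ by $F_{2r+1,n}$ from Definition~\ref{definition F odd s} and the character-counting estimates of Lemma~\ref{lemma sum over characters} by those of Lemma~\ref{lemma sum over characters odd}. The weaker bound is immediate: applying Lemma~\ref{lemma explicit formula higher moments of moments} with $s=2r+1$ and noting $(-1)^{(2r+1)n}=(-1)^n$, we get
$$(-1)^n\cV_{2r+1,n}(T,q;\eta,\Phi) = \frac{1}{\phi(q)^{(2r+1)n}} \sum_{\bchi \in X_{2r+1,n}} \sum_{\bm \gamma \in \Gamma(\bchi)} \widehat \eta(\bm \gamma)\, \Delta_{2r+1}(\bm \sigma_{\bm \gamma};\Phi,T) + O_\Phi\Big(\frac{(K_{\delta,\eta}\log q)^{(2r+1)n}}{T\phi(q)^{2r+1}}\Big).$$
Under GRH$_{\widehat\eta}$ every $\bm\gamma$ with $\widehat\eta(\bm\gamma)\neq 0$ lies in $\R^{(2r+1)n}$, so $\widehat\eta(\bm\gamma)\geq 0$, and by Lemma~\ref{inegDelta} we have $\Delta_{2r+1}(\bm\sigma_{\bm\gamma};\Phi,T)\geq\Delta_{2r+1}(\bm\sigma_{\bm\gamma})\geq 0$; hence the double sum is nonnegative and the weaker bound follows for all $q\geq 3$ and $n\geq 1$.

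For the main bound we take $n$ even (when $n$ is odd, $(2r+1)n$ is odd, there is no fixed-point-free involution on $\{1,\dots,2r+1\}\times\{1,\dots,n\}$, so $F_{2r+1,n}=\varnothing$ and the claim collapses to the weaker bound). As in the even case, positivity of $\widehat\eta$ and of $\Delta_{2r+1}(\bm\sigma_{\bm\gamma};\Phi,T)$ lets us keep only the arrays $\bchi$ in which each $\chi_{\mu,j}$ is conjugate to exactly one other entry; fixing the associated pairing involution $\pi$ (and requiring the conjugation pattern to be exactly $\pi$, so as not to over-count), we further restrict to zeros with $\gamma_{\chi_{\mu,j}}+\gamma_{\chi_{\pi(\mu,j)}}=0$ for all $(\mu,j)$ and $\sum_{j=1}^n\gamma_{\chi_{\mu,j}}\neq 0$ for every row $\mu$, on which $\Delta_{2r+1}(\bm\sigma_{\bm\gamma};\Phi,T)=1$. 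This last constraint forces each row to have an entry matched outside it; restricting to the cleanest configurations — rows paired two by two together with one irreducible block of three rows forming a $3$-cycle or a star, with at least two cross-entries per row — yields a lower bound by the sum over $\pi\in F_{2r+1,n}$. Inserting the factors $(\ord_{s=\frac12+i\gamma_{\chi_{\mu,j}}}L(s,\chi_{\mu,j}))^{\frac12}\geq 1$ as in~\eqref{inegb+b}, we remove the conditions $\sum_j\gamma_{\chi_{\mu,j}}\neq 0$ at the cost of an admissible error via the odd analogue of~\eqref{equation average b(chi) with one zero removed} in Lemma~\ref{lemma sum over characters odd} (a relative loss of size $O(r/\log q)$, since there are $O(r)$ rows and that estimate saves a factor $\alpha(\widehat\eta^2)\log q$), and evaluate the remaining sum for each $\pi$ by the second estimate of Lemma~\ref{lemma sum over characters odd}, which gives $\phi(q)^{(n-1)\frac{2r+1}2-\frac12}(\alpha(\widehat\eta^2)\log q)^{n\frac{2r+1}2}(1+O(n\log_2 q/\log q))^{r+1}$.

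Summing over $\pi\in F_{2r+1,n}$ and dividing by $\phi(q)^{(2r+1)n}$, the exponent of $\phi(q)$ becomes $-\tfrac12-\tfrac{(2r+1)(n+1)}2$, so by the definition~\eqref{defKn} of $V_n(q;\eta)$ the main contribution is $\phi(q)^{-1/2}(V_n(q;\eta)/\nu_n)^{(2r+1)/2}|F_{2r+1,n}|$; collecting the relative errors — the $(1+O(n\log_2 q/\log q))^{r+1}=(1+O(n\log_2 q/\log q))^r$ from the main term, the $O(r/\log q)$ from deleting the $\sum_j\gamma\neq 0$ conditions, and the $O(n^2r^2/\phi(q))$ from the character count (absorbed since $r\leq\log q$, $n\leq\log q/\log_2 q$) — into the bracketed factor produces exactly~\eqref{equation lemma lower bound centered moments odd s}.

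The main obstacle is the combinatorial restriction in the second step: because $2r+1$ is odd the rows cannot all be paired two by two, so one must argue that the minimal admissible "leftover'' is precisely a three-row block of one of the two shapes encoded in Definition~\ref{definition F odd s}, and that these are exactly the shapes whose character sums are controlled by Lemma~\ref{lemma sum over characters odd}. Once the passage to $\pi\in F_{2r+1,n}$ is justified, the rest is a routine adaptation of the even-$s$ argument, the only change being the shift of the $\phi(q)$-exponent by $-\tfrac12$ coming from the odd counts.
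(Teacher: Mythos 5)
Your proposal is correct and follows essentially the same route as the paper, whose own proof of this lemma is just a pointer: discard the sum over zeros by positivity for the weaker bound, and for even $n$ repeat the argument of Lemma~\ref{lemma lower bound centered moments even s} with Lemma~\ref{lemma sum over characters odd} in place of Lemma~\ref{lemma sum over characters}. Your exponent bookkeeping ($-\tfrac12-\tfrac{(2r+1)(n+1)}{2}$ for $\phi(q)$) and the handling of the sign $(-1)^{(2r+1)n}=(-1)^n$ are both right, so the write-up in fact supplies more detail than the paper does.
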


\begin{proof}
The estimate for odd values of $n$ is obtained by applying Lemma~\ref{lemma explicit formula higher moments of moments} and discarding the sum over zeros using positivity.
For even values of $n$, we bound the sum over zeros in the same way as in Lemmas~\ref{lemma sum over characters} and~\ref{lemma lower bound centered moments even s}, using this time Lemma~\ref{lemma sum over characters odd}. 
\end{proof}

In order to state the last combinatorial lemma of this section, we define the constants
 \begin{equation}\nu_n':=\sum_{\substack{ ({\bm \ell},{\bm k}) \in  \mathbb Z_{\geq 0}^3\times \mathbb Z_{\geq 2}^2\\  n=
k_{1}  +2\ell_1 
\\ n=
k_{2}  +2\ell_2
\\
n=
k_{1}+k_{2} +2\ell_3
}} \frac{n!^3 }{k_{1}!k_{2}!2^{\ell_{1 }+\ell_{2 }+\ell_{3}} \ell_{1}!\ell_{2}!\ell_{3}!};
\label{defKn'}
 \end{equation}
 \begin{equation}\label{defKn''}\nu_n'':=
\frac 13\sum_{\substack{ ({\bm \ell},{\bm k}) \in  \mathbb Z_{\geq 0}^3\times \mathbb Z_{\geq 1}^3\\  n=
k_{i}+k_{j} +2\ell_k
\\ \{ i,j,k\}=\{ 1,2,3\}}}\frac{n!^3 }{k_{1}!k_{2}!k_{3}!2^{\ell_{1 }+\ell_{2 }+\ell_{3}} \ell_{1}!\ell_{2}!\ell_{3}!}.
 \end{equation}

\begin{lemma}
\label{lemma combinatorics odd s}
For  $r \in \mathbb N$ and $n\in 2\mathbb N$, we have the formula
$$
 |F_{2r+1,n}| =
  \frac{(2r+1)!}{2^r(r-1)!}  \nu_n^{r-1} (\nu_n'+\nu_n'').
$$
\end{lemma}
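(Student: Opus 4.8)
The plan is to follow the scheme of the proof of Lemma~\ref{lemma combinatorics even s}, now setting aside the three ``exceptional'' rows that form $N_\pi$. The first step is to read off from Definition~\ref{definition F odd s} that for any $\pi\in F_{2r+1,n}$ the partition of the row set $\{1,\dots,2r+1\}$ into $N_\pi$ together with the $r-1$ pairs $\{\mu,\nu\}\subset S_\pi$ with $J_{\mu,\nu}(\pi)\neq\varnothing$ is preserved by $\pi$; hence $\pi$ restricts to a fixed-point-free involution of $N_\pi\times[1,n]$ and to one of each $(\{\mu,\nu\})\times[1,n]$, and conversely any such tuple of admissible restrictions glues to an element of $F_{2r+1,n}$. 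Writing $A$ for the number of admissible involutions on a fixed set of three rows (forming $N_\pi$) times $[1,n]$ and $B$ for the number on a pair of rows times $[1,n]$, this yields
\[
|F_{2r+1,n}| \;=\; \#\{(N,M)\}\cdot A\cdot B^{\,r-1},
\]
where $(N,M)$ runs over the choices of a $3$-element subset $N=N_\pi$ of $\{1,\dots,2r+1\}$ together with a perfect matching $M$ of its complement; the elementary count $\#\{(N,M)\}=\binom{2r+1}{3}(2r-3)!!=\frac{(2r+1)!}{3\cdot 2^{r}(r-1)!}$ is then immediate (neither $A$ nor $B$ depends on the choices).

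Next I would compute $B$, which is exactly the quantity already evaluated inside the proof of Lemma~\ref{lemma combinatorics even s}: for a pair $\{\mu,\nu\}$ with $J_{\mu,\nu}\neq\varnothing$ one has $|J_{\mu,\mu}|=|J_{\nu,\nu}|=:2\ell$ and $k:=n-2\ell=|J_{\mu,\nu}|\geq 2$ by~\eqref{equation conditionsurk}, and choosing and matching the self-loops in each of the two rows and then matching the remaining $k$ positions gives $\binom{n}{2\ell}^2\bigl(\tfrac{(2\ell)!}{2^{\ell}\ell!}\bigr)^{2}(n-2\ell)!=\tfrac{n!^{2}}{k!\,2^{2\ell}\ell!^{2}}$ involutions; summing over $\ell$ recovers $\nu_n$ as in~\eqref{defKn}, so $B=\nu_n$ and $B^{\,r-1}=\nu_n^{r-1}$.

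The heart of the matter is the computation of $A$, the number of fixed-point-free involutions on $(\text{three rows})\times[1,n]$ whose induced graph on the three rows is connected and which obey $\sum_{\nu\neq\mu}|J_{\mu,\nu}|\geq2$ for each of the three rows $\mu$. I would split according to whether that graph is a path (two of the three pairs joined) or a triangle (all three joined). In the path case one first picks the central row ($3$ ways); with $\ell_1,\ell_2,\ell_3$ the self-loop half-sizes of the three rows and $k_1,k_2$ the sizes of the two edges at the center, the leaf constraints force $k_1,k_2\geq2$, \eqref{equation conditionsurk} gives $n=k_1+2\ell_1=k_2+2\ell_2=k_1+k_2+2\ell_3$, and choosing and matching the self-loops and then splitting and matching the remaining $k_1+k_2$ positions of the center against the two leaves yields $\prod_{i=1}^{3}\binom{n}{2\ell_i}\tfrac{(2\ell_i)!}{2^{\ell_i}\ell_i!}\cdot\binom{k_1+k_2}{k_1}k_1!\,k_2!=\tfrac{n!^{3}}{k_1!k_2!\,2^{\ell_1+\ell_2+\ell_3}\ell_1!\ell_2!\ell_3!}$, which sums over $({\bm \ell},{\bm k})$ to $\nu_n'$ of~\eqref{defKn'}; so the path case contributes $3\nu_n'$. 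In the triangle case, with $k_1,k_2,k_3\geq1$ the edge sizes and $\ell_1,\ell_2,\ell_3$ the self-loop half-sizes, \eqref{equation conditionsurk} gives $n=k_i+k_j+2\ell_k$ for $\{i,j,k\}=\{1,2,3\}$, and the analogous count (choose and match the self-loops, split each row's remaining positions between its two incident edges, match) gives $\tfrac{n!^{3}}{k_1!k_2!k_3!\,2^{\ell_1+\ell_2+\ell_3}\ell_1!\ell_2!\ell_3!}$, which sums to $3\nu_n''$ once the factor $\tfrac13$ in~\eqref{defKn''} is taken into account. Hence $A=3(\nu_n'+\nu_n'')$, and assembling the three factors gives $|F_{2r+1,n}|=\frac{(2r+1)!}{3\cdot2^{r}(r-1)!}\cdot 3(\nu_n'+\nu_n'')\cdot\nu_n^{r-1}=\frac{(2r+1)!}{2^{r}(r-1)!}\,\nu_n^{r-1}(\nu_n'+\nu_n'')$, as claimed. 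I expect the main obstacle to be precisely this last bookkeeping: checking that the admissibility conditions translate into the index ranges ${\bm k}\in\mathbb Z_{\geq2}^{2}$ and ${\bm k}\in\mathbb Z_{\geq1}^{3}$ used in~\eqref{defKn'}--\eqref{defKn''} and that the products of binomial coefficients telescope to the multinomial-type expressions there; the gluing/decomposition step and the count of the pairs $(N,M)$ are routine. (For odd $n$ both sides vanish, since then no admissible self-loop/edge size vectors exist, consistently with $\vartheta_{n,2r+1}=0$.)
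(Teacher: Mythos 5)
Your proposal is correct and follows essentially the same route as the paper: decompose $\pi$ according to the induced pairing/3-block structure on the rows, count the row-structures ($\binom{2r+1}{3}(2r-3)!!$ blocks-plus-matchings, versus the paper's equivalent bookkeeping $\frac{(2r+1)!}{2^r(r-1)!}\big(|A_{2r+1,n}|+\tfrac13|B_{2r+1,n}|\big)$ with a distinguished centre row), and then count the involutions on each block exactly as in Lemma~\ref{lemma combinatorics even s}, splitting the exceptional block into the path and triangle cases to produce $\nu_n'$ and $\nu_n''$. The only difference is where you place the factor $3$ (inside $A$ rather than in the prefactor), which is immaterial.
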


\begin{proof}
Clearly, 
$$ |F_{2r+1,n}| = \frac{(2r+1)!}{2^{r}(r-1)!} \Big(|A_{2r+1,n}|+\frac 13|B_{2r+1,n}|\Big), $$ where $A_{2r+1,n},B_{2r+1,n}$  are sets of involutions $\pi: \{ 1,\dots,2r+1\} \times \{ 1,\dots , n\} \rightarrow \{ 1,\dots,2r+1\} \times \{ 1,\dots , n\}  $ having no fixed point, for which for each $1\leq \mu \leq 2r+1$,
\begin{equation}
 \sum_{\substack{1\leq \nu\leq 2r+1\\ \nu\neq \mu}}|J_{\mu,\nu}(\pi)| \geq 2,
 \label{equation condition geq 2 Jmunu}
\end{equation}
such that for $\pi\in A_{2r+1,n}$,
$$
 \big\{  \{\mu,\nu\}  : 
  J_{\mu,\nu}(\pi) \neq \varnothing \big\}  = \big\{\{ 1,2\} ,\{1,3\} \big\}\cup\big\{ \{2u,2u+1\}: 2\leq u \leq r \big\},
 $$
 and for $\pi \in B_{2r+1,n}$,
$$
 \big\{  \{\mu,\nu\}  : 
  J_{\mu,\nu}(\pi) \neq \varnothing \big\}  = \big\{\{ 1,2\} ,\{1,3\},\{2,3\} \big\}\cup\big\{ \{2u,2u+1\}: 2\leq u \leq r \big\}.
 $$
We let 
$$  k_u(\pi):=\begin{cases} 
| J_{u,u}(\pi) |&\text{ if } 1\leq u\leq 3,\\
 | J_{2u-4,2u-4}(\pi) | &\text{ if } 4\leq u \leq r+2,
 \end{cases}$$
and ${\bm k}(\pi) = (k_u(\pi))_{1\leq u\leq r+2}$. Note that $\bm \ell(\pi):= {\bm k}(\pi)/2 \in \mathbb (\mathbb Z_{\geq 0})^{r+2} $. 
 
For $\pi \in A_{2r+1,n}$ and $1\leq u \leq r+1$, 
we observe that 
$$n-2\ell_1(\pi)=|J_{1,2}(\pi)|+|J_{1,3}(\pi)|,\quad
n-2\ell_2(\pi)=|J_{1,2}(\pi)| ,\quad  n-2\ell_3(\pi)= |J_{1,3}(\pi)|, $$ and hence $n+2\ell_1(\pi)=2\ell_2(\pi)+2\ell_3(\pi)$.
For any given $\bm \ell\in \{0,\dots,  \frac {n}2-1  \}^{r+2}$ such that $n+2\ell_1=2\ell_2+2\ell_3$, there are exactly 
$$\binom{n-2\ell_1}{n-2\ell_2} (n-2\ell_2)!(n-2\ell_3)! \prod_{1\leq u\leq 3} \binom{n}{2 \ell_u} \frac{(2\ell_u)!}{2^{\ell_u}\ell_u!} \prod_{4\leq u\leq r+2}\binom{n}{2 \ell_u}^2 (n-2 \ell_u)!\Big( \frac{(2\ell_u)!}{2^{\ell_u}\ell_u!} \Big)^2 $$ involutions $\pi\in A_{2r+1,n}$ for which $\bm \ell(\pi)=\bm \ell$. 

As for $B_{2r+1,n}$, we let
$  m_1(\pi):= | J_{1,2}(\pi) |, m_2(\pi):= | J_{2,3}(\pi) |$ and $m_3(\pi):= | J_{3,1}(\pi) |$. For any given $\bm \ell\in \{0,\dots, \frac {n}2-1 \}^{r+2}$ and ${\bm m} \in \{1,\dots ,n-1 \}^{3}$ such that $ m_u+m_{ u+2 }+2\ell_u=n$ for $1\leq u\leq 3$ (with the convention that $m_4=m_1, m_5=m_2 $), there are  
$$\prod_{u=1}^3 \binom{n-2\ell_u}{m_u} m_u! \binom{n}{2\ell_u} \frac{(2\ell_u)!}{2^{\ell_u}\ell_u!}   \prod_{4\leq u\leq r+2}\binom{n}{2 \ell_u}^2 (n-2 \ell_u)!\Big( \frac{(2\ell_u)!}{2^{\ell_u}\ell_u!} \Big)^2 $$
involutions $\pi\in B_{2r+1,n}$ (note that $ m_u(\pi) \geq 1 $ implies the condition~\eqref{equation condition geq 2 Jmunu}) for which $\bm \ell(\pi)=\bm \ell$ and ${\bm m}(\pi)=\bm m$. The claim follows from adding all the different possible outcomes. 
 \end{proof}
\goodbreak

We are now ready to prove the main results of this section. 
\begin{proof}[Proof of Theorem~\ref{thmomentscentres2rT}]
 The lower bounds for~$\cV_{s,n}(T,q;\eta,\Phi)$ follow from Lemmas~\ref{lemma lower bound centered moments even s} and~\ref{lemma lower bound centered moments odd s}, combined with Lemmas~\ref{lemma combinatorics even s} and~\ref{lemma combinatorics odd s}. 

\end{proof}
We also establish the lower bounds in Theorem~\ref{thmomentscentres2r}.
\begin{proof}[Proof of Theorem~\ref{thmomentscentres2r}, first part]
 First note that under GRH, Lemma~\ref{lemma explicit formula higher moments of moments} implies that
\begin{align*}
    \V_{s,n}(q;\eta) & = \lim_{T\rightarrow \infty}   \cV_{s,n}(T,q;\eta,1_{[-1,1]}) \\
    &= \frac {(-1)^{sn}}{\phi(q)^{sn}} 
\sum_{\substack{\bchi\in X_{s,n}}}  \sum_{\substack{ \bm \gamma \in \Gamma(\bchi) }} \widehat \eta(\bm \gamma) \Delta_{s}(\bm\sigma_{\bm\gamma}),  
\end{align*}
by dominated convergence. For $s=2r$, we argue as in the proof of Lemma~\ref{lemma lower bound centered moments even s} and use positivity of $\widehat \eta$ to deduce that 
$$  \V_{2r,n}(q;\eta) \geq  \Big( \Big( 1+O_{\delta,\eta}\Big( n \frac{\log_2 q}{\log q}\Big)\Big)^r+O_{\delta,\eta}\Big( \frac r{\log q}\Big)\Big)\frac{V_n(q;\eta)^r}{\nu_n^r}  |F_{2r,n}|. $$
Thanks to Lemma~\ref{lemma combinatorics even s}, this implies~\eqref{inegmoment2r}. The proof of~\eqref{inegmoment2r+1} is similar, following this time Lemmas~\ref{lemma lower bound centered moments odd s} and~\ref{lemma combinatorics odd s}.
\end{proof}

\section{Unconditional applications of moments of moments}
\label{section unconditional applications}

The goal of this section is to prove Theorems~\ref{thunconditional n=2} and~\ref{thunconditional n>2}. We will seperate the proof into two parts. On one hand, we will show that it follows from GRH$_{\widehat \eta}$, and, on the other, we will show that it also follows from its negation. 

We first need to adapt a result of Hooley~\cite[Theorem 1]{H76}. 
\begin{lemma} Let  $\delta>0$ and $\eta\in\S_\delta$, and assume GRH$_{\widehat\eta}$. Then, in the range $q\geq 3$ and $\log q\leq S$, we have the bound
$$ \frac 1S \int_{0}^S M_{2}(\e^t,q;\eta) \d t \ll_{\delta,\eta} \frac{\log q}{\phi(q)}. $$
\label{lemma hooley upper bound}
\end{lemma}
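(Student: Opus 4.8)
The plan is to use the explicit-formula expansion of the second moment from Lemma~\ref{lemma explicit formula for moments of nu n} (or equivalently Lemma~\ref{lemma explicit formula higher moments of moments} with $s=1$, $n=2$) and then average over $t\in[0,S]$. Concretely, writing $h=\widehat\eta^2\in\T_\delta$, Lemma~\ref{lemma explicit formula for moments of nu n} with $\Phi$ an approximation of $1_{[-1,1]}$ (or directly the computation in that lemma's proof) gives, for $t\geq 0$,
$$
M_2(\e^t,q;\eta)=\frac{1}{\phi(q)^2}\sum_{\substack{\chi_1,\chi_2\neq\chi_{0,q}\\ \chi_1\chi_2=\chi_{0,q}}}\psi_\eta(\e^t,\chi_1)\psi_\eta(\e^t,\chi_2),
$$
and under GRH$_{\widehat\eta}$, using $\psi_\eta(\e^t,\chi_2)=\overline{\psi_\eta(\e^t,\chi_1)}+O(\e^{-t/2}\log q)$ (since $\chi_2=\overline{\chi_1}$ up to ramified primes, by Lemma~\ref{lemma ramified primes}), together with the bound $|\psi_\eta(\e^t,\chi)|\ll_{\delta,\eta}\log q$ from \eqref{leqlogq}, one gets
$$
M_2(\e^t,q;\eta)=\frac{1}{\phi(q)^2}\sum_{\chi\neq\chi_{0,q}}|\psi_\eta(\e^t,\chi)|^2+O_{\delta,\eta}\Big(\frac{\e^{-t/2}(\log q)^2}{\phi(q)}\Big).
$$
The error term integrates to $O_{\delta,\eta}((\log q)^2/(S\phi(q)))\ll_{\delta,\eta}\log q/\phi(q)$ in the range $\log q\leq S$, so it suffices to bound $\frac1S\int_0^S|\psi_\eta(\e^t,\chi)|^2\,\d t$ by $O_{\delta,\eta}(\log q)$ for each fixed $\chi$.

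For the remaining single-character average, I would expand $|\psi_\eta(\e^t,\chi)|^2=\psi_\eta(\e^t,\chi)\overline{\psi_\eta(\e^t,\chi)}$ via Lemma~\ref{lemma:explicit formula}, getting
$$
\psi_\eta(\e^t,\chi)=-\sum_{\gamma_\chi}\e^{i\gamma_\chi t}\widehat\eta\Big(\frac{\gamma_\chi}{2\pi}\Big)+O_{\delta,\eta}(\e^{-t/2}\log q),
$$
so that (again using \eqref{leqlogq} to control the cross terms with the error)
$$
\frac1S\int_0^S|\psi_\eta(\e^t,\chi)|^2\,\d t=\frac1S\int_0^S\Big|\sum_{\gamma_\chi}\e^{i\gamma_\chi t}\widehat\eta\Big(\frac{\gamma_\chi}{2\pi}\Big)\Big|^2\d t+O_{\delta,\eta}\Big(\frac{(\log q)^2}{S}\Big).
$$
Expanding the square gives a double sum over pairs of zeros $\gamma_\chi,\gamma'_\chi$, with the diagonal $\gamma_\chi=\gamma'_\chi$ contributing $\sum_{\gamma_\chi}|\widehat\eta(\gamma_\chi/2\pi)|^2=b(\chi;\widehat\eta^2)\ll_{\delta,\eta}\log q$ by \eqref{equation pointwise b(chi) }, and the off-diagonal terms contributing, after integrating $\e^{i(\gamma_\chi-\gamma'_\chi)t}$ over $[0,S]$, a factor $O(1/(S|\gamma_\chi-\gamma'_\chi|))$. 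The off-diagonal total is then controlled by a standard argument: pairs of distinct zeros of $L(s,\chi)$ have gap $\gg 1/\log(q_\chi(|\gamma|+2))$ by the Riemann--von Mangoldt formula \eqref{estNTchi}, and $\widehat\eta$ decays like $(|\xi|+1)^{-1}(\log(|\xi|+2))^{-2-\delta}$ by \eqref{condiavecdelta}, so summing $\sum_{\gamma_\chi\neq\gamma'_\chi}|\widehat\eta(\gamma_\chi/2\pi)\widehat\eta(\gamma'_\chi/2\pi)|/|\gamma_\chi-\gamma'_\chi|$ converges and is $O_{\delta,\eta}(\log q\cdot\log_2 q)$ or so, which divided by $S\geq\log q$ is admissible; I would split according to whether $|\gamma_\chi-\gamma'_\chi|$ is $\leq 1$ or $>1$, using the zero-density near any point to handle the close pairs.

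The main obstacle is the off-diagonal bound in the single-character mean square: one must show that, uniformly in $\chi\bmod q$ and in $S\geq\log q$, the contribution of pairs of distinct zeros is $\ll_{\delta,\eta}S\log q$. The delicate point is the pairs of very close zeros (where $1/|\gamma_\chi-\gamma'_\chi|$ is large), which requires combining the local zero count from \eqref{estNTchi} with the decay of $\widehat\eta$; here the condition $\eta\in\S_\delta$ (in particular \eqref{condiavecdelta}, giving an extra $(\log)^{-2-\delta}$) is exactly what makes the resulting sum over zeros converge with room to spare. Once this is in place, summing the bound $O_{\delta,\eta}(\log q)$ over the $\phi(q)-2+O(1)$ characters $\chi\neq\chi_{0,q}$ and dividing by $\phi(q)^2$ yields the claimed bound $\frac1S\int_0^S M_2(\e^t,q;\eta)\,\d t\ll_{\delta,\eta}\log q/\phi(q)$; alternatively, one can shortcut the per-character analysis by averaging over $\chi$ first and invoking Lemma~\ref{lemma truncation b_2} / Proposition~\ref{proposition first moment b(chi,h)} together with a large sieve-type input, but the zero-by-zero argument above is the most self-contained route.
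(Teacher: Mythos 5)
Your reduction to the per-character mean square $\frac1S\int_0^S|\psi_\eta(\e^t,\chi)|^2\,\d t$ and the diagonal computation via $b(\chi;\widehat\eta^2)\ll\log q$ are fine, but the off-diagonal step contains a genuine gap. You assert that ``pairs of distinct zeros of $L(s,\chi)$ have gap $\gg 1/\log(q_\chi(|\gamma|+2))$ by the Riemann--von Mangoldt formula \eqref{estNTchi}''. No such lower bound on zero gaps is known (nor does it follow from GRH); \eqref{estNTchi} only controls the \emph{count} of zeros in an interval, so the average gap is $\asymp 1/\log(qT)$ but individual consecutive zeros may be arbitrarily close. Quantitatively, the pairs with $0<|\gamma_\chi-\gamma'_\chi|\leq 1/S$ contribute $O(1)$ each to $\frac1S\int_0^S\e^{i(\gamma-\gamma')t}\d t$, and the only unconditional control from \eqref{estNTchi} is that a window of length $1/S\leq 1/\log q$ may contain $O(\log(q(|\gamma|+2)))$ zeros; this yields a per-character bound of order $(\log q)^2$ for the close pairs, hence $(\log q)^2/\phi(q)$ after averaging --- off by a factor $\log q$ from the claim. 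Your closing remark about averaging over $\chi$ and invoking Lemma~\ref{lemma truncation b_2} or Proposition~\ref{proposition first moment b(chi,h)} does not repair this either, since those results control first moments of $b(\chi;h)$, not the distribution of \emph{differences} of zeros.

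The missing ingredient is precisely the pair correlation estimate, Proposition~\ref{proposition pair correlation}, which is what the paper uses: it bounds, \emph{on average over the characters modulo $q$}, the sum $\sum\widehat\eta(\gamma_1/2\pi)\widehat\eta(\gamma_2/2\pi)\widehat\Phi(\frac{L}{2\pi}(z-\gamma_1-\gamma_2))$ by $\phi(q)(\log q)(1+\log q/L)$, via the explicit formula and the orthogonality computation of Lemma~\ref{lemma orthogonality second moment} (the point being that off-diagonal prime pairs $m\equiv n\bmod q$, $m\neq n$, are rare). In the paper's proof the close pairs $|\gamma_\chi-\lambda_\chi|\leq\frac{1}{3\log q}$ are majorized, using positivity of $\widehat\Phi_0$, by the same pair-correlation sum at scale $L\asymp\log q$, and the intermediate range is handled dyadically with the same proposition. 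This information is intrinsically an average over the family; a purely per-character argument of the kind you outline cannot close without an (unproven) hypothesis on gaps between zeros of a single $L(s,\chi)$.
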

\begin{proof}
We will show that in the range $ 2\log q \leq T$ we have the bound
$$  \cV_{1,2}(T,q;\eta,\Phi_0) = \frac 2T \int_{0}^\infty \Phi_0\Big(\frac t{T}\Big) M_{2}(\e^t,q;\eta) \d t \ll_{\delta,\eta} \frac{\log q}{\phi(q)}, $$
where \begin{equation}
\Phi_0(x):= 
\begin{cases}
1-|x| & \text{ if } |x|\leq 1, \\
0 &\text{ otherwise,}
\end{cases}
\label{equation definition triangle}
\end{equation} 
and $\widehat \Phi_0(\xi) = (\sin(\pi \xi)/\pi \xi)^2\geq 0$.
This clearly implies our claim. 

By Lemma~\ref{lemma explicit formula higher moments of moments} we have the estimate
 \begin{multline*}
      \cV_{1,2}(T,q;\eta,\Phi_0) = \frac {2}{\phi(q)^{2}} 
\sum_{\substack{ \chi \neq \chi_{0,q} }} \sum_{\substack{ \gamma_{\chi},\lambda_\chi }} \widehat \eta\Big(\frac{\gamma_\chi}{2\pi}\Big)\widehat \eta\Big( \frac{\lambda_\chi}{2\pi}\Big) \widehat \Phi_0\Big(\frac T{2\pi}(\gamma_\chi-\lambda_\chi)\Big)+O\Big( \frac{( K_{\delta,\eta}\log q)^{2}}{T\phi(q)} \Big),
\end{multline*} 
where $(\gamma_\chi,\lambda_\chi)$ runs over the pairs of imaginary parts of zeros of $L(s,\chi)$ on the critical line.
We will consider three distinct ranges of $ |\gamma_\chi-\lambda_\chi|$. 

Firstly, the bound~\eqref{equation pointwise b(chi) } implies that
$$\sum_{\substack{\gamma_\chi, \lambda_\chi\\ |\gamma_{\chi}-\lambda_\chi|\geq 1}} \widehat \eta\Big(\frac{\gamma_\chi}{2\pi}\Big)\widehat \eta\Big( \frac{\lambda_\chi}{2\pi}\Big) \widehat \Phi_0\Big(\frac T{2\pi} (\gamma_\chi-\lambda_\chi)\Big)\ll_\eta \frac {(\log q)^2}{T^2}, $$
whose contribution to  $\cV_{1,2}(T,q;\eta,\Phi_0)$ is $\ll (\log q)^2/(T^2\phi(q))$.

Secondly, in the range $ w\leq  |\gamma_\chi-\lambda_\chi| \leq 2w$  (which we denote by $|\gamma_\chi-\lambda_\chi|\sim w$) with  $ \frac 1{3\log q}\leq w \leq 1$,
we will use the fact that for $ |t|\in [0,\frac 9{10}]$, $ \widehat \Phi_{0}(t) \geq \frac 1{10}$. We deduce that
\begin{multline*}
\sum_{\chi\neq\chi_{0,q}}\sum_{\substack{\gamma_\chi,\lambda_{\chi} \\ \notag |\gamma_\chi-\lambda_{\chi}| \sim  w} }\widehat \eta\Big( \frac{\gamma_\chi}{2\pi}\Big)\widehat \eta\Big( \frac{\lambda_\chi}{2\pi}\Big) \widehat \Phi_0 \Big(\frac T{2\pi} (\gamma_\chi-\lambda_\chi)\Big)   \ll  \frac 1{(wT)^2}\sum_{\chi\neq\chi_{0,q}}\sum_{\substack{\gamma_\chi,\lambda_{\chi} \\ |\gamma_\chi-\lambda_{\chi}| \sim  w} }\widehat \eta\Big( \frac{\gamma_\chi}{2\pi}\Big) \widehat \eta\Big( \frac{\lambda_\chi}{2\pi}\Big) \\
\ll \frac{1}{(wT)^2} \sum_{\chi\neq\chi_{0,q}}\sum_{\substack{\gamma_\chi,\lambda_{\chi}}}\widehat \eta\Big( \frac{\gamma_\chi}{2\pi}\Big)\widehat \eta\Big( \frac{\lambda_\chi}{2\pi}\Big) \widehat \Phi_0\Big(\frac{\gamma_\chi-\lambda_\chi }{ 5\pi w}\Big)\notag 
\ll \frac{ \phi(q) (\log q)^2}{wT^2},
\label{equation bound with w}
\end{multline*}
by Proposition~\ref{proposition pair correlation}. As a result, decomposing the sum over $\gamma_\chi$ into dyadic intervals we obtain the bound
$$\frac 2{\phi(q)^2}\sum_{\chi\neq\chi_{0,q}}\sum_{\substack{\gamma_\chi,\lambda_{\chi} \\ \frac 1{3\log q} < |\gamma_\chi-\lambda_{\chi}| \leq 1} }\widehat \eta\Big( \frac{\gamma_\chi}{2\pi}\Big) \widehat \eta\Big( \frac{\lambda_\chi}{2\pi}\Big) \widehat \Phi_0\Big (\frac T{2\pi}(\gamma_\chi-\gamma_\psi)\Big) \ll \frac{(\log q)^2}{\phi(q)T^2}.  $$

Finally, we are left to bound
\begin{align*}
\frac 2{\phi(q)^2}\sum_{\chi\neq\chi_{0,q}}\sum_{\substack{\gamma_\chi,\lambda_{\chi} \\ |\gamma_\chi-\lambda_{\chi}|  \leq \frac 1{3\log q}} } & \widehat \eta\Big( \frac{\gamma_\chi}{2\pi}\Big) \widehat \eta\Big( \frac{\lambda_\chi}{2\pi}\Big)\widehat \Phi_0\Big(\frac T{2\pi} (\gamma_\chi-\gamma_\psi)\Big) \\
&\leq \frac 2{\phi(q)^2}\sum_{\chi\neq\chi_{0,q}}\sum_{\substack{\gamma_\chi,\lambda_{\chi} \\ |\gamma_\chi-\lambda_{\chi}|  \leq \frac 1{3\log q}} }\widehat \eta\Big( \frac{\gamma_\chi}{2\pi}\Big) \widehat \eta\Big( \frac{\lambda_\chi}{2\pi}\Big)
\\
&\ll \frac {1}{\phi(q)^2}\sum_{\chi\neq\chi_{0,q}}\sum_{\substack{\gamma_\chi,\lambda_{\chi}}}\widehat \eta\Big( \frac{\gamma_\chi}{2\pi}\Big) \widehat \eta\Big( \frac{\lambda_\chi}{2\pi}\Big)\widehat \Phi_0\Big(\frac{\log q}{4\pi }\cdot(\gamma_\chi-\lambda_{\chi})\Big).
\end{align*}
Applying Proposition~\ref{proposition pair correlation}, we see that this term is $\ll (\log q)/\phi(q)$.
The proof is finished. 
\end{proof}

We now show that $ M_n(\e^t,q;\eta )$ takes large values by using the bounds obtained in Section~\ref{section mean of first moment}.
\begin{lemma} Let $m\geq 1,$  $\delta>0$, $\eta\in \S_\delta$, and assume that GRH$_{\widehat \eta}$ holds. Fix $\eps>0$ and let $C_{\eps,\delta,\eta}$ be a large enough constant, and let $f: \mathbb R_{\geq 0}\mathbb \rightarrow \mathbb R_{\geq 0}$ be any function such that for all $t\geq 3$, $f(t)\geq 1 $. Then, for all $q \geq (C_{\eps,\delta,\eta})^m$, there exists an associated value $x_m(q)$ for which $\log(x_m(q))\in [f(q),f(q) \phi(q)^{m-1} (C_{\eps,\delta,\eta}\log q)^m \mu_{2m}^{-1}]$ and such that
\begin{equation}\label{OmegaM2m}
M_{2m}(x_{m}(q),q;\eta) \geq ( 1-\eps) \mu_{2m}\frac{(\alpha(\widehat \eta^2)\log q)^m}{\phi(q)^m}.  
\end{equation}

\label{lemma raw oscillations}
\end{lemma}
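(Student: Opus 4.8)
The plan is to derive \eqref{OmegaM2m} from the lower bound on the first moment of the $2m$-th moment obtained in Proposition~\ref{proposition first moment of moments} together with the upper bound on the \emph{second} moment of moments, which is furnished by Lemma~\ref{lemma hooley upper bound} (for $m=1$) and, for higher $m$, by the general bound $\cV_{2,2m}(T,q;\eta,\Phi_0) \ll \cdots$ that follows from Lemma~\ref{lemma lower bound centered moments even s} in its trivial (pointwise) direction, or directly from Corollary~\ref{corollary sup bound}. Concretely, write $A:=\mu_{2m}(\alpha(\widehat\eta^2)\log q)^m/\phi(q)^m$ for the target size. Proposition~\ref{proposition first moment of moments} gives
$$ \frac{2}{T}\int_0^\infty \Phi_0\Big(\frac tT\Big) M_{2m}(\e^t,q;\eta)\,\d t \geq A\big(1+o(1)\big) $$
for $T\geq 2\log q$ (the error term $(K_{\delta,\eta}\log q)^{2m}/(T\phi(q))$ being negligible against $A$ once $T$ exceeds a suitable power of $\log q$ times $\phi(q)^{m-1}\mu_{2m}^{-1}$, which is why that factor appears in the stated range for $\log x_m(q)$).

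Next I would run a standard averaging/second-moment argument. Suppose, for contradiction, that $M_{2m}(\e^t,q;\eta) < (1-\eps)A$ for \emph{all} $t$ in the relevant window $[f(q),\,S]$ with $S = f(q)\phi(q)^{m-1}(C_{\eps,\delta,\eta}\log q)^m\mu_{2m}^{-1}$. We already know $M_{2m}(\e^t,q;\eta)\geq 0$ when $2m$ is even, so pointwise $0\le M_{2m}(\e^t,q;\eta) < (1-\eps)A$ on the bulk of the window. Combining this with Corollary~\ref{corollary sup bound} (which bounds $M_{2m}$ by $(C_{\delta,\eta}\log q)^{2m}/\phi(q)$ everywhere, in particular on the short initial segment $[0,f(q)]$ where our assumption is not invoked), the weighted average $\frac 2T\int \Phi_0(t/T)M_{2m}(\e^t,q;\eta)\,\d t$ would be at most $(1-\eps)A + (\text{contribution of } t\le f(q)) + (\text{tail error})$. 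The contribution of $t\leq f(q)$ is $\ll f(q)(C_{\delta,\eta}\log q)^{2m}/(T\phi(q))$, which is $o(A)$ precisely because of our choice $T\asymp S$, and the tail error from Proposition~\ref{proposition first moment of moments} is likewise $o(A)$. This contradicts the lower bound $A(1+o(1))$ from the previous paragraph once $q\geq (C_{\eps,\delta,\eta})^m$ with $C_{\eps,\delta,\eta}$ large enough in terms of $\eps,\delta,\eta$. Hence there must exist $t\in[f(q),S]$ with $M_{2m}(\e^t,q;\eta)\geq (1-\eps)A$, and setting $x_m(q):=\e^t$ proves the claim.

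The one point requiring a little care — and the main (mild) obstacle — is matching the quantitative window $[f(q),\,f(q)\phi(q)^{m-1}(C_{\eps,\delta,\eta}\log q)^m\mu_{2m}^{-1}]$ to the size of the error terms. The error term $(K_{\delta,\eta}\log q)^{2m}/(T\phi(q))$ in Proposition~\ref{proposition first moment of moments} must be $\ll \eps A = \eps\mu_{2m}(\alpha(\widehat\eta^2)\log q)^m/\phi(q)^m$; solving for $T$ gives $T\gg \eps^{-1}\mu_{2m}^{-1}\phi(q)^{m-1}(\log q)^{2m-m}\cdot(\text{constant})^m$, which up to the value of the constant is exactly the right endpoint $S$ of the stated interval. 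Similarly the initial-segment contribution $\ll f(q)(C_{\delta,\eta}\log q)^{2m}/(T\phi(q))$ is $\ll \eps A$ as soon as $T \geq f(q)\cdot\eps^{-1}\mu_{2m}^{-1}\phi(q)^{m-1}(C_{\delta,\eta}\log q)^{m}$, again matching $S$. Choosing $T=S$ and $C_{\eps,\delta,\eta}$ large enough in terms of $\eps,\delta,\eta$ absorbs all implied constants, and the hypothesis $q\geq(C_{\eps,\delta,\eta})^m$ guarantees that $\beta_q(\widehat\eta^2)$ and the other lower-order terms in the $M_{2m}$ main term (from Proposition~\ref{proposition first moment of moments}) are dominated by an $\eps$-fraction of $A$, so that the clean bound \eqref{OmegaM2m} with $\mu_{2m}(\alpha(\widehat\eta^2)\log q)^m/\phi(q)^m$ on the right survives.
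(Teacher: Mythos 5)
Your argument is correct and is essentially the paper's own proof: apply Proposition~\ref{proposition first moment of moments} with the Fej\'er-type test function $\Phi_0$, then run the contradiction/averaging argument with $T$ equal to the right endpoint of the window, bounding the initial segment $t\leq f(q)$ via the pointwise bound of Corollary~\ref{corollary sup bound} and matching the error term $(K_{\delta,\eta}\log q)^{2m}/(T\phi(q))$ against $\eps A$ exactly as you describe. The second-moment input you mention at the outset (Lemma~\ref{lemma hooley upper bound}, or a bound on $\cV_{2,2m}$) is not actually used in your argument and is not needed here; the paper invokes it only later, in Lemma~\ref{lemma raw oscillations variance}, to obtain the sharper localization $\log x(q)\asymp f(q)$ in the case $m=1$.
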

\begin{proof}
We can clearly assume that $\alpha(\widehat \eta^2)=1.$
We apply Proposition~\ref{proposition first moment of moments} with $\Phi=\Phi_0$ where $\Phi_0$ is defined in \eqref{equation definition triangle}.
We deduce that for $C_{\eps,\delta,\eta}$ large enough, $\phi(q)\geq (C_{\eps,\delta,\eta})^{m}$ and $ T\geq  \frac{(C_{\eps,\delta,\eta}\log q)^m  }{\mu_{2m}}  \phi(q)^{m-1}$,

\begin{equation}
 \frac 1T\int_{0}^{\infty} \Phi_0\Big(\frac tT\Big) M_{2m}(\e^{t},q;\eta)\d t \geq \Big(\frac 12-\frac \eps3\Big) \mu_{2m}\frac{(\log q)^m}{\phi(q)^m}.  
 \label{equation lower bound first moment}
\end{equation}
We claim that there exists $ t_m(q) \in [ \frac {\mu_{2m}T}{ \phi(q)^{m-1}( C_{\eps,\delta,\eta} \log q)^m} ,T]$ (where we might need to enlarge the value of the constant $C_{\eps,\delta,\eta}>0$)
such that 
$$\Phi_0\Big(\frac {t_{m}(q)}T\Big)M_{2m}(\e^{t_{m}(q)},q;\eta) >(1- \eps )\mu_{2m}\frac{( \log q)^m}{\phi(q)^m}.$$ 
To show this, assume otherwise that for all $t  \in [S ,T] $, where we used the shorthand $S= T \mu_{2m} / \phi(q)^{m-1}(C_{\eps,\delta,\eta} \log q)^m$,
$$\Phi_0\Big(\frac {t}T\Big)M_{2m}(\e^{t},q;\eta) \leq(1- \eps ) \mu_{2m}\frac{( \log q)^m}{\phi(q)^m}.$$ 
Applying Corollary~\ref{corollary sup bound}, we deduce that
\begin{equation}
    \begin{split} 
\int_0^{\infty}&\!\!\!\Phi_0\Big(\frac {t}T\Big)M_{2m}(\e^{t},q;\eta) \d t  =\int_{S}^T\!\!\! \Phi_0\Big(\frac {t}T\Big)M_{2m}(\e^{t},q;\eta) \d t+ \int_{0}^S \!\!\!\Phi_0\Big(\frac {t}T\Big)M_{2m}(\e^{t},q;\eta) \d t\\
& \leq  \Big(\sup_{S\leq t\leq T} M_{2m}(\e^{t},q;\eta) \Phi_0\Big(\frac {t}T\Big) \Big) \int_{S}^T  \d t +O\Big(S\frac{(K_{\delta,\eta}\log q)^{2m}}{\phi(q)}\Big) \\
& \leq (1- \eps )  \mu_{2m}\frac{( \log q)^m}{\phi(q)^m}\frac T2 +O\Big(S\frac{(K_{\delta,\eta}\log q)^{2m}}{\phi(q)}\Big) .\end{split}
\label{equation here we put hooley}
\end{equation}
When $C_{\eps,\delta,\eta}$ is large enough, this contradicts~\eqref{equation lower bound first moment}. In summary, for all $q\geq 3 $ such that $\phi(q)\geq (C_{\eps,\delta,\eta})^m$ (by enlarging once more the constant $C_{\eps,\delta,\eta}$, we may replace $\phi(q)$ with $q$) and for any $U\geq 1$, there exists $x_m(q)$ such that $\log (x_m(q)) \in [U,U \phi(q)^{m-1} (C_{\eps,\delta,\eta}\log q)^m \mu_{2m}^{-1}]$ and such that 
$$M_{2m}(x_m(q),q;\eta) > (1- \eps ) \mu_{2m}\frac{(\log q)^m}{\phi(q)^m}.$$ 
The proof is finished.
\end{proof}
 In the particular case $n=2$, we can localize the large values of $M_{n}(\e^t,q;\eta)$ more precisely thanks to Lemma~\ref{lemma hooley upper bound}. 
 
 \begin{lemma} Let  $\delta>0$ and $\eta\in \S_\delta$, and assume that GRH$_{\widehat \eta}$ holds. Fix $\eps>0$, and let $f:\R_{\geq 0} \rightarrow \R_{\geq 0}$ be any function such that for all $x\geq 3$, $f(x)\geq \log x$. Then, for all $q$ large enough in terms of $\delta, \eta$ and $\eps$, there exists an associated value $x(q)$ for which $f(q)  \asymp_{\delta,\eta,\eps} \log(x(q))$ and such that
\begin{equation}
M_{2}(x(q),q;\eta) \geq (1-\eps) \frac{\alpha(\widehat \eta^2)\log q}{\phi(q)}.  
\end{equation}

\label{lemma raw oscillations variance}
\end{lemma}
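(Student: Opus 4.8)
The plan is to run the argument of Lemma~\ref{lemma raw oscillations} in the case $m=1$, but with the trivial pointwise bound of Corollary~\ref{corollary sup bound} on an initial segment replaced by Hooley's averaged bound from Lemma~\ref{lemma hooley upper bound}. The point is that the latter has order $\log q/\phi(q)$, i.e.\ the \emph{same} order as the expected main term for $M_2$, rather than $(\log q)^2/\phi(q)$; consequently the initial segment that must be discarded has length only a constant multiple of $T$ (rather than a vanishing proportion of $T$), and this is precisely what produces the sharp window $\log x(q)\asymp f(q)$.

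First I would normalize. Replacing $\eta$ by $\eta/\sqrt{\alpha(\widehat\eta^2)}$ (still an element of $\S_\delta$, with $M_2$ rescaled by $\alpha(\widehat\eta^2)^{-1}$), we may assume $\alpha(\widehat\eta^2)=1$. Then apply Proposition~\ref{proposition first moment of moments} with $m=1$ and $\Phi=\Phi_0$ the triangle function of~\eqref{equation definition triangle}, which belongs to $\U$ since $\widehat\Phi_0=(\sin\pi\xi/\pi\xi)^2\ge0$. Using $\int_0^\infty\Phi_0=\tfrac12$, $\mu_2=1$, the bound $|\beta_q(\widehat\eta^2)|\ll_{\delta,\eta}\log_2 q$, and the error terms of Proposition~\ref{proposition first moment of moments} (which are $\ll_{\delta,\eta}(\log q)^2/(T\phi(q))+\log q/\phi(q)^2$), one obtains a constant $C_1=C_1(\eps,\delta,\eta)$ such that, for $q$ large in terms of $\eps,\delta,\eta$ and all $T\ge C_1\log q$,
\[
\frac2T\int_0^\infty\Phi_0\Big(\frac tT\Big)M_2(\e^t,q;\eta)\,\d t\ \ge\ \Big(1-\frac\eps4\Big)\frac{\log q}{\phi(q)}.
\]

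Now I would set $T:=C_1 f(q)$, which is legitimate because $f(q)\ge\log q$, and argue by contradiction. Suppose $M_2(\e^t,q;\eta)<(1-\eps)\log q/\phi(q)$ for all $t\in[\delta_0 T,T]$, with $\delta_0=\delta_0(\eps,\delta,\eta)>0$ to be chosen small. Splitting the integral at $\delta_0 T$, the portion over $[\delta_0 T,T]$ is at most $(1-\eps)\frac{\log q}{\phi(q)}\int_0^\infty\Phi_0(t/T)\,\d t=(1-\eps)\frac{\log q}{\phi(q)}\cdot\frac T2$ (using $\Phi_0\ge0$ and the contradiction hypothesis), while the portion over $[0,\delta_0 T]$ is at most $\int_0^{\delta_0 T}M_2(\e^t,q;\eta)\,\d t\ll\delta_0 T\cdot\frac{\log q}{\phi(q)}$ by Lemma~\ref{lemma hooley upper bound} — this uses $M_2\ge0$, $\Phi_0\le1$, and $\delta_0 T\ge\log q$, the last of which holds once $C_1$ is enlarged so that $\delta_0 C_1\ge1$. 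Choosing $\delta_0$ small enough (in terms of $\eps$ and the implied constant in Lemma~\ref{lemma hooley upper bound}) that the second portion is strictly less than $\tfrac\eps8\cdot\frac T2\cdot\frac{\log q}{\phi(q)}$, the two estimates together force $\int_0^\infty\Phi_0(t/T)M_2(\e^t,q;\eta)\,\d t<(1-\tfrac\eps4)\frac{\log q}{\phi(q)}\cdot\frac T2$, contradicting the displayed inequality. Hence there exists $t_0\in[\delta_0 T,T]$ with $M_2(\e^{t_0},q;\eta)\ge(1-\eps)\log q/\phi(q)$; put $x(q):=\e^{t_0}$. Undoing the normalization yields $M_2(x(q),q;\eta)\ge(1-\eps)\alpha(\widehat\eta^2)\log q/\phi(q)$, and since $\log x(q)=t_0\in[\delta_0 C_1 f(q),\,C_1 f(q)]$ with $\delta_0,C_1$ depending only on $\eps,\delta,\eta$, we conclude $f(q)\asymp_{\delta,\eta,\eps}\log x(q)$.

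The only real difficulty is the order in which the constants are fixed: $C_1$ must be taken large (to absorb the $O((\log q)^2/(T\phi(q)))$ and the $\beta_q$ contributions, and to guarantee $\delta_0 C_1\ge1$) while $\delta_0$ must be small (to make the Hooley contribution a small fraction of the main term), and one should record explicitly that $M_2\ge0$ (so that discarding an initial segment only decreases the integral) and that $\beta_q(\widehat\eta^2)/\phi(q)=o(\log q/\phi(q))$. Conceptually there is no obstacle: the improvement over Lemma~\ref{lemma raw oscillations} is entirely due to the fact that the bound of Lemma~\ref{lemma hooley upper bound} matches the true order of magnitude of $M_2$.
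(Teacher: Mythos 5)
Your proposal is correct and is essentially the paper's own argument: the paper proves this lemma by running the proof of Lemma~\ref{lemma raw oscillations} verbatim, replacing the trivial pointwise bound on the initial segment by the averaged bound of Lemma~\ref{lemma hooley upper bound}, which is exactly what you do. Your additional bookkeeping (the order in which $\delta_0$ and $C_1$ are fixed, the requirement $\delta_0 T\geq\log q$ for Lemma~\ref{lemma hooley upper bound} to apply, and the positivity of $M_2$) is accurate and fills in details the paper leaves implicit.
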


\goodbreak
\begin{proof}
 The proof is almost identical to that of Lemma~\ref{lemma raw oscillations} except that one can apply Lemma~\ref{lemma hooley upper bound} and deduce that the left hand side of~\eqref{equation here we put hooley} is 
 $$\leq (1-\eps) T \frac{\log q}{\phi(q)} +O_{\delta,\eta}\Big( S \frac{\log q}{\phi(q)}\Big). $$
 \end{proof}

In the next lemma we apply our results on higher moments of $M_{2m}(x,q;\eta)$ and deduce a stronger $\Omega$-result. However, our localization of the large values of $M_{2m}(x,q;\eta)$ is less precise than in Lemma~\ref{lemma raw oscillations}.
\begin{lemma} Let $m,r\geq 1,$  $\delta>0$, $\eta\in \S_\delta$, and assume that GRH$_{\widehat \eta}$ holds. Fix $\eps>0$ and let $C_{\eps,\delta,\eta,m,r}$ be a large enough constant, and let $f: \mathbb R_{\geq 0}\mathbb \rightarrow \mathbb R_{\geq 0}$ be any function such that for all $t\geq 3$, $f(t)\geq 1$. Then, for all $q \geq C_{\eps,\delta,\eta,m,r}$, there exists an associated value $x_m(q)$ for which $\log(x_m(q))\in [f(q),f(q)C_{\eps,\delta,\eta,m,r} \phi(q)^{2mr+m-r}(\log q)^{m(2r+1)}]$ and such that
\begin{equation}\label{OmegaM2m precise}
M_{2m}(x_q,q;\eta) >  \mu_{2m}\Big( \frac{\alpha(\widehat \eta^2)\log q +\beta_q(\widehat \eta^2)}{\phi(q)}\Big)^m+ (1- \eps )   E_{r,m}
\frac{\nu_{2m}^{\frac12} (\alpha(\widehat \eta^2)\log q)^{ m}}{\phi(q)^{  m+\frac 12+\frac 1{2(2r+1)}}}, 
\end{equation}
where 
$$ E_{r,m}:= \Big( \frac{(2r+1)!}{2^{r+1} (r-1)!}\vartheta_{2m,2r+1}\Big)^{\frac 1{2r+1}}   .$$ 
\label{lemma raw oscillations higher moments}
\end{lemma}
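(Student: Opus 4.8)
The plan is to repeat the argument of Lemma~\ref{lemma raw oscillations}, but starting from the lower bound on the \emph{odd} moment of moments $\cV_{2r+1,2m}(T,q;\eta,\Phi_0)$ rather than from the first moment. I would take $\Phi_0(x):=\max(0,1-|x|)\in\U$ (note $\widehat\Phi_0(\xi)=(\sin(\pi\xi)/\pi\xi)^2\ge 0$). Since $n=2m$ is even, the constant $\vartheta_{2m,2r+1}=\nu_{2m}^{-3/2}(\nu_{2m}'+\nu_{2m}'')$ is strictly positive, so for $q$ large (so that $2m\le\log q/\log_2 q$ and $r\le\log q$, with $m,r$ fixed) Lemma~\ref{lemma lower bound centered moments odd s} together with Lemma~\ref{lemma combinatorics odd s} gives
\begin{equation*}
\cV_{2r+1,2m}(T,q;\eta,\Phi_0)\ \ge\ (1-\eps)\,\frac{(2r+1)!}{2^r(r-1)!}\,\frac{\vartheta_{2m,2r+1}}{\phi(q)^{1/2}}\,V_{2m}(q;\eta)^{(2r+1)/2}\ -\ O_{\Phi_0}\!\Big(\frac{(K_{\delta,\eta}\log q)^{2m(2r+1)}}{T\phi(q)^{2r+1}}\Big),
\end{equation*}
the factor $(1+O(m\log_2 q/\log q))^r+O(r/\log q)$ being $1+o_{q\to\infty}(1)$. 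Writing $V_{2m}(q;\eta)=\nu_{2m}(\alpha(\widehat\eta^2)\log q)^{2m}/\phi(q)^{2m+1}$, the main term above equals $\Lambda^{2r+1}$ with $\Lambda=2^{1/(2r+1)}E_{r,m}\,\nu_{2m}^{1/2}(\alpha(\widehat\eta^2)\log q)^m/\phi(q)^{m+\frac12+\frac1{2(2r+1)}}$, the exponent of $\phi(q)$ arising as $\tfrac12\cdot\tfrac1{2r+1}+\tfrac{2m+1}{2}$. I would then set $T:=f(q)\,C\,\phi(q)^{2mr+m-r}(\log q)^{m(2r+1)}$ with $C=C_{\eps,\delta,\eta,m,r}$ large; using the identity $\tfrac12+\tfrac{(2m+1)(2r+1)}2-(2r+1)=2mr+m-r$, this makes the truncation error at most $\eps\Lambda^{2r+1}$.

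Next I would pass to a pointwise bound exactly as in Lemma~\ref{lemma raw oscillations}. Put $S:=f(q)$ and $B:=2(C_{\delta,\eta}\log q)^{2m}/\phi(q)$, so that $|M_{2m}(\e^t,q;\eta)-m_{2m}(q;\eta)|\le B$ for all $t$ by Corollary~\ref{corollary sup bound}. Assuming toward a contradiction that $M_{2m}(\e^t,q;\eta)-m_{2m}(q;\eta)\le\Lambda'$ for every $t\in[S,T]$, splitting the integral $\int_0^\infty\Phi_0(t/T)(M_{2m}(\e^t,q;\eta)-m_{2m}(q;\eta))^{2r+1}\d t$ over $[0,S]$ and $[S,T]$ and using monotonicity of $x\mapsto x^{2r+1}$ yields $\cV_{2r+1,2m}(T,q;\eta,\Phi_0)\le(\Lambda')^{2r+1}+2B^{2r+1}S/T$; since $S/T=(C\phi(q)^{2mr+m-r}(\log q)^{m(2r+1)})^{-1}$, for $C$ large the last term is again $\le\eps\Lambda^{2r+1}$ (the exponents match, as above). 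Comparing with the lower bound forces $\Lambda'\ge(1-O(\eps))\Lambda$, so there must be $t_0\in[S,T]$ with $M_{2m}(\e^{t_0},q;\eta)>m_{2m}(q;\eta)+(1-O(\eps))\Lambda$; I would set $x_m(q):=\e^{t_0}$, whose logarithm lies in $[f(q),\,f(q)C_{\eps,\delta,\eta,m,r}\,\phi(q)^{2mr+m-r}(\log q)^{m(2r+1)}]$.

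Finally I would insert the lower bound $m_{2m}(q;\eta)\ge\mu_{2m}\big((\alpha(\widehat\eta^2)\log q+\beta_q(\widehat\eta^2))/\phi(q)\big)^m+O\big(\mu_{2m}(C_{\delta,\eta}\log q)^m/\phi(q)^{m+1}\big)$ from Theorem~\ref{thmomentscentres2rT} (whose proof of the $m_n$ bound uses only GRH$_{\widehat\eta}$, cf.\ Proposition~\ref{proposition first moment of moments}). The error term here is $\ll\phi(q)^{-m-1}(\log q)^m$, which for $r\ge 1$ is $o\big(\phi(q)^{-m-\frac12-\frac1{2(2r+1)}}(\log q)^m\big)$ because $m+1>m+\tfrac12+\tfrac1{2(2r+1)}$; combined with the factor $2^{1/(2r+1)}>1$ sitting in $\Lambda$, it is absorbed into the constant, and after renaming $\eps$ one obtains the claimed inequality~\eqref{OmegaM2m precise}. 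The step I expect to be the main obstacle is purely bookkeeping: one must check that all three error terms --- the truncation error in Lemma~\ref{lemma lower bound centered moments odd s}, the $[0,S]$ contribution governed by the trivial bound $B$, and the error in the lower bound for $m_{2m}(q;\eta)$ --- are genuinely dominated by the extracted term $\Lambda$ throughout the stated range of $T$, the last of these being exactly where the hypothesis $r\ge 1$ enters in an essential way (it is what makes $\tfrac1{2(2r+1)}<\tfrac12$).
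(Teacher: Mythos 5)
Your proposal is correct and follows essentially the same route as the paper: start as in Lemma~\ref{lemma raw oscillations} with the Fej\'er-type kernel $\Phi_0$, invoke the lower bound on the odd moment of moments $\cV_{2r+1,2m}$ from Theorem~\ref{thmomentscentres2rT} (equivalently Lemmas~\ref{lemma lower bound centered moments odd s} and~\ref{lemma combinatorics odd s}), extract a pointwise large value of $M_{2m}(\e^t,q;\eta)-m_{2m}(q;\eta)$ using that $x\mapsto x^{2r+1}$ is increasing, and finish with the lower bound on $m_{2m}(q;\eta)$. Your exponent bookkeeping ($\tfrac12+\tfrac{(2m+1)(2r+1)}2-(2r+1)=2mr+m-r$, the absorption of the $\phi(q)^{-m-1}$ error, and the role of $r\ge1$) matches the paper's choices exactly.
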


\begin{proof} 
The proof starts as that of Lemma~\ref{lemma raw oscillations}, with the test function $\Phi_0$ defined in~\eqref{equation definition triangle}. From Theorem~\ref{thmomentscentres2rT} (in which GRH$_{\widehat \eta}$ is clearly sufficient), we deduce that for $\phi(q)\geq C_{\eps,\eta,n,r}$ and $ T\geq  C_{\eps,\delta,\eta,m,r}(\log q)^{m(2r+1)}  \phi(q)^{2mr+m-r}$,
\begin{align*}
\frac {1}T \int_{\mathbb R} \Phi_0\Big(& \frac tT \Big) \Big(M_{2m}(\e^t,q;\eta)-m_{2m}(q;\eta)\Big)^{2r+1}\d t \\ &\geq \Big(1-\frac{\eps}3\Big)^{2r+1}
 \frac{(2r+1)!}{2^{r+1} (r-1)!}\frac{\vartheta_{2m,2r+1}}{\phi(q)^{\frac 12}}
 V_{2m}(q;\eta)^{\frac{2r+1}2}.
\end{align*}
Similarly as in the proof of Lemma~\ref{lemma raw oscillations} (note that the fact that $2r+1$ is odd is crucial here), we deduce that there exists $ t_m(q) \in [D_{\eps,\delta,\eta,m,r}\frac {T}{\phi(q)^{m(2r+1)-r}(\log q)^{m(2r+1)}} ,T]$, where $D_{\eps,\delta,\eta,m,r}>0$ is a small enough constant, such that 
$$M_{2m}(\e^{t},q;\eta) -m_{2m}(q;\eta) > \Big(1- \frac \eps2 \Big) \Big(\frac{(2r+1)!}{2^{r+1} (r-1)!}\Big)^{\frac 1{2r+1}}\frac{\vartheta_{2m,2r+1}^{\frac 1{2r+1}}}{\phi(q)^{\frac 1{2(2r+1)}}} 
 V_{2m}(q;\eta)^{\frac{1}2}.
$$
Finally, we apply the lower bound on $m_{2m}(q;\eta)$ given in Theorem~\ref{thmomentscentres2rT}.
\end{proof}

\begin{cor} Let  $\delta>0$ and $\eta\in \S_\delta$, and assume that GRH$_{\widehat \eta}$ holds. 
Fix $\eps>0$ small enough, and let $g:\mathbb R_{\geq 1} \rightarrow \R_{\geq 3}$ be an increasing function tending to infinity such that $g(u) \leq \e^{u}$. For each modulus $q\gg_{\eps,\delta, \eta} 1$, there exists an associated value $x_q$ such that $  g(c_1(\eps,\delta,\eta)\log x_q) \leq q \leq  g(c_2(\eps,\delta,\eta)\log x_q) $, where the $c_j(\eps,\delta,\eta)>0$ are constants, and
\begin{equation}
\label{OmegaM2}
M_{2}(x_q,q;\eta) \geq (1-\eps) \frac{\alpha(\widehat \eta^2)\log q}{\phi(q)}.  
\end{equation}
If moreover $g(u) \leq  u^{\frac 1M}$ for some $M\geq 4\eps^{-1}$ and for all $u$, then we can find $x_q$ such that $g((\log x_q)^{1-\frac{3}{M\eps}}) \leq q \leq  g(\log x_q) $ and
\begin{equation}
 M_{2}(x_q,q;\eta) \geq   \frac{\alpha(\widehat \eta^2)\log q +\beta_{q}(\widehat \eta^2)}{\phi(q)}+ 
\frac{ 1}{\phi(q)^{   \frac 32 -\eps}}. 
\label{equation lower bound lemma large values variance}    
\end{equation}
\label{corollary m=1 omega result GRH}
\end{cor}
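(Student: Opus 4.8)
The plan is to prove Corollary~\ref{corollary m=1 omega result GRH} by combining the large-value lemmas already established (Lemmas~\ref{lemma raw oscillations variance} and~\ref{lemma raw oscillations higher moments}) with a suitable choice of the auxiliary function $f$ tailored to the given increasing function $g$. For the first statement~\eqref{OmegaM2}, I would apply Lemma~\ref{lemma raw oscillations variance}. That lemma produces, for every large enough $q$, a value $x(q)$ with $f(q)\asymp_{\delta,\eta,\eps}\log x(q)$ at which $M_2(x(q),q;\eta)\geq(1-\eps)\alpha(\widehat\eta^2)\log q/\phi(q)$, and $f$ may be any function with $f(x)\geq\log x$. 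The idea is to take $f(q)$ to be (a constant times) the inverse function of $g$, so that $\log x_q\asymp g^{-1}(q)$, i.e. $q\asymp g(c\log x_q)$; since $g(u)\le\e^u$ we have $g^{-1}(q)\ge\log q$, so the constraint $f(q)\ge\log q$ is met. Unwinding the $\asymp$ into a two-sided inequality $g(c_1(\eps,\delta,\eta)\log x_q)\le q\le g(c_2(\eps,\delta,\eta)\log x_q)$ just uses monotonicity of $g$, giving~\eqref{OmegaM2}.

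For the second statement~\eqref{equation lower bound lemma large values variance}, I would instead invoke Lemma~\ref{lemma raw oscillations higher moments} with $m=1$ and a well-chosen $r$. That lemma gives, for $q$ large, a value $x_q$ with $\log x_q\in[f(q),\,f(q)\,C\,\phi(q)^{2r+1-r}(\log q)^{2r+1}]=[f(q),\,f(q)\,C\,\phi(q)^{r+1}(\log q)^{2r+1}]$ at which
\[
M_2(x_q,q;\eta)>\Big(\frac{\alpha(\widehat\eta^2)\log q+\beta_q(\widehat\eta^2)}{\phi(q)}\Big)+(1-\eps)E_{r,1}\frac{\nu_2^{1/2}(\alpha(\widehat\eta^2)\log q)}{\phi(q)^{3/2+1/(2(2r+1))}},
\]
using $\mu_2=1$. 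To get the clean error term $1/\phi(q)^{3/2-\eps}$ one simply picks $r=r(\eps)$ large enough that $1/(2(2r+1))<\eps$ (and notes $E_{r,1},\nu_2,\alpha(\widehat\eta^2)$ are positive constants, absorbed into the inequality for $q$ large); the extra $\log q$ factor is harmless against the saved power of $\phi(q)$. The localization window has length a fixed power of $\phi(q)$ times $(\log q)^{2r+1}$, so in logarithmic terms $\log\log x_q$ moves by $O_{\eps,\delta,\eta}(\log q)$, and choosing $f(q)$ of order a large power of the inverse of $g$ lets this window be absorbed: concretely, if $g(u)\le u^{1/M}$ then $g^{-1}(q)\ge q^M$, so taking $f(q)$ comparable to $g^{-1}(q)$ we get $\log x_q\ge q^M$ while the upper end of the window is $\le q^{M}\cdot q^{O(1)}=q^{M+O(1)}$; translating back through $g$ yields $g((\log x_q)^{1-3/(M\eps)})\le q\le g(\log x_q)$ for $M\geq 4\eps^{-1}$, the exponent $1-3/(M\eps)$ coming from comparing $q^{M+O(1)}$ with $q^{M}$ raised to the appropriate power.

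The main obstacle — really the only nontrivial bookkeeping point — is converting the multiplicative-range statement "$\log x_q$ lies in an interval $[f(q),\,f(q)\,q^{O(1)}(\log q)^{O(1)}]$" into the additive-in-$g^{-1}$ statement "$g(c_1(\log x_q)^{1-3/(M\eps)})\le q\le g(c_2\log x_q)$", keeping careful track of how the widths of the windows and the constants $c_1,c_2$ depend on $\eps$, $M$, and the choice of $r$. One must verify that the hypothesis $g(u)\le u^{1/M}$ with $M\ge 4\eps^{-1}$ is exactly what makes the lower exponent $1-3/(M\eps)$ positive and makes the chain of inequalities consistent; this is a direct computation with logarithms and the monotonicity of $g$, using that $g^{-1}$ grows at least polynomially. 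I would also remark that, since GRH implies GRH$_{\widehat\eta}$ for every $\eta\in\S_\delta$, this corollary immediately yields the GRH part of Theorem~\ref{thunconditional n=2}; the unconditional "positive proportion of $q$" version will instead be obtained by the complementary argument (assuming the negation of GRH$_{\widehat\eta}$) carried out later in this section.
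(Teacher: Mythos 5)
Your proposal matches the paper's proof: the first bound is obtained from Lemma~\ref{lemma raw oscillations variance} with $f=g^{-1}$ (noting $g(u)\le \e^u$ gives $f(q)\ge\log q$), and the second from Lemma~\ref{lemma raw oscillations higher moments} with $m=1$, $f=g^{-1}$ and $r=\lceil \eps^{-1}/4\rceil$ so that $1/(2(2r+1))<\eps$, exactly as in the paper. One small caution on the bookkeeping: since $g(u)\le u^{1/M}$ only gives the one-sided bound $f(q)=g^{-1}(q)\ge q^{M}$, the upper end of the localization window should be estimated as $f(q)\cdot q^{O_{\eps}(1)}\le f(q)^{1+O_{\eps}(1)/M}$ rather than as $q^{M+O(1)}$; this is precisely how the paper extracts the exponent $1-\frac{3}{M\eps}$.
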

\begin{proof}
For the first statement we apply Lemma~\ref{lemma raw oscillations variance} with $f=g^{-1}$, which implies that $f(q)\leq \log q$.

For the second statement, we apply Lemma~\ref{lemma raw oscillations higher moments} with $f=g^{-1}$, which now implies that $f(q) \geq q^M$. Hence, taking $r = \lceil\frac{\eps^{-1}}4 \rceil$ we have that~\eqref{equation lower bound lemma large values variance} holds for some $x_q$ in the range $f(q) \leq \log(x_q)\leq f(q)C_{\eps,\eta,1,r} \phi(q)^{r+1}(\log q)^{2r+1} \leq f(q)^{1+\frac {\eps^{-1}/4+2}{M}}$, and the proof follows. Here, we have used the bound $f(q) \geq q^M$ which implies that $\log q < M \log f(q) < f(q)^\eps$.   
\end{proof}

\begin{cor} 
\label{corollary m>=2 omega result GRH}
Let $\delta,\eps>0$ and $\eta\in \S_\delta$, assume that GRH$_{\widehat \eta}$ holds, and fix $m\geq 2$. 
For any $B\in \mathbb R_{ > 0}$, there exists a real number $\lambda \in [\frac 1{m-1+B},\frac 1B]$ with the following property. There exists a sequence of moduli $\{q_i\}_{i\geq 1}$ and associated values $\{x_i\}_{i\geq 1}$ such that 
$q_i= (\log x_i)^{\lambda+o(1)} $ and
\begin{equation}\label{OmegaM2m higher moments}
M_{2m}(x_i,q_i;\eta) \geq (1-\eps)\mu_{2m}\frac{(\alpha(\widehat \eta^2)\log q_i)^m}{\phi(q_i)^m}.  
\end{equation}

Let moreover $g:\mathbb R_{\geq 1} \rightarrow \R_{\geq 3}$ be an increasing function such that as $x\rightarrow\infty$, $ g(x) \leq x^{\frac 1{mM}}$, with $M\geq 2\eps^{-1}$. Then, for each modulus $q\geq C_{\eps,\delta,\eta,m}$ there exists an associated value $x_q$ such that $g((\log x_q)^{1-\frac{1}{\eps M}}) \leq q \leq  g(\log x_q) $ and~\eqref{OmegaM2m precise} holds.

\end{cor}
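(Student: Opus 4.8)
The plan is to deduce both statements from Lemmas~\ref{lemma raw oscillations} and~\ref{lemma raw oscillations higher moments} (which only require GRH$_{\widehat\eta}$, hence hold under GRH), by choosing the auxiliary function $f$ appropriately and then performing a localization argument. The substantive difficulty has already been dealt with in those lemmas — in particular in the lower bound on the mean of $M_{2m}(\e^t,q;\eta)$ from Proposition~\ref{proposition first moment of moments} and, for $m=1$, the Hooley-type upper bound of Lemma~\ref{lemma hooley upper bound}. For $m\geq 2$ the value $x_m(q)$ is only trapped in a window of multiplicative width $\phi(q)^{m-1+o(1)}$ in the variable $\log x$; this is precisely why the first statement can only be stated along a subsequence with the exponent $\lambda$ pinned down by a compactness argument rather than by hand.

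\textbf{First statement.} Fix $B>0$ and apply Lemma~\ref{lemma raw oscillations} with $f(q):=q^B$ (which satisfies $f(q)\geq 1$ for $q\geq 3$). For every $q\geq (C_{\eps,\delta,\eta})^m$ we obtain $x_m(q)$ satisfying \eqref{OmegaM2m} together with $\log x_m(q)\in\big[q^B,\ q^B\phi(q)^{m-1}(C_{\eps,\delta,\eta}\log q)^m\mu_{2m}^{-1}\big]$. Set $\theta(q):=\dfrac{\log\log x_m(q)}{\log q}$; since $\phi(q)=q^{1+o(1)}$, this interval yields $\theta(q)\in[B,\,B+m-1+o(1)]$ as $q\to\infty$. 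By Bolzano--Weierstrass there is an increasing sequence $q_i\to\infty$ with $\theta(q_i)\to\theta$ for some $\theta\in[B,B+m-1]$; put $\lambda:=1/\theta\in[\tfrac1{m-1+B},\tfrac1B]$ and $x_i:=x_m(q_i)$. Then $\log x_i\geq q_i^B\to\infty$ and, from $\log\log x_i=\theta(q_i)\log q_i$, we get $q_i=(\log x_i)^{1/\theta(q_i)}=(\log x_i)^{\lambda+o(1)}$, while \eqref{OmegaM2m higher moments} is exactly \eqref{OmegaM2m}.

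\textbf{Second statement.} Now we invoke Lemma~\ref{lemma raw oscillations higher moments} with $f=g^{-1}$; since $g(x)\leq x^{1/(mM)}$ we have $g^{-1}(q)\geq q^{mM}$. For a suitable fixed $r=r(\eps)$ (one may take $r=\lceil(4\eps)^{-1}\rceil$) and $q\geq C_{\eps,\delta,\eta,m,r}$ the lemma produces $x_q$ satisfying \eqref{OmegaM2m precise} and
$$g^{-1}(q)\leq\log x_q\leq g^{-1}(q)\cdot C_{\eps,\delta,\eta,m,r}\,\phi(q)^{2mr+m-r}(\log q)^{m(2r+1)}\leq\big(g^{-1}(q)\big)^{1+\frac{2mr+m-r+o(1)}{mM}},$$
the last step using $\phi(q)\leq q$ and $g^{-1}(q)\geq q^{mM}$. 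From the lower bound and monotonicity of $g$ one gets $q=g(g^{-1}(q))\leq g(\log x_q)$. For the upper localization, using $M\geq 2\eps^{-1}$ and the choice of $r$ one checks that $1+\frac{2mr+m-r+o(1)}{mM}\leq\frac1{1-1/(\eps M)}$, so $(\log x_q)^{1-1/(\eps M)}\leq g^{-1}(q)$ and hence $g\big((\log x_q)^{1-1/(\eps M)}\big)\leq q$. Thus the window $[g((\log x_q)^{1-1/(\eps M)}),g(\log x_q)]$ contains $q$, and \eqref{OmegaM2m precise} is already the conclusion of Lemma~\ref{lemma raw oscillations higher moments} (note $E_{r,m}>0$ because $\vartheta_{2m,2r+1}>0$ for even $2m$).

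\textbf{Main obstacle.} The real work is upstream: establishing Lemmas~\ref{lemma raw oscillations} and~\ref{lemma raw oscillations higher moments}, i.e. leveraging the lower bound on $\cM_{1,2m}$ (and, for $m=1$, the matching upper bound on the second moment) to produce large values of $M_{2m}(\e^t,q;\eta)$ in a controlled range. Given those, the corollary is a packaging step; the only genuine points of care are (i) that the compactness argument delivers a single $\lambda$ (and a subsequence $\{q_i\}$), not every exponent in $[\tfrac1{m-1+B},\tfrac1B]$, and (ii) the elementary bookkeeping comparing $2mr+m-r$ against $mM$ so that the localization window in the second statement actually straddles $q$.
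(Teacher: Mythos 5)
Your proposal is correct and follows essentially the same route as the paper: for the first statement the paper likewise applies Lemma~\ref{lemma raw oscillations} with a power function (it takes $f(q)=(q\log q)^B$ rather than $q^B$, an immaterial difference) and extracts a convergent subsequence of $\log\log x_m(q)/\log q$; for the second it invokes Lemma~\ref{lemma raw oscillations higher moments} with $f=g^{-1}$ exactly as in the $m=1$ case of Corollary~\ref{corollary m=1 omega result GRH}, with the same choice $r\asymp\eps^{-1}$ and the same exponent bookkeeping.
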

\begin{proof}
The proof of the second part is similar to that of Corollary~\ref{corollary m=1 omega result GRH}. In the first part we pick $f(q)=(q\log q)^{B}$, and apply Lemma~\ref{OmegaM2m}. The resulting sequence $x_m(q)$ is such that
$$ B+o(1) \leq  \frac{\log\log x_m(q)}{\log q}  \leq B+m-1+o(1), $$
and we can extract a subsequence having the desired property. 
\end{proof}

We now work under the negation of GRH$_{\widehat \eta}$. It follows from the work of Kaczorowski and Pintz~\cite{KP86} that if $L(s,\chi)$ does not have real non-trivial zeros, then $\psi_{\eta}(x,\chi)$ has $\gg \log X$ large oscillations in the interval $[1,X]$. Moreover, Pintz~\cite{P83} and Schlage-Puchta~\cite{SP18} have obtained stronger results for $\psi(x)-x$. A weaker but unconditional result will be sufficient for our purposes. 
\goodbreak

\begin{lemma}
\label{lemma omega result psi(x,chi)}
Let  $\delta>0$ and $\eta\in\S_\delta$, and fix $\eps >0$. For $q\geq 3$, let $\chi$ be a non-principal character $\bmod q$, and let $\Theta_{\chi,\eta}$ be the supremum of the real parts of the zeros $\rho_\chi$ of $L(s,\chi)$ such that $\widehat \eta\big( \frac{\rho_\chi-\frac 12}{2\pi i}\big) \neq  0$. Then, for every large enough $X\in\mathbb R$ (in terms of $\chi,\eps$ and $\eta$), there exists $x\in [X^{1-\eps},X]$ such that 
$$ |\psi_\eta(x,\chi)|>x^{\Theta_{\chi,\eta}-\frac 12 -\eps}. $$
\end{lemma}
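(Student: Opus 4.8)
The statement is a classical Landau-type $\Omega$-result, localised to a short interval, phrased for the smoothed sum $\psi_\eta(x,\chi)$. The plan is to argue by contradiction using a Mellin/Laplace transform identity together with the explicit formula. First I would fix a zero $\rho_0 = \beta_0 + i\gamma_0$ of $L(s,\chi^*)$ with $\widehat\eta\bigl(\frac{\rho_0 - 1/2}{2\pi i}\bigr) \neq 0$ and $\beta_0 > \Theta_{\chi,\eta} - \eps/2$ (such a zero exists by definition of $\Theta_{\chi,\eta}$ as a supremum, provided $\Theta_{\chi,\eta} > \tfrac12$; if $\Theta_{\chi,\eta} = \tfrac12$ the claimed bound $x^{\Theta_{\chi,\eta}-\frac12-\eps} = x^{-\eps}$ is trivial for large $x$ since $\psi_\eta(x,\chi)$ does not decay that fast — here one can instead invoke Lemma~\ref{lemma:explicit formula}, which shows $\psi_\eta(\e^t,\chi)$ equals a nontrivial oscillating sum over zeros up to $O(\e^{-t/2}\log q)$, so it cannot be $o(x^{-\eps+\frac12})$). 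So assume $\beta_0 = \Theta_{\chi,\eta}$.

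Next, suppose for contradiction that $|\psi_\eta(x,\chi)| \leq x^{\Theta_{\chi,\eta} - \frac12 - \eps}$ for all $x \in [X^{1-\eps}, X]$, for a sequence of $X \to \infty$. The key device is to integrate $\psi_\eta(\e^t,\chi)$ against a kernel that isolates the contribution of the zero $\rho_0$. Concretely, I would consider the Laplace-type integral $\int_{(1-\eps)\log X}^{\log X} \psi_\eta(\e^t,\chi)\, \e^{-(\rho_0 - \frac12) t}\, w(t)\, \d t$ for a suitable smooth bump $w$ supported in that interval with total mass $\asymp \eps \log X$; inserting the explicit formula from Lemma~\ref{lemma:explicit formula} and expanding, the diagonal term coming from $\rho_0$ itself contributes $-\widehat\eta\bigl(\frac{\rho_0 - 1/2}{2\pi i}\bigr) \int w(t)\,\d t$, which is $\asymp \eps \log X$ in absolute value, while every other zero $\rho_\chi$ contributes an oscillatory integral $\int \e^{(\rho_\chi - \rho_0) t} w(t)\,\d t$ that is bounded by $\min\bigl(\eps \log X,\ |\gamma_\chi - \gamma_0|^{-1}\bigr)$ up to constants (integrating by parts in $t$), times the exponentially decaying $\widehat\eta$-weight. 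Summing these over zeros using the Riemann–von Mangoldt formula~\eqref{estNTchi} and the rapid decay $\widehat\eta(\xi) \ll (|\xi|+1)^{-1}(\log(|\xi|+2))^{-2-\delta}$, the off-diagonal total is $O_{\chi,\eta}(\log\log X)$ or even $O_{\chi,\eta}(1)$ — in any case $o(\eps \log X)$. Hence the integral is $\gg_{\chi,\eta,\eps} \eps \log X$ in absolute value.

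On the other hand, the assumed upper bound $|\psi_\eta(\e^t,\chi)| \leq \e^{t(\Theta_{\chi,\eta} - \frac12 - \eps)}$ on $[(1-\eps)\log X, \log X]$ combined with $|\e^{-(\rho_0 - \frac12)t}| = \e^{-t(\Theta_{\chi,\eta}-\frac12)}$ gives that the integrand is bounded by $\e^{-\eps t} \leq X^{-\eps(1-\eps)}$ pointwise, so the whole integral is $\ll X^{-\eps(1-\eps)} \cdot \eps\log X \to 0$. This contradicts the lower bound $\gg \eps\log X$ for $X$ large, completing the proof. The main obstacle is the bookkeeping in the off-diagonal estimate: one must verify that the sum over zeros $\rho_\chi \neq \rho_0$ of $\bigl|\widehat\eta\bigl(\frac{\gamma_\chi}{2\pi}\bigr)\bigr| \min(\eps\log X, |\gamma_\chi - \gamma_0|^{-1})$ is genuinely negligible compared to $\eps \log X$, which requires a dyadic decomposition in $|\gamma_\chi - \gamma_0|$ and the density bound~\eqref{estNTchi}; the decay of $\widehat\eta$ is more than enough to make this work, but it is the one place where the argument needs care rather than being a one-line estimate. (One should also note the restriction to zeros with $\widehat\eta \neq 0$ is automatic here because those are exactly the zeros appearing with nonzero weight in Lemma~\ref{lemma:explicit formula}.)
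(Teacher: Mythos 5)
There is a genuine gap at the very first step: the sentence ``such a zero exists by definition of $\Theta_{\chi,\eta}$ as a supremum\dots\ So assume $\beta_0=\Theta_{\chi,\eta}$.'' The definition of a supremum only gives you a zero $\rho_0=\beta_0+i\gamma_0$ with $\beta_0>\Theta_{\chi,\eta}-\eps/2$; it does \emph{not} give you one with $\beta_0=\Theta_{\chi,\eta}$, because the supremum of the real parts of the (infinitely many) zeros need not be attained — a sequence of zeros can have real parts increasing to $\Theta_{\chi,\eta}$ while their imaginary parts escape to infinity. This is precisely the classical reason Landau-type $\Omega$-results lose an $\eps$ in the exponent. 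Your off-diagonal estimate $\int \e^{(\rho_\chi-\rho_0)t}w(t)\,\d t\ll\min(\eps\log X,|\rho_\chi-\rho_0|^{-1})$ requires $|\e^{(\rho_\chi-\rho_0)t}|\le 1$, i.e.\ $\Re e(\rho_\chi)\le\beta_0$ for every zero contributing to the explicit formula. If the supremum is not attained, there are zeros with $\Re e(\rho_\chi)\in(\beta_0,\Theta_{\chi,\eta})$, and their contributions can be as large as $X^{\Theta_{\chi,\eta}-\beta_0}$ (up to $X^{\eps/2}$) times bounded factors; these swamp the diagonal term $\asymp\eps\log X$, possibly cancel against it, and the lower bound $|I|\gg\eps\log X$ — hence the whole contradiction — collapses. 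Choosing $\rho_0$ with $\beta_0$ even closer to $\Theta_{\chi,\eta}$ does not help, since there is no single exponential $\e^{-(\rho_0-\frac12)t}$ that dominates all the remaining zeros.

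When the supremum \emph{is} attained by some zero with $\widehat\eta\neq 0$ (real or not), your kernel argument is sound and in fact slightly more flexible than the paper's: the paper only runs the direct averaging argument in the case of a real zero at $\Theta_{\chi,\eta}$ (its formula \eqref{equation average for RH false lemma} with $T_1=(1-\eps)\log X$, $T_2=\log X$ is essentially your integral with $w=1_{[T_1,T_2]}$), and for all other configurations it falls back on the Kaczorowski--Pintz localized oscillation theorem \cite{KP86}, applied to the Mellin transform of $2\Re e(\psi_\eta(x,\chi))$, using positivity of the residues of $L'/L$ to guarantee a genuine singularity in $\Re e(s)>\Theta_{\chi,\eta}-\tfrac12-\eps$. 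Some such external input (Kaczorowski--Pintz, or a Tur\'an power-sum argument) appears unavoidable for the unattained case, so you need to either add that case explicitly or restructure the proof around \cite{KP86} as the paper does.
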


\begin{proof}

We distinguish two cases, depending on the value of 
$$ \gamma_{\chi,\eta}:= \inf\{  |\gamma| :\gamma\in \mathbb R, L(\Theta_{\chi,\eta} +i\gamma,\chi)=0,\widehat \eta(\tfrac{\Theta_{\chi,\eta} -\frac 12 +i\gamma}{ 2\pi i})\neq 0 \}\in \mathbb R_{\geq 0}\cup \{\infty\}. $$
If $\gamma_{\chi,\eta}>0 $, then we apply~\cite[Theorem 1]{KP86} with $f(x):=2\Re e(\psi_{\eta}(x,\chi))=\psi_{\eta}(x,\chi)+\psi_{\eta}(x,\overline{\chi})$. We have that
 \begin{equation}
  \int_0^{\infty} f(x)  x^{-s-1} \d x=-\widehat \eta \Big(\frac{s}{2\pi i}\Big) \Big(\frac{L'(s+\frac 12,\chi)}{L(s+\frac 12,\chi)}+\frac{L'(s+\frac 12,\overline \chi)}{L(s+\frac 12,\overline \chi)}\Big).
\label{eq:Mellintransomega}
\end{equation}
By definition of $\Theta_{\chi,\eta}$, the function $L'(s+\frac 12,\chi)/L(s+\frac 12,\chi)$ has a pole $s_0$ of residue $\geq 1$ in the half plane $\Re e(s_0)> \Theta_{\chi,\eta}-\frac 12-\eps$, and moreover $\widehat \eta (\frac{s_0}{2\pi i})\neq 0$. Since $L(s+\frac 12,\overline{\chi})$ is entire, it follows that the poles of $L'(s+\frac 12,\overline{\chi})/L(s+\frac 12,\overline{\chi})$ have positive residues, and we conclude that the right hand side of \eqref{eq:Mellintransomega} is meromorphic on $\mathbb C$ and has a pole $s_0$ in the half plane $\Re e(s)> \Theta_{\chi,\eta}-\frac 12-\eps$. Hence, the conditions of~\cite[Theorem 1]{KP86} are satisfied and the conclusion follows.

We now assume that $\gamma_{\chi,\eta}=0$, which implies that $ L(\Theta_{\chi,\eta},\chi)=0$ and $\widehat \eta(\frac{\Theta_{\chi,\eta}-\frac 12}{2\pi i})\neq 0$. In this case, for $1<T_1<T_2$ we consider the average
$$ \frac 1{T_2-T_1}\int_{T_1}^{T_2} \e^{- t(\Theta_{\chi,\eta}-\frac 12)}\Re e(\psi_\eta(\e^t,\chi)) \d t,$$
which by Lemma~\ref{lemma:explicit formula} is 
\begin{equation}
    \begin{split}
    =& - \Re e\sum_{\rho_{\chi}\neq \Theta_{\chi,\eta}} \frac{\e^{ T_2(\rho_{\chi}-\Theta_{\chi,\eta}) }-\e^{ T_1(\rho_{\chi}-\Theta_{\chi,\eta})}}{ (T_2-T_1)(\rho_\chi - \Theta_{\chi,\eta})}\widehat \eta\Big( \frac{\rho_\chi-\frac 12}{2\pi i}\Big) \\ & - \Re e\Big( \widehat \eta\Big( \frac{\Theta_{\chi,\eta}-\frac 12}{2\pi i}\Big)\Big)\ord_{s=\Theta_{\chi,\eta}}L(s,\chi)+O_{\chi}\Big(\frac{\e^{-\Theta_{\chi,\eta} T_1}}{T_2-T_1} \Big). 
    \end{split}
    \label{equation average for RH false lemma}
\end{equation}
We have a similar identity for the average of $\Im m(\psi_\eta(\e^t,\chi))$. 
Then, taking $T_1=(1-\eps) \log X$ and $T_2=\log X$, we claim that~\eqref{equation average for RH false lemma}
 takes the form 
\begin{equation}
    \begin{split} \frac 1{\eps\log X}\int_{(1-\eps)\log X}^{\log X} &\e^{- t(\Theta_{\chi,\eta}-\frac 12)}\Re e\big(\psi_\eta(\e^t,\chi)\big) \d t\\ & = - \Re e\Big( \widehat \eta\Big( \frac{\Theta_{\chi,\eta}-\frac 12}{2\pi i}\Big)\Big)\ord_{s=\Theta_{\chi,\eta}}L(s,\chi) +O_\chi\Big ( \frac 1{\eps \log X}\Big). \end{split}
    \label{equation average for RH false lemma 2}
\end{equation} 
 Indeed, we have that $|\e^{ T_j(\rho_{\chi}-\Theta_{\chi,\eta}) }|\leq 1$. Moreover, we have the bound 
$$\widehat \eta(s) = \frac 1{-2\pi i s} \int_{\mathbb R} \e^{-2\pi  i s t} \eta'(t)\d t\ll \frac 1{|s|}  \int_{\mathbb R} \e^{ 2\pi |\Im m(s)| t} |\eta'(t)|\d t,   $$
which is $\ll 1/|s|$ whenever $ \Im m(s)\leq \frac 1{4\pi}$ by definition of $\S_\eta$. This is the case with $s= (\rho_\chi-\frac 12)/(2\pi i)$, and hence the infinite sum over zeros in~\eqref{equation average for RH false lemma} converges absolutely.

Coming back to~\eqref{equation average for RH false lemma 2} and its analogue for $\Im m(\psi_\eta(\e^t,\chi))$, we recall that $\widehat \eta(\frac{\Theta_{\chi,\eta}-\frac 12}{2\pi i})\neq 0$, and deduce that for all large enough $X$, there exists $t\in [(1-\eps)\log X,\log X]$ such that either $|\e^{- t(\Theta_{\chi,\eta}-\frac 12)}\Re e (\psi_\eta(\e^t,\chi) )| \gg_{\chi,\eta} 1,$ or $|\e^{ 
{-}
t(\Theta_{\chi,\eta}-\frac 12)}\Im m (\psi_\eta(\e^t,\chi) )| \gg_{\chi,\eta} 1$. 
This implies the claimed assertion.
\end{proof}

\begin{cor}
\label{corollary GRH false}
Let  $\delta>0$ and $\eta\in \S_\delta$, and assume that GRH$_{\widehat \eta}$ does not hold. Then, there exists an absolute (ineffective) number $\kappa>0$ such that the following holds. Let $f:\mathbb R_{\geq 0} \rightarrow \R_{\geq 3}$ be a increasing function tending to infinity such that $f(x) \leq x^{\kappa}$. 
For a positive (ineffective) proportion of moduli $q$, there exists an associated value $x_q$ such that $f(x_q^{1-\kappa})\leq  q \leq f(x_q) $ and for each $m\geq 1$,
$$ M_{2m}(x_q,q;\eta)  \gg \frac{ x_q^{m\kappa}}{\phi(q)^m}.$$ 
The implied constant in this bound is also ineffective.
\end{cor}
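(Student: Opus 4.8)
The plan is to exploit Lemma~\ref{lemma omega result psi(x,chi)}, which under the failure of GRH$_{\widehat\eta}$ produces, for each non-principal $\chi$ for which $\Theta_{\chi,\eta}>\tfrac 12$, large values of $|\psi_\eta(x,\chi)|$ localized in a dyadic-type window $[X^{1-\eps},X]$. First I would fix, once and for all, a primitive character $\chi_0$ of some modulus $d_0$ and a zero $\rho_0=\beta_0+i\gamma_0$ of $L(s,\chi_0)$ with $\beta_0>\tfrac 12$ and $\widehat\eta\big(\tfrac{\rho_0-\frac 12}{2\pi i}\big)\neq 0$; this exists precisely because GRH$_{\widehat\eta}$ fails. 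Set $\kappa:=\tfrac 12(\beta_0-\tfrac 12)>0$ (this is the source of the ineffectivity, since the location of $\rho_0$ is not effective). Now for any modulus $q$ divisible by $d_0$, the character $\chi_0$ induces a non-principal character $\chi$ mod $q$ with the same non-trivial zeros, so $\Theta_{\chi,\eta}\geq \beta_0$. Lemma~\ref{lemma omega result psi(x,chi)} (applied with $\eps$ small enough that $\Theta_{\chi,\eta}-\tfrac 12-\eps\geq 2\kappa$) then gives, for every large $X$, a point $x\in[X^{1-\eps},X]$ with $|\psi_\eta(x,\chi)|>x^{2\kappa}$.

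Next I would convert this into a lower bound on $M_{2m}(x,q;\eta)$. By orthogonality~\eqref{eqMn}, $M_{2m}(x,q;\eta)$ is a sum of products $\psi_\eta(x,\chi_1)\cdots\psi_\eta(x,\chi_{2m})$ over arrays in $X_{1,2m}$. The cleanest route is to isolate the diagonal contribution from the single character $\chi$: take $\chi_1=\cdots=\chi_m=\chi$ and $\chi_{m+1}=\cdots=\chi_{2m}=\overline\chi$, which satisfies $\chi_1\cdots\chi_{2m}=\chi_{0,q}$ and each $\chi_j\neq\chi_{0,q}$, contributing $\phi(q)^{-2m}|\psi_\eta(x,\chi)|^{2m}\geq \phi(q)^{-2m}x^{4m\kappa}$. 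But $M_{2m}$ is not a priori non-negative and the other terms could in principle cancel this, so I would instead argue as follows: by Cauchy--Schwarz (or Hölder) together with Lemma~\ref{lemma lower bound sum over zeros}-type estimates, or more simply by noting that $M_{2m}$ is real and, by Hölder's inequality applied to~\eqref{eqMn} in the form $M_{2m}(x,q;\eta)=\frac1{\phi(q)}\sum_{(a,q)=1}\big(\psi_\eta(x;q,a)-\tfrac{\psi_\eta(x,\chi_{0,q})}{\phi(q)}\big)^{2m}\geq 0$ (it is an even moment, hence manifestly non-negative), and then bounding $M_{2m}$ from below by a single Fourier coefficient: Parseval over $a\bmod q$ gives $M_2(x,q;\eta)=\phi(q)^{-2}\sum_{\chi\neq\chi_{0,q}}|\psi_\eta(x,\chi)|^2\geq\phi(q)^{-2}|\psi_\eta(x,\chi)|^2$, and then Hölder ($\|\cdot\|_2\leq\|\cdot\|_{2m}$ with the counting measure $\frac1{\phi(q)}\sum_{(a,q)=1}$) yields $M_{2m}(x,q;\eta)\geq M_2(x,q;\eta)^m\geq \phi(q)^{-m}|\psi_\eta(x,\chi)|^{2m}/\phi(q)^m$... wait, one must be careful with the normalization. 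The correct chain is: $M_2(x,q;\eta)=\frac1{\phi(q)}\sum_{(a,q)=1}Y_a^2$ where $Y_a:=\psi_\eta(x;q,a)-\tfrac{\psi_\eta(x,\chi_{0,q})}{\phi(q)}$, and $M_{2m}=\frac1{\phi(q)}\sum_{(a,q)=1}Y_a^{2m}$; by the power-mean inequality $\big(\frac1{\phi(q)}\sum Y_a^{2m}\big)^{1/2m}\geq\big(\frac1{\phi(q)}\sum Y_a^2\big)^{1/2}$, so $M_{2m}\geq M_2^{m}$. Combined with $M_2(x,q;\eta)=\phi(q)^{-2}\sum_{\chi\neq\chi_{0,q}}|\psi_\eta(x,\chi)|^2\geq\phi(q)^{-2}x^{4\kappa}$, this gives $M_{2m}(x,q;\eta)\geq\phi(q)^{-2m}x^{4m\kappa}\gg x^{m\kappa}/\phi(q)^m$ (using $x^{3m\kappa}\geq\phi(q)^m$ in the admissible range, since $q\leq f(x)\leq x^\kappa$).

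Finally I would package the localization and the positive proportion. Given the increasing function $f$ with $f(x)\leq x^\kappa$, for each admissible $q$ (i.e.\ $d_0\mid q$) set $X=X(q)$ to be the solution of $f(X)=q$ and $x_q$ the point in $[X^{1-\eps},X]$ produced above with $\eps=\kappa$; then $f(x_q^{1-\kappa})\leq f(X^{1-\eps})\leq q=f(X)\leq f(x_q)$ as required (using monotonicity of $f$). The set of $q$ with $d_0\mid q$ has positive density $1/d_0$ (restricting further to $(q/d_0,d_0)$ suitably coprime if one wants $\chi$ genuinely non-principal — in fact any $q$ with $d_0\mid q$ works), so this is a positive proportion. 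The main obstacle is not any single step but rather checking that the ranges line up: one must verify that the hypothesis $f(x)\leq x^\kappa$ indeed forces $q\leq x_q^\kappa$ so that the losses $\phi(q)^{-2m}$ versus the gain $x_q^{4m\kappa}$ leave the stated $x_q^{m\kappa}/\phi(q)^m$ with room to spare, and that $\eps=\kappa$ is compatible with the requirement in Lemma~\ref{lemma omega result psi(x,chi)} that $\Theta_{\chi,\eta}-\tfrac12-\eps>0$ — this is why $\kappa$ is taken to be half of $\beta_0-\tfrac12$ rather than the full gap. The ineffectivity of $\kappa$ and of the implied constants is inherited directly from the ineffectivity of the location of $\rho_0$.
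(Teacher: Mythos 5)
Your proposal follows essentially the same route as the paper's proof: fix once and for all a primitive character $\chi_e$ of least modulus with a zero off the half-line detected by $\widehat\eta$, take $\kappa$ strictly smaller than the gap $\Theta_e-\tfrac12$, use Lemma~\ref{lemma omega result psi(x,chi)} to produce large values of $|\psi_\eta(x,\chi)|$ in a window $[X^{1-\eps},X]$, drop to the single-character term in $M_2(x,q;\eta)=\phi(q)^{-2}\sum_{\chi\neq\chi_{0,q}}|\psi_\eta(x,\chi)|^2$, and finish with the power-mean inequality $M_{2m}\geq M_2^m$, absorbing the extra $\phi(q)^{-m}$ via $q\leq f(x_q)\leq x_q^{\kappa}$. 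Two small repairs are needed. First, your localization chain is backwards at the last step: with $X$ defined by $f(X)=q$ and $x_q\in[X^{1-\eps},X]$ one has $x_q\leq X$, hence $f(x_q)\leq f(X)=q$, which is the \emph{wrong} direction for the required upper bound $q\leq f(x_q)$; you should instead take $X$ with $X^{1-\eps}=f^{-1}(q)$ (equivalently, as in the paper, seek $x_q\in[f^{-1}(q),(f^{-1}(q))^{1+\kappa}]$), after which both inequalities $f(x_q^{1-\kappa})\leq q\leq f(x_q)$ follow from monotonicity. Second, you apply Lemma~\ref{lemma omega result psi(x,chi)} directly to the induced character $\chi\bmod q$, whose ``$X$ large enough in terms of $\chi$'' threshold is unquantified and varies with $q$; the paper avoids this uniformity issue by applying the lemma only to the fixed $\chi_e$ and then transferring to the induced character with Lemma~\ref{lemma ramified primes}, at the negligible cost $O(x^{-1/2}\log q)$. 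Neither point affects the underlying idea, and the ineffectivity bookkeeping is as you describe.
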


\begin{proof}
We first treat the case $m=1$. Let $\chi_e$ be a primitive character of least modulus $q_e$ for which there exists a zero $\rho_e=\Theta_e+i\gamma_e$ of $L(x,\chi_e)$ with $\Theta_e>\frac 12$, and such that $\widehat\eta\big(\frac{\rho_{e}-\frac 12}{2\pi}\big)\neq 0$. Fix $\eps< \Theta_e -\frac 12$, and pick any $\kappa<\Theta_e-\frac 12-\eps <\frac 12$. The set of moduli we will consider is the set of large enough multiples of $q_e$; those form a positive proportion of all moduli. If $q$ is such a multiple, we apply Lemma~\ref{lemma omega result psi(x,chi)}, and find that when $f^{-1}(q)$ is large enough in terms of $\kappa,\eps,\eta$  and $\chi_e$, there exists $x_q\in [f^{-1}(q),(f^{-1}(q))^{1+\kappa}]$ such that $|\psi_\eta(x_q,\chi_e)| > x^{\Theta_e-\frac 12-\eps}$. This implies that $f(x_q^{1-\kappa})\leq q\leq f(x_q)$. Denoting by $\chi_q$ the character modulo $q$ induced by $\chi_e$, by Lemma~\ref{lemma ramified primes} we have that 
$$ |\psi_\eta(x_q,\chi_q)| = |\psi_\eta(x_q,\chi_e)| +O(x_q^{-\frac 12} \log q)\gg_{\eps}  x_q^{\Theta_e-\frac 12-\eps}.$$ 
We deduce that
$$ M_2(x_q,q;\eta) = \frac 1{\phi(q)^2} \sum_{\substack{ \chi \bmod q\\ \chi \neq \chi_{0,q}}}|\psi_\eta(x_q,\chi)|^2  \geq \frac 1{\phi(q)^2} |\psi_\eta(x_q,\chi_q)|^2 \gg_{\eps}  \frac{x_q^{2\Theta_e-1-\eps}}{\phi(q)^2},$$ which is the required estimate. 

For $m \geq 2$ we apply H\"older's inequality and deduce that for all $x\geq 0$,
$$  M_{2m}(x,q;\eta) \geq M_2(x,q;\eta)^m.$$
The result follows.
\end{proof}

We are now ready to prove our main unconditional results.

\begin{proof}[Proof of Theorems~\ref{thunconditional n=2} and ~\ref{thunconditional n>2}]
If GRH$_{\widehat\eta}$ is false, then the claimed $\Omega$-results follow from Corollary~\ref{corollary GRH false}. Otherwise, they follow from Corollaries~\ref{corollary m=1 omega result GRH} and~\ref{corollary m>=2 omega result GRH}. 
\end{proof}

\begin{proof}[Proof of Corollary~\ref{corollary montgome}]
We argue by contradition. 
Assume that for some $q$ and  $K>0$ and for all $a\bmod q$ and $y\in [ \exp(c_2^{-1} g^{-1}(q)) ,\exp(c_1^{-1} g^{-1}(q))]$ we have the bound
$$ \Big|  \sum_{\substack{ n\leq y \\ n\equiv a\bmod {q}}} \Lambda(n)-\frac{1}{\phi(q)} \sum_{\substack{ n\leq y \\ (n,q)=1}} \Lambda(n) \Big| \leq K \frac{(y\log q)^{\frac 12}}{\phi(q)^{\frac 12}}.   $$
We let $\eta=\eta_0\star \eta_0$, where $\eta_0$ is any non-trivial smooth even function supported in $[-1,1]$. Applying summation by parts, we deduce that in the range $x\in [ \exp(c_2^{-1} g^{-1}(q)+2) ,\exp(c_1^{-1} g^{-1}(q)-2)]$,
\begin{align*}
    \psi_{\eta}( x;q,a) - \frac{\psi_{\eta} (x,\chi_{0,q})}{\phi(q)} &= \int_0^{\infty} \frac{\eta(\log(\frac yx))}{y^{\frac 12}} \d\Big(\psi(y;q,a) - \frac{\psi(y,\chi_{0,q})}{\phi(q)}\Big) \\
    &= -\int_{\e^{-2} x }^{\e^2 x} \frac{\eta'(\log(\frac yx))-\frac 12 \eta(\log(\frac yx))}{y^{\frac 32}}\Big(\psi(y;q,a) - \frac{\psi(y,\chi_{0,q})}{\phi(q)}\Big) \d y
    \\ &\ll K \frac{(\log q)^{\frac 12}}{\phi(q)^{\frac 12}}.
\end{align*}
This yields the upper bound
$  M_2(x, q;\eta) \ll  K^2  \log q / \phi(q).$
Under GRH$_{\widehat \eta}$, when $K$ is small enough and for some choice of $0<c_1,c_2\leq \frac 12$, this contradicts Lemma~\ref{lemma raw oscillations variance} as soon as $q$ is large enough. Under the negation of GRH$_{\widehat\eta}$, this contradicts Lemma~\ref{corollary GRH false} as soon as $q$ is in a well-chosen set (given by Lemma~\ref{corollary GRH false}) of positive (ineffective) density.
\end{proof}

Finally, we end this section by proving Corollary~\ref{corollary a=1}. This will be achieved through the following proposition, which we believe is of independent interest.

\begin{proposition}
\label{proposition moments a=1}
Assume GRH, and let  $\delta>0$, $\eta\in \S_\delta$, $T\geq 1$ and $\Phi\in \U$. For $m\in \mathbb N$, $q\geq 3$, and in the range $m\leq \log q/(2\log_2 q)$, we have the bound
\begin{multline*} \frac 1{T\int_0^{\infty} \Phi}\int_{\mathbb R}  \Phi\Big( \frac tT\Big) \Big( \psi_\eta(\e^t;q,1) - \frac{\psi_\eta(\e^t,\chi_{0,q})}{\phi(q)}\Big)^{2m} \d t \\ \geq \mu_{2m}\Big( \frac{ \alpha(\widehat \eta^2)\log q+\beta_q(\widehat \eta^2) }{\phi(q)}\Big)^m +O_\Phi\Big(\mu_{2m} \frac{ (C_{\delta,\eta}\log q)^{m}}{\phi(q)^{m+1 }}+ \frac{(K_{\delta,\eta}\log q)^{2m}}{T}\Big),
\end{multline*}
where $C_{\delta,\eta},K_{\delta,\eta}>0$ are constants.
 
\end{proposition}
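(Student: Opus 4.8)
The plan is to expand the $2m$-th power by character orthogonality, recognise the result as the quantity $M_{2m}(\e^t,q;\eta)$ with the constraint $\chi_1\cdots\chi_{2m}=\chi_{0,q}$ deleted, and then apply the explicit formula and the positivity argument of Section~\ref{section mean of first moment}. First, since $\chi(1)=1$ for every $\chi$, orthogonality of the characters modulo $q$ gives, for all $t$,
$$\Big(\psi_\eta(\e^t;q,1)-\frac{\psi_\eta(\e^t,\chi_{0,q})}{\phi(q)}\Big)^{2m}=\Big(\frac1{\phi(q)}\sum_{\substack{\chi\bmod q\\ \chi\neq\chi_{0,q}}}\psi_\eta(\e^t,\chi)\Big)^{2m}=\frac1{\phi(q)^{2m}}\sum_{\substack{\chi_1,\dots,\chi_{2m}\neq\chi_{0,q}}}\psi_\eta(\e^t,\chi_1)\cdots\psi_\eta(\e^t,\chi_{2m}),$$
which is real and, by~\eqref{leqlogq}, is $\ll_{\delta,\eta}(\log q)^{2m}$ for $t\geq0$, while for $t<0$ each factor $\psi_\eta(\e^t,\chi)$ is $\ll_{\delta,\eta}\e^{-(\frac12+\delta)|t|}$.

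I would then run the proof of Lemma~\ref{lemma explicit formula for moments of nu n} essentially verbatim, with $2m$ in place of $sn$ and with the above unconstrained sum over $(\chi_1,\dots,\chi_{2m})$ in place of the $s$-fold product over $X_{1,n}$: insert~\eqref{equation explicit formula before conjugation} in each factor, split $\int_\R=\int_0^\infty+\int_{-\infty}^0$ (the second integral contributing only $O_\Phi((\log q)^{2m}/T)$ by the decay just noted), pass to $\overline{\chi_j}$ on the first piece via~\eqref{equation explicit formual conjuguate}, and discard the $O(\e^{-t/2}\log q)$ tails using~\eqref{leqlogq}. The outcome is
$$\frac1{T\int_0^\infty\Phi}\int_\R\Phi\Big(\frac tT\Big)\Big(\psi_\eta(\e^t;q,1)-\frac{\psi_\eta(\e^t,\chi_{0,q})}{\phi(q)}\Big)^{2m}\d t=\frac1{\phi(q)^{2m}}\sum_{\substack{\chi_1,\dots,\chi_{2m}\neq\chi_{0,q}}}\ \sum_{\gamma_{\chi_1},\dots,\gamma_{\chi_{2m}}}\ \prod_{j=1}^{2m}\widehat\eta\Big(\frac{\gamma_{\chi_j}}{2\pi}\Big)\,\frac{\widehat\Phi\big(\tfrac{T}{2\pi}(\gamma_{\chi_1}+\dots+\gamma_{\chi_{2m}})\big)}{\widehat\Phi(0)}+O_\Phi\Big(\frac{(K_{\delta,\eta}\log q)^{2m}}{T}\Big),$$
where, under GRH$_{\widehat \eta}$, every $\gamma_{\chi_j}$ with $\widehat\eta(\gamma_{\chi_j}/2\pi)\neq0$ is real.

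The final step is positivity, applied twice. Because $\widehat\Phi\geq0$ and each $\prod_j\widehat\eta(\gamma_{\chi_j}/2\pi)\geq0$, the main term above is bounded below by its subsum over the configurations with $\gamma_{\chi_1}+\dots+\gamma_{\chi_{2m}}=0$ (on which the $\widehat\Phi$-factor equals $1$), and that subsum is in turn bounded below by the same expression with the additional restriction $\chi_1\cdots\chi_{2m}=\chi_{0,q}$ imposed, namely
$$\frac1{\phi(q)^{2m}}\sum_{\substack{\bchi\in X_{1,2m}}}\ \sum_{\substack{\gamma_{\chi_1},\dots,\gamma_{\chi_{2m}}\\ \gamma_{\chi_1}+\dots+\gamma_{\chi_{2m}}=0}}\widehat\eta\Big(\frac{\gamma_{\chi_1}}{2\pi}\Big)\cdots\widehat\eta\Big(\frac{\gamma_{\chi_{2m}}}{2\pi}\Big),$$
since in both steps only nonnegative terms are discarded. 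In the range $m\leq\log q/(2\log_2q)$, Lemma~\ref{lemma lower bound sum over zeros} bounds this last quantity below by $\mu_{2m}\big((\alpha(\widehat\eta^2)\log q+\beta_q(\widehat\eta^2))/\phi(q)\big)^m+O\big(\mu_{2m}(C_{\delta,\eta}\log q)^m/\phi(q)^{m+1}\big)$, and combining with the error from the explicit formula gives the claim.

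The argument merely reassembles machinery already in place, so no step is a genuine obstacle. The two points requiring minor care are the conjugation/normalization bookkeeping in the explicit-formula step (identical to the proofs of Lemmas~\ref{lemma explicit formula for moments of nu n} and~\ref{lemma explicit formula higher moments of moments}, together with the elementary observation that the $t<0$ portion of the integral is negligible), and the remark that Lemma~\ref{lemma lower bound sum over zeros} may legitimately be applied to the enlarged, unconstrained sum over $(\chi_1,\dots,\chi_{2m})$ precisely because dropping the relation $\chi_1\cdots\chi_{2m}=\chi_{0,q}$ only adds nonnegative contributions.
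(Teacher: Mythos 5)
Your proposal is correct and follows essentially the same route as the paper: expand by orthogonality using $\chi(1)=1$, apply the explicit formula to convert the weighted $t$-integral into a sum over zeros weighted by $\widehat\Phi\big(\tfrac{T}{2\pi}(\gamma_{\chi_1}+\dots+\gamma_{\chi_{2m}})\big)$, then use positivity of $\widehat\eta$ and $\widehat\Phi$ to restrict to the diagonal $\sum_j\gamma_{\chi_j}=0$ and to the subsum with $\chi_1\cdots\chi_{2m}=\chi_{0,q}$, and conclude with Lemma~\ref{lemma lower bound sum over zeros}. The only difference is that you spell out the second positivity step (reinstating the constraint $\chi_1\cdots\chi_{2m}=\chi_{0,q}$) which the paper leaves implicit.
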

\begin{proof}

The proof follows the lines of that of Lemma~\ref{lemma explicit formula higher moments of moments}.
Applying the orthogonality relations and the explicit formula in Lemma~\ref{lemma:explicit formula}, we see that
\begin{align*}&
\frac 1T\int_{0}^{\infty}   \Phi\Big( \frac tT\Big) \Big( \psi_\eta(\e^t;q,1) - \frac{\psi_\eta(\e^t,\chi_{0,q})}{\phi(q)}\Big)^{2m} \d t \\
&= \frac 1T\int_{0}^{\infty}   \Phi\Big( \frac tT\Big) \frac 1{\phi(q)^{2m}}\sum_{\substack{ \chi_1,\dots,\chi_{2m} \neq \chi_{0,q}}}\psi_\eta(\e^t,\chi_1)\cdots \psi_\eta(\e^t,\chi_{2m})\d t \\
&= \frac 1T\int_{0}^{\infty}   \Phi\Big( \frac tT\Big) \frac 1{\phi(q)^{2m}}\sum_{\substack{ \chi_1,\dots,\chi_{2m} \neq \chi_{0,q}}}\prod_{j=1}^{2m}\sum_{\gamma_{\chi_j}}\widehat\eta\Big(\frac{\gamma_{\chi_j}}{2\pi}\Big)\e^{- i t \gamma_{\chi_j}}  \d t+O_\Phi\Big( \frac{(K_{\delta,\eta}\log q)^{2m}}{T}\Big)\\
&= \frac 1{2\phi(q)^{2m}}\!\!\!\!  \sum_{\substack{ \chi_1,\dots,\chi_{2m} \neq \chi_{0,q}}}\sum_{\gamma_{\chi_1},\dots ,\gamma_{\chi_{2m}}}\!\!\!\widehat \Phi\Big(\frac T{2\pi}(\gamma_{\chi_1} +\dots +\gamma_{\chi_{2m}})\Big)\prod_{j=1}^{2m}\widehat\eta\Big(\frac{\gamma_{\chi_j}}{2\pi}\Big)+O_\Phi\Big( \frac{(K_{\delta,\eta}\log q)^{2m}}{T}\Big).
\end{align*}
At this point we apply positivity. Note that it is crucial here that the residue class we are working with is $a=1$, since otherwise we would not be able to handle the signs of $\overline{\chi}(a)$ in the explicit formula. The claimed result follows from Lemma~\ref{lemma lower bound sum over zeros}.
\end{proof}

\begin{proof}[Proof of Corollary~\ref{corollary a=1}]
After applying Proposition~\ref{proposition moments a=1}, the rest of the proof is similar to that of Corollary~\ref{corollary montgome}, the only major difference being that we apply the uniform bound 
$$\psi_\eta(\e^t;q,1) - \frac{\psi_\eta(\e^t,\chi_{0,q})}{\phi(q)}  \ll \log q $$
rather than Lemma~\ref{lemma hooley upper bound}. This is responsible for our weaker control on the size of $x_q$ in terms of $q$.
\end{proof}

\section{Applications of LI}

\label{section probabilistic model}
The goal of this section is to study the probabilistic model $H_n(q;\eta)$ defined in~Lemma \ref{lemma moments of limiting distribution as limit of V(T)}, for some $\eta \in \S_\delta$, under GRH and LI. 
To begin, we will see that the relation $\gamma_{1} +\dots +\gamma_{n}=0$ in the definition of $\Gamma_0(\bchi)$ (see Section~\ref{section limiting distributions}) has a very simple set of solutions.
\begin{lemma}\label{avecLI}
Let $q\geq 3$, and let $\gamma_1,\ldots,\gamma_n$ be ordinates of non-trivial zeros of the functions $L(s,\chi_1),\ldots, L(s,\chi_n)$, respectively, where $\chi_1,\dots \chi_n$ are non-principal characters $\bmod q$. Under GRH and LI, the relation $\gamma_1+\ldots+\gamma_n=0$ implies that $n=2m$ for some $m\in \mathbb N$, and moreover there exists $i_1,\ldots, i_m$ and $j_1,\ldots, j_m$ such that $$\{ i_1,\ldots, i_m, j_1,\ldots, j_m\}=\{ 1,\ldots, 2m\}$$ and  
$$\gamma_{i_k}+\gamma_{j_k}=0,\qquad \overline\chi_{j_k}=\chi_{i_k}.$$

\end{lemma}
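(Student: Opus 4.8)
\textbf{Proof plan for Lemma~\ref{avecLI}.}

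The plan is to translate the linear relation $\gamma_1 + \dots + \gamma_n = 0$ into a linear relation over $\mathbb{Q}$ among elements of the multiset $Z(q)$ appearing in Hypothesis LI, and then use the linear independence to pin down the structure. First I would observe that under GRH each $\gamma_j$ is real, and that for each non-principal $\chi_j \bmod q$ the non-trivial zeros of $L(s,\chi_j)$ are $\tfrac12 + i\gamma$ where $\gamma$ ranges over a symmetric set: if $\tfrac12 + i\gamma$ is a zero of $L(s,\chi_j)$ then $\tfrac12 - i\gamma$ is a zero of $L(s,\overline{\chi_j})$ (by taking complex conjugates of the functional/Euler-product relation). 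Consequently each $\gamma_j$ can be written as $\varepsilon_j \tilde\gamma_j$ with $\varepsilon_j \in \{+1,-1\}$ and $\tilde\gamma_j \geq 0$ an element of $Z(q)$ — more precisely $\tilde\gamma_j = |\gamma_j|$ is the imaginary part of a zero (of $L(s,\chi_j)$ or of $L(s,\overline{\chi_j})$, whichever has the zero with nonnegative imaginary part), hence belongs to the multiset $Z(q)$.

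Next I would rewrite $\sum_{j=1}^n \varepsilon_j \tilde\gamma_j = 0$. This is a $\mathbb{Q}$-linear relation among (a priori with multiplicity) elements of $Z(q)$. By Hypothesis LI, $Z(q)$ is linearly independent over $\mathbb{Q}$, so the only way such a relation can hold is if, after collecting equal values, all coefficients vanish. The subtlety is that the $\tilde\gamma_j$ need not be pairwise distinct as real numbers, so I would argue: group the indices $j$ according to the value of $\tilde\gamma_j$; within each group the net coefficient $\sum_{j:\tilde\gamma_j = v} \varepsilon_j$ must be zero (otherwise we'd have a nontrivial relation $c\cdot v + (\text{other independent elements}) = 0$, contradicting independence — note here one must be slightly careful that $v$ itself is a genuine element of $Z(q)$, which requires $v \neq 0$; but $v = 0$ would mean some $L(\tfrac12,\chi_j) = 0$, excluded by LI since LI forces $L(\tfrac12,\chi)\neq0$). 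Hence within each group there are equally many $\varepsilon_j = +1$ and $\varepsilon_j = -1$; in particular each group has even size, so $n$ is even, say $n = 2m$, and we obtain a perfect matching: pair each index with $\varepsilon_j = +1$ to a distinct index in the same group with $\varepsilon_j = -1$. This gives pairs $\{i_k, j_k\}$, $k = 1,\dots,m$, partitioning $\{1,\dots,2m\}$, with $\tilde\gamma_{i_k} = \tilde\gamma_{j_k}$ and opposite signs, i.e. $\gamma_{i_k} + \gamma_{j_k} = 0$.

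Finally I would verify the character condition $\overline{\chi_{j_k}} = \chi_{i_k}$. Within a pair we have $\gamma_{i_k} = -\gamma_{j_k} =: \gamma \neq 0$ (nonzero since $\tilde\gamma > 0$), so $\tfrac12 + i\gamma$ is a zero of $L(s,\chi_{i_k})$ and $\tfrac12 - i\gamma$ is a zero of $L(s,\chi_{j_k})$. Taking conjugates, $\tfrac12 + i\gamma$ is a zero of $L(s,\overline{\chi_{j_k}})$ as well. Thus $\tfrac12 + i\gamma$ is a common zero of $L(s,\chi_{i_k})$ and $L(s,\overline{\chi_{j_k}})$ lying on the critical line with $\gamma > 0$; by LI (simplicity and linear independence of $Z(q)$, which in particular means an element $\gamma \in Z(q)$ is the imaginary part of a zero of a unique character) this forces $\chi_{i_k} = \overline{\chi_{j_k}}$. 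I expect this last step — carefully justifying from LI that a given ordinate determines the character uniquely, handling the multiset/multiplicity bookkeeping — to be the main (though minor) obstacle; the rest is a direct application of linear independence.
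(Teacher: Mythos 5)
Your argument is correct and rests on the same mechanism as the paper's own proof: split the ordinates by sign, use LI to force the positive and negative ordinates to match up in pairs (whence $n$ is even), and use the fact that LI makes each positive ordinate the imaginary part of a zero of a unique character to conclude $\chi_{i_k}=\overline{\chi_{j_k}}$. The only difference is organizational --- the paper runs an induction on $n$, peeling off one matched pair at a time, whereas you extract the full matching in one step by grouping indices with equal $|\gamma_j|$ --- and your handling of the multiset subtleties (that $0\notin Z(q)$ and that each value occurs with multiplicity one) is exactly what is needed.
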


\begin{proof}
Clearly, LI implies that $L(\frac 12,\chi)\neq 0$, that is $\gamma_\chi\neq 0$.
 We argue by induction over $n$. 
The statement is clear for $n=1$. For $n=2$, note that $\gamma_1+\gamma_2=0$ implies that 
$\gamma_1\gamma_2<0$, say $\gamma_1>0$ and $\gamma_2<0$. Also, $\frac 12-i\gamma_2$ is a zero of $L(s,\overline{\chi_2})$, and $\gamma_1=-\gamma_2$ implies by LI that $\chi_1=\overline{\chi_2}$.

Let $n\geq 3$
 and assume that the statement holds for all $1\leq r<n$. 
We can reorder the $\gamma_i$ in such a way that $ \gamma_1, \dots \gamma_{\ell} >0$, and $\gamma_{\ell+1},\dots ,\gamma_n<0$, for some $1\leq \ell\leq n-1$ (clearly, the $\gamma_i$ are not all of the same sign). Then, 
\begin{equation}
    \label{formula before LI}
  \gamma_1+\dots +\gamma_\ell = (-\gamma_{\ell+1}) +\dots+(-\gamma_n),\end{equation}
and $\frac 12-i\gamma_{\ell+1},\dots \frac 12-i\gamma_n$ are zeros of $L(s,\overline{\chi_j})$ with $\gamma_j>0$. 
We claim that  
$$ \{ \gamma_1,\dots ,\gamma_\ell\} = \{-\gamma_{\ell+1}  ,\dots, -\gamma_n \} .$$ Indeed, if for example  $\gamma_1\notin \{-\gamma_{\ell+1}  ,\dots, -\gamma_n \}$, then the formula
\eqref{formula before LI} implies that $\gamma_1$ is a $\mathbb Q$-linear combination of  positive imaginary parts of other zeros, which contradicts LI.
We conclude that there exists $j_\ell \geq \ell+1$ such that $-\gamma_{j_\ell}=\gamma_\ell$; by LI, we also need to have $\chi_\ell=\overline{ \chi_{j_\ell}}.$
Hence, our induction hypothesis for $r=n-2$ implies that $r$ is even, $\ell-1=n-\ell-1$, and moreover we can find distinct indices $j_1,\ldots, j_{\ell-1}\in\{ \ell+1,\ldots, n\}\smallsetminus \{ j_\ell\}$ such that for $1\leq k\leq\ell-1$, $-\gamma_{j_k}=\gamma_k $ and $\chi_k=\overline{ \chi_{j_k}}$.
\end{proof}

A direct consequence of Lemma~\ref{lemma explicit formula for moments of nu n} is the following.

\begin{lemma} 
\label{momentimpair} Let $\delta>0$, $\eta\in \S_\delta$, $r, m\in \mathbb N$ and $q\geq 3.$ Under GRH and LI, we have that 
$$\mathbb E[H_{2m+1}(q;\eta)^{2r+1}]=0  .$$
\end{lemma}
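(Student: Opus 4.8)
The plan is to compute $\mathbb E[H_{2m+1}(q;\eta)^{2r+1}]$ by expanding it as a sum over arrays of characters and zeros, and to show that under GRH and LI every term vanishes. Recall from Lemma~\ref{lemma explicit formula for moments of nu n} (together with Lemma~\ref{lemma limiting distribution}, which identifies $H_n(q;\eta)$ with $G_n(q;\eta)$) that
\[
\mathbb E[H_{2m+1}(q;\eta)^{2r+1}] = \frac{(-1)^{(2m+1)(2r+1)}}{\phi(q)^{(2m+1)(2r+1)}} \sum_{\bchi \in X_{2r+1,2m+1}} \sum_{\bm\gamma \in \Gamma_0(\bchi)} \widehat\eta(\bm\gamma),
\]
so that the total number of zero-ordinates appearing in each term is $s n = (2r+1)(2m+1)$, which is odd. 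It therefore suffices to show $\Gamma_0(\bchi) = \varnothing$ for every $\bchi \in X_{2r+1,2m+1}$, i.e. that there is no array of ordinates of non-trivial zeros of the relevant $L$-functions summing to zero.

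First I would apply Lemma~\ref{avecLI} to the flattened list of ordinates. Relabel the $sn = (2r+1)(2m+1)$ entries $\gamma_{\chi_{\mu,j}}$ of $\bm\gamma$ as $\gamma_1,\dots,\gamma_N$ with $N = (2r+1)(2m+1)$, and relabel the corresponding characters $\chi_1,\dots,\chi_N$; since each $\chi_{\mu,j} \neq \chi_{0,q}$, all these characters are non-principal, so Lemma~\ref{avecLI} applies verbatim. Under GRH and LI the relation $\gamma_1 + \dots + \gamma_N = 0$ forces $N$ to be even. But $N = (2r+1)(2m+1)$ is odd, a contradiction. Hence $\Gamma_0(\bchi)$ is empty for every $\bchi$, the double sum above is empty, and $\mathbb E[H_{2m+1}(q;\eta)^{2r+1}] = 0$. (One should note the edge case $N=1$, handled directly by LI, which gives $L(\tfrac12,\chi)\neq 0$, hence $\gamma_\chi \neq 0$ — already covered by the statement of Lemma~\ref{avecLI}; and more generally the parity obstruction is exactly the content needed.)

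There is essentially no serious obstacle here: the argument is a one-line parity observation once Lemma~\ref{avecLI} is in hand. The only point requiring a small amount of care is making sure the hypotheses of Lemma~\ref{avecLI} are genuinely met — specifically that all $sn$ characters appearing in $\bchi$ are non-principal (true by the definition~\eqref{defXsn} of $X_{s,n}$, where $\chi_{\mu,j} \neq \chi_{0,q}$ for all $\mu,j$) and that the $\gamma_{\chi_{\mu,j}}$ are genuinely imaginary parts of zeros, which holds because under GRH all non-trivial zeros lie on the critical line. With those checks made, the conclusion follows immediately; I would expect the written proof to be only a few lines long.
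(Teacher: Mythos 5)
Your proof is correct and follows essentially the same route as the paper: both reduce the moment to the sum over $\Gamma_0(\bchi)$ via Lemma~\ref{lemma explicit formula for moments of nu n} and then invoke the parity consequence of Lemma~\ref{avecLI} to conclude the sum is empty. If anything, your version — applying Lemma~\ref{avecLI} to the full flattened array of $(2r+1)(2m+1)$ ordinates, whose total is what the condition in $\Gamma_0(\bchi)$ actually constrains — is stated slightly more carefully than the paper's one-line argument, which phrases the obstruction row by row.
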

\begin{proof}
 By Lemma~\ref{avecLI}, if $\gamma_{\mu,j}$ are imaginary parts of zeros of $L(s,\chi_{\mu,j})$, then  $\gamma_{\mu,1}+\dots + \gamma_{\mu,2m+1}\neq 0$. Hence, Lemma~\ref{lemma explicit formula for moments of nu n} implies that $\mathbb E[H_{2m+1}(q;\eta)^{2r+1}]$ is equal to an empty sum.
\end{proof}
 
 Taking $\Phi=1_{[-1,1]}$ and $T\rightarrow \infty$ in Lemma~\ref{lemma explicit formula higher moments of moments} and applying Lemma~\ref{lemma formula for moments of limiting distribution}, we obtain the following corollary of Lemma~\ref{avecLI}.
 \begin{cor}
Let $\delta>0$ and $\eta\in \S_\delta$, and assume GRH and LI. Let $\bm \chi= ( \chi_{\mu,j})_{\substack{ 1\leq \mu\leq s \\ 1\leq j \leq n}}$ be an array of characters, and for $1\leq \mu\leq s$ and $1\leq j \leq n$, denote by $\gamma_{\mu,j} $ the imaginary part of a generic non-trivial zero of $ L(\,\cdot\,,\chi_{\mu,j})$.
Then, there exists an involution $\pi$ of the set $ \{ 1,\dots,s\} \times \{1,\dots ,n\}$ having no fixed points such that  $\pi(\bchi ) = \overline{\bchi }$ and, for $1\leq \mu\leq s$ and $1\leq j \leq n$, $\gamma_{\mu,j}+ \gamma_{\pi(\mu,j)}=0 $.   Moreover, if the product $sn$ is even, then we have the upper bound
$$ \mathbb V_{s,n}(q;\eta)  \leq    \frac {1}{\phi(q)^{sn}} 
\sum_{\pi \in I_{s,n}} \sum_{\substack{ \bchi\in X_{s,n}\\   \pi(\bchi ) =\overline{ \bchi } }}  \sum_{\substack{ \bm \gamma \in \Gamma(\bchi) \\  \forall (\mu,j),\,  \gamma_{\mu,j}+\gamma_{\pi(\mu,j)}=0 }} \widehat \eta(\bm \gamma) \Delta_{s}(\bm\sigma_{\bm\gamma}), $$
where $I_{s,n}$ is the set of involutions $\pi$ of the set $ \{ 1,\dots,s\} \times \{1,\dots ,n\}$ having no fixed points and such that\footnote{Actually, the first condition in~\eqref{equation Jmunu >2} is automatic since $\pi$ is an involution having no fixed points. } for each $1\leq \mu\leq s$ \begin{equation}
   |J_{\mu,\mu}(\pi)| \equiv 0 \bmod 2;\qquad  \sum_{\substack{1\leq \nu \leq s \\ \nu \neq \mu}} |J_{\mu,\nu}(\pi)| \geq 2,  
     \label{equation Jmunu >2}
\end{equation}
 where the set $J_{\mu,\nu}(\pi)$ is defined in~\eqref{equation definition Jmunu}. 
 \end{cor}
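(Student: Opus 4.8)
The plan is to push the diagonal structure forced by Hypothesis~LI (Lemma~\ref{avecLI}) through the closed formula for $\mathbb V_{s,n}(q;\eta)$, and then to bound the resulting diagonal from above by over-counting it with the explicit finite family $I_{s,n}$ of involutions, exploiting that $\widehat\eta\geq 0$.

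First I would record the starting identity. Applying Lemma~\ref{lemma explicit formula higher moments of moments} with $\Phi=1_{[-1,1]}$ and letting $T\to\infty$ --- the error term being $O_\Phi(1/T)$, the inner sum over $\bm\gamma\in\Gamma(\bchi)$ converging absolutely since $\eta\in\S_\delta$, and $\Delta_s(\bm\sigma_{\bm\gamma};\Phi,T)\to\Delta_s(\bm\sigma_{\bm\gamma})$ pointwise, so that dominated convergence applies --- and combining this with Lemmas~\ref{lemma formula for moments of limiting distribution} and~\ref{lemma limiting distribution} and with $m_n(q;\eta)=\E[H_n(q;\eta)]$ exactly as in the proof of the first part of Theorem~\ref{thmomentscentres2r}, I obtain
\begin{equation*}
\mathbb V_{s,n}(q;\eta)=\frac{(-1)^{sn}}{\phi(q)^{sn}}\sum_{\bchi\in X_{s,n}}\sum_{\bm\gamma\in\Gamma(\bchi)}\widehat\eta(\bm\gamma)\,\Delta_s(\bm\sigma_{\bm\gamma}).
\end{equation*}
Under GRH we have $\Gamma(\bchi)\subset\R^{sn}$, and LI forces every entry of $\bm\gamma$ to be nonzero; moreover $\widehat\eta(\bm\gamma)=\prod_{\mu,j}\widehat\eta(\gamma_{\mu,j}/2\pi)\geq 0$ and $\Delta_s(\bm\sigma_{\bm\gamma})\in\{0,1\}$ by Lemma~\ref{inegDelta}, so each summand is non-negative.

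Next I would prove the first assertion, which is Lemma~\ref{avecLI} repackaged. Fix $\bchi\in X_{s,n}$ and $\bm\gamma\in\Gamma(\bchi)$ with $\sum_{\mu,j}\gamma_{\mu,j}=0$ (otherwise the corresponding summand above vanishes). Since each $\chi_{\mu,j}$ is non-principal, Lemma~\ref{avecLI}, applied to the list $(\gamma_{\mu,j})$ indexed by $\{1,\dots,s\}\times\{1,\dots,n\}$, partitions this index set into unordered pairs $\{(\mu,j),(\mu',j')\}$ with $\gamma_{\mu,j}+\gamma_{\mu',j'}=0$ and $\chi_{\mu,j}=\overline{\chi_{\mu',j'}}$; in particular $sn$ is even. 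Declaring $\pi(\mu,j)=(\mu',j')$ on each such pair defines an involution $\pi$ of $\{1,\dots,s\}\times\{1,\dots,n\}$ with $\pi(\bchi)=\overline{\bchi}$ and $\gamma_{\mu,j}+\gamma_{\pi(\mu,j)}=0$ for all $(\mu,j)$; it has no fixed point, since $\gamma_{\mu,j}\neq 0$.

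Finally, for the upper bound (with $sn$ even, so $(-1)^{sn}=1$) I would over-count. To each $(\bchi,\bm\gamma)$ with $\Delta_s(\bm\sigma_{\bm\gamma})=1$ --- in particular $\sum_{\mu,j}\gamma_{\mu,j}=0$ --- attach an involution $\pi$ as above; the point is to check $\pi\in I_{s,n}$. The parity $|J_{\mu,\mu}(\pi)|\equiv 0\bmod 2$ is automatic, $\pi$ restricting to a fixed-point-free involution of the set of indices in row $\mu$ that it keeps in row $\mu$. For $\sum_{\nu\neq\mu}|J_{\mu,\nu}(\pi)|\geq 2$: from $\bchi\in X_{s,n}$ we have $\prod_{j=1}^{n}\chi_{\mu,j}=\chi_{0,q}$; each pair of $\pi$ internal to row $\mu$ contributes $\chi_{\mu,j}\overline{\chi_{\mu,j}}=\chi_{0,q}$, so the product of the $\chi_{\mu,j}$ over the $\sum_{\nu\neq\mu}|J_{\mu,\nu}(\pi)|$ indices $(\mu,j)$ paired outside row $\mu$ also equals $\chi_{0,q}$; this forbids that number from equalling $1$ (a single non-principal character is not $\chi_{0,q}$) and from equalling $0$ (otherwise all of row $\mu$ is internally paired, whence $\sum_j\gamma_{\mu,j}=0$ and $\bm\sigma_{\bm\gamma}$ has a vanishing coordinate, contradicting $\Delta_s(\bm\sigma_{\bm\gamma})=1$). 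Hence $\pi\in I_{s,n}$. Since every summand $\widehat\eta(\bm\gamma)\Delta_s(\bm\sigma_{\bm\gamma})$ is $\geq 0$ and each contributing $(\bchi,\bm\gamma)$ is counted at least once when one ranges over $\pi\in I_{s,n}$, over the $\bchi$ with $\pi(\bchi)=\overline{\bchi}$, and over the $\bm\gamma$ with $\gamma_{\mu,j}+\gamma_{\pi(\mu,j)}=0$, the displayed identity yields the claimed inequality. The one genuinely delicate step is this last verification that the involutions furnished by Lemma~\ref{avecLI} automatically meet the two constraints~\eqref{equation Jmunu >2} defining $I_{s,n}$; everything else is bookkeeping on top of the positivity already present in the setup.
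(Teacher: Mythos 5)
Your proposal is correct and follows essentially the same route as the paper, which presents this corollary as an immediate consequence of taking $\Phi=1_{[-1,1]}$ and $T\to\infty$ in Lemma~\ref{lemma explicit formula higher moments of moments}, invoking Lemmas~\ref{lemma formula for moments of limiting distribution} and~\ref{lemma limiting distribution}, and then applying Lemma~\ref{avecLI} together with positivity of $\widehat\eta$ to over-count the surviving diagonal by the involutions in $I_{s,n}$. Your explicit verification that the pairing furnished by Lemma~\ref{avecLI} satisfies both constraints in~\eqref{equation Jmunu >2} (parity automatic; the bound $\geq 2$ from the relation $\prod_j\chi_{\mu,j}=\chi_{0,q}$ plus the non-vanishing of $\sigma_\mu$ enforced by $\Delta_s$) is exactly the detail the paper leaves implicit, and it is carried out correctly.
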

 Note that this upper bound on $\mathbb V_{s,n}(q;\eta)$ is not an equality, since there are arrays of characters associated to more than one involution $\pi\in I_{s,n}$. However, we will see in Lemma~\ref{lemma sum over characters LI} that the overcount is negligible. The following definition will be useful.
\begin{definition} For $r\geq 1$ and $n\geq 2$, let
$ G_{2r,n} $ be the set of involutions $\pi: \{1,\dots,2r \}\times \{ 1,\dots,n\} \rightarrow \{1,\dots,2r \}\times \{ 1,\dots,n\} $ having no fixed points for which~\eqref{equation Jmunu >2}
holds, and
$$\big| \big\{ (\mu,\nu) :1\leq  \mu < \nu \leq 2r, J_{\mu,\nu}(\pi)\neq \varnothing \big\}\big|\geq r+1. $$
In particular, recalling Definition~\ref{definition F} we have that
$$ I_{2r,n}=  F_{2r,n}\cup G_{2r,n}.$$
\end{definition}

\begin{lemma}\label{Lemma8.5}
Let $ r\in \mathbb N$, $n\geq 2,$ $q\geq 3$, and let $\pi \in G_{2r,n}$. We have the upper bound
\label{lemma sum over characters LI}
$$C_{2r,n}(q;\pi):=\sum_{\substack{\bchi\in X_{2r,n}
\\  \pi(\bchi ) =\overline{ \bchi }}}  1 \leq   \phi(q)^{(n-1)r-1} .$$
\end{lemma}

\begin{proof}
For an array of characters  $\bm \chi=( \chi_{\mu,j})_{\substack{ 1\leq \mu\leq s \\ 1\leq j \leq n}}$ such that $\pi(\bchi ) =\overline{ \bchi }$, we will use the notation 
$$ \Theta_{\mu,\nu}(\bm \chi,\pi) := \prod_{(\mu,j) \in   J_{\mu,\nu}  } \chi_{\mu,j}. $$
Note that $\Theta_{\mu,\mu}(\bm \chi,\pi)=\chi_{0,q}$ and $\Theta_{\mu,\nu}(\bm \chi,\pi) = \overline{\Theta_{\nu,\mu}(\bm \chi,\pi) }$. The relations
$$\chi_{\mu,1}\cdots \chi_{\mu,n} =\chi_{0,q}\qquad (1\leq \mu\leq 2r)$$
are equivalent to
\begin{equation}
\label{equation relations characters}
\prod_{\nu \neq \mu } \Theta_{\mu,\nu}(\bm \chi,\pi) = \chi_{0,q}\qquad (1\leq \mu\leq 2r).
\end{equation}
We consider
$$E_1(\pi):=\big\{ (\mu,\nu) :1\leq  \mu < \nu \leq 2r, J_{\mu,\nu}(\pi)\neq \varnothing \big\}.$$
We let $E(\pi)\subset E_1(\pi)$ be a set of least cardinality for which 
\begin{equation}
 \bigcup_{(\mu,\nu) \in E(\pi)} \{ \mu,\nu\} =\{ 1,\dots ,2r\}, 
 \label{equation property E(pi)}
\end{equation}
that is every $1\leq \mu \leq 2r$ is paired with some $1\leq \nu \leq 2r$ through $E(\pi)$. The existence of $E(\pi)$ follows from the assumption~\eqref{equation Jmunu >2}.
Clearly, $ |E(\pi)|\geq r$. 

We consider two distinct cases. First, we assume that $|E(\pi)|=r$. Then, by reordering the array $\bm \chi$, we may assume that $E(\pi)=\{(1,2),(3,4),\dots,(2r-1,2r)\}$ and $(1,3) \in E_1(\pi)$. For a given value of $\Theta_{\mu,\nu}(\bm \chi,\pi) $ for all $(\mu,\nu)\in \{(\mu,\nu) : 1\leq \mu<\nu \leq 2r  \}\smallsetminus (E(\pi)\cup\{(1,3)\})$, we claim that the relations~\eqref{equation relations characters} have a unique solution, that is the $\Theta_{\mu,\nu}(\bm \chi,\pi) $ with $(\mu,\nu) \in \E(\pi)\cup \{ (1,3)\} $ are determined. This is clearly the case for $\mu_0=5,7,\dots,2r-1$ since exactly one of the $\Theta_{\mu,\nu}(\bm \chi,\pi) $ with $(\mu,\nu)\in E(\pi)\cup \{(1,3)\}$ appears in the relations~\eqref{equation relations characters} with $\mu=\mu_0$. Now, for $(\mu_0,\nu_0) \in \{(1,2),(1,3),(3,4) \} $, we have the relations 
$$ 
\begin{cases}
\overline{\Theta_{1,2}(\bm \chi,\pi)}\Theta_{2,3}(\bm \chi,\pi)\Theta_{2,4}(\bm \chi,\pi) &=   \prod_{ 5\leq \nu \leq 2r } \overline{\Theta_{2,\nu}(\bm \chi,\pi)} \\
\overline{\Theta_{1,3}(\bm \chi,\pi)}\overline{\Theta_{2,3}(\bm \chi,\pi)}\Theta_{3,4}(\bm \chi,\pi) &=   \prod_{ 5\leq \nu \leq 2r } \overline{\Theta_{3,\nu}(\bm \chi,\pi)}\\
\overline{\Theta_{1,4}(\bm \chi,\pi)}\overline{\Theta_{2,4}(\bm \chi,\pi)}\overline{\Theta_{3,4}(\bm \chi,\pi)}&=   \prod_{ 5\leq \nu \leq 2r } \overline{\Theta_{4,\nu}(\bm \chi,\pi)},
\end{cases}
$$
which imply once more a unique choice for $\Theta_{\mu,\nu}(\bm \chi,\pi) $ with $(\mu,\nu) \in  \{(1,2),(1,3),(3,4)\} $. Now, $\Theta_{\mu,\nu}$ is a product of $|J_{\mu,\nu}|$ characters, hence the total number of summands in $C_{2r,n}(q;\pi)$ is at most
$$ \exp\Big(\log (\phi(q))\Big(\sum_{\mu=1}^{2r} \frac{|J_{\mu,\mu}|}2 + \sum_{\substack{1\leq \mu<\nu \leq 2r  \\ (\mu,\nu) \notin E(\pi) \cup \{ 1,3\}}}|J_{\mu,\nu}|\Big) \Big)\leq \phi(q)^{(n-1)r-1}.$$

We now consider the second case where $|E(\pi)|\geq r+1$. We claim that for every $(\mu_0,\nu_0)\in E(\pi)$, we either have that $ \mu_0 \notin \{ \mu,\nu\} $ for all $(\mu,\nu)\in E(\pi)\smallsetminus \{(\mu_0,\nu_0)\}$, or $ \nu_0 \notin \{ \mu,\nu\} $ for all $(\mu,\nu)\in E(\pi)\smallsetminus \{(\mu_0,\nu_0)\}$. Indeed, the contrary would contradict minimality of $E(\pi)$, since $E(\pi) \smallsetminus \{(\mu_0,\nu_0)\}$ would have the property~\eqref{equation property E(pi)}. Moreover, for a given value of $\Theta_{\mu,\nu}(\bm \chi,\pi) $ for all $(\mu,\nu)\in \{(\mu,\nu) : 1\leq \mu<\nu \leq 2r  \}\smallsetminus E(\pi)$, we claim that the relations~\eqref{equation relations characters} have a unique solution, that is the $\Theta_{\mu,\nu}(\bm \chi,\pi) $ with $(\mu,\nu) \in E(\pi) $ are determined. To show this, we fix $(\mu_0,\nu_0)\in E(\pi)$, and recall that either $ \mu_0 \notin \{ \mu,\nu\} $ for all $(\mu,\nu)\in E(\pi)\smallsetminus \{(\mu_0,\nu_0)\}$, or $ \nu_0 \notin \{ \mu,\nu\}\smallsetminus \{(\mu_0,\nu_0)\} $ for all $(\mu,\nu)\in E(\pi)$. In the first case, taking $\mu=\mu_0$ in~\eqref{equation relations characters} shows that  $\Theta_{\mu_0,\nu_0}(\bm \chi,\pi) $ is uniquely determined. In the second, this follows from taking $\mu=\nu_0$. As before, we conclude that 
$$ C_{2r,n}(q;\pi) \leq \phi(q)^{(n-1)r-1}.$$
 \end{proof}

We now move to the case where $s=2r+1$ is odd. As in the even case, 
we use the notation
$$E_1(\pi ):= \big\{ ( \mu,\nu) :1\leq  \mu < \nu \leq 2r+1, J_{\mu,\nu}(\pi)\neq \varnothing \big\}.$$

\begin{definition} Let $ r\in \mathbb N$, $n\geq 2.$ 
We define
$ G_{2r+1,n} $ to be the set of involutions $\pi:\{1,\dots,2r+1 \}\times \{ 1,\dots,n\}\rightarrow \{1,\dots,2r+1 \}\times \{ 1,\dots,n\} $ having no fixed points such that~\eqref{equation Jmunu >2} holds
for $1\leq \mu \leq 2r+1$, and for which either
$| E_1(\pi)|\geq r+3, $
or $| E_1(\pi)| =  r+2 $ and there does not exist $1\leq \mu_1<\mu_2<\mu_3 \leq 2r+1$ such that $\{(\mu_1,\mu_2),(\mu_2,\mu_3),(\mu_1,\mu_3)\} \subset E_1(\pi)$.
\end{definition}

We remark that~\eqref{equation Jmunu >2} implies that $|E_1(\pi)|\geq r+1$, and moreover we have that $$I_{2r+1,n}= F_{2r+1,n}\cup G_{2r+1,n}.$$

\begin{lemma}
Let $ r\in \mathbb N$, $n\in 2\mathbb N,$ $q\geq 3$, and let $\pi \in G_{2r,n}$. We have the bound
\label{lemma sum over characters odd LI}
$$C_{2r+1,n}(q;\pi):=\sum_{\substack{ \bchi\in X_{2r+1,n}\\  \pi(\bchi ) =\overline{ \bchi }}}  1 \leq   \phi(q)^{(n-1)\frac{2r+1}2-\frac 32} .$$
\end{lemma}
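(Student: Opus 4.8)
The plan is to follow exactly the structure of the proof of Lemma~\ref{lemma sum over characters LI} (the even case), modified to account for the three rows of $N_\pi$ that are not simply paired off. As in that proof, for an array $\bm \chi = (\chi_{\mu,j})$ with $\pi(\bm\chi) = \overline{\bm\chi}$ we introduce the partial products $\Theta_{\mu,\nu}(\bm\chi,\pi) := \prod_{(\mu,j)\in J_{\mu,\nu}(\pi)}\chi_{\mu,j}$, noting $\Theta_{\mu,\mu} = \chi_{0,q}$ and $\Theta_{\mu,\nu} = \overline{\Theta_{\nu,\mu}}$, so that the relations $\chi_{\mu,1}\cdots\chi_{\mu,n} = \chi_{0,q}$ become $\prod_{\nu\neq\mu}\Theta_{\mu,\nu}(\bm\chi,\pi) = \chi_{0,q}$ for $1\leq\mu\leq 2r+1$. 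The total count $C_{2r+1,n}(q;\pi)$ is then bounded by $\phi(q)$ raised to the number of ``free'' characters, which is $\sum_\mu \frac{|J_{\mu,\mu}|}{2} + \sum_{1\leq\mu<\nu\leq 2r+1, (\mu,\nu)\in E_1(\pi)}|J_{\mu,\nu}|$ minus the number of relations among the $\Theta_{\mu,\nu}$ that we can show are genuinely constraining (i.e. that reduce the exponent). Using $\sum_{\nu}|J_{\mu,\nu}| = n$ and $|J_{\mu,\nu}| = |J_{\nu,\mu}|$, summing over $\mu$ gives $\sum_\mu \frac{|J_{\mu,\mu}|}{2} + \sum_{\mu<\nu, (\mu,\nu)\in E_1}|J_{\mu,\nu}| = \frac{(2r+1)n}{2}$; so the target bound $\phi(q)^{(n-1)\frac{2r+1}{2} - \frac 32}$ amounts to extracting $\frac{2r+1}{2} + \frac 32 = r + 2$ independent constraints from the $2r+1$ relations $\prod_{\nu\neq\mu}\Theta_{\mu,\nu} = \chi_{0,q}$.

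The combinatorial heart is therefore to find, as in the even case, a sub-collection $E(\pi)\subset E_1(\pi)$ of minimal cardinality covering all rows $\{1,\dots,2r+1\}$ (this exists by~\eqref{equation Jmunu >2}), and then to show that for each $(\mu_0,\nu_0)\in E(\pi)$, the value $\Theta_{\mu_0,\nu_0}(\bm\chi,\pi)$ is uniquely determined by the remaining $\Theta_{\mu,\nu}$ through the relations~\eqref{equation relations characters}, thus contributing one constraint each, for a total of $|E(\pi)|$ constraints. I would split into two cases according to $|E_1(\pi)|$ exactly as the definition of $G_{2r+1,n}$ suggests. When $|E_1(\pi)| \geq r+3$: since $E(\pi)$ covers $2r+1$ rows and $2r+1$ is odd, a minimal cover has $|E(\pi)|\geq r+1$; but one should check that in fact $|E_1(\pi)|\geq r+3$ forces enough structure that $r+2$ independent constraints can be read off (either $|E(\pi)|\geq r+2$ directly, or $|E(\pi)| = r+1$ but then the extra edges in $E_1(\pi)\setminus E(\pi)$ supply the missing constraint, by the same ``unique solution'' argument used at the end of the even-case proof with the three equations for $(1,2),(1,3),(3,4)$). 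When $|E_1(\pi)| = r+2$ and there is no triangle inside $E_1(\pi)$: here $|E(\pi)|\in\{r+1, r+2\}$; if $|E(\pi)| = r+2$ we are done by the clean ``every $(\mu_0,\nu_0)\in E(\pi)$ has a pendant vertex'' argument (minimality forbids a cycle among the covering edges, and the no-triangle hypothesis plus minimality rules out the problematic configurations), giving $r+2$ constraints; if $|E(\pi)| = r+1$ the single extra edge of $E_1(\pi)$ must provide the $(r+2)$-nd constraint, which again follows from the local system of relations as in the even case.

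The main obstacle, as in Lemma~\ref{lemma sum over characters LI}, is the bookkeeping needed to guarantee that the relations one selects are genuinely independent — i.e. that one is not ``double counting'' a constraint — and this is precisely where the no-triangle hypothesis in the definition of $G_{2r+1,n}$ enters: a triangle $\{(\mu_1,\mu_2),(\mu_2,\mu_3),(\mu_1,\mu_3)\}$ inside $E_1(\pi)$ would be the configuration where the three local relations are dependent (their product telescopes), so excluding it ensures the argument goes through. One has to verify carefully that, under the stated hypotheses on $|E_1(\pi)|$, a minimal cover $E(\pi)$ together with at most one extra edge always yields a set of $r+2$ relations that can be triangulated into ``one new $\Theta$ per equation'', exactly mimicking the inductive elimination ($\mu_0 = 2r+1, 2r-1, \dots$) in the even proof. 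I expect the rest — translating the count of free characters into the exponent of $\phi(q)$ and using $\sum_\nu |J_{\mu,\nu}| = n$ — to be routine. The second and third assertions (the weighted sums $\prod b(\chi_{\mu,j};\widehat\eta^2)^{1/2}$ and the version with one zero removed), if needed here, would follow by inserting the pointwise bound~\eqref{equation pointwise b(chi) } and Lemma~\ref{lemme moyenne multiple b(chi)} exactly as in Lemma~\ref{lemma sum over characters}, but the statement to be proved only asks for the counting bound.
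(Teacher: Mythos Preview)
Your proposal is correct and follows essentially the same approach as the paper. The only organizational difference is that the paper splits the argument on $|E(\pi)|$ rather than on $|E_1(\pi)|$: first, if $|E(\pi)|\geq r+2$ the pendant-vertex argument from the even case gives $r+2$ constraints directly; second, if $|E(\pi)|=r+1$, the paper normalizes to $E(\pi)=\{(1,2),(1,3),(4,5),\dots,(2r,2r+1)\}$ and observes that the definition of $G_{2r+1,n}$ guarantees an extra edge which, after further reordering, is one of $(1,4)$, $(2,4)$, or $(4,6)$ (the no-triangle clause is precisely what rules out the extra edge being $(2,3)$ when $|E_1(\pi)|=r+2$, and when $|E_1(\pi)|\geq r+3$ there are at least two extra edges so at least one avoids $(2,3)$). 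The paper then writes out the explicit $3\times 3$ or $4\times 4$ linear system for each of these three cases and checks it is triangular. Your identification of the role of the no-triangle hypothesis---that a triangle makes the three row relations dependent---is exactly the point, and the rest of your outline matches the paper's argument.
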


\begin{proof}

We will use the same notation as in the proof of Lemma~\ref{lemma sum over characters LI}.
We seperate the proof into two distinct cases. 
Firstly, we assume that $|E(\pi)| \geq r+2$. As in the proof of Lemma~{\ref{Lemma8.5}}, 
for a given value of $\Theta_{\mu,\nu}(\bm \chi,\pi) $ for all $(\mu,\nu)\in \{(\mu,\nu) : 1\leq \mu<\nu \leq 2r +1 \}\smallsetminus E(\pi)$, the relations~\eqref{equation relations characters} have a unique solution. We conclude that
$$ C_{2r+1,n}(q;\pi) \leq \phi(q)^{(n-1)\frac{2r+1}2-\frac 32}.$$

Secondly, we assume that $E(\pi) = r+1$. After reordering $\bm \chi$ we may also assume that 
$$E(\pi)=\{ (1,2),(1,3),(4,5),(6,7),\dots,(2r,2r+1)\}.$$
Now, since $E_1(\pi) \geq r+2$, we may assume that one of $(1,4),(2,4),$ or $(4,6) $, which we will denote by $(\mu_e,\nu_e)$, is an element of $ E_1(\pi) $ (recall the definition of $G_{2r+1,n}$). Arguing as before, we see that for a given value of $\Theta_{\mu,\nu}(\bm \chi,\pi) $ for all $(\mu,\nu)\in \{(\mu,\nu) : 1\leq \mu<\nu \leq 2r +1 \}\smallsetminus (E(\pi)\cup \{ (\mu_e,\nu_e)\})$, the relations
\begin{equation}
\label{equation relations characters odd}
\prod_{\nu \neq \mu } \Theta_{\mu,\nu}(\bm \chi,\pi) = \chi_{0,q}\qquad (1\leq \mu\leq 2r+1)
\end{equation}
determine $\Theta_{\mu,\nu}(\bm \chi,\pi)$ for $ (\mu,\nu)\in E(\pi)\cup \{ (\mu_e,\nu_e)\}$, $\mu\geq 8$. 
In the case where $(\mu_e,\nu_e)=(1,4)$, $\Theta_{6,7}(\bm \chi,\pi)$ is also determined, and moreover we have that 
$$ 
\begin{cases}
\overline{\Theta_{1,2}(\bm \chi,\pi)}\Theta_{2,3}(\bm \chi,\pi)\Theta_{2,4}(\bm \chi,\pi)\Theta_{2,5}(\bm \chi,\pi) &=   \prod_{ 6\leq \nu \leq 2r +1} \overline{\Theta_{2,\nu}(\bm \chi,\pi)} \\
\overline{\Theta_{1,3}(\bm \chi,\pi)}\overline{\Theta_{2,3}(\bm \chi,\pi)}\Theta_{3,4}(\bm \chi,\pi)\Theta_{3,5}(\bm \chi,\pi) &=   \prod_{ 6\leq \nu \leq 2r+1 } \overline{\Theta_{3,\nu}(\bm \chi,\pi)}\\
\overline{\Theta_{1,4}(\bm \chi,\pi)}\overline{\Theta_{2,4}(\bm \chi,\pi)}\overline{\Theta_{3,4}(\bm \chi,\pi)}\Theta_{4,5}(\bm \chi,\pi) &=   \prod_{ 6\leq \nu \leq 2r+1 } \overline{\Theta_{4,\nu}(\bm \chi,\pi)}\\
\overline{\Theta_{1,5}(\bm \chi,\pi)}\overline{\Theta_{2,5}(\bm \chi,\pi)}\overline{\Theta_{3,5}(\bm \chi,\pi)}\overline{\Theta_{4,5}(\bm \chi,\pi)} &=   \prod_{ 6\leq \nu \leq 2r+1 } \overline{\Theta_{5,\nu}(\bm \chi,\pi)}
\end{cases}
$$
which shows that $\Theta_{1,2}(\bm \chi,\pi),$ $\Theta_{1,3}(\bm \chi,\pi),$ $\Theta_{4,5}(\bm \chi,\pi)$ and $\Theta_{1,4}(\bm \chi,\pi)$ are also determined.  The same argument works in the case $(\mu_e,\nu_e)=(2,4)$. Finally, for $(\mu_e,\nu_e)=(4,6)$ we have that 
$$ 
\begin{cases}
\overline{\Theta_{1,2}(\bm \chi,\pi)}\Theta_{2,3}(\bm \chi,\pi) &=   \prod_{ 4\leq \nu \leq 2r +1} \overline{\Theta_{2,\nu}(\bm \chi,\pi)} \\
\overline{\Theta_{1,3}(\bm \chi,\pi)}\overline{\Theta_{2,3}(\bm \chi,\pi)} &=   \prod_{ 4\leq \nu \leq 2r +1} \overline{\Theta_{3,\nu}(\bm \chi,\pi)}\\
\end{cases}
$$
and
$$ 
\begin{cases}
\overline{\Theta_{4,5}(\bm \chi,\pi)}\Theta_{5,6}(\bm \chi,\pi)\Theta_{5,7}(\bm \chi,\pi) &=   \prod_{ \nu \notin [4,7] } \overline{\Theta_{5,\nu}(\bm \chi,\pi)} \\
\overline{\Theta_{4,6}(\bm \chi,\pi)}\overline{\Theta_{5,6}(\bm \chi,\pi)}\Theta_{6,7}(\bm \chi,\pi) &=   \prod_{ \nu \notin [4,7] } \overline{\Theta_{6,\nu}(\bm \chi,\pi)}\\
\overline{\Theta_{4,7}(\bm \chi,\pi)}\overline{\Theta_{5,7}(\bm \chi,\pi)}\overline{\Theta_{6,7}(\bm \chi,\pi)}&=   \prod_{  \nu \notin [4,7] } \overline{\Theta_{7,\nu}(\bm \chi,\pi)},
\end{cases}
$$
which once more show that $\Theta_{1,2}(\bm \chi,\pi),$ $\Theta_{1,3}(\bm \chi,\pi),$ $\Theta_{4,5}(\bm \chi,\pi),$  $\Theta_{4,6}(\bm \chi,\pi)$ and $\Theta_{6,7}(\bm \chi,\pi)$ are determined.

We conclude that 
$$ C_{2r+1,n}(q;\pi) \leq \phi(q)^{(n-1)\frac{2r+1}2-\frac 32}.$$
 \end{proof}
 \goodbreak
 
We are now ready to finish the proof of Theorem~\ref{thmomentscentres2r}.

\begin{proof}[Proof of Theorem~\ref{thmomentscentres2r}, second part]

We first prove the estimate for $\mathbb E[H_{2m}(q;\eta) ]$.  
 Lemma~\ref{lemma explicit formula for moments of nu n} implies that
\begin{equation}\label{calculEHnq}
\mathbb E[H_{2m}(q;\eta)]  =  \frac{ 1}{\phi(q)^{2m}}  \sum_{\substack{\bchi\in X_{1,n}}}\sum_{\substack{\gamma_{\chi_1},\ldots , \gamma_{\chi_{2m}}\\ \gamma_{\chi_1}+\ldots+ \gamma_{\chi_{2m}}=0}}\widehat\eta\Big(\frac{\gamma_{\chi_1}}{2\pi}\Big) \cdots \widehat\eta\Big(\frac{\gamma_{\chi_{2m}}}{2\pi}\Big),
\end{equation} 
which combined with Lemma~\ref{avecLI} takes the form (recall~\eqref{equation definition b(chi,h)}, and note that under LI, the zeros of $L(s,\chi)$ are simple)
\begin{align*}
 \mathbb E[H_{2m}(q;\eta)]& =  \frac{ 1}{\phi(q)^{2m}}\!\!\!\!\!\! \sum_{\substack{\{ i_1,j_1 \},\dotsm, \{ i_m ,j_m\} \\ \{i_1,\dots,i_m,j_1,\dots,j_m\}=\{1,\dots ,2m\} \\ i_k = \min\{ i_\ell,j_\ell \; (\ell \geq k) \} } } \sum_{\substack{ \chi_{i_1},\dots,\chi_{i_m} \neq \chi_{0,q}  \\ \forall k\neq \ell, \chi_{i_\ell}\neq   \overline{\chi_{i_{k}}}  }}\sum_{\substack{\gamma_{\chi_{i_1}},\ldots , \gamma_{\chi_{i_m}}}}\Big|\widehat\eta\Big(\frac{\gamma_{\chi_{i_1}}}{2\pi}\Big)\Big|^{ 2} \cdots \Big|\widehat\eta\Big(\frac{\gamma_{\chi_{i_m}}}{2\pi}\Big)\Big|^{ 2}\\
 &\quad+O\Big( \mu_{2m} \frac{m^2}{\phi(q)^{2m}}
\sum_{\chi\neq \chi_{0,q}}b (\chi;\widehat \eta^2)^2  
\Big(\sum_{\chi_{k}\neq \chi_{0,q} }  b (\chi_{k};\widehat \eta^2)\Big)^{m-2} \Big)
 \\ 
 &= \frac{  \mu_{2m}}{\phi(q)^{m}} \Big(\sum_{\substack{ \chi \bmod q \\ \chi \neq \chi_{0,q}}} b(\chi;\widehat \eta^2)\Big)^{m}+O\Big(  \mu_{2m} \frac{m^2(K_{\delta,\eta}\log q)^{m}}{\phi(q)^{m+1}} \Big),
\end{align*}
by Proposition~\ref{proposition first moment b(chi,h)}. 
The claimed estimate follows from a second application of Proposition~\ref{proposition first moment b(chi,h)}.

Moving on to higher moments, by taking $T\rightarrow \infty $ in Lemma~\ref{lemma explicit formula higher moments of moments} and applying Lemma~\ref{lemma moments of limiting distribution as limit of V(T)}, we obtain that
\begin{align*}
\V_{s,n}(q;\eta)&=  \frac {(-1)^{sn}}{\phi(q)^{sn}} \sum_{\substack{\bchi\in X_{s,n}}}  \sum_{\substack{ \bm \gamma\in \Gamma(\bchi)  }} \widehat \eta(\bm \gamma) \Delta_{s}(\bm\sigma_{\bm\gamma})  \\ 
&=\frac {(-1)^{sn}}{\phi(q)^{sn}}  \sum_{\substack{\bchi\in X_{s,n}\\  \forall (\mu,j), \exists (\nu,i)\neq (\mu,j) : \chi_{\mu,j} =\overline{ \chi_{\nu,i}}}}  \sum_{\substack{  \bm \gamma\in \Gamma(\bchi) }} \widehat \eta(\bm \gamma) \Delta_{s}(\bm\sigma_{\bm\gamma}),
\end{align*}
by Lemma~\ref{avecLI}. The inner sum may be further restricted by requiring that $\gamma_{\chi_{\mu,j}}+\gamma_{\chi_{\nu,i}} =0$, where $(\mu,j)$ and $(\nu,j)$ are such that  $ \chi_{\mu,j} =\overline{ \chi_{\nu,i}}$. We will first discard characters in the outer sum in such a way to change the condition $ \forall (\mu,j), \exists (\nu,i) : \chi_{\mu,j} =\overline{ \chi_{\nu,i}}$ into  $\forall (\mu,j), \exists! (\nu,i)\neq (\mu,j) : \chi_{\mu,j} =\overline{ \chi_{\nu,i}}$. The sum of the extra terms is
\begin{align*}
&     \ll \sum_{\substack{ \bchi\in X_{s,n}\\  \forall (\mu,j), \exists (\nu,i) \neq  (\mu,j)  : \chi_{\mu,j} =\overline{ \chi_{\nu,i}} \\ \exists (\mu,j) ,(\nu,i) ,(\nu',j') \text{ distinct } :  \chi_{\mu,j} =\overline{ \chi_{\nu,i}}=\overline{ \chi_{\nu',i'} } }}  \!\!\!\!\!\!\!\!\!\!\!(K_{\delta,\eta}\log q)^{\frac{sn}2}  \\& \leq (K_{\delta,\eta}\log q)^{\frac{sn}2}  \sum_{\pi \in I_{s,n}} \!\!\!\!\!\!\!\!\! \sum_{\substack{ \bchi\in X_{s,n}\\  \ \pi(\bchi ) =\overline{ \bchi } \\ \exists (\mu,j) ,(\nu,i) ,(\nu',j') \text{ distinct } :  \chi_{\mu,j} =\overline{ \chi_{\nu,i}}=\overline{ \chi_{\nu',i'} } }}\!\!\!\!\!\!\!\!\!\!\! 1, 
\end{align*}
where we have used the bound 
$$  \sum_{\gamma_{\chi}} \Big| \widehat \eta \Big(\frac {\gamma_\chi}{2\pi }\Big) \Big|^2 \ll_{\delta,\eta} \log q,  $$
which is a consequence of the decay of $\widehat \eta$ combined with the Riemann-von Mangoldt formula.
By Lemmas~\ref{lemma sum over characters LI} and~\ref{lemma sum over characters odd LI}, the total contribution of the $\pi\in G_{s,n}$ is 
$$ \ll |G_{s,n}| \phi(q)^{ \frac{(n-1)s}2 -1 -\frac{\delta_{2\nmid s}}2} (K_{\delta,\eta}\log q)^{\frac{sn}2}, $$
where $\delta_{2\nmid s}=1$ when $2\nmid s$, and is zero otherwise.
If $s=2r$ and $\pi \in F_{2r,n}$, then after reordering $\bm \chi$ in the inner sum we can assume that $ J_{2u-1,2u}(\pi) \neq \varnothing $ for $ 1\leq u\leq r$ and that $J_{\mu,\nu}(\pi)=\varnothing$ otherwise. There are two subcases. If there exists $$\chi_{2u-1,j} \in \big\{ \overline{\chi_{\nu,i} } : \nu \notin \{ 2u-1,2u\}, i\leq n\big\} ,$$ then arguing as in the proof of Lemma~\ref{lemma sum over characters}, one shows that the inner sum is $ \ll n^2r^2\phi(q)^{ (n-1)r -1} . $ Otherwise, there must exist $$\chi_{2u-1,j} \in \big\{ \overline{\chi_{\nu,i} } : \nu \in \{ 2u-1,2u\}, i\leq n\big\} \smallsetminus \big\{ \overline{\chi_{2u-1,j} },\overline{\chi_{\pi(2u-1,j)}} \big\}  .$$ Once more, the inner sum is $ \ll n^2 r^2\phi(q)^{ (n-1)r -1} . $ The calculation is similar in the odd case.

We conclude that 
\begin{multline*} \V_{s,n}(q;\eta) = \frac {(-1)^{sn}}{\phi(q)^{sn}}\sum_{\pi \in I_{s,n}} \sum_{\substack{\bchi\in X_{s,n}\\ \pi(\bchi ) =\overline{ \bchi }\\ \forall (\mu,j), \exists! (\nu,i)\neq (\mu,j) : \chi_{\mu,j} =\overline{ \chi_{\nu,i}}}}  \sum_{\substack{ \bm \gamma\in \Gamma(\bchi)  \\  \forall (\mu,j),\,  \gamma_{\mu,j}+\gamma_{\pi(\mu,j)}=0 }} \!\!\!\!\!\!\!\!\!\!\!\widehat \eta(\bm \gamma) \Delta_{s}(\bm\gamma) \\ +O\Big( (|G_{s,n}|+n^2r^2|F_{s,n}|)\frac{(K_{\delta,\eta}\log q)^{\frac{sn}2}}{\phi(q)^{ \frac{(n+1)s}2 +1 +\frac{\delta_{2\nmid s}}2}} \Big).
\end{multline*}
Now, by Lemmas~\ref{lemma sum over characters LI} and~\ref{lemma sum over characters odd LI}, the sum over $\pi \in G_{s,n}$ is absorbed in the error term. The remaining terms are
$$ = \frac {(-1)^{sn}}{\phi(q)^{sn}}\sum_{\pi \in F_{s,n}} \sum_{\substack{ \bchi\in X_{s,n}\\  \pi(\bchi ) =\overline{ \bchi }\\ \forall (\mu,j), \exists! (\nu,i)\neq (\mu,j) : \chi_{\mu,j} =\overline{ \chi_{\nu,i}}}}  \sum_{\substack{ \bm \gamma\in \Gamma(\bchi)  \\  \forall (\mu,j),\,  \gamma_{\mu,j}+\gamma_{\pi(\mu,j)}=0 }} \widehat \eta(\bm \gamma).  $$
Applying Lemma
~\ref{lemma sum over characters} and~\ref{lemma sum over characters odd} 
yields the claimed estimate on $\V_{s,n}(q;\eta)$.
\end{proof}

\end{document}